\definecolor{green}{rgb}{0,0.8,0} % Redefines the color green.
\definecolor{deepgreen}{cmyk}{1,0,1,0.5}
\newcommand{\Del}[1]{}
\numberwithin{equation}{section}
\newtheorem{theorem}{Theorem}[section]
\newtheorem{lemma}[theorem]{Lemma}%[section]
\newtheorem{proposition}[theorem]{Proposition}%[section]
\newtheorem{remark}[theorem]{Remark}%[section]
\newcommand{\jap}[1]{\langle #1\rangle}
\renewcommand{\Re}{\mathrm{Re}}
\newcommand{\sign}{\operatorname{sign}}
\renewcommand{\hbar}{{\underline h}}
\newcommand{\bbC}{\mathbb C}
\newcommand{\bbR}{\mathbb R}
\newcommand{\bbZ}{\mathbb Z}
\newcommand{\calC}{\mathcal C}
\newcommand{\calF}{\mathcal F}
\newcommand{\calI}{\mathcal I}
\newcommand{\calJ}{\mathcal J}
\newcommand{\calO}{\mathcal O}
\newcommand{\calQ}{\mathcal Q}
\newcommand{\calR}{\mathcal R}
\newcommand{\calV}{\mathcal V}
\newcommand{\px}{\partial_x}
\newcommand{\pt}{\partial_t}
\newcommand{\jn}{\jap{\nabla}}
\newcommand{\jt}{\jap{t}}
\newcommand{\js}{\jap{s}}
\newcommand{\jx}{\jap{x}}
\newcommand{\jxi}{\jap{\xi}}
\newcommand{\hf}{\frac{1}{2}}
\newcommand{\thf}{\frac{3}{2}}
\newcommand{\Hess}{\mathrm{Hess}}
\newcommand{\ud}{\mathrm{d}}
\DeclareMathOperator{\sech}{sech}
\begin{document}

\title[Asymptotics for 1D KG equations with variable coefficient quadratic nonlinearities]{Asymptotics for 1D Klein-Gordon equations with \\ variable coefficient quadratic nonlinearities}

\author[H. Lindblad]{Hans Lindblad}
\address{Department of Mathematics \\ Johns Hopkins University \\ Baltimore, MD 21218, USA}
\email{lindblad@math.jhu.edu}

\author[J. L\"uhrmann]{Jonas L\"uhrmann}
\address{Department of Mathematics \\ Texas A\&M University \\ College Station, TX 77843, USA}
\email{luhrmann@math.tamu.edu}

\author[A. Soffer]{Avy Soffer}
\address{Mathematics Department, Rutgers University, New Brunswick, NJ 08903, USA}
\email{soffer@math.rutgers.edu}

\thanks{
H. Lindblad was supported in part by NSF grant DMS-1500925 and by Simons Foundation Collaboration grant 638955. 
J. L\"uhrmann was supported in part by NSF grant DMS-1954707 during the completion of this work. 
A. Soffer was supported in part by NSF grant DMS-1600749 and by NSFC11671163. 
Part of this work was conducted while the last two authors were visiting Central China Normal University, Wuhan, China.}

\begin{abstract}
 We initiate the study of the asymptotic behavior of small solutions to one-dimensional Klein-Gordon equations with variable coefficient quadratic nonlinearities. The main discovery in this work is a striking resonant interaction between specific spatial frequencies of the variable coefficient and the temporal oscillations of the solutions. In the resonant case a novel type of modified scattering behavior occurs that exhibits a logarithmic slow-down of the decay rate along certain rays. In the non-resonant case we introduce a new variable coefficient quadratic normal form and establish sharp decay estimates and asymptotics in the presence of a critically dispersing constant coefficient cubic nonlinearity.
 The Klein-Gordon models considered in this paper are motivated by the study of the asymptotic stability of kink solutions to classical nonlinear scalar field equations on the real line.
\end{abstract}

\maketitle 
\tableofcontents

\section{Introduction}

We consider the Cauchy problem for Klein-Gordon equations in one space dimension with variable coefficient quadratic nonlinearities of the form
\begin{equation} \label{equ:nlkg}
 \left\{ \begin{aligned}
  (\partial_t^2 - \partial_x^2 + 1) u &= \alpha(x) u^2 + \beta_0 u^3 + \beta(x) u^3 \text{ on } \bbR^{1+1}, \\
  (u, \partial_t u)|_{t=0} &= (u_0, u_1),
 \end{aligned} \right.
\end{equation}
where $\alpha(x)$ and $\beta(x)$ are smooth and decaying functions, $\beta_0 \in \bbR$, and the initial data $(u_0, u_1)$ are assumed to be real-valued, smooth, and sufficiently decaying. Global existence of solutions to~\eqref{equ:nlkg} follows readily from energy conservation for small smooth initial data. The goal of our investigation is to establish sharp decay estimates and asymptotics for small solutions to~\eqref{equ:nlkg}. A striking discovery in this work is a delicate resonant interaction between specific spatial frequencies of the variable coefficient and the temporal oscillations of the solutions.

\medskip 

\subsection{Motivation}
The main motivation for our investigation stems from the asymptotic stability problem for kink solutions occurring in classical nonlinear scalar field equations on the real line.
The two most well-known examples are the $\phi^4$ model
\begin{equation} \label{equ:phi4_equation}
 (\pt^2 - \px^2) \phi = \phi - \phi^3 \text{ on } \bbR^{1+1},
\end{equation}
and the sine-Gordon equation
\begin{equation} \label{equ:sineGordon_equation}
 (\pt^2 - \px^2) \psi = -\sin(\psi) \text{ on } \bbR^{1+1}.
\end{equation}
These admit special static solutions, called \emph{kinks}, that are explicitly given by
\begin{equation} \label{equ:kinks}
 \phi_0(x) = \tanh( {\textstyle \frac{x}{\sqrt{2}} } ), \quad \text{respectively} \quad \psi_0(x) = 4 \arctan(e^x).
\end{equation}
Kinks are the simplest (one-dimensional) examples of topological solitons, we refer to~\cite{MantSut04, Vachaspati06} for more background.
Note that the Lorentz invariance and the translation invariance of the field equations~\eqref{equ:phi4_equation} and~\eqref{equ:sineGordon_equation} also give rise to moving and shifted versions of the kinks~\eqref{equ:kinks}.

It is a classical problem to understand the asymptotic stability of the kinks $\phi_0(x)$ and $\psi_0(x)$ under general smooth perturbations.
The basic strategy of a perturbative approach to this problem is to decompose the evolution of a perturbation into a modulated kink, which takes into account the symmetries of the equation, and a small residue term. 
The asymptotic stability problem then largely amounts to studying the long-time behavior of the modulation parameters and of the residue term.
This strategy is generally easier to implement in higher space dimensions due to stronger decay properties.
In the following discussion we ignore for simplicity the issue of modulation.
Then for the $\phi^4$ model, the remainder term $u(t,x) = \phi(t,x) - \phi_0(x)$ satisfies the equation
\begin{equation} \label{equ:phi4_perturbation}
 \bigl( \partial_t^2 - \partial_x^2 + 2 - 3 \sech^2( {\textstyle \frac{x}{\sqrt{2}} }) \bigr) u = - 3 \tanh( {\textstyle \frac{x}{\sqrt{2}} } ) u^2 - u^3,
\end{equation}
while for the sine-Gordon equation the residue term $w(t,x) = \psi(t,x) - \psi_0(x)$ is a solution to the equation
\begin{equation} \label{equ:sineGordon_perturbation}
 \begin{aligned}
  \bigl( \partial_t^2 - \partial_x^2 + 1 - 2 \sech^2(x) \bigr) w &= -\sech(x) \tanh(x) w^2 + \frac{1}{6} w^3 - \frac{1}{3} \sech^2(x) w^3 + \bigl\{\text{higher order}\bigr\}.   
 \end{aligned}
\end{equation}
A salient feature of these nonlinear Klein-Gordon equations is the presence of {\it variable coefficient quadratic nonlinearities}.

The analysis of the asymptotic behavior of small solutions to~\eqref{equ:phi4_perturbation} and to~\eqref{equ:sineGordon_perturbation} poses several significant difficulties. 
Due to the slow decay of Klein-Gordon waves in one space dimension, constant coefficient quadratic and cubic nonlinearities are known to exhibit long-range effects leading to a modified scattering behavior of the solutions. The variable coefficient quadratic nonlinearities compound the treatment of these long-range effects and at the same time introduce new resonance phenomena. Moreover, the linearized operators on the left-hand sides of~\eqref{equ:phi4_perturbation} and~\eqref{equ:sineGordon_perturbation} have threshold resonances, i.e. resonances at the bottom of their continuous spectra, and in the case~\eqref{equ:phi4_perturbation} of the $\phi^4$ model an internal oscillation mode, i.e. a single, positive, discrete eigenvalue below the continuous spectrum.

In this paper we initiate the study of decay and asymptotics of small solutions to Klein-Gordon equations with variable coefficient quadratic nonlinearities of the form~\eqref{equ:nlkg}.  
We consider this investigation an important step towards obtaining a better understanding of the long-time behavior of perturbations of the kinks $\phi_0(x)$ and $\psi_0(x)$. 
While the equation~\eqref{equ:nlkg} does not yet include a linear potential, we contend that~\eqref{equ:nlkg} retains some of the key difficulties of the linearized operators in~\eqref{equ:phi4_perturbation} and in~\eqref{equ:sineGordon_perturbation}, because the free Klein-Gordon operator in one space dimension also has a threshold resonance.
We emphasize that the leading order nonlinearities on the right-hand side of the equation~\eqref{equ:sineGordon_perturbation} for perturbations of the sine-Gordon kink fall exactly into the class of nonlinearities of~\eqref{equ:nlkg} that are considered in this paper, see also Remark~\ref{rem:relevance_sineGordon}. In a future investigation we will consider non-localized quadratic variable coefficients $\alpha(x)$ that can assume different limits as $x \to \pm \infty$ like the coefficient $\tanh(x)$ in~\eqref{equ:phi4_perturbation} for perturbations of the kink of the $\phi^4$ model.

To conclude this subsection, we briefly summarize orbital and asymptotic stability results for kinks.
The orbital stability of kinks with respect to small perturbations in the energy space was obtained by Henry-Perez-Wreszinski~\cite{HPW82} for general one-dimensional scalar field theories. 
For a class of scalar field models with a certain flatness assumption on the potential near the wells and under suitable spectral assumptions (no resonances, possible presence of an internal mode), Komech-Kopylova~\cite{KK11_1, KK11_2} proved the asymptotic stability of kinks with respect to a weighted energy norm (pointwise in time).
In a remarkable work~\cite{KMM17}, Kowalczyk-Martel-Mu\~{n}oz established the asymptotic stability of the kink $\phi_0(x)$ of the $\phi^4$ model with respect to a local energy norm in the special case of odd, finite energy perturbations. 
The asymptotic stability of the kink $\psi_0(x)$ of the sine-Gordon equation is not possible in the energy space due to the existence of a wobbling kink solution around $\psi_0(x)$, see for instance~\cite[Remark 1.3]{KMM17}. However, Alejo-Mu\~{n}oz-Palacios~\cite{AMP20}  constructed a smooth infinite co-dimensional manifold of initial data close to the kink $\psi_0(x)$, for which there is asymptotic stability in the energy space. After completion of this work, Delort-Masmoudi~\cite{DelMas20} very recently proved long-time dispersive estimates for odd weighted perturbations of the kink of the $\phi^4$ model up to times $T \sim \varepsilon^{-4+c}$, for arbitrary $c>0$, where $\varepsilon$ measures the size of the initial data in a weighted Sobolev space. 
A sufficient condition for the asymptotic stability locally in the energy space of (moving) kinks in general $(1+1)$-scalar field models under arbitrary small finite energy perturbations has been introduced by Kowalczyk-Martel-Mu\~{n}oz-Van den Bosch~\cite{KMMV20}. 
Moreover, Chen-Liu-Lu~\cite{CLL20} very recently showed that the sine-Gordon kink is asymptotically stable under sufficiently strong weighted perturbations, relying on the complete integrability of the sine-Gordon model and using the nonlinear steepest descent method. 
For related asymptotic stability results we refer to the surveys~\cite{S06, Tao09, KMM17_1} and to the references therein.

\subsection{Main results}

Let $u(t)$ be the solution to~\eqref{equ:nlkg}. In the remainder of this paper we work at the level of the variable
\begin{equation*}
 v(t) := \frac{1}{2} \bigl( u(t) - i \jn^{-1} \pt u \bigr)
\end{equation*}
that satisfies the first-order equation 
\begin{equation} \label{equ:intro_first_order_kg}
  (\pt - i \jn) v = \frac{1}{2i} \jn^{-1} \bigl( \alpha(\cdot) u^2 + \beta_0 u^3 + \beta(\cdot) u^3 \bigr) \text{ on } \bbR^{1+1}
\end{equation}
with initial datum $v(0) = v_0 := \frac{1}{2} (u_0 - i \jn^{-1} u_1)$. It suffices to derive decay estimates and asymptotics for~$v(t)$ since we have that
\begin{equation} \label{equ:u_equ_v_plus_vbar}
 u(t) = v(t) + \bar{v}(t).
\end{equation}
We emphasize that we will frequently use~\eqref{equ:u_equ_v_plus_vbar} as a convenient short-hand notation. %without further mentioning this  % for $v(t) + \bar{v}(t)$.

\medskip 

A key discovery in this work is a delicate resonant interaction that can occur in the variable coefficient quadratic nonlinearity between the spatial frequencies $\xi = \pm \sqrt{3}$ of the variable coefficient and the temporal oscillations of the solution. Correspondingly, we distinguish between the resonant case ($\widehat{\alpha}(+\sqrt{3}) \neq 0$ or $\widehat{\alpha}(-\sqrt{3}) \neq 0$) and the non-resonant case ($\widehat{\alpha}(+\sqrt{3}) = 0$ and $\widehat{\alpha}(-\sqrt{3}) = 0$).

\medskip 

Our first theorem pertains to the resonant case and uncovers a novel modified scattering behavior of small solutions to~\eqref{equ:intro_first_order_kg} when $\beta_0 = \beta(x) = 0$.
\begin{theorem}[Resonant Case] \label{thm:resonant}
 Let $\alpha(x)$ be a smooth function satisfying $\| \jx^8 \alpha(x) \|_{H^3_x} < \infty$. Suppose that 
 \begin{equation*}
  \widehat{\alpha}(+\sqrt{3}) \neq 0 \quad \text{ or } \quad \widehat{\alpha}(-\sqrt{3}) \neq 0.
 \end{equation*} 
 Then there exists a small absolute constant $\varepsilon_0 > 0$ so that for any initial datum $v_0$ with 
 \begin{equation*}
  \varepsilon := \| \jx^2 v_0 \|_{H^4_x} \leq \varepsilon_0,
 \end{equation*}
 there exists a unique global solution $v(t)$ to
 \begin{equation} \label{equ:resonant_nlkg}
  \left \{ \begin{aligned}
   (\pt - i \jn) v &= \frac{1}{2i} \jn^{-1} \bigl( \alpha(\cdot) u^2 \bigr) \text{ on } \bbR^{1+1}, \\
   v(0) &= v_0,
  \end{aligned} \right.
 \end{equation}
 satisfying the decay estimate
 \begin{equation} \label{equ:resonant_thm_sharp_decay}
  \|v(t)\|_{L^\infty_x(\bbR)} \leq C \frac{\log(1 + \jt)}{\jt^{\hf}} \varepsilon.
 \end{equation}
 Moreover, the solution $v(t)$ admits a decomposition 
 \begin{equation} \label{equ:resonant_thm_solution_decomp}
  v(t) = v_{free}(t) + v_{mod}(t), \quad t \geq 1,
 \end{equation}
 with the following properties: 
 \begin{itemize}
  \item There exists $\widehat{V} \in L^\infty$ with $\|\widehat{V}\|_{L^\infty} \lesssim \varepsilon$ such that 
 \begin{equation} \label{equ:resonant_thm_asymptotics_vfree}
  v_{free}(t,x) = \frac{1}{t^\hf} e^{i\frac{\pi}{4}} e^{i\rho} \widehat{V} \Bigl(-\frac{x}{\rho}\Bigr) \theta\Bigl( \frac{x}{t} \Bigr) + \calO \Bigl( \frac{\varepsilon}{t^{\frac{5}{8}-}} \Bigr), \quad t \geq 1,
 \end{equation} 
 where $\rho := (t^2-x^2)^{\hf}$, $\theta(z) = 1$ for $|z| < 1$, and $\theta(z) = 0$ for $|z| \geq 1$. %and $0 < \nu \ll 1$ is a constant.
 \item There exists $a_0 \in \bbC$ with $|a_0| \lesssim \varepsilon$ so that $v_{mod}(t)$ is given by
  \begin{equation}
   v_{mod}(t,x) = \frac{a_0^2}{2} \int_1^t \bigl( e^{i(t-s) \jn} \jn^{-1} \alpha \bigr)(x) \frac{e^{2is}}{s} \, \ud s.
  \end{equation}
  For any given $\delta > 0$, there exists a constant $C_\delta > 0$ such that we have uniformly
  \begin{equation} \label{equ:resonant_thm_decay_off_rays}
   \bigl| v_{mod}(t,x) \bigr| \leq C_\delta \frac{\varepsilon^2}{\jap{t}^{\frac{1}{2}}} \qquad \text{whenever} \quad |x| < \Bigl( \frac{\sqrt{3}}{2} - \delta \Bigr) t \quad \text{ or } \quad |x| > \Bigl( \frac{\sqrt{3}}{2} + \delta \Bigr) t
  \end{equation}
 and along the rays $x = \pm \frac{\sqrt{3}}{2} t$ the asymptotics of $v_{mod}(t)$ are given by
 \begin{equation} \label{equ:resonant_thm_asymptotics_along_special_rays}
  v_{mod}\Bigl(t, \pm \frac{\sqrt{3}}{2} t \Bigr) = \frac{a_0^2}{\sqrt{8}}  e^{i\frac{\pi}{4}} e^{i \frac{t}{2}} \widehat{\alpha}(\mp \sqrt{3}) \frac{\log(t)}{t^{\frac{1}{2}}} + \calO \Bigl( \frac{\varepsilon^2}{t^\hf} \Bigr), \quad t \gg 1.
 \end{equation} 
 In particular, the decay estimate~\eqref{equ:resonant_thm_sharp_decay} is sharp. 
 \end{itemize}
 A decomposition analogous to~\eqref{equ:resonant_thm_solution_decomp} holds for negative times $t \leq -1$.
\end{theorem}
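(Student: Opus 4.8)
\emph{Step 1: profile, bootstrap, linear decay.} I would work with the profile $f(t) := e^{-it\jn}v(t)$, whose Fourier transform obeys $\pt\widehat f(t,\xi) = \tfrac{1}{2i}\langle\xi\rangle^{-1}e^{-it\langle\xi\rangle}\,\widehat{\alpha u^2}(t,\xi)$, together with $v(t,x) = \tfrac{1}{2\pi}\int e^{ix\xi+it\langle\xi\rangle}\widehat f(t,\xi)\,d\xi$. Since global existence is already known, the theorem reduces to propagating on $[1,T]$ three a priori bounds: a high Sobolev bound $\|v(t)\|_{H^4_x}\lesssim\varepsilon\langle t\rangle^{\delta_0}$ with $\delta_0$ tiny (the spatial decay of $\alpha$ is used to keep this growth slow); a weighted energy bound for $\|\partial_\xi\widehat f(t)\|_{L^2_\xi}$ (equivalently $\|(x+it\px\jn^{-1})v(t)\|_{L^2}$), allowed to grow like $\varepsilon\langle t\rangle^{\delta_0}$ away from $\xi=\pm\sqrt3$ and like $\varepsilon\langle t\rangle^{1/2}$ in a $\langle t\rangle^{-1}$-window around $\pm\sqrt3$; and a ``$Z$-norm'' controlling $\|\widehat f(t)\|_{L^\infty_\xi}$ that stays $\lesssim\varepsilon$ outside a fixed neighborhood of $\pm\sqrt3$ and is permitted a $\log\langle t\rangle$ loss near $\pm\sqrt3$. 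Stationary phase in $\xi$ converts these into pointwise information: for $|x|<t$, $v(t,x)=t^{-\hf}e^{i\frac{\pi}{4}}e^{i\rho}m(\tfrac xt)\widehat f(t,-\tfrac x\rho)+\calO\!\bigl(t^{-\frac34}\|\partial_\xi\widehat f(t)\|_{L^2}\bigr)$ with $m$ smooth, which yields~\eqref{equ:resonant_thm_sharp_decay}.

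\emph{Step 2: the unique space--time resonance.} Expanding $\alpha u^2=\alpha v^2+2\alpha v\bar v+\alpha\bar v^2$, on the Fourier side each piece is a convolution against $\widehat\alpha$ weighted by a phase $e^{it(\pm\langle\eta\rangle\pm\langle\sigma\rangle-\langle\xi\rangle)}$, with $\xi$ the output frequency and $\eta,\sigma$ integrated; dispersion forces these bilinear interactions to be dominated by their spatially stationary point $\eta=\sigma=0$. There the $\bar v\bar v$ phase is $-(2+\langle\xi\rangle)\le-3$ and the $v\bar v$ phase is $-\langle\xi\rangle\le-1$: neither is time-resonant, so a single quadratic normal form — bounded because these phases are bounded away from zero — converts them into cubic expressions carrying a factor $\alpha$ or $\alpha^2$, and the spatial localization of $\alpha$ makes these genuinely short-range (their contribution to $\pt\widehat f$ is integrable in $L^\infty_\xi$ and only mildly grows the weighted norm). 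The sole genuine space--time resonance lives in $\alpha v^2$: at $\eta=\sigma=0$ the phase equals $2-\langle\xi\rangle$, which vanishes precisely at $\xi=\pm\sqrt3$ (i.e.\ $\langle\xi\rangle=2$), and the group velocity $\xi/\langle\xi\rangle=\pm\tfrac{\sqrt3}{2}$ there fixes the exceptional rays. Since $v$ concentrates at frequency $0$ with amplitude $\approx t^{-\hf}e^{i\frac\pi4}e^{it}\widehat f(t,0)$, this resonance contributes to $\pt\widehat f(t,\xi)$, near $\xi=\pm\sqrt3$, the non-integrable model term $\tfrac{1}{2i}\langle\xi\rangle^{-1}\widehat\alpha(\xi)\,\widehat f(t,0)^2\,\tfrac{e^{it(2-\langle\xi\rangle)}}{t}$; extracting exactly this is what forces the $\log$ loss in the $Z$-norm and the $\langle t\rangle^{1/2}$ growth of the weighted norm in the $\pm\sqrt3$ window — and, once the bootstrap bounds are reinserted, nothing worse. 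Closing the bootstrap with these precise rates, keeping the $t$-factor generated when $\partial_\xi$ hits the oscillatory phase near $\pm\sqrt3$ under control, is the main obstacle of the argument.

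\emph{Step 3: the decomposition.} The Step~2 bounds show $\widehat f(t,0)$ converges; let $a_0$ be the suitably normalized limit $\lim_{t\to\infty}\widehat f(t,0)$, so that $v(t,0)\approx t^{-\hf}e^{i\frac\pi4}e^{it}a_0$ and $|a_0|\lesssim\varepsilon$. Defining $v_{mod}$ by the stated formula, $v_{free}:=v-v_{mod}$, and $f_{free}:=e^{-it\jn}v_{free}$, one has $(\pt-i\jn)v_{free}=\tfrac1{2i}\jn^{-1}(\alpha u^2)-\tfrac{a_0^2}{2}\jn^{-1}\alpha\,\tfrac{e^{2it}}{t}$. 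Using that $\widehat f(t,-x/\rho)^2=a_0^2+(\text{summable error})$ on $\operatorname{supp}\alpha$, together with Step~2 (every other piece of $\alpha u^2$ is short-range), one checks that the model resonant term on the right-hand side is cancelled exactly, so $\pt\widehat{f_{free}}(t)$ is integrable in $L^\infty_\xi$; hence $\widehat{f_{free}}(t)\to\widehat V$ in $L^\infty_\xi$ with $\|\widehat V\|_{L^\infty}\lesssim\varepsilon$, at a convergence rate compatible with the remainder in~\eqref{equ:resonant_thm_asymptotics_vfree}. Stationary phase applied to $v_{free}=e^{it\jn}f_{free}$ then produces~\eqref{equ:resonant_thm_asymptotics_vfree}, the $t^{\frac58-}$-type remainder combining this convergence rate with the $t^{-\frac34}\langle t\rangle^{\delta_0}$ weighted-norm contribution to the stationary-phase error.

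\emph{Step 4: $v_{mod}$ and the logarithmic slow-down.} It remains to analyze the explicit integral for $v_{mod}$. Inside the cone, $(e^{i\tau\jn}\jn^{-1}\alpha)(x)=\tau^{-\hf}e^{i\frac\pi4}n\bigl(\tfrac x\tau\bigr)e^{i\sqrt{\tau^2-x^2}}\widehat\alpha\bigl(-\tfrac{x}{\sqrt{\tau^2-x^2}}\bigr)+(\text{lower order})$ with $n$ smooth, and it is rapidly decaying for $|x|>\tau$; hence to leading order $v_{mod}(t,x)\approx\tfrac{a_0^2}{2}\int(t-s)^{-\hf}e^{i\psi(s)}n(\cdot)\widehat\alpha\bigl(-\tfrac{x}{\sqrt{(t-s)^2-x^2}}\bigr)\tfrac{ds}{s}$ over $\{1\le s\le t:\ |x|<t-s\}$, with $\psi(s)=\sqrt{(t-s)^2-x^2}+2s$. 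Since $\psi'(s)=2-(t-s)/\sqrt{(t-s)^2-x^2}$, the only stationary point is $s_\ast=t-\tfrac{2|x|}{\sqrt3}$. When $\bigl||x|/t-\tfrac{\sqrt3}{2}\bigr|>\delta$, the phase is non-stationary on (or $s_\ast$ lies outside) the effective $s$-range, the $s=1$ endpoint term is $\calO_\delta(\varepsilon^2\langle t\rangle^{-\hf})$ — giving~\eqref{equ:resonant_thm_decay_off_rays} — and the interior stationary point, when present, is even smaller, $\calO(\varepsilon^2\langle t\rangle^{-1})$. On the ray $x=\pm\tfrac{\sqrt3}{2}t$ one has $s_\ast\approx 0$: the stationary point coalesces with the lower endpoint, $\psi(s)=\tfrac t2-\tfrac3t s^2+\calO(s^3/t^2)$ is essentially constant for $1\le s\lesssim\sqrt t$, the amplitude is $\approx t^{-\hf}$ evaluated at the stationary frequency $\mp\sqrt3$ (hence $\widehat\alpha(\mp\sqrt3)$), and $\int_1^{\sqrt t}\tfrac{ds}{s}\sim\tfrac12\log t$ while the tail $s\gtrsim\sqrt t$ contributes only $\calO(t^{-\hf})$; tracking the constants (the $e^{i\frac\pi4}$ being the $\xi$-stationary-phase factor of the linear flow) yields~\eqref{equ:resonant_thm_asymptotics_along_special_rays} and hence the sharpness of~\eqref{equ:resonant_thm_sharp_decay}. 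This coalescence of $s_\ast$ with the endpoint of the $s$-integration along the two rays is the second delicate point of the argument. The negative-time statement follows from the time-reflection symmetry of~\eqref{equ:resonant_nlkg}.
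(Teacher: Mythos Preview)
Your Steps~3--4 are essentially correct and close to the paper's argument. The substantive gap is in the bootstrap of Steps~1--2.

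You propose to bootstrap on $\|\partial_\xi\hat f(t)\|_{L^2_\xi}$, allowing it to grow like $t^{1/2}$ near $\xi=\pm\sqrt3$. But the model resonant source gives $\partial_\xi\hat f(t,\xi)\sim\varepsilon^2\min(t,|\xi\mp\sqrt3|^{-1})$ near $\pm\sqrt3$, whose $L^2_\xi$ norm over any fixed neighborhood is genuinely $\sim\varepsilon^2 t^{1/2}$. Plugged into your own stationary-phase remainder $\calO\bigl(t^{-3/4}\|\partial_\xi\hat f(t)\|_{L^2}\bigr)$ this yields $\varepsilon^2 t^{-1/4}$, which is \emph{larger} than the target $\varepsilon t^{-1/2}\log t$; so~\eqref{equ:resonant_thm_sharp_decay} does not follow, and the $L^\infty$ bound you need to feed back into the nonlinearity is unavailable. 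The ``main obstacle'' you flag is real, and the paper's remark following Theorem~\ref{thm:resonant} says exactly this: the weighted quantity ``is strongly divergent for the profile of $v_{mod}(t)$'' and closing it ``needs significant further ideas'' (cf.\ the penalized framework of Germain--Pusateri).

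The paper sidesteps this entirely by running the bootstrap (Proposition~\ref{prop:resonant_bootstrap_bounds}) on \emph{local decay} norms
\[
\langle t\rangle^{1/2}\|\langle x\rangle^{-2}v(t)\|_{L^2_x},\qquad \langle t\rangle\,\|\langle x\rangle^{-2}\partial_x^j v(t)\|_{L^2_x}\ (1\le j\le3),\qquad \langle t\rangle\,\|\langle x\rangle^{-2}\partial_t(e^{-it}v(t))\|_{L^2_x},
\]
closed via the pointwise-in-time local decay of $e^{it\langle\nabla\rangle}$ (Lemma~\ref{lem:local_decay}) and the spatial localization of $\alpha$. The two key tricks are: writing $\alpha u(s)^2=\alpha\bigl(u(s)^2-u(s,0)^2\bigr)+\alpha\,u(s,0)^2$, the first piece gaining a derivative via the fundamental theorem of calculus; and, for the second piece, splitting $\alpha=\alpha_r+\alpha_{nr}$ so that $\alpha_r=\partial_x\tilde\alpha_r$ (its frequency support is near $\pm\sqrt3$, hence away from $0$) accesses the faster $\langle t\rangle^{-3/2}$ local decay of $\partial_x\langle\nabla\rangle^{-1}e^{it\langle\nabla\rangle}$, while for $\alpha_{nr}$ one integrates by parts in time. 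No weighted profile norm enters. The decay~\eqref{equ:resonant_thm_sharp_decay} is then obtained not from stationary phase on $\hat f$ but from the $L^1\!\to\!L^\infty$ dispersive estimate in Duhamel, using $\|\langle\nabla\rangle(\alpha u^2)(s)\|_{L^1}\lesssim\|\langle x\rangle^{-2}v(s)\|_{L^2}^2+\dots\lesssim\varepsilon^2\langle s\rangle^{-1}$. The asymptotics of $v(t,0)$ and hence $a_0$ follow from the same local-decay bounds (Proposition~\ref{prop:resonant_asymptotics_origin}). A weighted bound $\|\langle x\rangle g(t)\|_{H^2_x}\lesssim\varepsilon\langle t\rangle^{0+}$ is needed only for the profile $g=e^{-it\langle\nabla\rangle}v_{free}$ \emph{after} $v_{mod}$ has been subtracted --- and for that profile the resonant source is absent, so the bound closes.
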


\begin{remark}
 The amplitude $a_0 \in \bbC$ in the statement of Theorem~\ref{thm:resonant} is explicitly given by
 \begin{equation} \label{equ:intro_formula_a0}
 \begin{aligned}
  a_0 &= \hat{v}_0(0) + \frac{1}{\sqrt{2\pi}} \biggl( \frac{1}{2} \int_{\bbR} \alpha(x) v(0,x)^2 \, \ud x - \int_{\bbR} \alpha(x) |v(0,x)|^2 \, \ud x - \frac{1}{6} \int_{\bbR} \alpha(x) \bar{v}(0,x)^2 \, \ud x \biggr) \\
  &\quad \quad \quad \, \, \, + \frac{1}{\sqrt{2\pi}} \biggl( \int_0^\infty e^{+is} \int_{\bbR} \alpha(x) \partial_s \bigl( e^{-is} v(s,x) \bigr) \bigl( e^{-is} v(s,x) \bigr) \, \ud x \, \ud s \\
  &\qquad \qquad \qquad \qquad - \int_0^\infty e^{-is} \int_{\bbR} \alpha(x) \partial_s \Bigl( \bigl( e^{-is} v(s,x) \bigr) \bigl( e^{+is} \bar{v}(s,x) \bigr) \Bigr) \, \ud x \, \ud s \\
  &\qquad \qquad \qquad \qquad - \frac{1}{3} \int_0^\infty e^{-3is} \int_{\bbR} \alpha(x) \partial_s \bigl( e^{+is} \bar{v}(s,x) \bigr) \bigl( e^{+is} \bar{v}(s,x) \bigr) \, \ud x \, \ud s \biggr).
 \end{aligned}
 \end{equation}
\end{remark}

\begin{remark}
 In the special case where the initial datum $v_0$ is odd and the variable coefficient $\alpha(x)$ is odd, which implies that the solutions are odd, it is evident from~\eqref{equ:intro_formula_a0} that $a_0 = 0$. Thus, in that case no resonance occurs. At a more technical level this follows from the fact that $v(t,0) = 0$ for odd solutions and that therefore the entire variable coefficient quadratic term
 \begin{equation*}
  \alpha(x) v(t,x)^2  = \alpha(x) v(t,x)^2 - \alpha(x) v(t,0)^2 \sim x \alpha(x) (\px v)(t) v(t)
 \end{equation*}
 has stronger time decay due to the improved local decay of spatial derivatives of the solution. See also the discussion of the main ideas of the proof of Theorem~\ref{thm:resonant} in Subsection~\ref{subsubsec:resonant_ideas} below.
\end{remark}

\begin{remark}
 An inspection of the proof of Theorem~\ref{thm:resonant} shows that the limit profile $\widehat{V}$ in~\eqref{equ:resonant_thm_asymptotics_vfree} in fact satisfies $\widehat{V} \in L^\infty \cap L^2$.
\end{remark}

\begin{remark} \label{rem:intro_mention_LLSS}
 After completion of this work, the authors in collaboration with Schlag~\cite{LLSS} generalized Theorem~\ref{thm:resonant} to the study of the asymptotic behavior of small global solutions to the following one-dimensional quadratic Klein-Gordon model with a linear potential
 \begin{equation} \label{equ:intro_more_general_KG_model}
  (\pt^2 - \px^2 + 1 + V(x)) u = P_c ( \alpha(\cdot) u^2 ) \text{ on } \bbR^{1+1}
 \end{equation}
 for a spatially localized variable coefficient $\alpha(x)$. The core assumption in~\cite{LLSS} is that the linear potential $V(x)$ is non-generic, in other words that the Schr\"odinger operator $H = - \px^2 + V(x)$ has a zero energy resonance, i.e., that there exists a bounded non-trivial solution $\varphi(x)$ to $H \varphi = 0$ such that $\varphi(x) \to 1$ as $x \to \infty$ and $\varphi(x) \to c \neq 0$ as $x \to -\infty$. It is worth to emphasize that the Laplacian $-\px^2$ in one space dimension exhibits a zero energy resonance, namely the constant function $1$. 
 Denoting by $\widetilde{\mathcal{F}}$ the distorted Fourier transform associated with the Schr\"odinger operator $H$, \cite[Theorem 1.1]{LLSS} establishes the same type of modified scattering behavior as exhibited in Theorem~\ref{thm:resonant} in this paper for the more general Klein-Gordon model~\eqref{equ:intro_more_general_KG_model} under the corresponding resonance assumption $\widetilde{\mathcal{F}}[\alpha \varphi^2](+\sqrt{3}) \neq 0$ or $\widetilde{\mathcal{F}}[\alpha \varphi^2](-\sqrt{3}) \neq 0$. The work~\cite{LLSS} further clarifies the role that the threshold resonance of the linear Klein-Gordon propagator and the local decay properties of the associated Klein-Gordon waves play for the occurrence of the type of modified scattering behavior uncovered in Theorem~\ref{thm:resonant} for the special flat case $V(x) = 0$.
\end{remark}

\begin{remark}
 It is straightforward to extend the proof of Theorem~\ref{thm:resonant} to also include a variable coefficient cubic nonlinearity $\beta(x) u^3$ for a smooth and sufficiently decaying coefficient $\beta(x)$. A very interesting next step is to generalize the result of Theorem~\ref{thm:resonant} to include a constant coefficient quadratic or cubic nonlinearity in the right-hand side of the Klein-Gordon model~\eqref{equ:resonant_nlkg}. In order to capture the long-range effects of the constant coefficient quadratic or cubic nonlinearity, one would have to propagate a slow growth estimate for a quantity like $\|\jxi^2 \partial_\xi \hat{f}(t)\|_{L^2_\xi}$, where $f(t) := e^{-it\jap{\nabla}} v(t)$ denotes the profile of the solution $v(t)$. This is achieved in the second result of this paper in Theorem~\ref{thm:nonresonant} below under the additional non-resonance assumption $\widehat{\alpha}(\pm \sqrt{3}) = 0$. However, in the general case a quick computation shows that such a quantity is, e.g., strongly divergent for the profile of $v_{mod}(t)$. This step therefore needs significant further ideas. 
 
 In this regard we emphasize that after completion of this paper the remarkable recent work of Germain-Pusateri~\cite{GP20} considered the long-time behavior of small solutions to the general one-dimensional Klein-Gordon equation 
 \begin{equation} \label{equ:intro_resonant_thm_GP}
  (\pt^2 - \px^2 + 1 + V(x)) u = a(x) u^2 \text{ on } \bbR^{1+1},
 \end{equation}
 where the variable coefficient $a(x)$ is assumed to satisfy $a(x) \to \ell_{\pm \infty}$ as $x \to \pm \infty$ for arbitrary fixed $\ell_{\pm \infty} \in \bbR$ (and is thus not necessarily spatially localized) and where $V(x)$ is a sufficiently regular and decaying linear potential with no bound states. Under the key spectral assumption that the distorted Fourier transform $\tilde{u}(t,0) = 0$ of the solution vanishes at zero frequency at all times, \cite[Theorem 1.1]{GP20} establishes modified scattering in the sense that small solutions to~\eqref{equ:intro_resonant_thm_GP} decay in $L^\infty_x$ at the rate $t^{-\hf}$ of free Klein-Gordon waves and that their asymptotics involve logarithmic phase corrections (that are ``caused by the non-zero limits'' $\ell_{\pm\infty}$ at spatial infinity of the coefficient $a(x)$). The condition $\tilde{u}(t,0) = 0$ holds automatically for generic potentials, but only under additional assumptions for non-generic potentials, for instance by imposing suitable parity conditions. We note that the flat potential $V(x) = 0$ in one space dimension is non-generic and that~\cite[Theorem 1.1]{GP20} would in effect only pertain to odd solutions in the flat case. The work~\cite{GP20} further clarifies that the special frequencies $\pm \sqrt{3}$ in Theorem~\ref{thm:resonant} are the (distorted) output frequencies of a nonlinear space-time resonance. Such a fully coherent phenomenon is expected to generally occur for quadratic interactions in one space dimension in the presence of a linear potential $V(x)$ and already in the presence of a variable coefficient quadratic nonlinearity $\alpha(x) u^2$ in the flat case $V(x) = 0$, because the latter cause a decorrelation between the input and the output frequencies. The previously mentioned difficulties with slow $L^2_\xi$ growth estimates for a derivative of the distorted Fourier transform of the profile of the solution are overcome in~\cite{GP20} by introducing an adapted functional framework, which penalizes around the special (distorted) frequencies $\pm \sqrt{3}$, and by crucially exploiting the stronger local decay properties of solutions that are available if one imposes the vanishing condition $\tilde{u}(t,0) = 0$ at zero frequency.  
\end{remark}

Our second theorem establishes asymptotics for small solutions to~\eqref{equ:intro_first_order_kg} in the non-resonant case.

\begin{theorem}[Non-Resonant Case] \label{thm:nonresonant}
 Let $\alpha(x)$ and $\beta(x)$ be smooth and decaying coefficients satisfying $\| \jx^4 \alpha(x) \|_{H^2_x} < \infty$, respectively $\|\jx^3 \beta(x)\|_{H^1_x} < \infty$, and let $\beta_0 \in \bbR$. Assume that 
 \begin{equation*}
  \widehat{\alpha}(+\sqrt{3}) = 0 \quad \text{ and } \quad \widehat{\alpha}(-\sqrt{3}) = 0.
 \end{equation*}
 Then there exists a small absolute constant $\varepsilon_0 > 0$ with the following property: For any initial datum $v_0$ with
 \begin{equation*}
  \varepsilon := \bigl\| \jx v_0 \bigr\|_{H^2_x} \leq \varepsilon_0,
 \end{equation*}
 there exists a unique global solution $v(t)$ to
 \begin{equation} \label{equ:first_order_kg}
  \left \{ \begin{aligned}
   (\pt - i \jn) v &= \frac{1}{2i} \jn^{-1} \bigl( \alpha(\cdot) u^2 + \beta_0 u^3 + \beta(\cdot) u^3 \bigr) \text{ on } \bbR^{1+1}, \\
   v(0) &= v_0,
  \end{aligned} \right.
 \end{equation}
 satisfying the decay estimate
 \begin{equation} \label{equ:thm_sharp_decay}
  \|v(t)\|_{L^\infty_x(\bbR)} \lesssim \frac{\varepsilon}{\jt^{\hf}}.
 \end{equation}
 Moreover, there exists a unique final state $\widehat{W} \in L^\infty$ with $\|\widehat{W}\|_{L^\infty} \lesssim \varepsilon$ such that 
 \begin{equation} \label{equ:thm_asymptotics}
  v(t,x) = \frac{1}{t^\hf} e^{i\frac{\pi}{4}} e^{i \rho} e^{-i \frac{3 \beta_0}{2} \jap{\frac{x}{\rho}}^{-1} | \widehat{W} (-\frac{x}{\rho}) |^2 \log(t) } \widehat{W} \Bigl(-\frac{x}{\rho}\Bigr) \theta\Bigl( \frac{x}{t} \Bigr) + \calO\Bigl( \frac{\varepsilon}{t^{\hf + \nu}} \Bigr), \quad t \geq 1,
 \end{equation} 
 where $\rho := (t^2-x^2)^{\hf}$, $\theta(z) = 1$ for $|z| < 1$, $\theta(z) = 0$ for $|z| \geq 1$, and $0 < \nu \ll 1$ is a small constant.
\end{theorem}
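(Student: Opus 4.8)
\medskip
\noindent \textbf{Proof proposal.} Since global existence for small smooth data is already a consequence of energy conservation, the content of the statement is the sharp decay~\eqref{equ:thm_sharp_decay} and the asymptotics~\eqref{equ:thm_asymptotics}, which I would obtain by a bootstrap argument for the profile $f(t) := e^{-it\jn} v(t)$. On the Fourier side, $\hat f(t,\xi) = e^{-it\jxi}\hat v(t,\xi)$ satisfies, by~\eqref{equ:first_order_kg} and $u = v+\bar v$, an equation of the schematic form
\begin{equation*}
 \pt \hat f(t,\xi) = \frac{1}{2i}\jxi^{-1}\sum_{\pm,\pm} \iint e^{it \Phi_2^{\pm\pm}(\xi,\eta,\mu)}\, \widehat{\alpha}(\xi-\eta-\mu)\, \widehat{f_\pm}(t,\eta)\, \widehat{f_\pm}(t,\mu) \, \ud\eta\, \ud\mu + \calC_3(t,\xi),
\end{equation*}
with $\Phi_2^{\pm\pm} = -\jxi \pm \jap{\eta} \pm \jap{\mu}$, $f_+ := f$, $f_- := \bar f$, and $\calC_3$ collecting the cubic contributions of $\beta_0 u^3$ and $\beta(x) u^3$ (carrying the constant $\beta_0$, resp.\ the symbol $\widehat\beta$, and cubic phases $-\jxi\pm\jap{\eta}\pm\jap{\mu}\pm\jap{\kappa}$). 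The bootstrap norm is
\begin{equation*}
 \| f \|_{X_T} := \sup_{0 \le t \le T} \Bigl( \|\hat f(t)\|_{L^\infty_\xi} + \jt^{-\delta} \| \jxi^2 \partial_\xi \hat f(t)\|_{L^2_\xi} + \jt^{-C_0 \varepsilon^2} \|v(t)\|_{H^2_x} + \jt^{\hf} \|v(t)\|_{L^\infty_x} \Bigr),
\end{equation*}
with $0 < \delta \ll 1$ (depending on $\nu$) and $C_0$ a fixed constant, and one assumes $\|f\|_{X_T} \le C_1 \varepsilon$ in order to improve it to $\|f\|_{X_T} \le \tfrac{1}{2} C_1 \varepsilon$. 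The sharp pointwise bound~\eqref{equ:thm_sharp_decay} then follows from the other quantities and the linear dispersive estimate $\|e^{it\jn} g\|_{L^\infty_x} \lesssim \jt^{-\hf} \|\hat g\|_{L^\infty_\xi} + \jt^{-\hf-\sigma_0} \|\jxi^2 \partial_\xi \hat g\|_{L^2_\xi}$ (valid for some $\sigma_0 > 0$), provided $\delta < \sigma_0$.

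The first main step is a \emph{variable-coefficient quadratic normal form}: integrating by parts in time in the quadratic term, i.e.\ writing $e^{it\Phi_2^{\pm\pm}} = (i\Phi_2^{\pm\pm})^{-1}\partial_t e^{it\Phi_2^{\pm\pm}}$, replaces the variable-coefficient quadratic nonlinearity by a boundary term plus terms in which $\partial_t$ has fallen on a copy of $\hat f$ (hence, schematically, cubic terms). This manipulation is legitimate here because, although the phases $\Phi_2^{\pm\pm}$ have nontrivial zero sets, those zero sets meet the ``slow'' region $\{\eta = \mu = 0\}$ --- where the solution decays slowest and oscillates like $e^{\pm it}$ --- only at $\xi - \eta - \mu = \pm\sqrt3$, since $\langle 0 \rangle + \langle 0 \rangle = 2 = \jap{\pm\sqrt3}$; the non-resonance hypothesis $\widehat\alpha(\pm\sqrt3) = 0$, equivalently $\widehat\alpha(\zeta) = (\zeta^2 - 3)\widehat\gamma(\zeta)$ for a function $\widehat\gamma$ inheriting suitable decay from the weights on $\alpha$, then renders the multiplier $\widehat\alpha(\xi-\eta-\mu)/\Phi_2^{\pm\pm}$ bounded, so that the normal form and its boundary terms are bounded bilinear objects. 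Granting this, the energy bound $\|v(t)\|_{H^2_x} \lesssim \varepsilon \jt^{C_0 \varepsilon^2}$ is a standard $H^2$ energy estimate after the normal form, all remaining nonlinear terms being cubic and thus contributing $\lesssim \int_1^t s^{-1} \|v(s)\|_{L^\infty_x} \|v(s)\|_{H^2_x}^2 \, \ud s$.

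Next, the $L^\infty_\xi$ bound and the asymptotics come from the ODE satisfied by $\hat f$. After the quadratic normal form, the only nonlinear term in $\pt\hat f$ that is not integrable in time in $L^\infty_\xi$ is the space-time resonant part of the constant-coefficient cubic $\beta_0 v^2 \bar v$; a stationary phase analysis at the resonant set $\eta = \mu = \xi$, where $\Phi_3$ and $\partial_{(\eta,\mu)}\Phi_3$ vanish simultaneously, yields
\begin{equation*}
 \pt \hat f(t,\xi) = -\frac{i}{t} \frac{3\beta_0}{2} \jxi^{-1} |\hat f(t,\xi)|^2 \hat f(t,\xi) + \calO_{L^\infty_\xi}\bigl( \varepsilon \jt^{-1-\sigma_1}\bigr), \qquad t \ge 1,
\end{equation*}
for some $\sigma_1 > 0$; here the resonant part of $\beta(x) v^2 \bar v$ is genuinely short range (the variable coefficient decorrelates input and output frequencies), and --- crucially --- the cubic terms produced by the quadratic normal form contribute no further resonant term precisely because $\widehat\alpha(\pm\sqrt3) = 0$. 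Since the leading term is $i$ times a real-valued quantity times $\hat f$, one has $\pt |\hat f(t,\xi)|^2 = \calO(\varepsilon \jt^{-1-\sigma_1})$, so $|\hat f(t,\xi)|$ converges and $\|\hat f(t)\|_{L^\infty_\xi} \lesssim \varepsilon$, which closes that part of the bootstrap; integrating $\pt\bigl( \hat f(t,\xi) \exp\bigl( i \tfrac{3\beta_0}{2}\jxi^{-1}|\widehat{W}(\xi)|^2 \log t \bigr)\bigr)$ produces a final state $\widehat{W} \in L^\infty$ with $\hat f(t,\xi) = \exp\bigl( -i\tfrac{3\beta_0}{2}\jxi^{-1}|\widehat{W}(\xi)|^2 \log t\bigr) \widehat{W}(\xi) + \calO_{L^\infty_\xi}(\varepsilon t^{-\sigma_1})$. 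A stationary phase expansion of $v(t,x) = (e^{it\jn} f)(t,x)$ around the critical point $\xi_\ast = -x/\rho$, with $\rho = (t^2 - x^2)^\hf$ and $\jap{\xi_\ast} = \jap{x/\rho}$, then gives~\eqref{equ:thm_asymptotics} with remainder $\calO(\varepsilon t^{-\hf-\nu})$.

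The remaining ingredient --- and, I expect, the main obstacle --- is the slow-growth weighted estimate $\|\jxi^2\partial_\xi \hat f(t)\|_{L^2_\xi} \lesssim \varepsilon \jt^{\delta}$. Differentiating the equation for $\hat f$ in $\xi$, the dangerous terms are those in which $\partial_\xi$ falls on an oscillatory factor $e^{it\Phi}$, producing a growth factor $t\,\partial_\xi\Phi$. For the resonant cubic this factor is harmless because $\partial_\xi\Phi_3$ also vanishes on the space-time resonant set, leaving contributions of size $\lesssim \int_1^t s^{-1}\, \ud s \sim \log t$, consistent with $\jt^\delta$. For the quadratic normal form the crucial gain is that the two-dimensional stationary phase in $(\eta,\mu)$ around $(0,0)$ already supplies a full factor $t^{-1}$ in the boundary term, which compensates the $t$ from $\partial_\xi$ hitting $e^{it\Phi_2^{\pm\pm}}$; the genuinely singular piece, where $\partial_\xi$ hits $(\Phi_2^{\pm\pm})^{-1}$ and produces $(\Phi_2^{\pm\pm})^{-2}$, must be tamed by using $\widehat\alpha(\zeta) = (\zeta^2-3)\widehat\gamma(\zeta)$ a second time near the resonant surface. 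One has to carry out all of these estimates --- together with those for the boundary and singular-multiplier cubic-in-$f$ terms generated by the normal form --- in the weighted $L^2$ norm while only ever using a single $\xi$-derivative of $\hat f$, since only one spatial weight is available ($\varepsilon = \|\jx v_0\|_{H^2_x}$). Once the $L^\infty_\xi$, weighted, energy, and pointwise estimates are assembled, a routine continuity argument closes the bootstrap and yields \thm{thm:nonresonant}.
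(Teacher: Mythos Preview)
Your identification of the space-time resonance at $(\xi,\eta,\mu)=(\pm\sqrt3,0,0)$ is correct, but the claim that the non-resonance hypothesis makes the multiplier $\widehat{\alpha}(\xi-\eta-\mu)/\Phi_2^{\pm\pm}$ bounded is false, and this is where your normal form breaks down. The time-resonant set $\{\Phi_2^{++}=0\}=\{\jxi=\jap{\eta}+\jap{\mu}\}$ is a two-dimensional surface, and its image under $(\xi,\eta,\mu)\mapsto \xi-\eta-\mu$ is not the two-point set $\{\pm\sqrt3\}$: for instance $\xi=\sqrt{15}$, $\eta=\mu=\sqrt3$ gives $\Phi_2^{++}=0$ while $\xi-\eta-\mu=\sqrt{15}-2\sqrt3\notin\{\pm\sqrt3\}$. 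So dividing by $\Phi_2^{\pm\pm}$ produces an unbounded bilinear operator; the vanishing of $\widehat{\alpha}$ at two isolated frequencies cannot compensate a singularity along a surface. A full space-time resonances treatment would have to mix integration by parts in $t$ and in $(\eta,\mu)$ on different regions, which is considerably more delicate than what you sketch, and the singular piece you describe where $\partial_\xi$ hits $(\Phi_2)^{-1}$ then becomes a genuine obstruction to the weighted $L^2$ estimate with only one $\xi$-derivative available.

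The paper sidesteps this via a different, physical-space normal form. It writes $\alpha(x)u(t,x)^2=\alpha(x)u(t,0)^2+\alpha(x)\bigl(u(t,x)^2-u(t,0)^2\bigr)$; the second piece picks up a factor $\partial_x u$ by the fundamental theorem of calculus and decays like a cubic term thanks to improved local decay of derivatives. For the first piece one inserts $u(t,0)=e^{it}\bigl(e^{-it}v(t,0)\bigr)+\text{c.c.}$, so that on the Fourier side only the \emph{one-variable} phases $2-\jxi$, $-\jxi$, $-2-\jxi$ appear, each multiplied by $\widehat{\alpha}(\xi)$. The sole singularity is now $(2-\jxi)^{-1}\widehat{\alpha}(\xi)$ at $\xi=\pm\sqrt3$, and this is \emph{exactly} what $\widehat{\alpha}(\pm\sqrt3)=0$ cancels. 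The resulting normal form $\calQ$ leaves remainders controlled by the key estimate $\bigl|\partial_t\bigl(e^{-it}v(t,0)\bigr)\bigr|\lesssim\jt^{-1+\delta}$ and by pointwise-in-time local decay estimates for the Klein-Gordon propagator; the latter (rather than any Fourier bilinear analysis) also drive the slow-growth bound for $\|\jn L v(t)\|_{L^2_x}$, which is the paper's substitute for your weighted norm.
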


\begin{remark}
 An inspection of the proof of Theorem~\ref{thm:nonresonant} shows that the limit profile $\widehat{W}$ in~\eqref{equ:thm_asymptotics} in fact satisfies $\widehat{W} \in L^\infty \cap L^2$.
\end{remark}

\begin{remark}
 The method of proof of Theorem~\ref{thm:nonresonant} easily allows to also include a constant coefficient quadratic nonlinearity $\alpha_0 u^2$, $\alpha_0 \in \bbR$, and to establish asymptotics for small solutions to
 \begin{equation*}
  (\pt - i \jn) v = \frac{1}{2i} \jn^{-1} \bigl( \alpha_0 u^2 + \alpha(\cdot) u^2 + \beta_0 u^3 + \beta(\cdot) u^3 \bigr) \text{ on } \bbR^{1+1}
 \end{equation*}
 in the non-resonant case $\widehat{\alpha}(\pm \sqrt{3}) = 0$. It is a classical observation by Shatah~\cite{Sh85} that the constant coefficient quadratic nonlinearity for the Klein-Gordon equation has a nice non-resonance property and can be transformed by the normal form method into non-local constant coefficient cubic nonlinearities. These can then be treated analogously to the (local) constant coefficient cubic term $\beta_0 u^3$ on the right-hand side of~\eqref{equ:first_order_kg}. See for instance~\cite{Del01, HN12} for asymptotics of small solutions to 1D Klein-Gordon equations with a constant coefficient quadratic nonlinearity and~\cite{LS15, Sterb16} in the presence of an additional variable coefficient cubic nonlinearity. 
\end{remark}

\begin{remark} \label{rem:relevance_sineGordon}
 As already discussed in Remark~\ref{rem:intro_mention_LLSS}, after completion of this work the authors in collaboration with Schlag~\cite{LLSS}  generalized Theorem~\ref{thm:resonant} of this paper to the more general one-dimensional Klein-Gordon equation 
 \begin{equation*} 
  (\pt^2 - \px^2 + 1 + V(x)) u = P_c ( \alpha(\cdot) u^2 ) \text{ on } \bbR^{1+1}
 \end{equation*}
 with a spatially localized variable coefficient $\alpha(x)$ and a non-generic potential $V(x)$. In particular, \cite{LLSS} identifies the non-resonance condition for this more general setting to be $\widetilde{\mathcal{F}}[\alpha \varphi^2](\pm \sqrt{3}) = 0$, where $\widetilde{\mathcal{F}}$ denotes the distorted Fourier transform associated with the Schr\"odinger operator $H = - \px^2 + V(x)$ and where $\varphi(x)$ denotes the (normalized) zero energy resonance of $H$. In particular, \cite[Remark 1.2]{LLSS} observes that this non-resonance condition turns out to be satisfied by the sine-Gordon model! For this reason, the Klein-Gordon equation~\eqref{equ:first_order_kg} considered in Theorem~\ref{thm:nonresonant} in this paper is an important model problem for studying perturbations of the sine-Gordon kink since the nonlinearities on the right-hand side of~\eqref{equ:first_order_kg} have exactly the same structure as those that occur in the equation~\eqref{equ:sineGordon_perturbation} for perturbations of the sine-Gordon kink and since both the flat potential $V(x) = 0$ as well as the potential $V(x) = - 2 \sech^2(x)$ in~\eqref{equ:sineGordon_perturbation} are non-generic. 
\end{remark}

\subsection{Related works}

Over the past decades there has been enormous progress in the study of modified scattering for dispersive and hyperbolic equations. The rich and vast literature on this subject cannot be reviewed here in its entirety. We primarily focus on those papers that are most relevant to our results.

The investigation of the long-time behavior of small solutions to Klein-Gordon equations with constant coefficient nonlinearities originates in the pioneering works of Klainerman~\cite{Kl80, Kl85} and Shatah~\cite{Sh85}. The long-range effects of quadratic and cubic nonlinearities for the one-dimensional Klein-Gordon equation were unveiled in the work of Delort~\cite{Del01, Del06}, which established that the asymptotic behavior of small solutions to such equations differs from that of linear Klein-Gordon waves by a logarithmic phase correction. A simpler approach was later developed by Lindblad-Soffer~\cite{LS05_1, LS05_2} in the cubic case, providing a detailed asymptotic expansion of the solution for large times. Subsequently, Hayashi-Naumkin~\cite{HN08, HN12} removed the compact support assumptions about the initial data required in~\cite{Del01, LS05_1, LS05_2}, see also Stingo~\cite{Stingo18} and Candy-Lindblad~\cite{CL18}.

In contrast, the study of one-dimensional nonlinear Klein-Gordon equations with variable coefficient nonlinearities was only recently initiated by Lindblad-Soffer~\cite{LS15} and by Sterbenz~\cite{Sterb16}. Specifically, \cite{LS15, Sterb16} prove dispersive decay of small solutions for smooth, compactly supported initial data in the case of a variable coefficient cubic nonlinearity 
coupled to constant coefficient cubic and quadratic nonlinearities. Recently, a much simpler and robust approach was introduced by the authors in~\cite{LLS19} establishing sharp decay estimates and asymptotics for one-dimensional Klein-Gordon equations with constant and variable coefficient cubic nonlinearities. In contrast to~\cite{LS15, Sterb16}, the approach in~\cite{LLS19} avoids the use of any variable coefficient cubic normal forms and instead uses local decay estimates for the Klein-Gordon propagator as a key tool to deal with difficulties caused by the variable coefficient cubic nonlinearity.

After completion of this work, Germain-Pusateri~\cite{GP20} established modified scattering for the general one-dimensional quadratic Klein-Gordon equation $(\pt^2 - \px^2 + 1 + V(x)) u = a(x) u^2$ with a linear potential $V(x)$, 
where $a(x)$ is a smooth coefficient satisfying $a(x) \to \ell_{\pm \infty}$ as $x \to \pm \infty$ for arbitrary fixed $\ell_{\pm \infty} \in \bbR$ (and is thus not necessarily spatially localized) and where $H = - \px^2 + V(x)$ has no bound states. Under the key spectral assumption that the distorted Fourier transform of the solution $\tilde{u}(t,0) = 0$ vanishes at zero frequency at all times $t \in \bbR$, \cite[Theorem 1.1]{GP20} shows that such small solutions to that Klein-Gordon equation decay in $L^\infty_x$ at the rate $t^{-\hf}$ of free Klein-Gordon waves and that their asymptotics feature logarithmic phase corrections (which are related to the ``non-zero limits'' $\ell_{\pm\infty}$ of the coefficient $a(x)$). 
The condition $\tilde{u}(t,0) = 0$ holds automatically for generic potentials $V(x)$, while for non-generic potentials it is a special case that can often be enforced by imposing suitably parity conditions. It is worth to record that a peculiar feature of the flat Klein-Gordon operator in one space dimension is that it exhibits a zero energy resonance, i.e., that the flat potential $V(x) = 0$ is non-generic.
As an application, \cite[Corollary 1.4]{GP20} yields the full asymptotic stability of kinks with respect to odd perturbations for the double sine-Gordon model in an appropriate range of the deformation parameter.

We emphasize that the occurrence of a logarithmic slow-down of the pointwise decay rate due to the presence of a space-time resonance was exhibited by Bernicot-Germain~\cite{BerGerm13} in a simpler setting of proving bilinear dispersive estimates for quadratic interactions of 1D free dispersive waves. We also refer to~\cite{DIP17, DIPP17} for higher-dimensional instances, where the optimal pointwise decay cannot be propagated by the nonlinear flow (but where it appears that the obtained decay rate is not asserted to be sharp).

For the one-dimensional Schr\"odinger equation with a constant coefficient cubic nonlinearity we refer to Hayashi-Naumkin~\cite{HN98}, Lindblad-Soffer~\cite{LS06}, Kato-Pusateri~\cite{KatPus11}, and Ifrim-Tataru~\cite{IT15} for closely related results on modfied scattering of small solutions. Deift-Zhou~\cite{DZ03} obtained asymptotics even for large initial data in the defocusing case, utilizing the complete integrability of the equation and studying the problem via inverse scattering techniques.

Asymptotics for small solutions to one-dimensional Schr\"odinger equations with a linear potential and with constant as well as variable coefficient cubic nonlinearities were obtained by Delort~\cite{Del16}, Germain-Pusateri-Rousset~\cite{GermPusRou18}, and Chen-Pusateri~\cite{ChenPus19}. In the case of the zero potential, these results only pertain to odd solutions. We remark that the restriction to odd solutions avoids to deal with the threshold resonance of the free Schr\"odinger operator in one space dimension and therefore constitutes a simplification of the problem. 

We conclude by emphasizing that many other nonlinear dispersive and hyperbolic equations exhibit modified scattering of small solutions. Without being exhaustive we mention, for example, the mKdV equation~\cite{HarropGriffiths16, GPR16, HN99}, the boson star equation~\cite{Pus14}, fractional Schr\"odinger equations~\cite{IP14}, water waves equations~\cite{AD15, IP15, DIPP17, IP18, IT16}, the Maxwell-Dirac equation~\cite{FlatoSimonTaflin87}, and the Vlaslov-Poisson system~\cite{MouhotVillani11, BedMasMou16, GrenNguRod20, ChoiKwon16, IPWW20}. See also the corresponding problem for the Einstein field equations of general relativity~\cite{ChristodKlainer93, LR10, Lindblad17, IP19}.

\subsection{Proof ideas} \label{subsec:intro_proof_ideas}

\subsubsection{Resonant Case} \label{subsubsec:resonant_ideas}

We begin with a heuristic discussion of the delicate resonant interaction that occurs in the variable coefficient quadratic nonlinearity between certain spatial frequencies of the variable coefficient and the temporal oscillations of the solution to
\begin{equation} \label{equ:intro_ideas_res_kg}
 (\pt - i \jn) v = \frac{1}{2i} \jn^{-1} \bigl( \alpha(\cdot) u^2 \bigr) \text{ on } \bbR^{1+1}.
\end{equation}
Due to the strong spatial localization of the coefficient $\alpha(x)$, we expect that the leading order contribution of the variable coefficient quadratic nonlinearity $\alpha(x) u(t,x)^2$ is governed by the behavior of $u(t)$ close to the origin $x=0$, in fact by the contribution of $\alpha(x) u(t,0)^2$. At a technical level this intuition can be made rigorous by observing that the difference $\alpha(x) u(t,x)^2 - \alpha(x) u(t,0)^2$ is by the fundamental theorem of calculus schematically of the form $x \alpha(x) (\px u)(t) u(t)$. Then the stronger time decay of this term stems from the improved local decay of spatial derivatives of the solution, which is in effect due to the spatial localization of the coefficient $\alpha(x)$.
Recalling that $u(t,0) = v(t,0) + \bar{v}(t,0)$, we can thus think of~\eqref{equ:intro_ideas_res_kg} as
\begin{equation} \label{equ:intro_ideas_res1}
 \begin{aligned}
  (\pt - i \jn) v &= \frac{1}{2i} \bigl( \jn^{-1} \alpha \bigr) \Bigl( v(t,0)^2 + 2 v(t,0) \bar{v}(t,0) + \bar{v}(t,0)^2 \Bigr) + \bigl\{ \text{better terms} \bigr\}.
 \end{aligned}
\end{equation}
%in fact crucially hinges on its long-time behavior at the origin. [At this point it is also clear that the special case of odd solutions satisfying $v(t,0) = 0$ is less severe.] 
Now suppose for the moment that $v(t,0)$ asymptotically behaves like a linear Klein-Gordon wave, i.e. %$(\pt - i \jn) v = 0$ at the origin, i.e.
\begin{equation} \label{equ:intro_ideas_res_v_origin}
 v(t,0) \sim \frac{e^{it}}{t^\hf} \quad \text{ for } t \gg 1.
\end{equation}
Inserting this into Duhamel's formula for the main nonlinear terms on the right-hand side of~\eqref{equ:intro_ideas_res1} gives that to leading order the long-time behavior of $v(t,x)$ is determined by an expression of the form 
\begin{equation} \label{equ:intro_ideas_res2}
 \frac{1}{2i} \int_1^t \bigl( e^{i(t-s)\jn} \jn^{-1} \alpha \bigr)(x) \biggl( \frac{e^{2is}}{s} + \frac{2}{s} + \frac{e^{-2is}}{s} \biggr) \, \ud s.
\end{equation}
Clearly, the asymptotics of~\eqref{equ:intro_ideas_res2} hinge on the temporal oscillations in $s$ of the integrand. Filtering by the linear evolution $e^{it\jn}$ and taking the Fourier transform gives
\begin{equation} \label{equ:intro_ideas_res3}
 \frac{1}{2i} \int_1^t \jxi^{-1} \widehat{\alpha}(\xi) \Bigl( e^{is(2-\jxi)} + 2 e^{is\jxi} + e^{-is(2+\jxi)} \Bigr) \frac{1}{s} \, \ud s.
\end{equation}
Then it is evident that the first term in the parentheses in~\eqref{equ:intro_ideas_res3} causes a resonance because it is stationary in~$s$ when
\begin{equation*}
 2-\jxi = 0 \quad \Leftrightarrow \quad \xi = \pm \sqrt{3},
\end{equation*}
where we use the Japanese bracket notation $\jxi = (1+\xi^2)^\hf$. The other two terms in the parentheses in~\eqref{equ:intro_ideas_res3} are non-stationary in~$s$ and therefore better behaved.
In view of the dispersion relation for the Klein-Gordon propagator $e^{it\jn}$, the frequencies $\xi = \pm \sqrt{3}$ are associated with the rays $\frac{x}{t} = \mp \frac{\sqrt{3}}{2}$. 
This computation suggests that the corresponding part of the integral~\eqref{equ:intro_ideas_res2} along the rays $\frac{x}{t} = \mp \frac{\sqrt{3}}{2}$ is (partially) monotone, which should in particular cause a logarithmic slow-down of the decay rate along those rays. %as in~\eqref{equ:resonant_thm_asymptotics_along_special_rays} in the statement of Theorem~\ref{thm:resonant}.

Becoming more rigorously now, the first step in the proof of Theorem~\ref{thm:resonant} consists in establishing local decay estimates for the solution $v(t)$ to~\eqref{equ:intro_ideas_res1} as well as for its spatial derivatives $\px v$ and for the time derivative of its ``phase-filtered'' component $\pt ( e^{-it} v(t) )$, see Proposition~\ref{prop:resonant_bootstrap_bounds}. Their derivation crucially exploits the spatial localization provided by the variable coefficient $\alpha(x)$. 
Next, we conclude in Proposition~\ref{prop:resonant_asymptotics_origin} that the asymptotic behavior of the nonlinear solution $v(t)$ to~\eqref{equ:intro_ideas_res1} at the origin $x=0$ is indeed that of linear Klein-Gordon waves! The idea is to just start off from Duhamel's formula for the nonlinear term and to simply insert the asymptotics for the retarded Klein-Gordon propagator. Having the powerful local decay bounds for $v(t)$ at our disposal along with the spatial localization of the coefficient $\alpha(x)$, we are able to control all remainder terms and to isolate the leading order long-time behavior at the origin. Specifically, we obtain that there exists $a_0 \in \bbC$, explicitly defined in~\eqref{equ:resonant_amplitude_a0} in terms of the solution $v(t)$, such that
\begin{equation*}
 v(t,0) = \frac{1}{t^\hf} e^{i\frac{\pi}{4}} e^{it} a_0 + \calO \Bigl( \frac{\varepsilon^2}{t^{1-}} \Bigr), \quad t \gg 1.
\end{equation*}
Here, $0 < \varepsilon \ll 1$ measures the small size of a weighted Sobolev norm of the initial datum.
Finally, in the proof of Theorem~\ref{thm:resonant} we first use the local decay bounds for $v(t)$ to infer the decay estimate
\begin{equation} \label{equ:intro_ideas_res4}
 \|v(t)\|_{L^\infty_x} \lesssim \varepsilon \frac{1 + \log(\jt)}{\jt^\hf}.
\end{equation}
Then the majority of the work goes into uncovering more details of the asymptotics of $v(t)$, which in particular shows that the decay estimate~\eqref{equ:intro_ideas_res4} is sharp. Proceeding along the lines of the reasoning in the heuristic discussion above, we use the local decay bounds for $v(t)$ and the knowledge of the asymptotics of $v(t)$ at $x=0$ to peel off all parts of $v(t)$ that asymptotically behave like linear Klein-Gordon waves. This leaves us with the component
\begin{equation} \label{equ:intro_res_3}
 v_{mod}(t,x) := \frac{a_0^2}{2} \int_1^t \bigl( e^{i(t-s) \jn} \jn^{-1} \alpha \bigr)(x) \frac{e^{2is}}{s} \, \ud s.
\end{equation}
Then a stationary (and non-stationary) phase analysis of~\eqref{equ:intro_res_3} reveals the modified scattering behavior~\eqref{equ:resonant_thm_decay_off_rays}--\eqref{equ:resonant_thm_asymptotics_along_special_rays} with the punch line being the logarithmic slow-down along the rays $\frac{x}{t} = \pm \frac{\sqrt{3}}{2} t$ with asymptotics given by
\begin{equation*} 
 v_{mod}\Bigl(t, \pm \frac{\sqrt{3}}{2} t \Bigr) = \frac{a_0^2}{\sqrt{8}}  e^{i\frac{\pi}{4}} e^{i \frac{t}{2}} \widehat{\alpha}(\mp \sqrt{3}) \frac{\log(t)}{t^{\frac{1}{2}}} + \calO \Bigl( \frac{\varepsilon^2}{t^\hf} \Bigr), \quad t \gg 1.
\end{equation*}

\subsubsection{Non-Resonant Case} \label{subsubsec:nonresonant_ideas}

We now outline the main ideas of the proof of Theorem~\ref{thm:nonresonant}, which establishes sharp decay estimates and asymptotics for small solutions to
\begin{equation} \label{equ:intro_ideas_nonres1}
 (\pt - i \jn) v = \frac{1}{2i} \jn^{-1} \bigl( \alpha(\cdot) u^2 + \beta_0 u^3 + \beta(\cdot) u^3 \bigr) \text{ on } \bbR^{1+1}
\end{equation}
under the non-resonance assumption
\begin{equation} \label{equ:intro_ideas_nonres2}
 \widehat{\alpha}(+\sqrt{3}) = 0 \quad \text{ and } \quad \widehat{\alpha}(-\sqrt{3}) = 0.
\end{equation}
Recall that we use the short-hand notation $u(t) = v(t) + \bar{v}(t)$. 
In view of the slow decay rate $t^{-\hf}$ of linear Klein-Gordon waves in one space dimension, the constant coefficient cubic term $\beta_0 u^3$ has critical dispersive decay. We therefore expect it to cause a modified scattering behavior of the solution $v(t)$ to~\eqref{equ:intro_ideas_nonres1}. Oversimplifying a little bit here, all current techniques to capture asymptotic corrections in the scattering behavior of small solutions to dispersive equations usually combine some version of an ODE argument with slow growth estimates for energies of weighted vector fields of the solution. In the context of Klein-Gordon equations, the Lorentz boost $Z = t \px + x \pt$ and the closely related operator $L = \jn x - i t \px$ play a crucial role.
However, in the presence of a variable coefficient nonlinearity, it becomes problematic to obtain such slow growth bounds, because the vector field $Z$ and the operator $L$ produce badly divergent factors of $t$ when they fall onto a variable coefficient. 
This issue becomes particularly severe in the case of variable coefficient {\it quadratic} nonlinearities as in~\eqref{equ:intro_ideas_nonres1} that can only provide little time decay to compensate. 
In our previous work~\cite{LLS19} we introduced a simple and robust method based on local decay estimates for the Klein-Gordon propagator to overcome this problem in the context of variable coefficient {\it cubic} nonlinearities.

The first and key step in the proof of Theorem~\ref{thm:nonresonant} is therefore to transform the variable coefficient quadratic nonlinearity on the right-hand side of~\eqref{equ:intro_ideas_nonres1} into a more favorable form that is of ``variable coefficient cubic type''. As already observed in the discussion of the proof of Theorem~\ref{thm:resonant} above, we expect the difference $\alpha(x) u(t,x)^2 - \alpha(x) u(t,0)^2 \sim x \alpha(x) (\px u)(t) u(t)$ to have stronger time decay due to the improved local decay of spatial derivatives of the solution, see Lemma~\ref{lem:improved_decay_localized_derivatives} in the context of the study of~\eqref{equ:intro_ideas_nonres1}. Hence, it suffices to recast the part $\alpha(x) u(t,0)^2$ of the variable coefficient quadratic nonlinearity in~\eqref{equ:intro_ideas_nonres1} into a better form. To this end we insert the decomposition of $u(t,0)$ into its ``phase-filtered components''
\begin{equation*}
 u(t,0) = e^{+it} \bigl( e^{-it} v(t,0) \bigr) + e^{-it} \bigl( e^{+it} \bar{v}(t,0) \bigr)
\end{equation*}
into Duhamel's formula for the nonlinear term $\alpha(x) u(t,0)^2$ to find that
\begin{equation} \label{equ:intro_ideas_nonres4}
 \begin{aligned}
  \frac{1}{2i} \int_0^t \bigl( e^{i(t-s)\jn} \jn^{-1} \alpha \bigr) u(s,0)^2 \, \ud s = \frac{1}{2i} \int_0^t \bigl( e^{i(t-s)\jn} \jn^{-1} \alpha \bigr) \Bigl( e^{+2is} \bigl( e^{-is} v(s,0) \bigr)^2 + \ldots \Bigr) \, \ud s.  
 \end{aligned}
\end{equation}
Here we only display the most delicate term in the parentheses of the integrand on the right-hand side of~\eqref{equ:intro_ideas_nonres4}.
Owing to the non-resonance assumption~\eqref{equ:intro_ideas_nonres2} the integral~\eqref{equ:intro_ideas_nonres4} has a nice non-resonance property and we can integrate by parts in time $s$ to get 
\begin{equation} \label{equ:intro_ideas_nonres5}
 \begin{aligned}
  \int_0^t \bigl( e^{i(t-s)\jn} (2-\jn)^{-1} \jn^{-1} \alpha \bigr) e^{+2is} \partial_s \bigl( e^{-is} v(s,0) \bigr) \bigl( e^{-is} v(s,0) \bigr) \, \ud s + \ldots + \bigl\{ \text{boundary terms} \bigr\}.
 \end{aligned}
\end{equation}
More precisely, we use that $\widehat{\alpha}(\xi)$ is assumed to vanish at the frequencies $\xi = \pm \sqrt{3}$ where the symbol $(2-\jxi)^{-1}$ has a singularity.
Then we make the important observation that the time derivative of the phase filtered component $\pt ( e^{-it} v(t,0) )$ has stronger time decay at the origin. Specifically, we obtain in Lemma~\ref{lem:key_improved_decay} that
\begin{equation*}
 \bigl| \pt \bigl( e^{-it} v(t,0) \bigr) \bigr| \lesssim \frac{\varepsilon}{\jt^{1-\delta}},
\end{equation*}
where $0 < \delta \ll 1$ is a small absolute constant and $0 < \varepsilon \ll 1$ measures the small size of a weighted Sobolev norm of the initial datum. Assuming that the nonlinear solution $v(t)$ to~\eqref{equ:intro_ideas_nonres1} ends up having the decay rate $t^{-\hf}$ of linear Klein-Gordon waves in one space dimension, we can then view the first term on the right-hand side of~\eqref{equ:intro_ideas_nonres5} as Duhamel's formula for a nonlinear term of ``variable coefficient cubic type'' that is of the schematic form $\kappa(x) \varepsilon^2 \jt^{-\thf + \delta}$ for some smooth and decaying coefficient $\kappa(x)$. 

Put differently, in the first part of the proof of Theorem~\ref{thm:nonresonant} we introdue the {\it variable coefficient quadratic normal form}
\begin{equation*}
 \calQ := \alpha_1(x) v(t,0)^2 + \alpha_2(x) |v(t,0)|^2 + \alpha_3(x) \bar{v}(t,0)^2
\end{equation*}
with smooth and decaying coefficients $\alpha_j(x)$, $j = 1, 2, 3$, defined in terms of their Fourier transforms by 
\begin{equation*} 
\begin{aligned}
 \widehat{\alpha}_1(\xi) &:= \frac{1}{2} (2-\jxi)^{-1} \jxi^{-1} \widehat{\alpha}(\xi), \\
 \widehat{\alpha}_2(\xi) &:= - \jxi^{-2} \widehat{\alpha}(\xi), \\
 \widehat{\alpha}_3(\xi) &:= - \frac{1}{2} (2+\jxi)^{-1} \jxi^{-1} \widehat{\alpha}(\xi).
\end{aligned}
\end{equation*}
Then the new variable $v + \calQ$ satisfies a Klein-Gordon equation of the form
\begin{equation} \label{equ:intro_ideas_nonres3}
 (\pt - i \jn) (v + \calQ) = \frac{\beta_0}{2i} \jn^{-1} \bigl( u^3 \bigr) + \calO \Bigl( \kappa(x) \varepsilon^2 \jt^{-\thf+\delta} \Bigr).
\end{equation}
The right-hand side of~\eqref{equ:intro_ideas_nonres3} can be viewed as the sum of the constant coefficient cubic nonlinearity~$\beta_0 u^3$ and of several nonlinear terms of ``variable coefficient cubic type''. 
At this point we are in a position to implement a version of the strategy from our previous work~\cite{LLS19} to derive decay estimates and asymptotics for small solutions to~\eqref{equ:intro_ideas_nonres3}. 
More specifically, a standard result on the asymptotics of the Klein-Gordon propagator (Lemma~\ref{lem:KG_propagator_asymptotics}) reduces the proof of decay and asymptotics of the solution~$v(t)$ to establishing the uniform boundedness and asymptotics of the Fourier transform of the profile $f(t) := e^{-it\jn} v(t)$ of the solution~$v(t)$. Following the approach of the space-time resonances method by Germain-Masmoudi-Shatah~\cite{GMS09, GMS12_JMPA, GMS12_Ann} and Gustafson-Nakanishi-Tsai~\cite{GNT09}, we obtain the latter from the analysis of the stationary points of the oscillatory integrals that govern the equation satisfied by the Fourier transform of the profile $\hat{f}(t,\xi)$, see Proposition~\ref{prop:Linfty_bound_profile}. Some technical aspects of this step are inspired by the proof of modified scattering for the mKdV equation contained in~\cite{GPR16}. In order to control various remainder terms arising in this analysis, we need a slow growth estimate for the energy $\|\jn L v(t)\|_{L^2_x}$ of the operator $L = \jn x - i t \px = e^{it\jn} \jn x e^{-it\jn}$, which yields control of the spatial localization of the profile. In the derivation of this slow growth estimate, we use a key idea from our previous work~\cite{LLS19} and employ local decay estimates for the Klein-Gordon propagator. These provide a crucial source of additional time decay in certain energy estimates to compensate the divergent factor of $t$ that occurs when the operator $L$ falls onto the variable coefficients, see Lemma~\ref{lem:local_decay} and Proposition~\ref{prop:slow_growth_H1L}.

\section{Preliminaries}

\subsection{Notation and conventions}

For nonnegative $X$, $Y$ we write $X \lesssim Y$ or $X = \calO(Y)$ if $X \leq C Y$ for some absolute constant $C > 0$. 
We use the notation $X \lesssim_\nu Y$ to indicate that the implicit constant depends on a parameter $\nu$ and we write $X \ll Y$ if the implicit constant should be regarded as small. 
Moreover, we use the notation $\jap{x} = (1+x^2)^\hf$, $\jap{t} = (1+t^2)^\hf$, and $\jxi = (1+\xi^2)^\hf$.
For any number $a \in \bbR$ we denote by $a+$, respectively by $a-$, a number that is larger, respectively smaller, than $a$, but that can be chosen arbitrarily close to $a$.

Our conventions for the Fourier transform and its inverse are as follows:
\begin{equation*}
 \calF[f](\xi) = \hat{f}(\xi) := \frac{1}{\sqrt{2\pi}} \int_{\bbR} e^{-ix\xi} f(x) \, \ud x, \qquad \calF^{-1}[f](x) = \check{f}(x) := \frac{1}{\sqrt{2\pi}} \int_{\bbR} e^{+ix\xi} f(\xi) \, \ud \xi.
\end{equation*}
Then we define the operator $\jn$ by $\calF[\jn f](\xi) = \jxi \hat{f}(\xi)$ and we define the action of the Klein-Gordon propagator $e^{\pm it \jn}$ by $\calF[e^{\pm it\jn} f](\xi) = e^{\pm it\jxi} \hat{f}(\xi)$.

We denote the Lorentz boost by $Z = t \px + x \pt$ and we introduce the operator $L = \jn x - i t \px$. Note that $\jn x$ conjugates to $L$ via $e^{it\jn}$ in the sense that 
\begin{equation*}
 L = \jap{\nabla} x - i t \px = e^{i t \jap{\nabla}} \jn x e^{- i t \jap{\nabla}} = \calF^{-1} e^{i t \jap{\xi}} \jap{\xi} i \partial_\xi e^{- i t \jap{\xi}} \calF. 
\end{equation*}
The Lorentz boost $Z$ and the operator $L$ are closely related by the identity
\begin{equation*}
 Z = i L + (\partial_t - i \jn) x = i L + i \jn^{-1} \px + x (\partial_t - i \jn).
\end{equation*}
In the derivation of energy estimates in Subsection~\ref{subsec:nonresonant_energy_estimates} we repeatedly use the following commutator identities
\begin{equation} \label{equ:commutator_identities}
\begin{aligned}
 \bigl[ (\partial_t - i \jap{\nabla}), L \bigr] &= 0, \\
 [ (\partial_t - i \jn), Z ] &= i \jn^{-1} \px (\pt - i \jn), \\
 [ x, \jap{\nabla}^k ] &= k \jap{\nabla}^{k-2} \px, \quad k \in \bbZ, \\
 [ L, \jn^{-1} ] &= - \jn^{-2} \px, \\
 [Z, \jap{\nabla}^{-1}] &= - \jn^{-3} \px \pt.
\end{aligned}
\end{equation}

Finally, we record a standard energy estimate for the first-order Klein-Gordon equation.
 \begin{lemma}(Energy estimate)
 Let $w(t)$ be a solution to 
 \begin{align*}
  (\partial_t - i \jn) w = F \quad \text{ on } [0,T] \times \bbR.
 \end{align*}
 Then we have for $0 \leq t \leq T$ that
 \begin{equation} \label{equ:energy_est_sq}
  \|w(t)\|_{L^2_x(\bbR)}^2 \leq \|w(0)\|_{L^2_x}^2 + 2 \biggl| \int_0^t \int_{\bbR} F(s) \, \overline{w(s)} \, \ud x \, \ud s \biggr|
 \end{equation}
 and that
 \begin{equation} \label{equ:energy_est_std}
  \|w\|_{L^\infty_t L^2_x([0,T]\times\bbR)} \lesssim \|w(0)\|_{L^2_x(\bbR)} + \|F\|_{L^1_t L^2_x([0,T]\times\bbR)}.
 \end{equation}
\end{lemma}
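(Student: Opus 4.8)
The plan is to pass to the spatial Fourier transform, where the equation decouples into a family of scalar linear ODEs in $t$ indexed by the frequency $\xi$, derive the basic differential identity there, and then integrate back using Plancherel's theorem. Writing $\partial_t \hat{w}(t,\xi) = i \jxi \hat{w}(t,\xi) + \hat{F}(t,\xi)$ and using that $\jxi$ is real, one computes
\begin{equation*}
 \partial_t \bigl| \hat{w}(t,\xi) \bigr|^2 = 2 \Re \bigl( i \jxi |\hat{w}(t,\xi)|^2 \bigr) + 2 \Re \bigl( \overline{\hat{w}(t,\xi)} \, \hat{F}(t,\xi) \bigr) = 2 \Re \bigl( \overline{\hat{w}(t,\xi)} \, \hat{F}(t,\xi) \bigr),
\end{equation*}
since the skew-adjointness of $i\jn$ makes the first term vanish. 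Integrating in $\xi$ and applying Plancherel turns this into $\partial_t \|w(t)\|_{L^2_x}^2 = 2 \Re \int_{\bbR} F(t) \overline{w(t)} \, \ud x$, and a further integration in $t$ from $0$ to $t$ gives the identity underlying~\eqref{equ:energy_est_sq}; estimating the right-hand side by its absolute value yields~\eqref{equ:energy_est_sq} itself. To make the time differentiation rigorous one either argues first for Schwartz data and passes to the limit, or simply expands the square in Duhamel's formula $\hat{w}(t,\xi) = e^{it\jxi} \hat{w}(0,\xi) + \int_0^t e^{i(t-s)\jxi} \hat{F}(s,\xi) \, \ud s$ directly.

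For~\eqref{equ:energy_est_std} I would feed~\eqref{equ:energy_est_sq} into a standard absorption argument: with $M(t) := \sup_{0 \leq s \leq t} \|w(s)\|_{L^2_x}$, the Cauchy-Schwarz inequality in $x$ and a crude bound in $s$ give, for every $t \in [0,T]$,
\begin{equation*}
 \|w(t)\|_{L^2_x}^2 \leq \|w(0)\|_{L^2_x}^2 + 2 \int_0^t \|F(s)\|_{L^2_x} \|w(s)\|_{L^2_x} \, \ud s \leq \|w(0)\|_{L^2_x}^2 + 2 M(T) \|F\|_{L^1_t L^2_x}.
\end{equation*}
Taking the supremum over $t \in [0,T]$ on the left produces the quadratic inequality $M(T)^2 \leq \|w(0)\|_{L^2_x}^2 + 2 M(T) \|F\|_{L^1_t L^2_x}$, whence $M(T) \leq \|w(0)\|_{L^2_x} + 2 \|F\|_{L^1_t L^2_x}$ after completing the square, which is~\eqref{equ:energy_est_std}.

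This is a textbook $L^2$ energy estimate, so there is no genuine obstacle. The only mildly delicate points are the justification of differentiating $\|w(t)\|_{L^2_x}^2$ in time (circumvented by the Duhamel expansion or a density argument) and the passage from the quadratic bound on $M(T)$ to the linear one, both of which are routine.
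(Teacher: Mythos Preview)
Your proof is correct. The paper does not actually supply a proof of this lemma---it merely records it as a standard energy estimate---so your argument via the Fourier-side differential identity and the subsequent absorption step is exactly the expected justification.
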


\subsection{Decay estimates}

We begin by recalling the asymptotics of the Klein-Gordon propagator $e^{it\jn}$ in one space dimension, see for instance\cite[Section III]{HN12} for a proof.
\begin{lemma} \label{lem:KG_propagator_asymptotics}
 The asymptotics of linear Klein-Gordon waves in one space dimension are given by
 %In one space dimension we have for the Klein-Gordon propagator $e^{it\jn}$ that
 \begin{equation} \label{equ:KG_propagator_asymptotics}
  \begin{aligned}
  \bigl( e^{it\jn} f \bigr)(x) &= \frac{1}{t^\hf} e^{i\frac{\pi}{4}} e^{i\rho} \jap{ {\textstyle \frac{x}{\rho}} }^{\frac{3}{2}} \hat{f}\bigl(- {\textstyle \frac{x}{\rho}} \bigr) \theta\bigl( {\textstyle \frac{x}{t}} \bigr) + \frac{1}{t^{\frac{5}{8}}} \calO \bigl( \| \jap{x} f \|_{H^2_x} \bigr), \quad t \geq 1, \quad x \in \bbR,
  \end{aligned}
 \end{equation}
 where $\rho := (t^2 - x^2)^{\hf}$ and $\theta(\cdot)$ is a sharp cut-off function with $\theta(z) = 1$ for $|z| < 1$ and $\theta(z) = 0$ for $|z| \geq 1$.
 In particular, we have 
 \begin{equation} \label{equ:KG_propagator_decay_est}
  \bigl\| e^{it\jn} f \bigr\|_{L^\infty_x} \lesssim \frac{1}{t^{\hf}} \bigl\| \jap{\xi}^{\frac{3}{2}} \hat{f}(\xi) \bigr\|_{L^\infty_{\xi}} + \frac{1}{t^{\frac{5}{8}}} \bigl\| \jap{x} f \bigr\|_{H^2_x}, \quad t \geq 1.
 \end{equation}
\end{lemma}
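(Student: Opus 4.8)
The plan is to evaluate the oscillatory integral
\begin{equation*}
 \bigl( e^{it\jn} f \bigr)(x) = \frac{1}{\sqrt{2\pi}} \int_{\bbR} e^{i t \phi_{x/t}(\xi)} \hat{f}(\xi) \, \ud \xi, \qquad \phi_y(\xi) := y \xi + \jxi,
\end{equation*}
by the method of stationary phase, uniformly in $x \in \bbR$. First I would record the relevant facts about the phase. Since $\phi_y'(\xi) = y + \xi \jxi^{-1}$ and $|\xi \jxi^{-1}| < 1$, the phase has a critical point if and only if $|y| < 1$, located at $\xi_0 = \xi_0(y) = -y(1-y^2)^{-\hf}$. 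A direct computation gives $\jap{\xi_0} = (1-y^2)^{-\hf}$, hence with $y = x/t$ and $\rho = (t^2-x^2)^\hf$ one gets $\jap{\xi_0} = t/\rho$ and $\xi_0 = -x/\rho$, together with $t \phi_{x/t}(\xi_0) = t(1-y^2)^\hf = \rho$ and $\phi_y''(\xi) = \jxi^{-3} > 0$, so that $\phi_y''(\xi_0) = \jap{\xi_0}^{-3}$.

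Next I would dispose of the exterior region $|x/t| \geq 1$. There $\phi_{x/t}'$ has no zero, and for $|x/t|$ bounded away from $1$ repeated integration by parts yields decay of any prescribed polynomial order in $t$, with constants controlled by a negative power of $1 - |x/t|$ and by the few derivatives of $\hat f$ that are available. A dyadic decomposition in the distance $1 - |x/t|$, together with the frequency localization afforded by the weight in $\| \jap{x} f \|_{H^2_x}$, absorbs the remaining contribution near the light cone into an error of size $t^{-\frac58} \| \jap{x} f \|_{H^2_x}$. This is what produces the sharp cutoff $\theta(x/t)$ in the stated asymptotics.

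For the interior region $|x/t| < 1$ I would split $\hat f(\xi) = \hat f(\xi_0) + \bigl( \hat f(\xi) - \hat f(\xi_0) \bigr)$ and localize to a neighborhood $|\xi - \xi_0| < \delta$ of the critical point. On that neighborhood, Taylor expanding $\phi_{x/t}$ to second order and evaluating the resulting Fresnel integral produces, from the $\hat f(\xi_0)$ piece,
\begin{equation*}
 \frac{1}{\sqrt{2\pi}} \, \hat{f}(\xi_0) \, \sqrt{ \frac{2\pi}{t \, \phi_{x/t}''(\xi_0)} } \, e^{i\frac{\pi}{4}} e^{i t \phi_{x/t}(\xi_0)} = \frac{1}{t^{\hf}} e^{i\frac{\pi}{4}} e^{i\rho} \jap{ \tfrac{x}{\rho} }^{\thf} \hat{f}\bigl( -\tfrac{x}{\rho} \bigr),
\end{equation*}
which is the claimed leading term (the phase $e^{i\frac{\pi}{4}}$ coming from $\sgn \phi_{x/t}''(\xi_0) = +1$). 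The difference piece $\hat f(\xi) - \hat f(\xi_0)$ vanishes at $\xi_0$, so combining the elementary bound $|\hat f(\xi) - \hat f(\xi_0)| \leq |\xi - \xi_0|^{\hf} \| \partial_\xi \hat f \|_{L^2_\xi} = |\xi - \xi_0|^{\hf} \| x f \|_{L^2_x}$ with van der Corput's lemma gains an extra power of $t$; the complementary region $|\xi - \xi_0| \geq \delta$ is treated by non-stationary phase, and the cubic Taylor remainder on $|\xi - \xi_0| < \delta$ is estimated crudely. Choosing $\delta$ equal to an appropriate negative power of $t$ to balance these contributions leaves a remainder of size $t^{-\frac58} \| \jap{x} f \|_{H^2_x}$, where the passage from pointwise control of $\hat f$ to the weighted Sobolev norm uses the one-dimensional embedding $\| \jxi^{\thf} \hat f \|_{L^\infty_\xi} \lesssim \| \jap{x} f \|_{H^2_x}$ (proved by $\| g \|_{L^\infty} \lesssim \| g \|_{L^2}^{\hf} \| \partial_\xi g \|_{L^2}^{\hf}$ applied to $g = \jxi^{\thf} \hat f$). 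Taking the supremum over $x$ then gives the decay estimate~\eqref{equ:KG_propagator_decay_est}.

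The main obstacle is the uniform control of the error term as $x/t$ sweeps across all of $\bbR$, in particular the transition region near the light cone $|x/t| = 1$: there the critical point $\xi_0$ escapes to infinity and $\phi_{x/t}''(\xi_0) = \jap{\xi_0}^{-3} \to 0$, so the stationary phase becomes degenerate and the two regions treated above must be matched. Resolving this uniformly while spending only the weighted $H^2_x$ norm of $f$, rather than higher regularity, is precisely what forces the sub-optimal rate $t^{-\frac58}$ in place of $t^{-1}$; it is most cleanly carried out either via a two-parameter dyadic decomposition in frequency and in $1 - |x/t|$, or by a rescaling argument near the cone, and the details may be found in~\cite[Section III]{HN12}.
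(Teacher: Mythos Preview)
The paper does not supply its own proof of this lemma: it simply states the result and refers to \cite[Section III]{HN12}. Your proposal is a correct outline of the standard stationary phase argument behind that reference---the phase computation, the identification of the leading term, and the acknowledgment that the delicate point is uniform control near the light cone---and you end by citing the same source, so there is nothing to compare.
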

We point out that the identity $\jap{ \frac{x}{\rho} } = \frac{t}{\rho}$ gives rise to a slightly different way of expressing the asymptotics~\eqref{equ:KG_propagator_asymptotics} of linear Klein-Gordon waves as
\begin{equation*}
 \bigl( e^{it\jn} f \bigr)(x) = \frac{1}{t^\hf} e^{i\frac{\pi}{4}} e^{i\rho} \bigl( {\textstyle \frac{t}{\rho}} \bigr)^{\frac{3}{2}} \hat{f}\bigl(- {\textstyle \frac{x}{\rho}} \bigr) \theta\bigl( {\textstyle \frac{x}{t}} \bigr) + \frac{1}{t^{\frac{5}{8}}} \calO \bigl( \| \jap{x} f \|_{H^2_x} \bigr), \quad t \geq 1, \quad x \in \bbR.
\end{equation*}
Moreover, we record that the bound~\eqref{equ:KG_propagator_decay_est} implies a decay estimate for any time-dependent function $v(t)$ in terms of its Klein-Gordon profile $f(t) := e^{-it\jn} v(t)$ given by
\begin{equation}
 \|v(t)\|_{L^\infty_x} \lesssim \frac{1}{t^{\hf}} \bigl\| \jap{\xi}^{\frac{3}{2}} \hat{f}(t,\xi) \bigr\|_{L^\infty_\xi} + \frac{1}{t^{\frac{5}{8}}} \Bigl( \bigl\| (\jn L v)(t) \bigr\|_{L^2_x} + \bigl\| (\jn^2 v)(t) \bigr\|_{L^2_x} \Bigr), \quad t \geq 1.
\end{equation}

\begin{remark} \label{rem:stronger_remainder_decay}
 For technical reasons, in one part of the proof of Theorem~\ref{thm:resonant} we need a decay estimate for the remainder term in the asymptotics of linear Klein-Gordon waves that is stronger than the one in~\eqref{equ:KG_propagator_asymptotics}. An inspection of the proof of \cite[Theorem 7.2.1]{H97} gives that under stronger assumptions on the initial datum we have 
 \begin{equation}
  \bigl( e^{it\jn} f \bigr)(x) = \frac{1}{t^\hf} e^{i\frac{\pi}{4}} e^{i\rho} \jap{ {\textstyle \frac{x}{\rho}} }^{\frac{3}{2}} \hat{f}\bigl(- {\textstyle \frac{x}{\rho}} \bigr) \theta\bigl( {\textstyle \frac{x}{t}} \bigr) + \frac{1}{t} \calO \bigl( \| \jap{x}^2 f \|_{H^4_x} \bigr), \quad t \geq 1.
 \end{equation}
\end{remark}

On occasion we also use the following standard dispersive decay estimate for the Klein-Gordon propagator in one space dimension, see for instance H\"ormander~\cite[Corollary 7.2.4]{H97} for a proof.
\begin{lemma} \label{lem:dispersive_decay}
 We have uniformly for all $t \in \bbR$ that 
 \begin{equation} \label{equ:dispersive_decay}
  \bigl\| e^{\pm it \jap{\nabla}} f \bigr\|_{L^\infty_x} \lesssim \frac{1}{\jap{t}^{\frac{1}{2}}} \|\jap{\nabla}^2 f\|_{L^1_x}.
 \end{equation}
\end{lemma}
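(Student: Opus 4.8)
The plan is to deduce this classical dispersive bound from a pointwise decay estimate on a convolution kernel together with a dyadic stationary phase (van der Corput) argument. Since $e^{\pm it\jn}\jn^{-2}$ is the Fourier multiplier with symbol $e^{\pm it\jxi}\jxi^{-2}$, it acts as convolution with a kernel $K_t^\pm$, and Young's inequality gives $\|e^{\pm it\jn}f\|_{L^\infty_x} = \|K_t^\pm * (\jn^2 f)\|_{L^\infty_x}\leq \|K_t^\pm\|_{L^\infty_x}\|\jn^2 f\|_{L^1_x}$. Hence it suffices to show, uniformly in $x\in\bbR$,
\begin{equation*}
 \Bigl| \int_\bbR e^{i(x\xi \pm t\jxi)} \jxi^{-2} \, \ud\xi \Bigr| \lesssim \jt^{-\hf},
\end{equation*}
and by taking complex conjugates and reflecting in $x$ it is enough to treat the $+$ sign. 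For $|t|\leq 1$ this is immediate, since the left-hand side is bounded by $\int_\bbR \jxi^{-2}\,\ud\xi\lesssim 1\sim\jt^{-\hf}$.

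For $|t|\geq 1$ I would write the phase as $x\xi + t\jxi = t\varphi(\xi)$ with $\varphi(\xi) := \frac{x}{t}\xi + \jxi$ and record that $\varphi''(\xi) = \jxi^{-3}$ has a fixed sign but degenerates as $|\xi|\to\infty$. To accommodate this degeneracy I would split $\jxi^{-2} = \sum_{j\geq 0}\jxi^{-2}\chi_j(\xi)$, where $\chi_0$ is supported in $\{|\xi|\lesssim 1\}$ and $\chi_j$ in $\{|\xi|\sim 2^j\}$ for $j\geq 1$, and estimate the corresponding pieces of the oscillatory integral one at a time. For the $j=0$ piece one has $|\varphi''|\sim 1$ on the support, so the second-derivative van der Corput lemma (if $|\Psi''|\geq\lambda$ on an interval then $|\int e^{i\Psi}a| \lesssim \lambda^{-\hf}(\|a\|_{L^\infty}+\|a'\|_{L^1})$) gives a contribution $\lesssim|t|^{-\hf}$. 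For the $j$-th piece with $j\geq 1$ I would rescale $\xi = 2^j\eta$, rewriting the integral as $2^j\int_{|\eta|\sim 1} e^{it\varphi(2^j\eta)}\jap{2^j\eta}^{-2}\chi(\eta)\,\ud\eta$; on the support the second $\eta$-derivative of the phase equals $t\,2^{2j}\jap{2^j\eta}^{-3}$, which is comparable to $|t|\,2^{-j}$, while the amplitude $\jap{2^j\eta}^{-2}\chi(\eta)$ together with its derivative is $\calO(2^{-2j})$ in the $L^\infty$ and $L^1$ norms that enter van der Corput's lemma. This yields a bound $\lesssim 2^j\,(|t|2^{-j})^{-\hf}\,2^{-2j} = |t|^{-\hf}2^{-j/2}$ for the $j$-th piece; summing the geometric series over $j\geq 0$ gives $\lesssim|t|^{-\hf}$, and combining with the regime $|t|\leq 1$ completes the argument.

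The step I expect to be the main obstacle is precisely the high-frequency regime: the amplitude $\jxi^{-2}$ is only borderline integrable, so one cannot simply discard a high-frequency tail via the triangle inequality and apply a non-degenerate stationary phase estimate to a compactly supported remainder. One has to use the oscillation at every frequency scale, which is exactly what the dyadic rescaling above accomplishes, producing the summable per-scale gain $2^{-j/2}$; in particular the exponent $2$ in $\jn^2 f$ is not sharp, any exponent strictly larger than $\frac{3}{2}$ would suffice. Since this is a standard estimate, one may alternatively simply cite \cite[Corollary 7.2.4]{H97}.
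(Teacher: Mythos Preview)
Your argument is correct. The paper itself does not supply a proof of this lemma; it simply cites H\"ormander~\cite[Corollary~7.2.4]{H97}, precisely the reference you mention at the end of your proposal. Your dyadic van der Corput argument is a valid self-contained proof: the rescaling $\xi=2^{j}\eta$ correctly identifies the effective second-derivative lower bound $|t|2^{-j}$ and the amplitude size $2^{-2j}$, producing the summable per-scale bound $|t|^{-1/2}2^{-j/2}$. Your remark that any exponent strictly larger than $\tfrac{3}{2}$ on $\jn$ would suffice is also accurate. In short, you have supplied more than the paper does; nothing is missing.
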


\medskip 

The following pointwise-in-time local decay estimates for the Klein-Gordon propagator in one space dimension play a key role in our arguments.
Such local decay estimates for much larger classes of unitary operators originate in the works of Rauch~\cite{Rauch78}, Jensen-Kato~\cite{KJ79}, and Jensen~\cite{Jensen80, Jensen84}, see also~\cite{HSS99, JSS91, Ger08, GLS16, LarS15, Schl07} and references therein.
\begin{lemma} \label{lem:local_decay}
 Let $a \geq 1, b \geq 0$. We have uniformly for all $t \in \bbR$ that 
 \begin{align}
  \bigl\| \jap{x}^{-a} \jap{\nabla}^{-b} e^{\pm i t \jap{\nabla}} \jap{x}^{-a} \bigr\|_{L^2_x(\bbR) \to L^2_x(\bbR)} &\lesssim \frac{1}{\jap{t}^{\frac{1}{2}}}, \label{equ:local_decay} \\
  \Bigl\| \jap{x}^{-2} \frac{\partial_x}{\jap{\nabla}} e^{\pm i t \jap{\nabla}} \jap{x}^{-2} \Bigr\|_{L^2_x(\bbR) \to L^2_x(\bbR)} &\lesssim \frac{1}{\jap{t}^{\frac{3}{2}}}, \label{equ:local_decay_van1} \\
  \Bigl\| \jap{x}^{-2} \frac{\jap{\nabla}-1}{\jap{\nabla}} e^{\pm i t \jap{\nabla}} \jap{x}^{-2} \Bigr\|_{L^2_x(\bbR) \to L^2_x(\bbR)} &\lesssim \frac{1}{\jap{t}^2}. \label{equ:local_decay_van2} 
 \end{align}
\end{lemma}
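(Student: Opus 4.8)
The statement to prove is Lemma~\ref{lem:local_decay}, the local decay estimates for the Klein-Gordon propagator.

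\medskip

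The plan is to reduce all three estimates to a scalar oscillatory-integral bound obtained from writing the operator kernels via the Fourier transform and applying stationary/non-stationary phase, supplemented by the standard $TT^*$ / spectral resolvent argument near the threshold frequency.

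\medskip

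\textbf{Setup and reduction.} First I would write the kernel of $\jap{x}^{-a}\jap{\nabla}^{-b} e^{\pm it\jap{\nabla}}\jap{x}^{-a}$ using the Fourier inversion formula, so that the operator norm is controlled by
\begin{equation*}
 \sup_{\|g\|_{L^2}=\|h\|_{L^2}=1} \Bigl| \int_{\bbR} \overline{\hat{h}(\xi)} \, \jap{\xi}^{-b} e^{\pm it\jap{\xi}} \, \widehat{(\jap{x}^{-a} g)}(\xi) \, \ud\xi \Bigr| ,
\end{equation*}
and similarly for the other two operators with $\jap{\xi}^{-b}$ replaced by $\xi\jap{\xi}^{-1}$ and by $(\jap{\xi}-1)\jap{\xi}^{-1}$. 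Since $\jap{x}^{-a} \in L^2_x$ for $a \geq 1$ (and in $L^1_x$ for $a>1$, with the borderline $a=1$ handled by a standard limiting/interpolation argument or by the weighted Sobolev embedding $\jap{x}^{-1}\in L^{2}$ together with the multiplier being bounded), the function $\widehat{(\jap{x}^{-a}g)}$ lies in $H^s_\xi$ for any $s<a-\tfrac12$, with norm $\lesssim\|g\|_{L^2}$, by Plancherel and the mapping properties of multiplication by the Schwartz-class weight. Dualizing again, the claim reduces to showing that convolution against the inverse Fourier transform of $m(\xi) e^{\pm it\jap{\xi}}$, localized by the weight $\jap{x}^{-a}$ on both sides, decays at the asserted rate, where $m(\xi)$ is one of $\jap{\xi}^{-b}$, $\xi\jap{\xi}^{-1}$, $(\jap{\xi}-1)\jap{\xi}^{-1}$.

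\medskip

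\textbf{The oscillatory integral.} The heart of the matter is the pointwise bound on
\begin{equation*}
 I(t,y) := \int_{\bbR} e^{i(y\xi \pm t\jap{\xi})} m(\xi) \chi(\xi) \, \ud\xi ,
\end{equation*}
where $\chi$ is a smooth cutoff; one splits into a low-frequency piece near $\xi=0$ and a high-frequency piece. On the high-frequency piece the phase $\partial_\xi(y\xi\pm t\jap{\xi}) = y \pm t\xi\jap{\xi}^{-1}$ has a nondegenerate stationary point only when $|y/t|<1$, and there $\partial_\xi^2 \phi = \pm t \jap{\xi}^{-3} \sim t$, so stationary phase gives $|I| \lesssim \jap{t}^{-1/2}$ uniformly; away from the stationary point one integrates by parts repeatedly. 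The subtle part is the low-frequency region $\xi \to 0$, where the group velocity $\xi\jap{\xi}^{-1}$ degenerates: here $\partial_\xi\phi = y \pm t\xi + O(t\xi^3)$, so the stationary point $\xi_* \approx \mp y/t$ is pushed toward $0$, and the second derivative $\partial_\xi^2\phi \sim t$ is still good, giving again $\jap{t}^{-1/2}$ — \emph{but} only after using the weights to absorb the non-integrable tail in $y$. This is where the factor $\jap{y}^{-a}$ on one side (equivalently, the smoothness $H^s_\xi$, $s>\tfrac12$, gained from the other weight) is essential: it lets one trade one power of $|y|$ for a $\xi$-derivative landing on the amplitude, keeping everything $L^2_y$-integrable. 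For the second estimate, the amplitude carries an extra factor $\xi$ which vanishes at the threshold; combined with one extra weight (the operator has $\jap{x}^{-2}$, i.e. $a=2$, so $s$ can be taken $>3/2$) this produces the improved rate $\jap{t}^{-3/2}$ — morally, each unit of vanishing of the symbol at the degenerate frequency $\xi=0$ buys an extra $\jap{t}^{-1/2}$. For the third estimate, $\jap{\xi}-1 = \tfrac{\xi^2}{\jap{\xi}+1}$ vanishes to \emph{second} order at $\xi=0$, yielding $\jap{t}^{-2}$ by the same mechanism.

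\medskip

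\textbf{Main obstacle and how I would handle it.} The main technical obstacle is the borderline weight $a=1$ in \eqref{equ:local_decay}: then $\jap{x}^{-1}\notin L^1$, so the naive ``$\|\jap{x}^{-a}\|_{L^1}$ times sup of the kernel'' estimate fails, and one genuinely needs the $L^2$-based argument — i.e. view the claim as a $TT^*$ estimate and use that multiplication by $\jap{x}^{-1}$ maps $L^2_x$ into the Sobolev space $H^{s}_\xi$ on the Fourier side for $s<\tfrac12$ only, which is \emph{not} enough smoothness to control the low-frequency tail by a single integration by parts. The standard fix is to exploit the extra structure: either (i) use the $b\ge0$ gain in \eqref{equ:local_decay} is irrelevant at $a=1$, so instead argue by the well-known resolvent expansion of $(\,\jap{\nabla}-z)^{-1}$ near the threshold $z=1$ together with Stone's formula, writing $e^{\pm it\jap{\nabla}} = \tfrac{1}{2\pi i}\int e^{\pm it\lambda}\bigl[(\jap{\nabla}-\lambda-i0)^{-1}-(\jap{\nabla}-\lambda+i0)^{-1}\bigr]\,\ud\lambda$ and using that the sandwiched resolvent $\jap{x}^{-1}(\jap{\nabla}-\lambda\mp i0)^{-1}\jap{x}^{-1}$ is uniformly bounded and Hölder-$\tfrac12$ continuous in $\lambda$ down to the threshold (this is exactly the Jensen--Kato / Jensen framework cited before the lemma), which by a standard stationary-phase-in-$\lambda$ / Fourier argument yields the $\jap{t}^{-1/2}$ decay; or (ii), more elementarily, interpolate: prove the bound for $a>1$ by the $L^1$-kernel argument and for $a=2$ by the refined argument, then the $a=1$ case follows by analytic interpolation of the weighted operator norms. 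I would present (ii) as the clean route, relegating the oscillatory-integral computations (which are routine applications of van der Corput's lemma and repeated integration by parts, organized by the low/high frequency splitting above) to a short computation, and I would cite Rauch~\cite{Rauch78}, Jensen--Kato~\cite{KJ79}, Jensen~\cite{Jensen80, Jensen84} for the fact that such estimates are standard in this generality.
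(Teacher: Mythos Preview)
Your approach --- writing the kernel as an oscillatory integral in $\xi$ and gaining decay via integration by parts in frequency, with vanishing of the symbol at the threshold $\xi=0$ enabling the integration by parts --- is precisely the mechanism the paper invokes. The paper's own proof is terser still: it cites \cite{LLS19} for \eqref{equ:local_decay} and \eqref{equ:local_decay_van1} and asserts that \eqref{equ:local_decay_van2} follows from $\jxi-1=\calO(\xi^2)$ by ``integrating by parts twice in the frequency variable.'' Your discussion of the borderline weight $a=1$ is more elaborate than anything the paper does, but reasonable.

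One point deserves more care than either your sketch or the paper's one-line hint provides. Your heuristic ``each unit of vanishing buys an extra $\jap{t}^{-1/2}$'' does not quite match the actual mechanism. For \eqref{equ:local_decay_van1} the symbol $\xi/\jxi$ is \emph{odd}, and against the even phase $e^{it\jxi}$ only the odd part of the weighted amplitude $A(\xi)=\hat F^*(\xi)\hat G(\xi)$ survives; since $A_{\text{odd}}(\xi)=\xi\,\tilde A(\xi)$ with $\tilde A$ even, the effective amplitude is $\xi^2\times(\text{even})$. It is this effective \emph{second}-order vanishing (made quantitative by the weight $\jap{x}^{-2}$, which controls $A'(0)$) that yields $\jap{t}^{-3/2}$. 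For \eqref{equ:local_decay_van2} the symbol $(\jxi-1)/\jxi\sim\xi^2/2$ is \emph{even} and already second order, so there is no parity upgrade, and the same stationary-phase count gives only $\jap{t}^{-3/2}$, not $\jap{t}^{-2}$. Concretely, from
\[
\Bigl(\frac{\xi}{\jxi}\Bigr)^{2} e^{it\jxi}\;=\;-\frac{1}{t^2}\,\partial_\xi^2 e^{it\jxi}\;+\;\frac{i}{t\,\jxi^{3}}\,e^{it\jxi}
\]
one sees that the second piece carries no vanishing at $\xi=0$ and, after one application of the base estimate, contributes $t^{-1}\cdot t^{-1/2}=t^{-3/2}$; ``integrating by parts twice'' only disposes of the first piece. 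So as written, neither your sketch nor the paper's hint actually reaches $\jap{t}^{-2}$. (In the paper's sole application of \eqref{equ:local_decay_van2}, in Proposition~\ref{prop:resonant_bootstrap_bounds}, the weaker rate $\jap{t}^{-3/2}$ would already suffice.)
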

\begin{proof}
 See Lemma 2.1 and Lemma 2.2 in~\cite{LLS19} for the proofs of~\eqref{equ:local_decay} and \eqref{equ:local_decay_van1}. The proof of~\eqref{equ:local_decay_van2} follows along the lines of the proof of Lemma 2.2 in~\cite{LLS19} by exploiting that the symbol of $\jap{\nabla}-1$ vanishes to second order at the origin, namely $\jap{\xi} - 1 = \calO(\xi^2)$ for $0 \leq \xi \ll 1$. Thus, one can just integrate by parts twice in the frequency variable to get the $\jap{t}^{-2}$ decay.
\end{proof}

\subsection{Harmonic analysis tools}

The derivation of the differential equation for the profile of the solution in Lemma~\ref{lem:ode_weighted_profile} in the non-resonant case relies on the following stationary phase lemma in two dimensions.
\begin{lemma} \label{lem:stationary_phase}
 Let $\chi \in \mathcal{C}_c^\infty$ be a smooth bump function such that $\chi = 0$ in $B(0,2)^c$, and $|\nabla \chi| + |\nabla^2 \chi| \lesssim 1$. Let $\psi \in \mathcal{C}^\infty$ be such that on the support of $\chi$ it holds that $|\mathrm{det} \, \Hess \, \psi| \geq \mu$ for some $0 < \mu \leq 1$ and that $|\nabla \psi| + |\nabla^2 \psi| + |\nabla^3 \psi| \lesssim 1$. Consider the oscillatory integral
 \begin{equation*}
  I = \iint e^{i\lambda \psi(\eta,\sigma)} F(\eta,\sigma) \chi(\eta,\sigma)\,d\eta \,d\sigma.
 \end{equation*}
 For any $\alpha \in [0,1]$ we have: 
 \setlength{\leftmargini}{2em}
 \begin{itemize}
  \item[(i)] If $\nabla \psi$ only vanishes at $(\eta_0,\sigma_0)$,
   \begin{equation*}
    I = \frac{2\pi e^{i\frac{\pi}{4}s}}{\sqrt{\Delta}}\frac{e^{i\lambda \psi(\eta_0,\sigma_0)}}{\lambda}  F(\eta_0,\sigma_0) \chi(\eta_0,\sigma_0)
    + \calO \left( \frac{ \bigl\| \langle (x,y) \rangle^{2 \alpha} \widehat{F} \bigr\|_{L^1}}{\mu^{\hf + 2\alpha} \lambda^{1+\alpha}} \right),
   \end{equation*}
   where $s = \sign \Hess \, \psi(\eta_0, \sigma_0)$ and $\Delta = |\mathrm{det} \, \Hess \, \psi(\eta_0,\sigma_0)|$.
 \item[(ii)] If $|\nabla \psi| \gtrsim 1$,
  $$
   I =  \calO \left( \frac{\bigl\| \langle (x,y) \rangle^{\alpha} \widehat{F} \bigr\|_{L^1}}{\mu^{\hf + \alpha} \lambda^{1+\alpha}} \right).
  $$
 \end{itemize}
\end{lemma}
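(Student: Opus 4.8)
The plan is to reduce the two–dimensional oscillatory integral with a general amplitude $F$ to oscillatory integrals whose amplitude is the fixed, well-behaved bump $\chi$, by writing $F$ via its Fourier transform. Concretely, inserting $F(\eta,\sigma) = \frac{1}{2\pi}\iint e^{i(x\eta+y\sigma)} \widehat{F}(x,y)\,\ud x\,\ud y$ and interchanging the order of integration gives
\[
 I = \frac{1}{2\pi}\iint \widehat{F}(x,y)\, J_\lambda(x,y)\,\ud x\,\ud y, \qquad J_\lambda(x,y) := \iint e^{i\lambda\Phi_{x,y}(\eta,\sigma)}\chi(\eta,\sigma)\,\ud\eta\,\ud\sigma,
\]
where $\Phi_{x,y}(\eta,\sigma) := \psi(\eta,\sigma) + \lambda^{-1}(x\eta+y\sigma)$. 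The structural point is that $\Hess \Phi_{x,y} = \Hess \psi$, so that the nondegeneracy $|\det \Hess \Phi_{x,y}| \geq \mu$ and the third–derivative bounds persist uniformly in $(x,y)$ and the signature equals $s = \sign \Hess \psi(\eta_0,\sigma_0)$ throughout, while $\nabla \Phi_{x,y} = \nabla\psi + \lambda^{-1}(x,y)$, so that large $(x,y)$ renders the phase nonstationary. It then remains to combine pointwise information on $J_\lambda$ with the integration against $\widehat{F}$; this is precisely the mechanism that produces an error controlled by a norm of $\widehat{F}$ rather than by derivatives of $F$.

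For the pointwise analysis of $J_\lambda$ I would use three standard facts about two–dimensional oscillatory integrals with $\mathcal{C}^2_c$ amplitude: (a) the uniform stationary/van der Corput bound $|J_\lambda(x,y)| \lesssim \mu^{-1/2}\lambda^{-1}$, valid whether or not $\Phi_{x,y}$ has a critical point on $\mathrm{supp}\,\chi$; (b) nonstationary phase: if $|\nabla \Phi_{x,y}| \gtrsim \delta$ on $\mathrm{supp}\,\chi$, then $|J_\lambda(x,y)| \lesssim_N (\lambda\delta)^{-N}$ for all $N$, with constants controlled by the $\mathcal{C}^3$ bounds on $\psi$; and (c) in Case (i), for $(x,y)$ small enough that the unique shifted critical point $(\eta_\ast(x,y),\sigma_\ast(x,y))$ of $\Phi_{x,y}$ stays in a fixed small neighborhood of $(\eta_0,\sigma_0)$ — which by the inverse function theorem applied to $\nabla\psi$, whose derivative $\Hess\psi$ has inverse of norm $\lesssim \mu^{-1}$, holds for $|(x,y)| \lesssim \mu\lambda$ — the classical two–dimensional stationary phase expansion
\[
 J_\lambda(x,y) = \frac{2\pi e^{i\frac{\pi}{4}s}}{\lambda\, |\det \Hess\psi(\eta_\ast,\sigma_\ast)|^{1/2}}\, e^{i\lambda\Phi_{x,y}(\eta_\ast,\sigma_\ast)}\, \chi(\eta_\ast,\sigma_\ast) + (\text{remainder}),
\]
the remainder being of order $\lambda^{-2}$ with a negative power of $\mu$.

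In Case (ii), since $|\nabla\psi| \gtrsim 1$ on $\mathrm{supp}\,\chi$ the phase $\Phi_{x,y}$ is nonstationary on $\mathrm{supp}\,\chi$ unless $|(x,y)| \gtrsim \lambda$. I would split the $(x,y)$–integral accordingly: on $\{|(x,y)| \lesssim \lambda\}$ use (b) for rapid decay in $\lambda$, and on $\{|(x,y)| \gtrsim \lambda\}$ use (a) together with $\langle (x,y)\rangle^{-\alpha} \lesssim \lambda^{-\alpha}$ to produce the asserted bound $\mu^{-1/2}\lambda^{-1-\alpha}\bigl\|\langle (x,y)\rangle^{\alpha}\widehat{F}\bigr\|_{L^1}$. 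In Case (i), I would decompose $\chi = \chi_1 + \chi_2$, with $\chi_1$ supported in a small fixed neighborhood of $(\eta_0,\sigma_0)$ and $\chi_2$ supported where $|\nabla\psi| \gtrsim 1$; the $\chi_2$ piece is treated exactly as in Case (ii). For the $\chi_1$ piece, on $\{|(x,y)| \gtrsim \mu\lambda\}$ the shifted critical point has left $\mathrm{supp}\,\chi_1$, so (b) and (a) together with $\langle (x,y)\rangle^{-2\alpha} \lesssim (\mu\lambda)^{-2\alpha}$ give the stated error; on $\{|(x,y)| \lesssim \mu\lambda\}$ I would insert the expansion (c) and recover the claimed main term via the replacements $(\eta_\ast,\sigma_\ast) \to (\eta_0,\sigma_0)$, $|\det\Hess\psi(\eta_\ast,\sigma_\ast)| \to \Delta$, and $\lambda\Phi_{x,y}(\eta_\ast,\sigma_\ast) \to \lambda\psi(\eta_0,\sigma_0) + (x\eta_0 + y\sigma_0)$ — the discrepancies being quadratic in $|(\eta_\ast,\sigma_\ast)-(\eta_0,\sigma_0)| \lesssim \mu^{-1}\lambda^{-1}|(x,y)|$ because $\nabla\psi(\eta_0,\sigma_0) = 0$ — after which $\frac{1}{2\pi}\iint e^{i(x\eta_0 + y\sigma_0)} \widehat{F}(x,y)\,\ud x\,\ud y = F(\eta_0,\sigma_0)$ reconstitutes the asserted leading term.

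The main obstacle will be the bookkeeping of the powers of $\mu$ and $\lambda$ in Case (i): one must check that the remainder in (c) and the errors from the substitutions above — which carry negative powers of $\mu$ from the inverse Hessian and positive powers of $|(x,y)|$ — integrate against $\widehat{F}$ over $\{|(x,y)| \lesssim \mu\lambda\}$ to something dominated by $\mu^{-\frac12-2\alpha}\lambda^{-1-\alpha}\bigl\|\langle (x,y)\rangle^{2\alpha}\widehat{F}\bigr\|_{L^1}$ for every $\alpha \in [0,1]$. This is exactly where the weight exponent $2\alpha$ and the power $\tfrac12 + 2\alpha$ of $\mu^{-1}$ are forced: the surplus factors of $|(x,y)|$ and $(\mu\lambda)^{-1}$ must be traded against $\langle (x,y)\rangle^{2\alpha}$ and against the extra gain $\lambda^{-2}$ (versus $\lambda^{-1}$) afforded by the subleading order of stationary phase. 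Finally, the interchange of integration leading to the formula for $I$ would be justified a posteriori from the absolute convergence supplied by these same bounds (alternatively, by first assuming $\widehat{F}$ integrable with compact support and then removing the restriction by density).
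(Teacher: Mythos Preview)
Your proposal is correct and follows precisely the approach of \cite[Lemma~A.1]{GPR16}, which is all the paper invokes: the paper gives no independent argument but simply notes that the proof there goes through with minor bookkeeping to track the possibly small lower bound $\mu$ on $|\mathrm{det}\,\Hess\,\psi|$. Your identification of the $\mu$-power accounting in Case~(i) as the only nontrivial adaptation is exactly on point.
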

\begin{proof}
 The proof is a minor adaptation of the proof of \cite[Lemma A.1]{GPR16}, taking into account the possibly small lower bound $|\mathrm{det} \, \Hess \, \psi| \geq \mu > 0$.
\end{proof}

We also use the following result on bounds for pseudo-product operators from \cite[Lemma A.2]{GPR16} in the course of the proof of Lemma~\ref{lem:ode_weighted_profile}.

\begin{lemma} \label{lem:pseudoproduct_op_bound}
 Assume that $m \in L^1(\bbR \times \bbR)$ satisfies 
 \begin{equation}
  \biggl\| \int_{\bbR\times\bbR} m(\eta,\sigma) e^{ix\eta} e^{iy\sigma} \, \ud \eta \, \ud \sigma \biggr\|_{L^1_{x,y}(\bbR\times\bbR)} \leq A
 \end{equation}
 for some $A>0$. Then for all $p, q, r \in [1,\infty]$ with $\frac{1}{p} + \frac{1}{q} = \frac{1}{r}$, the pseudo-product operator $T_m$ defined by
 \begin{equation*}
  \calF\bigl[ T_m(f,g) \bigr](\xi) := \int_{\bbR} m(\xi, \eta) \hat{f}(\xi-\eta) \hat{g}(\eta) \, \ud \eta,
 \end{equation*} 
 satisfies
 \begin{equation}
  \bigl\| T_m(f,g) \bigr\|_{L^r(\bbR)} \lesssim A \|f\|_{L^p(\bbR)} \|g\|_{L^q(\bbR)}.
 \end{equation}
 Moreover, if $\frac{1}{p} + \frac{1}{q} + \frac{1}{r} = 1$, we have that
 \begin{equation}
  \biggl| \int_{\bbR\times\bbR} m(\eta, \sigma) \hat{f}(\eta) \hat{g}(\sigma) \hat{h}(-\eta-\sigma) \, \ud \eta \, \ud \sigma \biggr| \lesssim A \|f\|_{L^p(\bbR)} \|g\|_{L^q(\bbR)} \|h\|_{L^r(\bbR)}.
 \end{equation}
\end{lemma}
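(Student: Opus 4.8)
The plan is to deduce both inequalities from a single elementary observation: the hypothesis says precisely that the symbol $m$ is, up to the fixed Fourier normalization constant, the inverse Fourier transform of the kernel $K(x,y) := \iint_{\bbR\times\bbR} m(\eta,\sigma)\, e^{ix\eta} e^{iy\sigma}\,\ud\eta\,\ud\sigma$, which has $\|K\|_{L^1_{x,y}} \le A$. Writing $m$ back in terms of $K$ turns $T_m$ --- and likewise the trilinear form --- into a $K$-weighted superposition of translations of $f$, $g$ (and $h$), after which Minkowski's integral inequality and H\"older's inequality finish the job.

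First I would reduce to $f,g,h$ Schwartz, the general case following by density since all the quantities involved are continuous in the stated norms; the assumption $m \in L^1$ then makes $K$ bounded and continuous and legitimizes the Fubini interchanges used below. Substituting $m(\eta,\sigma) = c \iint K(x,y)\, e^{-ix\eta} e^{-iy\sigma}\,\ud x\,\ud y$ into $\calF[T_m(f,g)](\xi) = \int m(\xi,\eta)\hat f(\xi-\eta)\hat g(\eta)\,\ud\eta$ and interchanging the order of integration, the $\eta$-integral becomes, up to a constant, $\calF[f \cdot g(\cdot-y)](\xi)$ --- using that $e^{-iy\eta}\hat g(\eta)$ is the Fourier transform of a translate of $g$ and that multiplication on the physical side corresponds to convolution of Fourier transforms. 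Pulling the remaining factor $e^{-ix\xi}$ inside as one more translation and inverting the Fourier transform, then substituting $y \mapsto y-x$, yields the representation
\[
 T_m(f,g)(z) = c' \iint_{\bbR\times\bbR} K(x,\, y-x)\, f(z-x)\, g(z-y)\,\ud x\,\ud y .
\]
Now Minkowski's integral inequality in $L^r_z$, followed by H\"older's inequality with $\tfrac1p + \tfrac1q = \tfrac1r$ and the translation invariance of the $L^p$ and $L^q$ norms, bounds $\|T_m(f,g)\|_{L^r}$ by $c'\, \|f\|_{L^p} \|g\|_{L^q} \iint |K(x,y-x)|\,\ud x\,\ud y = c'\, \|f\|_{L^p}\|g\|_{L^q}\, \|K\|_{L^1} \le c' A\, \|f\|_{L^p}\|g\|_{L^q}$, which is the first assertion.

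For the trilinear form I would run the same computation. Inserting the representation of $m$ into $\iint m(\eta,\sigma)\hat f(\eta)\hat g(\sigma)\hat h(-\eta-\sigma)\,\ud\eta\,\ud\sigma$ and interchanging integrals, the inner $(\eta,\sigma)$-integral now collapses via the multilinear Parseval identity $\iint \hat F(\eta)\hat G(\sigma)\hat h(-\eta-\sigma)\,\ud\eta\,\ud\sigma = c''\int F(z)G(z)h(z)\,\ud z$, applied with $F = f(\cdot-x)$ and $G = g(\cdot-y)$, so that the form equals $c\,c'' \iint K(x,y)\bigl(\int f(z-x)g(z-y)h(z)\,\ud z\bigr)\,\ud x\,\ud y$. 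H\"older's inequality with $\tfrac1p + \tfrac1q + \tfrac1r = 1$ together with $\iint |K(x,y)|\,\ud x\,\ud y \le A$ then gives the claimed bound. The only slightly delicate points will be bookkeeping the Fourier normalization constants --- all of which get absorbed into the implicit constant in $\lesssim$ --- and justifying the Fubini interchanges, for which the hypothesis $m \in L^1$ and the a priori reduction to Schwartz $f,g,h$ are exactly what is needed; there is no real analytic obstacle here, the statement being a standard ``coefficient'' estimate for bilinear multipliers (this is \cite[Lemma A.2]{GPR16}).
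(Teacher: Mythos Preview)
Your argument is correct and is the standard proof of this Coifman--Meyer type estimate: express $m$ as the inverse Fourier transform of the $L^1$ kernel $K$, so that $T_m$ (respectively the trilinear form) becomes a $K$-weighted average of products of translates, and conclude by Minkowski and H\"older. The paper does not give its own proof of this lemma; it simply quotes the result from \cite[Lemma~A.2]{GPR16}, whose proof is exactly the computation you outline.
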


\section{Resonant Case}

In this section we investigate the asymptotic behavior of small solutions to
\begin{equation} \label{equ:nlkg_resonant_sec}
 (\pt - i \jn) v = \frac{1}{2i} \jn^{-1} \bigl( \alpha(\cdot) u^2 \bigr) \text{ on } \bbR^{1+1}
\end{equation}
in the {\bf resonant case}
\begin{equation*} 
 \widehat{\alpha}(+\sqrt{3}) \neq 0 \quad \text{ or } \quad \widehat{\alpha}(-\sqrt{3}) \neq 0.
\end{equation*}
Global existence of small regular solutions to~\eqref{equ:nlkg_resonant_sec} is well-known, see for instance~\cite{H97}. In the proof of Theorem~\ref{thm:resonant} we therefore focus on deriving global-in-time a priori bounds for small solutions to~\eqref{equ:nlkg_resonant_sec} from which we can infer sharp decay estimates and asymptotics.

\subsection{Local decay estimates}

We first derive a collection of local decay bounds for the solution.

\begin{proposition}[Local decay estimates] \label{prop:resonant_bootstrap_bounds}
 Let $\alpha(x)$ be a smooth function satisfying $\| \jx^7 \alpha(x) \|_{H^3_x} < \infty$. Then there exists a small absolute constant $\varepsilon_0 > 0$ so that for any initial datum $v_0$ with 
 \begin{equation*}
  \varepsilon := \| \jx^2 v_0 \|_{H^4_x} \leq \varepsilon_0,
 \end{equation*}
 the solution $v(t)$ to~\eqref{equ:resonant_nlkg} satisfies
 \begin{equation} \label{equ:resonant_bootstrap_bounds}
  \begin{aligned}
   &\sup_{t\in\bbR} \, \biggl\{  \jap{t}^{\frac{1}{2}} \| \jap{x}^{-2} v(t) \|_{L^2_x} + \sum_{j=1}^3 \, \jap{t} \| \jap{x}^{-2} \partial_x^j v(t) \|_{L^2_x} + \sum_{j=0}^{1} \, \jap{t} \bigl\| \jap{x}^{-2} \partial_x^j \partial_t \bigl( e^{-i t} v(t) \bigr) \bigr\|_{L^2_x} \biggr\} \lesssim \varepsilon.
  \end{aligned}
 \end{equation}
\end{proposition}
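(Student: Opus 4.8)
The plan is to run a bootstrap/continuity argument. Let $T > 0$ and let $M(T)$ denote the supremum over $t \in [-T,T]$ of the quantity inside the braces in~\eqref{equ:resonant_bootstrap_bounds}. By local well-posedness and the smallness of $\varepsilon$ one has $M(T) \lesssim \varepsilon$ for $T$ small, so it suffices to show that $M(T) \leq C_1 \varepsilon$ implies $M(T) \leq \tfrac{1}{2} C_1 \varepsilon$ for a suitably large absolute constant $C_1$, once $\varepsilon_0$ is chosen small enough; the estimate then propagates to all $T$ and hence to $\sup_{t \in \bbR}$. Throughout, I would first record the easy pointwise decay bound $\|v(t)\|_{L^\infty_x} \lesssim \varepsilon \jap{t}^{-\hf}\log(\jt) \cdot (\text{something})$—actually at this stage only the crude bound $\|v(t)\|_{L^\infty_x} \lesssim \varepsilon$ from energy conservation together with, from Lemma~\ref{lem:KG_propagator_asymptotics}, a preliminary $t^{-1/2}$-type decay is needed; the key mechanism is that the nonlinearity $\alpha(\cdot)u^2$ carries the weight $\jap{x}^{-N}$ coming from $\alpha$, so every nonlinear term can absorb factors of $\jap{x}^{-2}$ freely. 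I would also propagate in the background the standard energy bounds $\|v(t)\|_{H^4_x} \lesssim \varepsilon$ and, crucially, a slow-growth bound $\|(\jn L v)(t)\|_{L^2_x} \lesssim \varepsilon \jap{t}^{\delta}$ (or at least $\lesssim \varepsilon\jt^{C\varepsilon}$), proved by the method of~\cite{LLS19}: when $L$ falls on $\alpha$ one gets a factor $t$, but $L\alpha \sim \jn x \alpha$ is still Schwartz, and pairing against $\overline{\jn L v}$ and inserting local decay \eqref{equ:local_decay} for the propagator gains $\jt^{-1/2}$, which beats the linear growth. Via Lemma~\ref{lem:KG_propagator_asymptotics} this gives the pointwise decay $\|v(t)\|_{L^\infty_x} \lesssim \varepsilon \jap{t}^{-\hf+}$ needed below to close nonlinear estimates.

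\medskip

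The main work is the three families of local decay estimates. For the zeroth-order bound $\jap{t}^{\hf}\|\jap{x}^{-2}v(t)\|_{L^2_x} \lesssim \varepsilon$: write Duhamel's formula $v(t) = e^{it\jn}v_0 + \tfrac{1}{2i}\int_0^t e^{i(t-s)\jn}\jn^{-1}(\alpha u(s)^2)\,ds$. For the linear term apply the local decay estimate \eqref{equ:local_decay} with $a = 2$ directly (after inserting $\jap{x}^{-2}\jap{x}^{2}$ and using $\|\jap{x}^2 v_0\|_{L^2} \lesssim \varepsilon$). For the Duhamel term, insert $\jap{x}^{-2}$ on the left and factor the source as $\jn^{-1}\alpha \cdot u(s)^2 = (\jn^{-1}\jap{x}^{2}\alpha)\cdot(\jap{x}^{-2}u(s)^2)$—schematically—so that \eqref{equ:local_decay} applies with a $\jt^{-1/2}$ gain from the propagator and the source is controlled by $\|\jap{x}^{-2}u(s)^2\|_{L^2} \lesssim \|u(s)\|_{L^\infty}\|\jap{x}^{-2}u(s)\|_{L^2} \lesssim \varepsilon \jap{s}^{-\hf+}\cdot\varepsilon\jap{s}^{-\hf}$. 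The resulting time integral $\int_0^t \jap{t-s}^{-1/2}\varepsilon^2\jap{s}^{-1+}\,ds \lesssim \varepsilon^2 \jap{t}^{-1/2+}$, which closes (with room to spare, since $\varepsilon^2 \ll \varepsilon$). For the first-order bounds $\jap{t}\|\jap{x}^{-2}\partial_x^j v(t)\|_{L^2}$, $j=1,2,3$: commute $\partial_x^j$ through the equation; the key point is that $\partial_x(\alpha u^2) = \alpha' u^2 + 2\alpha u \partial_x u$ and $\partial_x u = \partial_x v + \partial_x \bar v$, so derivatives either land on the Schwartz coefficient or produce $\partial_x v$ which itself enjoys the improved local decay one is bootstrapping; use \eqref{equ:local_decay_van1} (the $\jt^{-3/2}$ local decay with the extra $\partial_x/\jn$) where a factor $\partial_x$ can be moved onto the propagator, and \eqref{equ:local_decay} otherwise, to produce source terms decaying like $\varepsilon^2\jap{s}^{-3/2+}$ and hence a Duhamel integral bounded by $\varepsilon^2\jap{t}^{-1}$.

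\medskip

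The most delicate family is the third: $\jap{t}\|\jap{x}^{-2}\partial_x^j \partial_t(e^{-it}v(t))\|_{L^2}$ for $j = 0,1$. Writing $\partial_t(e^{-it}v) = e^{-it}(\partial_t - i)v = e^{-it}\bigl(i(\jn - 1)v + \tfrac{1}{2i}\jn^{-1}(\alpha u^2)\bigr)$, the nonlinear piece is acceptable by the estimates just described (it is one power of decay better than $v$ itself, since the weight from $\alpha$ is present), but the linear piece $i(\jn-1)v$ is the issue: $\jn - 1$ does not gain decay pointwise. Here one must \emph{not} estimate term by term but rather use \eqref{equ:local_decay_van2}: the symbol $\jap{\xi}-1 = \calO(\xi^2)$ vanishes to second order at $\xi = 0$, which upgrades the local decay of $\jap{x}^{-2}\tfrac{\jn-1}{\jn}e^{\pm it\jn}\jap{x}^{-2}$ to $\jt^{-2}$. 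Concretely, go back to Duhamel for $v$ itself, apply $(\jn-1)$, and split: the $(\jn-1)e^{it\jn}v_0$ term is handled directly by \eqref{equ:local_decay_van2}, giving $\jt^{-2}$ decay; for the Duhamel integral, write $(\jn-1)\int_0^t e^{i(t-s)\jn}\jn^{-1}(\alpha u^2)\,ds = \int_0^t \tfrac{\jn-1}{\jn}e^{i(t-s)\jn}(\alpha u^2)\,ds$ and invoke \eqref{equ:local_decay_van2} again, after moving one copy of $\jap{x}^{-2}$ off $\alpha$; the source $\|\jap{x}^{-2}u(s)^2\|_{L^2}\lesssim \varepsilon^2\jap{s}^{-1+}$ together with the $\jt^{-2}$-kernel yields a $\jap{t}^{-1}$ bound (even $\jt^{-1+}$), as required. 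The $j=1$ case is analogous, using $\partial_x(\jn-1)/\jn$ which still vanishes to high order at the origin.

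\medskip

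The main obstacle I anticipate is precisely this last step: naively, $(\partial_t - i)v \sim (\jn-1)v$ has the same local decay as $v$, which is \emph{not} enough to get the extra power of $\jt$ demanded in \eqref{equ:resonant_bootstrap_bounds}. Recognizing that one must exploit the second-order vanishing of $\jap{\xi}-1$ at zero frequency—which is exactly the content of \eqref{equ:local_decay_van2}, itself a manifestation of the threshold resonance structure emphasized in the introduction—and organize the Duhamel iteration so that this cancellation is available, is the crux. A secondary technical point is the bookkeeping of how many $x$-weights and how many derivatives are ``spent'': one needs $\|\jap{x}^7\alpha\|_{H^3} < \infty$ precisely so that, after $L$ (one weight), $\partial_x^j$ up to $j=3$, and the two $\jap{x}^{-2}$ factors required by the local decay lemmas, the coefficient appearing in every source term is still in a weighted Sobolev space to which the lemmas apply; tracking this is routine but must be done carefully. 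Once all three families close with $\varepsilon^2$-gains, choosing $\varepsilon_0$ small completes the bootstrap.
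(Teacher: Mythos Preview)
Your treatment of the $\partial_x^j v$ bounds ($j \geq 1$) and of $\partial_t(e^{-it}v)$ is essentially correct and matches the paper: the extra vanishing of $\partial_x/\jn$ and $(\jn-1)/\jn$ at zero frequency upgrades the local-decay kernel to $\jap{t-s}^{-3/2}$ and $\jap{t-s}^{-2}$ respectively, and the source $\alpha u^2$ decays like $M(T)^2/\js$ in weighted $L^2$, so these convolutions are $\lesssim M(T)^2/\jt$.

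The genuine gap is in your handling of the zeroth-order bound $\jt^{1/2}\|\jx^{-2}v(t)\|_{L^2_x}$, and there are two separate problems.

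\textbf{(i)} The slow-growth bound $\|(\jn L v)(t)\|_{L^2_x} \lesssim \varepsilon\jt^{\delta}$ is \emph{false} in the resonant case. As noted in the remark following Theorem~\ref{thm:resonant}, the quantity $\|\jxi^2\partial_\xi\hat f(t,\xi)\|_{L^2_\xi}$ is strongly divergent for the profile of $v_{mod}$: near $\xi = \pm\sqrt{3}$ one has $\partial_\xi\int_1^t e^{is(2-\jxi)}s^{-1}\,\ud s \sim \int_1^t e^{is(2-\jxi)}\,\ud s$, whose $L^2_\xi$ norm grows like $t^{1/2}$. The method of~\cite{LLS19} works in the non-resonant Section~4 only \emph{after} the normal form has turned the quadratic source into cubic-type terms with integrable decay $\js^{-3/2+\delta}$; without that, the factor $t$ produced when $L$ hits the variable coefficient cannot be compensated. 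So you cannot obtain $\|v(t)\|_{L^\infty_x}\lesssim\varepsilon\jt^{-1/2+}$ by this route.

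\textbf{(ii)} Even granting the optimal $\|v(s)\|_{L^\infty_x}\lesssim\varepsilon\js^{-1/2}$, or using only the bootstrap bound $\|\jx^{-2}v(s)\|_{L^\infty_x}\lesssim M(T)\js^{-1/2}$, the source $\alpha u(s)^2$ decays only like $M(T)^2/\js$ in weighted $L^2$, and
\[
\jt^{1/2}\int_0^t \frac{1}{\jap{t-s}^{1/2}}\,\frac{M(T)^2}{\js}\,\ud s \ \sim \ M(T)^2\log\jt
\]
does \emph{not} close uniformly in $t$. Your claimed bound ``$\int_0^t\jap{t-s}^{-1/2}\varepsilon^2\js^{-1+}\,\ud s\lesssim\varepsilon^2\jt^{-1/2+}$'' is simply wrong; the integral is $\sim\varepsilon^2\jt^{-1/2}\log\jt$ at best.

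The paper's cure is to treat this zeroth-order estimate as the \emph{most} delicate one (not the $\partial_t(e^{-it}v)$ estimate). One adds and subtracts $u(s,0)^2$ inside the Duhamel integral. The difference $\alpha(x)\bigl(u(s,x)^2 - u(s,0)^2\bigr)$ is, by the fundamental theorem of calculus, schematically $x\alpha(x)(\partial_x u)(s)\,u(s)$, and the factor $\partial_x u$ enjoys the improved local decay $\js^{-1}$ already sitting in the bootstrap, so this piece has source decay $\js^{-3/2}$ and closes against the $\jap{t-s}^{-1/2}$ kernel. For the remaining term $(\jn^{-1}\alpha)(x)\,u(s,0)^2$ one decomposes $\alpha = \alpha_r + \alpha_{nr}$ in frequency near and away from $\pm\sqrt{3}$: for $\alpha_r$ one writes $\alpha_r = \partial_x\tilde\alpha_r$ (possible since $0\notin\operatorname{supp}\widehat{\alpha_r}$) and invokes~\eqref{equ:local_decay_van1}; for $\alpha_{nr}$ one inserts $u(s,0) = e^{is}\bigl(e^{-is}v(s,0)\bigr) + \mathrm{c.c.}$ and integrates by parts in $s$, which is legitimate since $(2-\jxi)^{-1}\widehat{\alpha_{nr}}(\xi)$ is smooth, producing boundary terms plus integrals whose sources involve $\partial_s\bigl(e^{-is}v(s,0)\bigr)$---precisely the quantity in your third family, decaying like $M(T)/\js$. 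All pieces then close with $M(T)^2/\jt^{1/2}$, and the argument stays entirely within $M(T)$ without any appeal to $L$ or to global $L^\infty_x$ decay.
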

\begin{proof}%[Proof of Proposition~\ref{prop:resonant_bootstrap_bounds}]
 The proof proceeds via a continuity argument. By time-reversal symmetry it suffices to argue forward in time. For any $T > 0$ we define the bootstrap quantity 
\begin{equation} \label{equ:resonant_bootstrap_quantity}
 \begin{aligned}
  M(T) &:= \sup_{0 \leq t \leq T} \, \biggl\{ \jap{t}^{\frac{1}{2}} \| \jap{x}^{-2} v(t) \|_{L^2_x} + \sum_{j=1}^3 \, \jap{t} \| \jap{x}^{-2} \partial_x^j v(t) \|_{L^2_x} + \sum_{j=0}^{1} \, \jap{t} \bigl\| \jap{x}^{-2} \partial_x^j \partial_t \bigl( e^{-i t} v(t) \bigr) \bigr\|_{L^2_x} \biggr\}.
 \end{aligned}
\end{equation}
We first observe that one-dimensional Sobolev estimates imply the following weighted $L^\infty_x$-bounds that will be used throughtout this proof
\begin{align}
 \sup_{0 \leq t \leq T} \, \jap{t}^{\frac{1}{2}} \| \jap{x}^{-2} \partial_x^j v(t) \|_{L^\infty_x} &\lesssim M(T), \quad 0 \leq j \leq 2,\\
 \sup_{0 \leq t \leq T} \, \jap{t} \bigl\| \jap{x}^{-2} \partial_t \bigl( e^{-it} v(t) \bigr) \bigr\|_{L^\infty_x} &\lesssim M(T).
\end{align}
%Since $u_-(t) = \overline{u_+(t)}$ and $u(t) = 2 \Re\bigl( u_+(t) \bigr)$, in what follows it suffices to estimate the $+$ wave component $u_+(t)$ of $u(t)$. 
In what follows we only consider times $0\leq t \leq T$.

\medskip 

\noindent \underline{{\it Local decay for $\pt (e^{-it} v(t))$}:}
We begin with the local decay estimates for the time derivative of the phase-filtered component $\partial_t \bigl( e^{-it} v(t) \bigr)$. To this end we compute that 
\begin{equation} 
 \begin{aligned}
  \partial_t \bigl( e^{-it} v(t) \bigr) &= e^{-it} \biggl( i (\jap{\nabla}-1) e^{it\jap{\nabla}} v_0 + \frac{1}{2 i} \jn^{-1} \bigl( \alpha(\cdot) u(t)^2 \bigr) \\
  &\qquad \qquad \qquad + \frac{1}{2} \int_0^t \frac{\jap{\nabla}-1}{\jap{\nabla}} e^{i(t-s)\jap{\nabla}} \bigl( \alpha u(s)^2 \bigr) \, \ud s \biggr).
 \end{aligned}
\end{equation}
By the local decay estimate~\eqref{equ:local_decay_van2} for the Klein-Gordon propagator, it follows that 
\begin{align*}
 \bigl\| \jap{x}^{-2} \partial_t \bigl( e^{-it} v(t) \bigr) \bigr\|_{L^2_x} &\lesssim \frac{\|\jx^2 v_0 \|_{H^1_x}}{\jt^2} + \|\jap{x}^4 \alpha(x)\|_{L^2_x} \|\jap{x}^{-2} v(t)\|_{L^\infty_x}^2 \\
 &\quad \quad + \int_0^t \biggl\| \jap{x}^{-2} \frac{\jap{\nabla}-1}{\jap{\nabla}} e^{i(t-s)\jap{\nabla}} \jap{x}^{-2} \biggr\|_{L^2_x \to L^2_x} \bigl\| \jap{x}^6 \alpha(x) \bigr\|_{L^2_x} \|\jap{x}^{-2} v(s)\|_{L^\infty_x}^2 \, \ud s \\
 &\lesssim \frac{\|\jx^2 v_0 \|_{H^1_x}}{\jt^2} + \frac{M(T)^2}{\jap{t}} + \int_0^t \frac{1}{\jap{t-s}^2} \frac{M(T)^2}{\jap{s}} \, \ud s \\
 &\lesssim \frac{1}{\jap{t}} \bigl( \|\jx^2 v_0 \|_{H^1_x} + M(T)^2 \bigr).
\end{align*}
The derivation of the local decay bound for $\px \pt (e^{-it} v(t))$ proceeds analogously.

\medskip 

\noindent \underline{{\it Local decay for $\px^j v(t)$:}} Next, we turn to the local decay bound for $\partial_x v(t)$. Using the local decay estimate~\eqref{equ:local_decay_van1} for the Klein-Gordon propagator, it is straightforward to obtain the desired bound
\begin{align*}
 \| \jap{x}^{-2} \partial_x v(t) \|_{L^2_x} &\lesssim \frac{\|\jx^2 v_0\|_{H^1_x}}{\jap{t}^{\frac{3}{2}}} + \int_0^t \biggl\| \jap{x}^{-2} \frac{\partial_x}{\jap{\nabla}} e^{i(t-s)\jap{\nabla}}  \jap{x}^{-2} \biggr\|_{L^2_x \to L^2_x} \bigr\| \jap{x}^2 \bigl( \alpha(\cdot) u(s)^2 \bigr) \bigr\|_{L^2_x} \, \ud s \\
 &\lesssim \frac{\|\jx^2 v_0\|_{H^1_x}}{\jap{t}^{\frac{3}{2}}} + \int_0^t \frac{1}{\jap{t-s}^{\frac{3}{2}}} \| \jap{x}^6 \alpha(x) \|_{L^2_x} \|\jap{x}^{-2} v(s)\|_{L^\infty_x}^2 \, \ud s \\
 &\lesssim \frac{\|\jx^2 v_0\|_{H^1_x}}{\jap{t}^{\frac{3}{2}}} + \int_0^t \frac{1}{\jap{t-s}^{\frac{3}{2}}} \frac{M(T)^2}{\jap{s}} \, \ud s \\
 &\lesssim \frac{1}{\jap{t}} \bigl( \|\jx^2 v_0\|_{H^1_x} + M(T)^2 \bigr).
\end{align*}
In the same manner we also obtain the desired local decay bounds for higher derivatives $\px^j v(t)$, namely 
\begin{equation*}
 \| \jap{x}^{-2} \partial_x^j v(t) \|_{L^2_x} \lesssim \frac{1}{\jap{t}} \bigl( \|\jx^2 v_0\|_{H^j_x}  + M(T)^2 \bigr) \quad \text{ for } j = 2, 3.
\end{equation*}

\medskip 

\noindent \underline{{\it Local decay for $v(t)$}:} Finally, the derivation of the local decay estimate for $v(t)$ requires a more careful argument. It suffices to describe how to treat the nonlinear part of Duhamel's formula for $v(t)$. By freely adding and subtracting $u(s,0)^2$, we rewrite it as
\begin{equation} \label{equ:resonant_duhamel_addsubtract}
 \begin{aligned}
  &\frac{1}{2i} \int_0^t e^{i(t-s)\jap{\nabla}} \jn^{-1} \bigl( \alpha(\cdot) u(s, \cdot)^2 \bigr) \, \ud s \\
  &\quad \quad = \frac{1}{2i} \int_0^t e^{i(t-s)\jap{\nabla}} \jn^{-1} \Bigl( \alpha(\cdot) \bigl( u(s, \cdot)^2 - u(s,0)^2 \bigr) \Bigr) \, \ud s + \frac{1}{2i} \int_0^t \bigr( e^{i(t-s)\jap{\nabla}} \jn^{-1} \alpha \bigr) u(s,0)^2 \, \ud s.
 \end{aligned}
\end{equation}
In the first term on the right-hand side we can pick up a derivative via the fundamental theorem of calculus and therefore expect better decay properties in view of the stronger local decay bound for $\px v(t)$. Indeed, for any $x \in \bbR$ we have 
\begin{equation} \label{equ:resonant_trick_derivative}
 \begin{aligned}
  u(s, x)^2 - u(s,0)^2 &= \bigl( u(s, x) - u(s, 0) \bigr) \bigl( u(s,x) + u(s,0) \bigr) \\
  &= x \Bigl( \int_0^1 (\partial_x u)(s, \eta x) \, \ud \eta \Bigr) (u(s,x) + u(s,0)).
 \end{aligned}
\end{equation}
Using the local decay estimate~\eqref{equ:local_decay} for the Klein-Gordon propagator and a change of variables, we obtain the desired bound
\begin{align*}
 &\biggl\| \jap{x}^{-2} \int_0^t e^{i(t-s)\jap{\nabla}} \jn^{-1} \Bigl( \alpha(\cdot) \bigl( u(s, \cdot)^2 - u(s,0)^2 \bigr) \Bigr) \, \ud s \biggr\|_{L^2_x} \\
 &\quad \lesssim \int_0^t \biggl\| \jap{x}^{-2} \frac{e^{i(t-s)\jap{\nabla}}}{\jap{\nabla}} \jap{x}^{-2} \biggr\|_{L^2_x \to L^2_x} \biggl( \int_0^1 \bigl\| \jap{x}^2 x \alpha(x) (\partial_x u)(s, \eta x) (u(s,x) + u(s,0)) \bigr\|_{L^2_x} \, \ud \eta \biggr) \, \ud s \\
 &\quad \lesssim \int_0^t \frac{1}{\jap{t-s}^{\frac{1}{2}}} \bigl\| \jap{x}^7 \alpha(x) \bigr\|_{L^\infty_x} \| \jap{x}^{-2} v(s) \|_{L^\infty_x} \biggl( \int_0^1  \bigl\| \jap{\eta x}^{-2} (\partial_x v)(s, \eta x) \bigr\|_{L^2_x}  \, \ud \eta \biggr) \, \ud s \\
 &\quad \lesssim \int_0^t \frac{1}{\jap{t-s}^{\frac{1}{2}}} \bigl\| \jap{x}^7 \alpha(x) \bigr\|_{L^\infty_x} \| \jap{x}^{-2} v(s) \|_{L^\infty_x} \| \jap{x}^{-2} (\partial_x v)(s) \|_{L^2_x} \biggl( \int_0^1 \eta^{-\frac{1}{2}} \, \ud \eta \biggr) \, \ud s \\ 
 &\quad \lesssim \int_0^t \frac{1}{\jap{t-s}^{\frac{1}{2}}} \frac{M(T)^2}{\jap{s}^{\frac{3}{2}}} \, \ud s \\
 &\quad \lesssim \frac{M(T)^2}{\jap{t}^{\frac{1}{2}}}.
\end{align*}
It remains to estimate the last term on the right-hand side of~\eqref{equ:resonant_duhamel_addsubtract}, which is the heart of the matter. To this end we decompose the variable coefficient $\alpha(x)$ into a component that is frequency localized around $\xi \simeq \pm \sqrt{3}$ and a remainder term. Specifically, let $\varphi \in C_c^\infty(\bbR)$ be a smooth bump function with $\varphi(\xi) = 1$ in a small neighborhood of $0$. %for $|\xi| \leq 2^{-M-1}$ and $\varphi(\xi) = 0$ for $|\xi| \geq 2^{-M}$ for some sufficiently large, fixed $M \in \bbN$. 
Then we define 
\begin{equation} \label{equ:definition_alpha1}
 \alpha_r(x) := \calF^{-1} \bigl[ \bigl(\varphi(\xi - \sqrt{3}) + \varphi(\xi + \sqrt{3}) \bigl) \widehat{\alpha}(\xi) \bigr](x)
\end{equation}
and set 
\[
 \alpha_{nr}(x) := \alpha(x) - \alpha_r(x).
\]
Correspondingly, we split the last term on the right-hand side of~\eqref{equ:resonant_duhamel_addsubtract} into 
\begin{equation} \label{equ:resonant_difficult_term_decompose}
 \begin{aligned}
  \frac{1}{2i} \int_0^t \bigr( e^{i(t-s)\jap{\nabla}} \jn^{-1} \alpha \bigr) u(s,0)^2 \, \ud s &= \frac{1}{2i} \int_0^t \bigr( e^{i(t-s)\jap{\nabla}} \jn^{-1} \alpha_r \bigr) u(s,0)^2 \, \ud s \\
  &\quad \quad + \frac{1}{2i} \int_0^t \bigr( e^{i(t-s)\jap{\nabla}} \jn^{-1} \alpha_{nr} \bigr) u(s,0)^2 \, \ud s. 
 \end{aligned}
\end{equation}
Since the Fourier transform of the coefficient $\alpha_r$ is supported away from zero frequency $\xi = 0$, there exists a smooth and decaying function $\widetilde{\alpha}_r(x)$ such that
\begin{equation*}
 \alpha_r(x) = \partial_x \widetilde{\alpha}_r(x).
\end{equation*}
Thus, by the local decay estimate~\eqref{equ:local_decay_van1} for the Klein-Gordon propagator we obtain for the first term on the right-hand side of the decomposition~\eqref{equ:resonant_difficult_term_decompose} that
\begin{align*}
 \biggl\| \jap{x}^{-2} \frac{1}{2i} \int_0^t \bigr( e^{i(t-s)\jap{\nabla}} \jn^{-1} \alpha_r \bigr) u(s,0)^2 \, \ud s \biggr\|_{L^2_x} &\lesssim \int_0^t \biggl\| \jap{x}^{-2} \frac{\partial_x}{\jap{\nabla}} e^{+i(t-s)\jap{\nabla}} \widetilde{\alpha}_r \biggr\|_{L^2_x} |v(s,0)|^2 \, \ud s \\
 &\lesssim \int_0^t \frac{1}{\jap{t-s}^{\frac{3}{2}}} \| \jap{x}^2 \widetilde{\alpha}_r(x) \|_{L^2_x} \| \jap{x}^{-2} v(s) \|_{L^\infty_x}^2 \, \ud s \\
 &\lesssim \int_0^t \frac{1}{\jap{t-s}^{\frac{3}{2}}} \frac{M(T)^2}{\jap{s}} \, \ud s \\
 &\lesssim \frac{M(T)^2}{\jap{t}}.
\end{align*}
Finally, we estimate the last term on the right-hand side of~\eqref{equ:resonant_difficult_term_decompose}. To this end we insert 
\begin{equation*}
 u(s,0) = e^{+is} \bigl( e^{-is} v(s,0) \bigr) + e^{-is} \bigl( e^{+is} \bar{v}(s,0) \bigr)
\end{equation*}
to find that
\begin{equation} %\label{equ:resonant_weighted_energy_end1}
 \begin{aligned}
  &\frac{1}{2i} \int_0^t \bigr( e^{i(t-s)\jap{\nabla}} \jn^{-1} \alpha_{nr} \bigr) u(s,0)^2 \, \ud s \\
  &\quad \quad = \frac{1}{2i} \int_0^t \bigr( e^{i(t-s)\jap{\nabla}} \jn^{-1} \alpha_{nr} \bigr)  \Bigl( e^{+is} \bigl( e^{-is} v(s,0) \bigr) + e^{-is} \bigl( e^{+is} \bar{v}(s,0) \bigr) \Bigr)^2 \, \ud s \\
  &\quad \quad = \frac{1}{2i} \int_0^t \bigr( e^{i(t-s)\jap{\nabla}} \jn^{-1} \alpha_{nr} \bigr)  e^{+2is} \bigl( e^{-is} v(s,0) \bigr)^2 \, \ud s \\
  &\quad \qquad + \frac{1}{2i} \int_0^t \bigr( e^{i(t-s)\jap{\nabla}} \jn^{-1} \alpha_{nr} \bigr)  2 \bigl( e^{-is} v(s,0) \bigr) \bigl( e^{+is} \bar{v}(s,0) \bigr) \, \ud s \\ 
  &\quad \qquad + \frac{1}{2i} \int_0^t \bigr( e^{i(t-s)\jap{\nabla}} \jn^{-1} \alpha_{nr} \bigr)  e^{-2is} \bigl( e^{+is} \bar{v}(s,0) \bigr)^2 \, \ud s \\
  &\quad \quad \equiv I + II + III.
 \end{aligned}
\end{equation}
Now we describe in detail how to treat the term $I$. All other terms can be handled similarly. Since the Fourier transform of the coefficient $\alpha_{nr}$ is supported away from the frequencies $\xi = \pm \sqrt{3}$ and since the symbol $2-\jxi$ vanishes if and only if $\xi = \pm \sqrt{3}$, we can integrate by parts in $s$ to find that
\begin{equation} %\label{equ:resonant_weighted_energy_end3}
 \begin{aligned}
 I &= -\frac{1}{2} \Bigl( (2-\jap{\nabla})^{-1} \jn^{-1} \alpha_{nr} \Bigr) e^{2it} \bigl( e^{-it} v(t,0) \bigr)^2 \\
 &\quad + \frac{1}{2} \Bigl( e^{it\jn} (2-\jap{\nabla})^{-1} \jn^{-1} \alpha_{nr} \Bigr) \bigl( v(0,0) \bigr)^2 \\
 &\quad + \frac{1}{2} \int_0^t \Bigl( e^{i(t-s)\jn} (2-\jap{\nabla})^{-1} \jn^{-1} \alpha_{nr} \Bigr) e^{2is} \partial_s \bigl( e^{-is} v(s,0) \bigr) \bigl( e^{-is} v(s,0) \bigr) \, \ud s \\
 &\equiv I_{(a)} + I_{(b)} + I_{(c)}.
 \end{aligned}
\end{equation}
Then we can easily estimate the term $I_{(a)}$ by
\begin{equation*}
 \| \jap{x}^{-2} I_{(a)} \|_{L^2_x} \lesssim \bigl\| (2-\jap{\nabla})^{-1} \jn^{-1} \alpha_{nr} \bigr\|_{L^2_x} |v(t,0)|^2 \lesssim \| \jap{x}^{-2} v(t) \|_{L^\infty_x}^2 \lesssim \frac{M(T)^2}{\jap{t}}.
\end{equation*}
The bound on the second term $I_{(b)}$ just follows from the local decay estimate~\eqref{equ:local_decay} for the Klein-Gordon propagator acting on the smooth and decaying function $(2-\jap{\nabla})^{-1} \jn^{-1} \alpha_{nr}$, to wit 
\begin{align*}
 \| \jap{x}^{-2} I_{(b)} \|_{L^2_x} &\lesssim \Bigl\| \jap{x}^{-2} e^{it\jap{\nabla}} (2 - \jap{\nabla})^{-1} \jn^{-1} \alpha_{nr} \Bigr\|_{L^2_x} |v(0,0)|^2 \\
 &\lesssim \frac{1}{\jap{t}^{\frac{1}{2}}} \Bigl\| \jx^2 (2 - \jap{\nabla})^{-1} \jn^{-1} \alpha_{nr} \Bigr\|_{L^2_x} \|v_0\|_{H^1_x}^2.
\end{align*}
Lastly, we obtain that
\begin{align*}
 \| \jap{x}^{-2} I_{(c)} \|_{L^2_x} &\lesssim \int_0^t \bigl\| \jap{x}^{-2} e^{i(t-s)\jn} (2-\jap{\nabla})^{-1} \jn^{-1} \alpha_{nr} \bigr\|_{L^2_x} \bigl| \partial_s \bigl( e^{-is} v(s,0) \bigr) \bigr| \bigl| e^{-is} v(s,0) \bigr| \, \ud s \\
 &\lesssim \int_0^t \frac{1}{\jap{t-s}^{\frac{1}{2}}} \bigl\| \jap{x}^2 (2-\jap{\nabla})^{-1} \jn^{-1} \alpha_{nr} \bigr\|_{L^2_x} \bigl\| \jap{x}^{-2} \partial_s \bigl( e^{-is} v(s) \bigr) \bigr\|_{L^\infty_x} \|\jap{x}^{-2} v(s)\|_{L^\infty_x} \, \ud s \\
 &\lesssim \int_0^t \frac{1}{\jap{t-s}^{\frac{1}{2}}} \frac{M(T)}{\jap{s}} \frac{M(T)}{\jap{s}^{\frac{1}{2}}} \, \ud s \\
 &\lesssim \frac{M(T)^2}{\jap{t}^{\frac{1}{2}}}.
\end{align*}

Putting all of the above estimates together, we conclude that
\begin{equation*}
 M(T) \lesssim \varepsilon + M(T)^2.
\end{equation*}
The assertion of Proposition~\ref{prop:resonant_bootstrap_bounds} now follows by a standard continuity argument.
\end{proof}

\subsection{Asymptotics at the origin}

Having the local decay bounds from Proposition~\ref{prop:resonant_bootstrap_bounds} at our disposal, we are already in a position to determine the asymptotics of the solution $v(t)$ to~\eqref{equ:resonant_nlkg} at the origin $x=0$.

\begin{proposition}[Asymptotics at the origin] \label{prop:resonant_asymptotics_origin}
 Suppose that the assumptions of Proposition~\ref{prop:resonant_bootstrap_bounds} are in place. Then there exists a small amplitude $a_0 \in \bbC$ with $|a_0| \lesssim \varepsilon$ so that the asymptotics of the solution $v(t)$ to~\eqref{equ:resonant_nlkg} at the origin $x=0$ are given by 
 \begin{equation} \label{equ:resonant_asymptotics_origin}
  v(t,0) = \frac{1}{t^\hf} e^{i \frac{\pi}{4}} e^{it} a_0 + r(t) \quad \text{ for all } t \geq 1,
 \end{equation}
 with a remainder term satisfying
 \begin{equation*}
  |r(t)| \lesssim \frac{\varepsilon}{t^{1-}}.
 \end{equation*}
 Analogous asymptotics for $v(t,0)$ hold for negative times.
\end{proposition}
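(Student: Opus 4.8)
The plan is to start from Duhamel's formula for the solution $v(t)$ to~\eqref{equ:resonant_nlkg} evaluated at $x=0$, to insert the asymptotics of the Klein-Gordon propagator, and to isolate the leading order term with the help of the local decay bounds of \prop{prop:resonant_bootstrap_bounds}. We may assume $t \geq 2$; for $1 \leq t \leq 2$ the asserted expansion holds trivially since $|v(t,0)| \lesssim \|\jx^{-2} v(t)\|_{L^\infty_x} \lesssim \varepsilon$. Writing out Duhamel's formula and using~\eqref{equ:u_equ_v_plus_vbar},
\[
 v(t,0) = \bigl( e^{it\jn} v_0 \bigr)(0) + \frac{1}{2i} \int_0^t \bigl( e^{i(t-s)\jn} \jn^{-1} (\alpha u(s)^2) \bigr)(0) \, \ud s .
\]
For the linear term I would use the sharpened asymptotics of \rem{rem:stronger_remainder_decay}: at $x=0$ (so $\rho = t$) they give $(e^{it\jn} v_0)(0) = t^{-\hf} e^{i\frac{\pi}{4}} e^{it} \widehat{v}_0(0) + \calO(t^{-1} \|\jx^2 v_0\|_{H^4_x})$, which already produces the term $\widehat{v}_0(0)$ of the formula~\eqref{equ:intro_formula_a0} for $a_0$, up to an error $\calO(\varepsilon t^{-1})$; note $|\widehat{v}_0(0)| \lesssim \|\jx v_0\|_{L^2_x} \lesssim \varepsilon$.

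For the nonlinear Duhamel term I would split $\int_0^t = \int_0^{t-1} + \int_{t-1}^t$. On the near-diagonal piece $s \in [t-1,t]$ one has $\|e^{i(t-s)\jn} \jn^{-1}(\alpha u(s)^2)\|_{L^\infty_x} \lesssim \|\alpha u(s)^2\|_{L^2_x} \lesssim \varepsilon^2 \js^{-1}$ by Sobolev embedding and the weighted $L^\infty_x$-bounds of \prop{prop:resonant_bootstrap_bounds}, so this piece is $\calO(\varepsilon^2 t^{-1})$. On $[0,t-1]$ we have $t - s \geq 1$, and I would again apply \rem{rem:stronger_remainder_decay}, now to $e^{i(t-s)\jn}(\jn^{-1}(\alpha u(s)^2))$ at $x=0$; since $\widehat{\jn^{-1}(\alpha u(s)^2)}(0) = \widehat{\alpha u(s)^2}(0)$, this produces a main term $(t-s)^{-\hf} e^{i\frac{\pi}{4}} e^{i(t-s)} \widehat{\alpha u(s)^2}(0)$ plus a remainder $\calO\bigl( (t-s)^{-1} \|\jx^2 \jn^{-1}(\alpha u(s)^2)\|_{H^4_x} \bigr)$. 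Using the local decay bounds of \prop{prop:resonant_bootstrap_bounds} (which control $\px^j v$ for $j \leq 3$ and $\px^j \pt(e^{-is}v)$ for $j \leq 1$ with the stated weights) together with the spatial localization of $\alpha$, one checks $\|\jx^2 \jn^{-1}(\alpha u(s)^2)\|_{H^4_x} \lesssim \varepsilon^2 \js^{-1}$, so the remainder integrates to $\calO(\varepsilon^2 t^{-1}\log t) = \calO(\varepsilon t^{-1+})$. Hence, up to admissible errors,
\[
 v(t,0) = \frac{e^{i\frac{\pi}{4}} e^{it}}{t^\hf} \widehat{v}_0(0) + \frac{e^{i\frac{\pi}{4}} e^{it}}{2i} \int_0^{t-1} \frac{e^{-is}}{(t-s)^\hf} \, \widehat{\alpha u(s)^2}(0) \, \ud s + \calO(\varepsilon t^{-1+}) .
\]

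Next I would expand $\widehat{\alpha u(s)^2}(0) = \tfrac{1}{\sqrt{2\pi}} \int_{\bbR} \alpha u(s)^2 \, \ud x$ via the phase-filtered decomposition $u(s,\cdot) = e^{is}\bigl(e^{-is}v(s,\cdot)\bigr) + e^{-is}\bigl(e^{is}\bar v(s,\cdot)\bigr)$, which rewrites it as $\tfrac{1}{\sqrt{2\pi}}\bigl( e^{2is} A_1(s) + 2 A_2(s) + e^{-2is} A_3(s) \bigr)$, where $A_1(s) = \int \alpha \, (e^{-is}v(s))^2 \, \ud x$, $A_2(s) = \int \alpha \, |v(s)|^2 \, \ud x$, and $A_3(s) = \int \alpha \, (e^{is}\bar v(s))^2 \, \ud x$. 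The three resulting integrands then carry $s$-oscillation $e^{is}$, $e^{-is}$, $e^{-3is}$, all of which are non-stationary — this reflects the fact that the $\xi = \pm\sqrt{3}$ resonance does not affect the behavior of $v$ at the origin. I would integrate by parts in $s$ in each of the three terms. This produces: (i) boundary terms at $s=0$, which (after tracking the constants coming from the three phases) assemble into the ``initial data'' part $\tfrac{1}{\sqrt{2\pi}}\bigl( \tfrac12 \int\alpha v_0^2 - \int\alpha|v_0|^2 - \tfrac16 \int\alpha\bar v_0^2 \bigr)$ of~\eqref{equ:intro_formula_a0}; (ii) boundary terms at $s=t-1$, which are $\calO(\varepsilon^2 t^{-1})$ since $|A_j(t-1)| \lesssim \varepsilon^2 \jt^{-1}$; and (iii) leftover integrals of the form $\int_0^{t-1} e^{\pm is}\, \partial_s\bigl( \tfrac{A_j(s)}{(t-s)^\hf} \bigr) \, \ud s$ (and with $e^{-3is}$). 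Splitting $\partial_s\bigl( \tfrac{A_j}{(t-s)^\hf} \bigr) = \tfrac{\partial_s A_j}{(t-s)^\hf} + \tfrac{A_j}{2 (t-s)^{\thf}}$, the second summand is $\calO(\varepsilon^2 t^{-1})$ using $|A_j(s)| \lesssim \varepsilon^2 \js^{-1}$. For the first summand I would use the improved bound $|\partial_s A_j(s)| \lesssim \varepsilon^2 \js^{-\thf}$, which follows from the local decay estimate for $\pt(e^{-it}v)$ in \prop{prop:resonant_bootstrap_bounds} and the localization of $\alpha$; since $\js^{-\thf}$ is integrable in $s$, one may replace $(t-s)^{-\hf}$ by $t^{-\hf}$ and extend the integration to $s = +\infty$ at a total cost $\calO(\varepsilon^2 t^{-1}\log t)$, and the resulting terms $t^{-\hf}\int_0^\infty e^{\pm is} \partial_s A_j(s) \, \ud s$ assemble, after unwinding $\partial_s A_j$, into the time-integral part of~\eqref{equ:intro_formula_a0}. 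Collecting all contributions yields $v(t,0) = t^{-\hf} e^{i\frac{\pi}{4}} e^{it} a_0 + \calO(\varepsilon t^{-1+})$ with $a_0$ as in~\eqref{equ:intro_formula_a0}; the bound $|a_0| \lesssim \varepsilon$ then follows directly from that formula together with the weighted $L^\infty_x$-bounds for $v$, the decay estimates above, and $\|\jx^4\alpha\|_{L^1_x} < \infty$. The statement for negative times follows from the time-reversal symmetry of~\eqref{equ:resonant_nlkg}.

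I expect the main obstacle to be twofold. First, one genuinely needs the sharpened remainder of \rem{rem:stronger_remainder_decay} — the $t^{-5/8}$ remainder in \lem{lem:KG_propagator_asymptotics} is too weak to close the estimate at the rate $t^{-1+}$ — and feeding $\jn^{-1}(\alpha u(s)^2)$ into it forces the bound $\|\jx^2 \jn^{-1}(\alpha u(s)^2)\|_{H^4_x} \lesssim \varepsilon^2 \js^{-1}$, which is precisely where the full strength of \prop{prop:resonant_bootstrap_bounds}, including the higher-derivative and $\pt(e^{-it}v)$ bounds, enters. Second, the bookkeeping around the singular weights $(t-s)^{-\hf}$ and $(t-s)^{-\thf}$ near $s=t$, and in particular the step of replacing $(t-s)^{-\hf}$ by $t^{-\hf}$ and extending the $s$-integral to infinity at only $\calO(\varepsilon t^{-1+})$ cost, hinges on carefully exploiting the integrable $\js^{-\thf}$ decay of $\partial_s A_j$, i.e. the local-decay improvement for $\pt(e^{-it}v)$ provided by \prop{prop:resonant_bootstrap_bounds}.
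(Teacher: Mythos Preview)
Your proposal is correct and follows essentially the same route as the paper: Duhamel at $x=0$, the sharpened asymptotics of \rem{rem:stronger_remainder_decay} for both the free and retarded propagators, the phase-filtered decomposition of $u(s,0)^2$, and integration by parts in $s$ using the non-stationarity of $e^{is}, e^{-is}, e^{-3is}$. The only cosmetic differences are that the paper factors out $t^{-1/2}$ upfront (working with $t^{1/2}(t-s)^{-1/2}$) and replaces your ``$(t-s)^{-1/2}\to t^{-1/2}$ and extend to $\infty$'' step by an explicit four-piece splitting of the $s$-integral; also, the $\partial_t(e^{-it}v)$ local decay bound is needed for $\partial_s A_j$ (as you say) but not for the $H^4$ remainder estimate, which only uses the $\partial_x^j v$ bounds with $j\leq 3$.
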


\begin{proof}
By Duhamel's formula for $v(t)$ we have that 
\begin{equation} \label{equ:resonant_asymptotics_duhamel_v}
 v(t,0) = \bigl( e^{it\jn} v_0 \bigr)(0) + \frac{1}{2i} \int_0^t \Bigl( e^{i(t-s)\jap{\nabla}} \jn^{-1} \bigl( \alpha(\cdot) u(s, \cdot)^2 \bigr) \Bigr)(0) \, \ud s.
\end{equation}
The asymptotics for the Klein-Gordon propagator as in Remark~\ref{rem:stronger_remainder_decay} give 
\begin{equation*}
 \bigl( e^{it\jn} v_0 \bigr)(0) = \frac{1}{t^\hf} e^{i\frac{\pi}{4}} e^{it} \hat{v}_0(0) + \frac{1}{t} \calO \bigl( \|\jx^2 v_0\|_{H^4_x} \bigr), \quad t \geq 1.
\end{equation*}
We can therefore focus on determining the asymptotics of the nonlinear term on the right-hand side of~\eqref{equ:resonant_asymptotics_duhamel_v}. Let $t \geq 1$. The contribution from integration over the time interval $t-1 \leq s \leq t$ can easily be seen to be of order $\calO(\varepsilon^2 t^{-1})$ using the local decay bounds for $v(t)$ from~Proposition~\ref{prop:resonant_bootstrap_bounds}. On the time integration interval $0 \leq s \leq t-1$ we may insert the asymptotics from Remark~\ref{rem:stronger_remainder_decay} for the retarded Klein-Gordon propagator $e^{i(t-s)\jap{\nabla}}$ to find that
\begin{equation} \label{equ:resonant_asymptotics_insert_for_retarded}
 \begin{aligned}
  \int_0^{t-1} \Bigl( e^{i(t-s)\jap{\nabla}} \jn^{-1} \bigl( \alpha(\cdot) u(s, \cdot)^2 \bigr) \Bigr)(0) \, \ud s &= \int_0^{t-1} e^{i\frac{\pi}{4}} \frac{ e^{i(t-s)} }{(t-s)^{\frac{1}{2}}} \calF\bigl[ \alpha(\cdot) u(s, \cdot)^2 \bigr](0) \, \ud s + \tilde{r}(t) \\
  &= e^{i\frac{\pi}{4}} \frac{e^{it}}{t^{\frac{1}{2}}} \int_0^{t-1} \frac{ t^{\frac{1}{2}} }{(t-s)^{\frac{1}{2}}} e^{-is} \calF\bigl[ \alpha(\cdot) u(s, \cdot)^2 \bigr](0) \, \ud s + \tilde{r}(t), 
 \end{aligned}
\end{equation}
where the remainder term $\tilde{r}(t)$ satisfies the stronger decay estimate
\begin{align*}
 |\tilde{r}(t)| \lesssim \int_0^{t-1} \frac{1}{t-s} \bigl\| \jap{x}^2 \alpha(x) u(s,x)^2 \bigr\|_{H^3_x} \, \ud s &\lesssim \int_0^{t-1} \frac{1}{t-s} \bigl\| \jap{x}^6 \alpha(x) \bigr\|_{H^3_x} \bigl\| \jap{x}^{-2} v(s) \bigr\|_{H^3_x}^2 \, \ud s \\
 &\lesssim \int_0^{t-1} \frac{1}{t-s} \frac{\varepsilon^2}{\jap{s}} \, \ud s \\
 &\lesssim \frac{\varepsilon^2}{\jap{t}^{1-}}.
\end{align*}
It remains to determine the leading order behavior of the first term on the right-hand side of~\eqref{equ:resonant_asymptotics_insert_for_retarded}. To this end we insert the decomposition 
\begin{equation*}
 u(s) = e^{+is} \bigl( e^{-is} v(s) \bigr) + e^{-is} \bigl( e^{+is} \bar{v}(s) \bigr)
\end{equation*}
to find that
\begin{align*}
 &\sqrt{2\pi} \int_0^{t-1} \frac{ t^{\frac{1}{2}} }{(t-s)^{\frac{1}{2}}} e^{-is} \calF\bigl[ \alpha(\cdot) u(s, \cdot)^2 \bigr](0) \, \ud s \\
 &\quad \quad = \int_0^{t-1} \frac{ t^{\frac{1}{2}} }{(t-s)^{\frac{1}{2}}} e^{-is} \biggl( \int_{\bbR} \alpha(x) u(s,x)^2 \, \ud x \biggr) \, \ud s \\
 &\quad \quad = \int_0^{t-1} \frac{ t^{\frac{1}{2}} }{(t-s)^{\frac{1}{2}}} e^{+is} \biggl( \int_{\bbR} \alpha(x) \bigl( e^{-is} v(s,x) \bigr)^2 \, \ud x \biggr) \, \ud s \\
 &\quad \quad \quad + 2 \int_0^{t-1} \frac{ t^{\frac{1}{2}} }{(t-s)^{\frac{1}{2}}} e^{-is} \biggl( \int_{\bbR} \alpha(x) \bigl( e^{-is} v(s,x) \bigr) \bigl( e^{+is} \bar{v}(s,x) \bigr) \, \ud x \biggr) \, \ud s \\
 &\quad \quad \quad + \int_0^{t-1} \frac{ t^{\frac{1}{2}} }{(t-s)^{\frac{1}{2}}} e^{-3is} \biggl( \int_{\bbR} \alpha(x) \bigl( e^{+is} \bar{v}(s,x) \bigr)^2 \, \ud x \biggr) \, \ud s \\
 &\quad \quad \equiv I + II + III.
\end{align*}
Then we exploit the oscillations in each term and integrate by parts in time. For the term $I$ we find that
\begin{align*}
 I &= -i t^{\frac{1}{2}} e^{+i(t-1)} \int_{\bbR} \alpha(x) \bigl( e^{-i(t-1)} v(t-1, x) \bigr)^2 \, \ud x \\ 
 &\quad \quad + i \int_{\bbR} \alpha(x) v(0,x)^2 \, \ud x \\
 &\quad \quad + \frac{i}{2} \int_0^{t-1} \frac{ t^{\frac{1}{2}} }{(t-s)^{\frac{3}{2}}} e^{+is} \biggl( \int_{\bbR} \alpha(x) \bigl( e^{-is} v(s,x) \bigr)^2 \, \ud x \biggr) \, \ud s \\
 &\quad \quad + 2i \int_0^{t-1} \frac{ t^{\frac{1}{2}} }{(t-s)^{\frac{1}{2}}} e^{+is} \biggl( \int_{\bbR} \alpha(x) \partial_s \bigl( e^{-is} v(s,x) \bigr) \bigl( e^{-is} v(s,x) \bigr) \, \ud x \biggr) \, \ud s \\
 &\equiv I_{(a)} + I_{(b)} + I_{(c)} + I_{(d)}.
\end{align*}
It is clear that the term $I_{(b)}$ contributes to the leading order behavior of $I$. Using the local decay estimates for $v(t)$ from Proposition~\ref{prop:resonant_bootstrap_bounds} we obtain 
\begin{equation*}
 | I_{(a)} | \lesssim t^{\frac{1}{2}} \| \jap{x}^4 \alpha(x) \|_{L^\infty_x} \bigl\| \jap{x}^{-2} v(t-1) \bigr\|_{L^2_x}^2 \lesssim \frac{\varepsilon^2}{\jap{t}^{\frac{1}{2}}}
\end{equation*}
and
\begin{equation*}
 | I_{(c)} | \lesssim t^{\frac{1}{2}} \int_0^{t-1} \frac{1}{(t-s)^{\frac{3}{2}}} \| \jap{x}^4 \alpha(x) \bigr\|_{L^\infty_x} \bigl\| \jap{x}^{-2} v(s,x) \bigr\|_{L^2_x}^2 \, \ud s \lesssim t^{\frac{1}{2}} \int_0^{t-1} \frac{1}{(t-s)^{\frac{3}{2}}} \frac{\varepsilon^2}{\jap{s}} \, \ud s \lesssim \frac{\varepsilon^2}{\jap{t}^{\frac{1}{2}}}.
\end{equation*}
Then we rewrite the last term $I_{(d)}$ as
\begin{align*}
 I_{(d)} &= 2i \int_0^{t-1} \frac{ t^{\frac{1}{2}} }{(t-s)^{\frac{1}{2}}} e^{+is} \biggl( \int_{\bbR} \alpha(x) \partial_s \bigl( e^{-is} v(s,x) \bigr) \bigl( e^{-is} v(s,x) \bigr) \, \ud x \biggr) \, \ud s \\ 
 &= 2i \int_0^\infty e^{+is} \int_{\bbR} \alpha(x) \partial_s \bigl( e^{-is} v(s,x) \bigr) \bigl( e^{-is} v(s,x) \bigr) \, \ud x \, \ud s \\
 &\quad -2i \int_{\frac{t}{2}}^\infty e^{+is} \int_{\bbR} \alpha(x) \partial_s \bigl( e^{-is} v(s,x) \bigr) \bigl( e^{-is} v(s,x) \bigr) \, \ud x \, \ud s \\
 &\quad + 2i \int_{\frac{t}{2}}^{t-1} \frac{t^{\frac{1}{2}}}{(t-s)^{\frac{1}{2}}} e^{+is} \int_{\bbR} \alpha(x) \partial_s \bigl( e^{-is} v(s,x) \bigr) \bigl( e^{-is} v(s,x) \bigr) \, \ud x \, \ud s \\
 &\quad + 2i \int_0^{\frac{t}{2}} \biggl( \frac{t^{\frac{1}{2}}}{(t-s)^{\frac{1}{2}}} - 1 \biggr) e^{+is} \int_{\bbR} \alpha(x) \partial_s \bigl( e^{-is} v(s,x) \bigr) \bigl( e^{-is} v(s,x) \bigr) \, \ud x \, \ud s \\
 &\equiv I_{(d)}^{(1)} + I_{(d)}^{(2)} + I_{(d)}^{(3)} + I_{(d)}^{(4)}.
\end{align*}
Using the local decay estimates from Proposition~\ref{prop:resonant_bootstrap_bounds} it is easy to see that the improper integral $I_{(d)}^{(1)}$ converges and contributes to the leading order behavior of the term $I$, while the other terms $I_{(d)}^{(2)}$, $I_{(d)}^{(3)}$, and $I_{(d)}^{(4)}$ can be seen to be of order $\calO( \varepsilon^2 \jap{t}^{-\frac{1}{2}}) $. Indeed, we have 
\begin{align*}
 \bigl| I_{(d)}^{(1)} \bigr| &\lesssim \int_0^\infty \| \jap{x}^4 \alpha(x) \|_{L^\infty_x} \bigl\| \jap{x}^{-2} \partial_s \bigl( e^{-is} v(s) \bigr) \bigr\|_{L^2_x} \bigl\| \jap{x}^{-2} v(s) \bigr\|_{L^2_x} \, \ud s \lesssim \int_{0}^\infty \frac{\varepsilon^2}{\jap{s}^{\frac{3}{2}}} \, \ud s \lesssim \varepsilon^2
\end{align*}
and 
\begin{align*}
 \bigl| I_{(d)}^{(2)} \bigr| &\lesssim \int_{\frac{t}{2}}^\infty \| \jap{x}^4 \alpha(x) \|_{L^\infty_x} \bigl\| \jap{x}^{-2} \partial_s \bigl( e^{-is} v(s) \bigr) \bigr\|_{L^2_x} \bigl\| \jap{x}^{-2} v(s) \bigr\|_{L^2_x} \, \ud s \lesssim \int_{\frac{t}{2}}^\infty \frac{\varepsilon^2}{\jap{s}^{\frac{3}{2}}} \, \ud s \lesssim \frac{\varepsilon^2}{\jap{t}^{\frac{1}{2}}}.
\end{align*}
Similarly, we find that 
\begin{align*}
 \bigl| I_{(d)}^{(3)} \bigr| &\lesssim \int_{\frac{t}{2}}^{t-1} \frac{t^{\frac{1}{2}}}{(t-s)^{\frac{1}{2}}} \frac{\varepsilon^2}{\jap{s}^{\frac{3}{2}}} \, \ud s \lesssim \frac{\varepsilon^2}{\jap{t}^{\frac{1}{2}}}
\end{align*}
and
\begin{align*}
 \bigl| I_{(d)}^{(4)} \bigr| &\lesssim \int_0^{\frac{t}{2}} \frac{s}{(t-s)^{\frac{1}{2}} ( t^{\frac{1}{2}} + (t-s)^{\frac{1}{2}} )} \frac{\varepsilon^2}{\jap{s}^{\frac{3}{2}}} \ud s \lesssim \frac{\varepsilon^2}{\jap{t}^{\frac{1}{2}}}.
\end{align*}
Hence, we have found that the leading order behavior of the term $I$ is given by
\begin{align*}
 I = i \int_{\bbR} \alpha(x) v(0,x)^2 \, \ud x + 2i \int_0^\infty e^{+is} \biggl( \int_{\bbR} \alpha(x) \partial_s \bigl( e^{-is} v(s,x) \bigr) \bigl( e^{-is} v(s,x) \bigr) \, \ud x \biggr) \, \ud s + \calO \biggl( \frac{\varepsilon^2}{\jap{t}^{\frac{1}{2}}} \biggr).
\end{align*}
In an analogous manner, we compute that the leading order behaviors of the terms $II$ and $III$ are given by
\begin{align*}
 II &= - 2i \int_{\bbR} \alpha(x) |v(0,x)|^2 \, \ud x - 2i \int_0^\infty e^{-is}  \biggl( \int_{\bbR} \alpha(x) \partial_s \Bigl( \bigl( e^{-is} v(s,x) \bigr) \bigl( e^{+is} \bar{v}(s,x) \bigr) \Bigr) \, \ud x \biggr) \, \ud s + \calO \biggl( \frac{\varepsilon^2}{\jap{t}^{\frac{1}{2}}} \biggr)
\end{align*}
and 
\begin{align*}
 III &= - \frac{i}{3} \int_{\bbR} \alpha(x) \bar{v}(0,x)^2 \, \ud x - \frac{2i}{3} \int_0^\infty e^{-3is} \biggl( \int_{\bbR} \alpha(x) \partial_s \bigl( e^{+is} \bar{v}(s,x) \bigr) \bigl( e^{+is} \bar{v}(s,x) \bigr) \, \ud x \biggr) \, \ud s + \calO \biggl( \frac{\varepsilon^2}{\jap{t}^{\frac{1}{2}}} \biggr).
\end{align*}

Putting things together, we conclude that the asymptotic behavior of $v(t,0)$ is
\begin{equation*}
 v(t,0) = \frac{1}{t^\hf} e^{i\frac{\pi}{4}} e^{it} a_0 + \calO \biggl( \frac{\varepsilon}{\jap{t}^{1-}} \biggr), \quad t \geq 1, 
\end{equation*}
where the amplitude $a_0$ is given by
\begin{equation} \label{equ:resonant_amplitude_a0}
\begin{aligned}
 a_0 &= \hat{v}_0(0) + \frac{1}{\sqrt{2\pi}} \biggl( \frac{1}{2} \int_{\bbR} \alpha(x) v(0,x)^2 \, \ud x - \int_{\bbR} \alpha(x) |v(0,x)|^2 \, \ud x - \frac{1}{6} \int_{\bbR} \alpha(x) \bar{v}(0,x)^2 \, \ud x \biggr) \\
 &\quad \quad \quad \, \, \, + \frac{1}{\sqrt{2\pi}} \biggl( \int_0^\infty e^{+is} \int_{\bbR} \alpha(x) \partial_s \bigl( e^{-is} v(s,x) \bigr) \bigl( e^{-is} v(s,x) \bigr) \, \ud x \, \ud s \\
 &\qquad \qquad \qquad \qquad - \int_0^\infty e^{-is} \int_{\bbR} \alpha(x) \partial_s \Bigl( \bigl( e^{-is} v(s,x) \bigr) \bigl( e^{+is} \bar{v}(s,x) \bigr) \Bigr) \, \ud x \, \ud s \\
 &\qquad \qquad \qquad \qquad - \frac{1}{3} \int_0^\infty e^{-3is} \int_{\bbR} \alpha(x) \partial_s \bigl( e^{+is} \bar{v}(s,x) \bigr) \bigl( e^{+is} \bar{v}(s,x) \bigr) \, \ud x \, \ud s \biggr).
\end{aligned}
\end{equation}
This finishes the proof of Proposition~\ref{prop:resonant_asymptotics_origin}.
\end{proof}

\subsection{Proof of Theorem~\ref{thm:resonant}}

 In this subsection we give the proof of Theorem~\ref{thm:resonant}. We begin by deriving the decay estimate~\eqref{equ:resonant_thm_sharp_decay} for the solution $v(t)$ to~\eqref{equ:resonant_nlkg}.
 Using the standard dispersive decay estimate~\eqref{equ:dispersive_decay} for the Klein-Gordon propagator as well as the local decay estimates for $v(t)$ established in Proposition~\ref{prop:resonant_bootstrap_bounds}, we obtain from the Duhamel representation of $v(t)$ that
 \begin{align*}
  \|v(t)\|_{L^\infty_x} &\lesssim \frac{\|\jn^2 v_0\|_{L^1_x}}{\jt^\hf}  + \int_0^t \frac{1}{\jap{t-s}^\hf} \bigl\| \jn \bigl( \alpha(\cdot) u(s)^2 \bigr) \bigr\|_{L^1_x} \, \ud s \\
  &\lesssim \frac{\|\jx v_0\|_{H^2_x} }{\jt^{\hf}} + \int_0^t \frac{1}{\jap{t-s}^\hf}  \Bigl( \bigl\| \jx^{-2} v(s) \bigr\|_{L^2_x}^2 + \bigl\| \jx^{-2} \px v(s) \bigr\|_{L^2_x} \bigl\| \jx^{-2} v(s) \bigr\|_{L^2_x} \Bigr) \, \ud s \\
  &\lesssim \frac{\varepsilon}{\jt^\hf} + \int_0^t \frac{1}{\jap{t-s}^\hf} \frac{\varepsilon^2}{\js} \, \ud s \\
  &\lesssim \frac{\log(1+\jt)}{\jt^\hf} \varepsilon.
 \end{align*}
 
 Next we study the long-time behavior of the solution $v(t)$ in more detail. We note that by time-reversal symmetry it suffices to consider times $t>0$. By Proposition~\ref{prop:resonant_asymptotics_origin} the asymptotics of $v(t)$ at the origin $x=0$ are given by
 \begin{equation*}
  v(t,0) = \frac{1}{t^\hf} e^{i\frac{\pi}{4}} e^{it} a_0 + r(t), \quad t \geq 1,
 \end{equation*}
 where $a_0 \in \bbC$ with $|a_0| \lesssim \varepsilon$ and $|r(t)| \lesssim \varepsilon t^{-(1-)}$. Hence, we may write 
 \begin{align*}
  u(t,0)^2 = \bigl( v(t,0) + \bar{v}(t,0) \bigr)^2 = \frac{1}{t} e^{i\frac{\pi}{2}} e^{2it} a_0^2 + \frac{2}{t} |a_0|^2 + \frac{1}{t} e^{-i\frac{\pi}{2}} e^{-2it} \bar{a}_0^2 + \tilde{r}(t)
 \end{align*}
 for some remainder term $\tilde{r}(t)$ satisfying $|\tilde{r}(t)| \lesssim \varepsilon^2 t^{-(\frac{3}{2}-)}$. 
 Inserting this expansion into the Duhamel formula for $v(t)$, we obtain for times $t \geq 1$ that
 \begin{equation*}
  \begin{aligned}
   v(t) &= e^{it\jn} v_0 + \frac{1}{2i} \int_0^t e^{i(t-s)\jn} \jn^{-1} \bigl( \alpha(\cdot) u(s)^2 \bigr) \, \ud s \\
   &= e^{it\jn} v_0 + \frac{1}{2i} \int_0^1 e^{i(t-s)\jn} \jn^{-1} \bigl( \alpha(\cdot) u(s)^2 \bigr) \, \ud s + \frac{1}{2i} \int_1^t e^{i(t-s)\jn} \jn^{-1} \Bigl( \alpha(\cdot) \bigl( u(s)^2 - u(s,0)^2 \bigr) \Bigr) \, \ud s \\
   &\qquad + \frac{a_0^2}{2} \int_1^t \bigl( e^{i(t-s)\jn} \jn^{-1} \alpha \bigr) \frac{e^{2is}}{s} \, \ud s -i |a_0|^2 \int_1^t \bigl( e^{i(t-s)\jn} \jn^{-1} \alpha \bigr) \frac{1}{s} \, \ud s \\
   &\qquad - \frac{\bar{a}_0^2}{2} \int_1^t \bigl( e^{i(t-s)\jn} \jn^{-1} \alpha \bigr) \frac{e^{-2is}}{s} \, \ud s + \frac{1}{2i} \int_1^t \bigl( e^{i(t-s)\jn} \jn^{-1} \alpha \bigr) \tilde{r}(s) \, \ud s.
  \end{aligned}
 \end{equation*}
 We find below that the modified scattering behavior of $v(t)$ is caused by the component 
 \begin{equation*}
  v_{mod}(t) := \frac{a_0^2}{2} \int_1^t \bigl( e^{i(t-s)\jn} \jn^{-1} \alpha \bigr) \frac{e^{2is}}{s} \, \ud s.
 \end{equation*}
 All other components of $v(t)$ turn out to behave asymptotically like linear Klein-Gordon waves. Correspondingly,  we set 
 \begin{equation*}
  v_{free}(t) := v(t) - v_{mod}(t).
 \end{equation*}

 \medskip  
 
 \noindent \underline{{\it Decay and asymptotics of $v_{free}(t)$}:}
 We denote the profile of $v_{free}(t)$ by $g(t) = e^{-it\jn} v_{free}(t)$. In what follows, we establish that
 \begin{equation} \label{equ:resonant_Linfty_bound_profile}
  \sup_{t \geq 0} \, \bigl\| \jxi^\thf \hat{g}(t,\xi) \bigr\|_{L^\infty_\xi} \lesssim \varepsilon
 \end{equation}
 and that there exists $\widehat{V} \in L^\infty$ such that
 \begin{equation} \label{equ:resonant_limit_profile}
  \bigl\| \widehat{V}(\xi) - \jxi^\thf \hat{g}(t,\xi) \bigr\|_{L^\infty} \lesssim \frac{\varepsilon^2}{\jt^{\hf-}}, \quad t \geq 1.
 \end{equation}
 Moreover, we show that %there exists $0 < \nu \ll 1$ such that
 \begin{equation} \label{equ:resonant_growth_bound_profile}
  \sup_{t \geq 0} \, \jt^{-(0+)} \bigl\| \jx g(t) \bigr\|_{H^2_x} \lesssim \varepsilon.
 \end{equation}
 Then the asymptotics~\eqref{equ:resonant_thm_asymptotics_vfree} of $v_{free}(t)$ asserted in Theorem~\ref{thm:resonant} are a standard consequence of~\eqref{equ:resonant_Linfty_bound_profile}--\eqref{equ:resonant_growth_bound_profile} and Lemma~\ref{lem:KG_propagator_asymptotics}.
 Before we turn to the proofs of \eqref{equ:resonant_Linfty_bound_profile}--\eqref{equ:resonant_growth_bound_profile} we record that the Fourier transform $\hat{g}(t,\xi)$ of the profile~$g(t)$ is given by
 \begin{equation*}
  \begin{aligned}
   \hat{g}(t,\xi) &= \hat{v}_0(\xi) + \frac{1}{2i} \int_0^1 e^{-is \jxi} \jxi^{-1} \calF\bigl[ \alpha(\cdot) u(s)^2 \bigr](\xi) \, \ud s + \frac{1}{2i} \int_1^t e^{-is \jxi} \jxi^{-1} \calF\Bigl[ \alpha(\cdot) \bigl( u(s)^2 - u(s,0)^2 \bigr) \Bigr](\xi) \, \ud s \\
   &\quad -i |a_0|^2 \int_1^t e^{-is \jxi} \jxi^{-1} \widehat{\alpha}(\xi) \frac{1}{s} \, \ud s - \frac{\bar{a}_0^2}{2} \int_1^t e^{-is \jxi} \jxi^{-1} \widehat{\alpha}(\xi) \frac{e^{-2is}}{s} \, \ud s + \frac{1}{2i} \int_1^t e^{-is \jxi} \jxi^{-1} \widehat{\alpha}(\xi) \tilde{r}(s) \, \ud s \\
   &\equiv \hat{v}_0(\xi) + \hat{g}_1(t,\xi) + \ldots + \hat{g}_5(t,\xi).
  \end{aligned}
 \end{equation*}
 The phases in $\hat{g}_3(t,\xi)$ and in $\hat{g}_4(t,\xi)$ are non-stationary in $s$ so that we can recast these terms into a better form by integrating by parts
 \begin{align}
  \hat{g}_3(t,\xi) &= |a_0|^2 e^{-it\jxi} \jxi^{-2} \widehat{\alpha}(\xi) \frac{1}{t} - |a_0|^2 e^{-i\jxi} \jxi^{-2} \widehat{\alpha}(\xi) + |a_0|^2 \int_1^t e^{-is\jxi} \jxi^{-2} \widehat{\alpha}(\xi) \frac{1}{s^2} \, \ud s, \label{equ:resonant_g4hat_intbyparts} \\
  \hat{g}_4(t,\xi) &= \frac{\bar{a}_0^2}{2i} e^{-it(2+\jxi)} (2+\jxi)^{-1} \jxi^{-1} \widehat{\alpha}(\xi) \frac{1}{t} - \frac{\bar{a}_0^2}{2i} e^{-i(2+\jxi)} (2+\jxi)^{-1} \jxi^{-1} \widehat{\alpha}(\xi)   \label{equ:resonant_g5hat_intbyparts} \\
   &\quad - \frac{\bar{a}_0^2}{2i} \int_1^t e^{-is(2+\jxi)} (2+\jxi)^{-1} \jxi^{-1} \widehat{\alpha}(\xi) \frac{1}{s^2} \, \ud s. \nonumber
 \end{align}

 \begin{proof}[Proofs of~\eqref{equ:resonant_Linfty_bound_profile} and~\eqref{equ:resonant_limit_profile}]
 The bound~\eqref{equ:resonant_Linfty_bound_profile} follows by direct computation for each component $\hat{g}(t,\xi)$. It is clear for $\hat{v}_0(\xi)$. Then to estimate the components $\hat{g}_1(t,\xi)$ and $\hat{g}_2(t,\xi)$ we rely on the local decay bounds from Proposition~\ref{prop:resonant_bootstrap_bounds}. In particular, we note that the integrand in $\hat{g}_2(t,\xi)$ is integrable since we have 
 \begin{align*}
  \bigl\| \jxi^\hf \calF\bigl[ \alpha(\cdot) \bigl( u(s)^2 - u(s,0)^2 \bigr) \bigr](\xi) \bigr\|_{L^\infty_\xi} &\lesssim \bigl\| \jxi^\hf \calF\bigl[ \alpha(\cdot) \bigl( u(s)^2 - u(s,0)^2 \bigr) \bigr](\xi) \bigr\|_{H^1_\xi} \\
  &\lesssim \bigl\| \jx \alpha(x) \bigl( u(s)^2 - u(s,0)^2 \bigr) \bigr\|_{H^1_x} \\
  &\lesssim \bigl\| \jx^6 \alpha(x) \bigr\|_{H^1_x} \bigl\| \jx^{-2} \px v(s) \bigr\|_{L^2_x} \bigl\|\jx^{-2} v(s)\bigr\|_{H^1_x} \\
  &\lesssim \frac{\varepsilon^2}{\js^\thf}.
 \end{align*}
 The bounds for $\hat{g}_3(t,\xi)$ and $\hat{g}_4(t,\xi)$ follow directly using~\eqref{equ:resonant_g4hat_intbyparts} and~\eqref{equ:resonant_g5hat_intbyparts}, while the bound for~$\hat{g}_5(t,\xi)$ is straightforward since $\tilde{r}(s)$ is integrable. In fact, we obtain for any $t_1 \geq t_2 \geq 1$ that 
 \begin{align*}
  \bigl\| \jxi^\thf \hat{g}(t_1,\xi) - \jxi^\thf \hat{g}(t_2,\xi) \bigr\|_{L^\infty_\xi} &\lesssim \frac{\varepsilon^2}{t_2^{\hf-}}.
 \end{align*}
 Thus, $\bigl\{ \jxi^\thf \hat{g}(t,\xi) \bigr\}_{t \geq 1}$ is a Cauchy sequence in $L^\infty$, which implies the existence of a limit profile $\widehat{V} \in L^\infty$ satisfying \eqref{equ:resonant_limit_profile}.
 \end{proof}

 \begin{proof}[Proof of~\eqref{equ:resonant_growth_bound_profile}]
 We establish the estimate separately for each component of the profile $g(t)$. It is clear for $v_0$. Noting that $\|\jx g_j(t)\|_{H^2_x} \lesssim \| \jxi^2 \hat{g}_j(t) \|_{H^1_\xi}$ and using~\eqref{equ:resonant_g4hat_intbyparts}--\eqref{equ:resonant_g5hat_intbyparts}, it is a straightforward computation to obtain for $j = 1,3,4$ that $\|\jx g_j(t)\|_{H^2_x} \lesssim \varepsilon^2$ uniformly for all $t \geq 0$. The growth estimates for $g_2(t)$ and $g_5(t)$ have to be done more carefully. We present the details for $g_2(t)$. 
 Let $v_2(t) = e^{it\jn} g_2(t)$. Then it holds that
 \begin{equation*}
  \| \jx g_2(t) \|_{H^2_x} \lesssim \|v_2(t)\|_{H^2_x} + \|\jn L v_2(t)\|_{L^2_x}.
 \end{equation*}
 It is easy to see that $\|v_2(t)\|_{H^2_x} \lesssim \varepsilon^2$ uniformly for all $t \geq 0$. To bound the growth of the second term on the right-hand side we show below the auxiliary estimate
 \begin{equation} \label{equ:resonant_Lv3_local_decay}
  \sup_{t \geq 0} \, \jt^\hf \bigl\| \jx^{-2} \jn Lv_2(t) \bigr\|_{L^2_x} \lesssim \varepsilon^2.
 \end{equation}
 Then, using the energy estimate~\eqref{equ:energy_est_sq}, the auxiliary estimate~\eqref{equ:resonant_Lv3_local_decay}, and the local decay bounds from Proposition~\ref{prop:resonant_bootstrap_bounds}, we obtain 
 \begin{align*}
  \bigl\| \jn L v_2(t) \bigr\|_{L^2_x}^2 &\lesssim \int_1^t \bigl\| \jn L (\pt-i\jn)v_2(s) \, \overline{ \jn L v_2(s) } \bigr\|_{L^1_x} \, \ud s \\
  &\lesssim \int_1^t \bigl\| \jx^2 \jn L (\pt-i\jn)v_2(s) \bigr\|_{L^2_x} \bigl\| \jx^{-2} \jn L v_2(s) \bigr\|_{L^2_x} \, \ud s \\
  &\lesssim \int_1^t \jap{s} \bigl\| \jx^3 \alpha(x) \bigl( u(s)^2 - u(s,0)^2 \bigr) \bigr\|_{H^1_x} \bigl\| \jx^{-2} \jn L v_2(s) \bigr\|_{L^2_x} \\
  &\lesssim \int_1^t \jap{s} \bigl\| \jx^8 \alpha(x) \bigr\|_{H^2_x} \bigl\| \jx^{-2} \px v(s) \bigr\|_{L^2_x} \bigl\|\jx^{-2} v(s)\bigr\|_{H^1_x} \bigl\| \jx^{-2} \jn L v_2(s) \bigr\|_{L^2_x} \, \ud s \\
  &\lesssim \int_1^t \frac{\varepsilon^2}{\js} \, \ud s \\
  &\lesssim \varepsilon^2 \log(t).
 \end{align*}
 Hence, we arrive at the desired growth bound 
 \begin{equation} \label{equ:resonant_x_g3_slow_growth}
  \sup_{t \geq 0} \, \jt^{-(0+)} \| \jx g_2(t) \|_{H^2_x} \lesssim \varepsilon^2.
 \end{equation}
 It remains to prove~\eqref{equ:resonant_Lv3_local_decay}. Using the local decay estimates for the Klein-Gordon propagator from Lemma~\ref{lem:local_decay} and the local decay bounds for $v(t)$ from Proposition~\ref{prop:resonant_bootstrap_bounds}, we obtain from the Duhamel formula for $v_2(t)$ that
 \begin{align*}
  \bigl\| \jx^{-2} \jn L v_2(t) \bigr\|_{L^2_x} &\lesssim \bigl\| \jx^{-1} \jn^2 v_2(t) \bigr\|_{L^2_x} + t \bigl\| \jx^{-2} \jn \px v_2(t) \bigr\|_{L^2_x} \\
  &\lesssim \int_1^t \bigl\| \jx^{-1} e^{i(t-s)\jn} \jx^{-1} \bigr\|_{L^2_x \to L^2_x} \bigl\| \jx \alpha(x) \bigl( u(s)^2 - u(s,0)^2 \bigr) \bigr\|_{H^1_x} \, \ud s \\
  &\quad + t \int_1^t \Bigl\| \jx^{-2} \frac{\px}{\jn} e^{i(t-s)\jn} \jx^{-2} \Bigr\|_{L^2_x \to L^2_x}  \bigl\| \jx^2 \alpha(x) \bigl( u(s)^2 - u(s,0)^2 \bigr) \bigr\|_{H^1_x} \, \ud s \\
  &\lesssim \int_1^t \frac{1}{\jap{t-s}^\hf} \frac{\varepsilon^2}{\js^\thf} \, \ud s + t \int_1^t \frac{1}{\jap{t-s}^\thf} \frac{\varepsilon^2}{\jap{s}^\thf} \, \ud s \\
  &\lesssim \frac{\varepsilon^2}{\jt^\hf}.
 \end{align*}
 
 Finally, we remark that the growth bound~\eqref{equ:resonant_x_g3_slow_growth} can be derived for $g_5(t)$ in an analogous manner. It is at this point that the strong decay of the remainder term $|\tilde{r}(t)| \lesssim \varepsilon^2 t^{-(\thf-)}$ is needed.
 \end{proof}

 \medskip
 
 \noindent \underline{{\it Decay and asymptotics of $v_{mod}(t)$}:} 
 The modified scattering behavior of $v_{mod}(t)$ is only caused by the frequencies of the variable coefficient $\alpha(x)$ near $\xi = \pm \sqrt{3}$. We therefore decompose $\widehat{\alpha}(\xi)$ into
 \begin{equation*}
  \widehat{\alpha}(\xi) = \widehat{\alpha}_+(\xi) + \widehat{\alpha}_{-}(\xi) + \widehat{\alpha}_{nr}(\xi)
 \end{equation*}
 with
 \begin{equation*}
  \widehat{\alpha}_{\pm}(\xi) = \varphi(\xi \mp \sqrt{3}) \widehat{\alpha}(\xi).
 \end{equation*}
 Here, $\varphi(\xi)$ is a smooth bump function such that $\varphi(\xi) = 1$ in a small neighborhood around $\xi = 0$ and such that
 \begin{equation} \label{equ:definition_varphi}
  \varphi(\xi) = 0 \quad  \text{for } |\xi| \geq \tilde{\delta}
 \end{equation}
 for some small $\tilde{\delta} = \tilde{\delta}(\delta) > 0$, whose size will be specified further below. 
 Correspondingly, we define
 \begin{align}
  v_{mod, \pm}(t) &:= \frac{a_0^2}{2} \int_1^t \bigl( e^{i(t-s)\jn} \jn^{-1} \alpha_{\pm} \bigr) \frac{e^{2is}}{s} \, \ud s, \label{equ:resonant_def_v_modpm_earlier}  \\
  v_{mod, nr}(t) &:= \frac{a_0^2}{2} \int_1^t \bigl( e^{i(t-s)\jn} \jn^{-1} \alpha_{nr} \bigr) \frac{e^{2is}}{s} \, \ud s.
 \end{align}

 \medskip 
 
\noindent {\it Decay of $v_{mod, nr}(t)$}:
Since $\widehat{\alpha}_{nr}(\pm \sqrt{3}) = 0$, we can integrate by parts in time $s$ in the Duhamel integral for $v_{mod, nr}(t)$. Then using the standard dispersive decay estimate~\eqref{equ:dispersive_decay}, we obtain uniformly for all $t \geq 1$ that
\begin{align*}
 \|v_{mod,nr}(t)\|_{L^\infty_x} &\lesssim \frac{\varepsilon^2}{t^\hf} \bigl\| (2-\jn)^{-1} \jn \alpha_{nr} \bigr\|_{L^1_x} + \frac{\varepsilon^2}{t} \bigl\| (2-\jn)^{-1} \jn^{-1} \alpha_{nr} \bigr\|_{L^\infty_x} \\
 &\quad + \varepsilon^2 \int_1^t \frac{1}{\jap{t-s}^\hf} \bigl\| (2-\jn)^{-1} \jn \alpha_{nr} \bigr\|_{L^1_x} \frac{1}{s^2} \, \ud s \\
 &\lesssim_{\delta} \frac{\varepsilon^2}{t^\hf}.
\end{align*}

\noindent {\it Decay of $v_{mod, \pm}(t)$ away from a small conic neighborhood of $x = \pm \frac{\sqrt{3}}{2} t$}:
We can infer time decay of $v_{mod, \pm}(t)$ away from a small conic neighborhood of the rays $x = \pm \frac{\sqrt{3}}{2} t$ just by integrating by parts in the frequency variable. Indeed, writing $v_{mod,\pm}(t)$ as the (non-standard) double oscillatory integral
\begin{equation} \label{equ:resonant_def_v_modpm}
 v_{mod,\pm}(t,x) = \frac{a_0^2}{2 \sqrt{2\pi}} \int_1^t \int_\bbR e^{i(x \xi + (t-s) \jxi + 2s)} \jxi^{-1} \widehat{\alpha}_{\pm}(\xi) \frac{1}{s} \, \ud \xi \, \ud s,
\end{equation} 
we note that the phase 
\begin{equation*}
 \psi(s, \xi; t,x) := x \xi + (t-s) \jap{\xi} + 2s
\end{equation*}
satisfies
\begin{align*}
 \partial_\xi \psi(s, \xi; t, x) = x + (t-s) \frac{\xi}{\jap{\xi}}, \qquad \partial_\xi^2 \psi(s, \xi; t,x) = \frac{t-s}{\jap{\xi}^3}. 
\end{align*}
Then for any given small $\delta > 0$, we may choose the constant $\tilde{\delta} \equiv \tilde{\delta}(\delta) > 0$ in the definition~\eqref{equ:definition_varphi} of the cut-off funtion $\varphi$ above so small such that
\begin{align*}
 \Bigl| \frac{\xi}{\jap{\xi}} - \Bigl( \pm \frac{\sqrt{3}}{2} \Bigr) \Bigr| \leq \frac{\delta}{2} \quad \text{ whenever } \quad \widehat{\alpha}_{\pm}(\xi) = \varphi(\xi \mp \sqrt{3}) \widehat{\alpha}(\xi) \neq 0.
\end{align*}
Moreover, we have $|\partial_\xi^2 \psi(s,\xi; t,x)| \simeq (t-s)$ on the support of $\widehat{\alpha}_\pm(\xi)$.
Now we distinguish two cases. If $|x| \geq \bigl( \frac{\sqrt{3}}{2} + \delta \bigr) t$, then on the support of $\widehat{\alpha}_{\pm}(\xi)$ the phase satisfies 
\begin{align*}
 |\partial_\xi \psi| &\geq |x| - (t-s) \frac{|\xi|}{\jap{\xi}} \geq \Bigl( \frac{\sqrt{3}}{2} + \delta \Bigr) t - \Bigl( \frac{\sqrt{3}}{2} + \frac{\delta}{2} \Bigr) (t-s) \geq \frac{\delta}{2} t.
\end{align*}
Integrating by parts $N$ times in $\xi$, we obtain that 
\begin{align*}
 \bigl| v_{mod, \pm}(t,x) \bigr| \lesssim_{\delta, N} \int_1^t \frac{1}{t^N} \frac{\varepsilon^2}{s} \, \ud s \lesssim_{\delta, N} \frac{\varepsilon^2}{t^{N-}}.
\end{align*}
Instead, if $0 \leq |x| \leq \bigl( \frac{\sqrt{3}}{2} - \delta \bigr) t$, we divide the time integration interval into two subintervals $$[1, t] = [1, t_1] \cup [t_1, t],$$ where
\begin{align*}
 t_1 :=  \frac{\delta}{2 (\sqrt{3} + \delta)} t. %\frac{1}{\sqrt{3} + \delta} \biggl( \Bigl( \frac{\sqrt{3}}{2} - \frac{\delta}{2} \Bigr) t - x \biggr).
\end{align*}
%Note that $t_1 \geq \frac{\delta}{2 (\sqrt{3} + \delta)} t$. 
On the support of $\widehat{\alpha}_{\pm}(\xi)$ the phase satisfies for $1 \leq s \leq t_1$ that
\begin{align*}
 |\partial_\xi \psi| &\geq t \frac{|\xi|}{\jap{\xi}} - |x| - s \frac{|\xi|}{\jap{\xi}} \geq t \Bigl( \frac{\sqrt{3}}{2} - \frac{\delta}{2} \Bigr) - t \Bigl( \frac{\sqrt{3}}{2} - \delta \Bigr) - \frac{\delta}{2 (\sqrt{3} + \delta)} t \Bigl( \frac{\sqrt{3}}{2} + \frac{\delta}{2} \Bigr) = \frac{\delta}{4} t
\end{align*}
and integration by parts in $\xi$ pays off. When $s \geq t_1$ we can just use the usual $\jap{t-s}^{-\frac{1}{2}}$ dispersive decay~\eqref{equ:dispersive_decay} of $e^{i(t-s)\jap{\nabla}}$ and crudely bound $\frac{1}{s} \leq \frac{1}{t_1} \lesssim_\delta \frac{1}{t}$. Overall, we obtain in the case $0 \leq |x| \leq \bigl( \frac{\sqrt{3}}{2} - \delta \bigr) t$ that 
\begin{equation} 
 \begin{aligned}
  |v_{mod,\pm}(t,x)| &\lesssim_{\delta} \int_1^{t_1} \frac{1}{t} \frac{\varepsilon^2}{s} \, \ud s + \int_{t_1}^t \frac{1}{\jap{t-s}^{\frac{1}{2}}}  \frac{\varepsilon^2}{t} \, \ud s \lesssim_{\delta} \frac{\varepsilon^2}{t^{1-}} + \frac{\varepsilon^2}{t^\hf} \lesssim_{\delta} \frac{\varepsilon^2}{t^\hf}.
 \end{aligned}
\end{equation}

\medskip 

\noindent {\it Asymptotics of $v_{mod,\pm}(t,x)$ along the rays $x = \pm \frac{\sqrt{3}}{2} t$}: We provide the details for $v_{mod,-}(t,x)$ noting that the treatment of $v_{mod,+}(t,x)$ proceeds analogously. First, we may restrict the time integration in the definition~\eqref{equ:resonant_def_v_modpm_earlier} of $v_{mod,-}(t,x)$ to times $1 \leq s \leq t-1$ at the expense of picking up a remainder term of order $\calO\bigl( \varepsilon^2 t^{-1} \bigr)$. Moreover, by Lemma~\ref{lem:KG_propagator_asymptotics} on the asymptotics of the Klein-Gordon propagator we have for $1 \leq s \leq t-1$ that 
\begin{equation} \label{equ:resonant_asymptotics_retarded_free_along_ray}
 \begin{aligned}
  &\Bigl( e^{i(t-s)\jap{\nabla}} \jn^{-1} \alpha_{-} \Bigr)\Bigl( \pm \frac{\sqrt{3}}{2} t \Bigr) \\
  &\quad \quad = \frac{1}{\rho(t-s, \pm \frac{\sqrt{3}}{2} t )^{\frac{1}{2}}} e^{i \frac{\pi}{4}} e^{i\rho(t-s,\pm \frac{\sqrt{3}}{2} t )} \widehat{\alpha}_-\biggl( - \frac{\pm \frac{\sqrt{3}}{2} t}{\rho(t-s,\pm \frac{\sqrt{3}}{2} t )} \biggr) \theta \biggl( \frac{\pm \frac{\sqrt{3}}{2} t}{t-s} \biggr) + \frac{1}{(t-s)^{\frac{5}{8}}} \calO \bigl( \| \jx \alpha_{-} \|_{H^2_x} \bigr),
 \end{aligned}
\end{equation}
where $\theta(z) = 1$ for $|z| < 1$, $\theta(z) = 0$ for $|z| \geq 1$, and %$\rho(t-s,x) = \bigl( (t-s)^2 - x^2 \bigr)^{\frac{1}{2}}$ 
\begin{align*}
 \rho(t-s, {\textstyle \pm \frac{\sqrt{3}}{2} t}) &= \bigl( (t-s)^2 - {\textstyle \frac{3}{4}} t^2 \bigr)^{\frac{1}{2}} = {\textstyle \frac{t}{2}} \bigl( 1 - 8 {\textstyle \frac{s}{t}} + 4  {\textstyle \frac{s^2}{t^2}} \bigr)^{\frac{1}{2}}. % = \frac{t}{2} - 2 s + \calO \Bigl( \frac{s^2}{t} \Bigr).
\end{align*}
Inserting the asymptotics~\eqref{equ:resonant_asymptotics_retarded_free_along_ray} into~\eqref{equ:resonant_def_v_modpm_earlier} gives
\begin{equation*}
 v_{mod, -}\Bigl(t, \pm \frac{\sqrt{3}}{2} t \Bigr) = \frac{a_0^2}{2} \int_1^{t-\frac{\sqrt{3}}{2} t} \frac{1}{\rho(t-s, \pm \frac{\sqrt{3}}{2} t)^{\frac{1}{2}}} e^{i \frac{\pi}{4}} e^{i\rho(t-s, \pm \frac{\sqrt{3}}{2} t)} \widehat{\alpha}_-\biggl( - \frac{\pm \frac{\sqrt{3}}{2} t}{\rho(t-s, \pm \frac{\sqrt{3}}{2} t)} \biggr) \frac{e^{2is}}{s} \, \ud s + \calO \Bigl( \frac{\varepsilon^2}{t^{\frac{5}{8}-}} \Bigr).
\end{equation*}
Since $\widehat{\alpha}_-(\xi) = 0$ for $\xi > 0$, we have along the ray $x = -\frac{\sqrt{3}}{2} t$ that
\begin{equation*}
 v_{mod, -}\Bigl(t, - \frac{\sqrt{3}}{2} t \Bigr) = \calO \Bigl( \frac{\varepsilon^2}{t^{\frac{5}{8}-}} \Bigr).
\end{equation*}
Moreover, due to the sharp localization of the frequency support of $\widehat{\alpha}_-(\xi)$ around $\xi = -\sqrt{3}$, for $t \gg 1$ the time integration in the last identity for $v_{mod,-}(t, +\frac{\sqrt{3}}{2} t)$ is in fact only over an interval $1 \leq s \leq ct$ for some small constant $0 < c \ll 1$. Thus, along the ray $x = + \frac{\sqrt{3}}{2} t$ it holds that
\begin{equation} \label{equ:resonant_v_mod_minus_along_plus_ray}
 v_{mod,-}\Bigl(t, + \frac{\sqrt{3}}{2} t \Bigr) = \frac{a_0^2}{2} \int_1^{ct} \frac{1}{\rho(t-s, \frac{\sqrt{3}}{2} t)^{\frac{1}{2}}} e^{i \frac{\pi}{4}} e^{i\rho(t-s, \frac{\sqrt{3}}{2} t)}  \widehat{\alpha}_-\biggl( - \frac{\frac{\sqrt{3}}{2} t}{\rho(t-s, \frac{\sqrt{3}}{2} t)} \biggr) \frac{e^{2is}}{s} \, \ud s + \calO \Bigl( \frac{\varepsilon^2}{t^{\frac{5}{8}-}} \Bigr).
\end{equation}
In view of the approximate identities
\begin{align*}
 - \frac{ \frac{\sqrt{3}}{2} t }{ \rho(t-s,  \frac{\sqrt{3}}{2} t) } &= -\sqrt{3} + \calO \Bigl( \frac{s}{t} \Bigr), \qquad \frac{1}{\rho(t-s, \frac{\sqrt{3}}{2} t)^{\frac{1}{2}}} = \frac{\sqrt{2}}{t^{\frac{1}{2}}} + \calO \Bigl( \frac{s}{t^{\frac{3}{2}}} \Bigr), 
\end{align*}
it follows that 
\begin{equation} \label{equ:resonant_v_mod_minus_along_plus_ray_almost_done}
 v_{mod,-}\Bigl(t, + \frac{\sqrt{3}}{2} t \Bigr) = \frac{a_0^2}{\sqrt{2}} e^{i\frac{\pi}{4}} \widehat{\alpha}(-\sqrt{3}) \frac{1}{t^{\frac{1}{2}}} \int_1^{ct} e^{i(\rho(t-s,\frac{\sqrt{3}}{2} t) + 2s)} \frac{1}{s} \, \ud s + \calO \Bigl( \frac{\varepsilon^2}{t^\hf} \Bigr).
\end{equation}
At this point we observe that the phase 
\begin{equation*}
 \phi(s;t) := \rho \Bigl(t-s,\frac{\sqrt{3}}{2} t\Bigr) + 2s
\end{equation*}
is stationary at $s=0$ and that its Taylor expansion about $s=0$ is of the form
\begin{equation*}
 \phi(s;t) = \frac{t}{2} + \calO \Bigl( \frac{s^2}{t} \Bigr).
\end{equation*}
Thus, for $1 \leq s \ll t^\hf$ the phase $\phi(s;t)$ is essentially constant and the integrand in~\eqref{equ:resonant_v_mod_minus_along_plus_ray_almost_done} is effectively monotone, which causes the buildup of a $\log(t)$ factor. In order to arrive at a sharp formula for the asymptotics, we split the time integration interval into the two subintervals $1 \leq s \leq 10^{-3} t^{\frac{1}{2}}$ and $10^{-3} t^{\frac{1}{2}} \leq s \leq c t$. 
For the interval $1 \leq s \leq 10^{-3} t^\hf$ we compute that
\begin{align*}
 \int_1^{10^{-3} t^{\frac{1}{2}}} e^{i \phi(s;t)} \frac{1}{s} \, \ud s &= e^{i \frac{t}{2}} \int_1^{10^{-3} t^{\frac{1}{2}}} \frac{1}{s} \, \ud s + \int_1^{10^{-3} t^{\frac{1}{2}}} \calO \Bigl( \frac{s}{t} \Bigr) \, \ud s = \frac{e^{i \frac{t}{2}}}{2} \log(t) + \calO(1).
\end{align*}
Instead, on the interval $10^{-3} t^{\frac{1}{2}} \leq s \leq c t$ we integrate by parts. Using that there $\partial_s \phi(s;t) = \calO \bigl( \frac{s}{t} \bigr)$ and $\partial_s^2 \phi(s;t) = \calO \bigl( \frac{1}{t} \bigr)$, we find 
\begin{equation*}
 \biggl| \int_{10^{-3} t^{\frac{1}{2}}}^{c t} e^{i \phi(s;t)} \frac{1}{s} \, \ud s \biggr| \lesssim \int_{10^{-3} t^{\frac{1}{2}}}^{ct} \frac{t}{s^3} \, \ud s + \biggl| \frac{t}{s^2} \Bigr|_{s=10^{-3} t^\hf}^{s=ct} \biggr| \lesssim 1.
\end{equation*}
Putting things together, we obtain the asymptotics
\begin{equation*}
 v_{mod,-}\Bigl(t, + \frac{\sqrt{3}}{2} t \Bigr) = \frac{a_0^2}{\sqrt{8}}  e^{i\frac{\pi}{4}} e^{i \frac{t}{2}} \widehat{\alpha}(-\sqrt{3}) \frac{\log(t)}{t^{\frac{1}{2}}} + \calO \Bigl( \frac{\varepsilon^2}{t^\hf} \Bigr), \quad t \gg 1.
\end{equation*}
This finishes the proof of Theorem~\ref{thm:resonant}.

\section{Non-Resonant Case}

This section is devoted to the proof of Theorem~\ref{thm:nonresonant}, which establishes sharp decay estimates and asymptotics for small global solutions $v(t)$ to 
\begin{equation} \label{equ:nlkg_nonresonant_sec}
 (\pt - i \jn) v = \frac{1}{2i} \jn^{-1} \bigl( \alpha(\cdot) u^2 + \beta_0 u^3 + \beta(\cdot) u^3 \bigr) \text{ on } \bbR^{1+1}
\end{equation}
in the {\bf non-resonant case}
\begin{equation} \label{equ:nonresonance_assumption_sec}
 \widehat{\alpha}(+\sqrt{3}) = 0 \quad \text{ and } \quad \widehat{\alpha}(-\sqrt{3}) = 0.
\end{equation}
Global existence of small regular solutions to~\eqref{equ:nlkg_nonresonant_sec} is well-known, see for instance~\cite{H97}. Our goal is therefore to derive global-in-time a priori bounds for small solutions to~\eqref{equ:nlkg_nonresonant_sec} that yield sharp decay estimates and asymptotics. 
By time-reversal symmetry it suffices to only consider positive times. 
The main new difficulty here is to deal with the variable coefficient quadratic nonlinearity on the right-hand side of~\eqref{equ:nlkg_nonresonant_sec}. As explained in Subsection~\ref{subsec:intro_proof_ideas}, the potentially problematic contributions can only come from the part $\alpha(x) u(t,0)^2$ of the variable coefficient quadratic nonlinearity. To isolate that component, we rewrite~\eqref{equ:nlkg_nonresonant_sec} as
\begin{equation} \label{equ:nonresonant_first_order_kg_rewritten}
 \begin{aligned}
  (\pt - i\jn) v &= \frac{1}{2i} (\jn^{-1} \alpha)(\cdot) u(t,0)^2 + \frac{1}{2i} \jn^{-1} \Bigl( \alpha(\cdot) \bigl( u(t, \cdot)^2 - u(t,0)^2 \bigr) \Bigr) \\
  &\qquad + \frac{\beta_0}{2i} \jn^{-1} \bigl( u^3 \bigr) + \frac{1}{2i} \jn^{-1} \bigl( \beta(\cdot) u^3 \bigr).
 \end{aligned}
\end{equation}
Relying on the non-resonance assumption~\eqref{equ:nonresonance_assumption_sec} we introduce a novel variable coefficient quadratic normal form to transform this problematic component into more favorable terms that turn out to behave like ``variable coefficient cubic nonlinearities''. Then we can control their contributions using local decay estimates, following the strategy from our previous work~\cite{LLS19}. 
The constant coefficient cubic term on the right-hand side of~\eqref{equ:nlkg_nonresonant_sec} causes a logarithmic phase correction in the asymptotics of the solution $v(t)$ to~\eqref{equ:nlkg_nonresonant_sec}, which we capture by deriving an ODE for the profile $f(t) := e^{-it\jn} v(t)$ in the spirit of the space-time resonances method~\cite{GMS12_Ann, GMS12_JMPA, GMS09, GNT09}. 

The main part of the proof of Theorem~\ref{thm:nonresonant} consists in closing a bootstrap argument for sufficiently small data for the quantity
\begin{equation} \label{equ:nonresonant_NT_def}
\begin{aligned}
 N(T) &:= \sup_{0 \leq t \leq T} \, \biggl\{ \jt^{\frac{1}{2}} \|v(t)\|_{L^\infty_x} + \jt^{-\delta} \bigl\| \jn^2 v(t) \bigr\|_{L^2_x} + \jt^{-\delta} \bigl\| \jn L v(t) \bigr\|_{L^2_x} \\
 &\qquad \qquad \qquad \qquad \qquad \qquad \qquad \qquad \qquad + \jt^{-1-\delta} \|x v(t)\|_{L^2_x} + \bigl\| \jap{\xi}^{\frac{3}{2}} \hat{f}(t,\xi) \bigr\|_{L^\infty_\xi} \biggr\},
\end{aligned}
\end{equation}
where $T > 0$ and $0 < \delta \ll 1$ is a sufficiently small absolute constant.
Throughout this section we can therefore work under the assumption that $N(T) \leq 1$, which simplifies the bookkeeping for some of the nonlinear estimates.

\subsection{Normal form transformation}

Our first task is to transform the problematic first term on the right-hand side of~\eqref{equ:nonresonant_first_order_kg_rewritten} into a better form. To this end we consider the equation satisfied by the Fourier transform of the profile $\hat{f}(t,\xi)$. From~\eqref{equ:nonresonant_first_order_kg_rewritten} it follows that 
\begin{equation} \label{equ:pt_ft_profile_rewritten}
 \begin{aligned}
  \pt \hat{f}(t,\xi) &= \frac{1}{2i} e^{-it\jxi} \jxi^{-1} \widehat{\alpha}(\xi) u(t,0)^2 + \frac{1}{2i} e^{-it\jxi} \jxi^{-1} \calF\Bigl[ \alpha(\cdot) \bigl( u(t, \cdot)^2 - u(t,0)^2 \bigr) \Bigr](\xi) \\
  &\qquad + \frac{\beta_0}{2i} e^{-it\jxi} \jxi^{-1} \calF\bigl[ u(t)^3 \bigr](\xi) + \frac{1}{2i} e^{-it\jxi} \jxi^{-1} \calF\bigl[ \beta(\cdot) u(t)^3 \bigr](\xi).
 \end{aligned}
\end{equation}
Then we insert the decomposition of $u(t,0)$ into its ``phase-filtered components'' 
%In order to transform the difficult first term on the right-hand side of~\eqref{equ:pt_ft_profile_rewritten} into a better form we insert the decomposition of $u(t,0)$ into its ``phase-filtered components'' 
\begin{equation*}
 u(t,0) = v(t,0) + \bar{v}(t,0) = e^{+it} \bigl( e^{-it} v(t,0) \bigr) + e^{-it} \bigl( e^{+it} \bar{v}(t,0) \bigr),
\end{equation*}
into the first term on the right-hand side of~\eqref{equ:pt_ft_profile_rewritten} to find that
\begin{equation} \label{equ:nonresonant_phase_filtered_inserted}
\begin{aligned}
 \frac{1}{2i} e^{-it\jxi} \jxi^{-1} \widehat{\alpha}(\xi) u(t,0)^2 &= \frac{1}{2i} e^{it(2-\jxi)} \jxi^{-1} \widehat{\alpha}(\xi) \bigl( e^{-it} v(t,0) \bigr)^2 \\
 &\quad + \frac{1}{i} e^{-it\jxi} \jxi^{-1} \widehat{\alpha}(\xi) \bigl( e^{-it} v(t,0) \bigr) \bigl( e^{+it} \bar{v}(t,0) \bigr) \\
 &\quad + \frac{1}{2i} e^{-it(2+\jxi)} \jxi^{-1} \widehat{\alpha}(\xi) \bigl( e^{+it} \bar{v}(t,0) \bigr)^2.
\end{aligned}
\end{equation}
Now we observe that thanks to the non-resonance assumption $\widehat{\alpha}(\pm \sqrt{3}) = 0$, the symbol $(2-\jxi)^{-1} \widehat{\alpha}(\xi)$ does not have a singularity, although $2-\jxi$ vanishes for $\xi = \pm \sqrt{3}$. We can therefore pull out a time derivative and rewrite~\eqref{equ:nonresonant_phase_filtered_inserted} as
\begin{equation} \label{equ:nonresonant_pull_out_pt}
\begin{aligned}
 \frac{1}{2i} e^{-it\jxi} \jxi^{-1} \widehat{\alpha}(\xi) u(t,0)^2 &= \pt \biggl( - \hf e^{-it\jxi} (2-\jxi)^{-1} \jxi^{-1} \widehat{\alpha}(\xi) e^{2it} \bigl( e^{-it} v(t,0) \bigr)^2 \biggr) \\
 &\quad + e^{-it\jxi} (2-\jxi)^{-1} \jxi^{-1} \widehat{\alpha}(\xi) e^{2it} \pt \bigl( e^{-it} v(t,0) \bigr) \bigl( e^{-it} v(t,0) \bigr) \\
 &\quad + \pt \biggl( e^{-it\jxi} \jxi^{-2} \widehat{\alpha}(\xi)  \bigl( e^{-it} v(t,0) \bigr) \bigl( e^{+it} \bar{v}(t,0) \bigr) \biggr) \\
 &\quad - 2 e^{-it\jxi} \jxi^{-2} \widehat{\alpha}(\xi) \, \Re \Bigl( \pt \bigl( e^{-it} v(t,0) \bigr) \bigl( e^{+it} \bar{v}(t,0) \bigr) \Bigr) \\
 &\quad + \pt \biggl( \hf e^{-it\jxi} (2+\jxi)^{-1} \jxi^{-1} \widehat{\alpha}(\xi) e^{-2it} \bigl( e^{+it} \bar{v}(t,0) \bigr)^2 \biggr) \\
 &\quad - e^{-it\jxi} (2+\jxi)^{-1} \jxi^{-1} \widehat{\alpha}(\xi) e^{-2it} \pt \bigl( e^{+it} \bar{v}(t,0) \bigr) \bigl( e^{+it} \bar{v}(t,0) \bigr). 
\end{aligned}
\end{equation}
Next, we introduce short-hand notations for the smooth and decaying coefficients that emerge on the right-hand side of~\eqref{equ:nonresonant_pull_out_pt}
\begin{equation} \label{equ:nonresonant_defs_alpha_coeff}
\begin{aligned}
 \alpha_1(x) &:= \frac{1}{2} \calF^{-1}\bigl[(2-\jxi)^{-1} \jxi^{-1} \widehat{\alpha}\bigr](x), \\
 \alpha_2(x) &:= - \calF^{-1}\bigl[ \jxi^{-2} \widehat{\alpha} \bigr](x), \\
 \alpha_3(x) &:= - \frac{1}{2} \calF^{-1}\bigl[ (2+\jxi)^{-1} \jxi^{-1} \widehat{\alpha}\bigr](x).
\end{aligned}
\end{equation}
Inserting~\eqref{equ:nonresonant_pull_out_pt} and~\eqref{equ:nonresonant_defs_alpha_coeff} back into~\eqref{equ:pt_ft_profile_rewritten} we arrive at 
\begin{equation} \label{equ:pt_ft_profile_recast}
 \begin{aligned}
 &\pt \Bigl( \hat{f}(t, \xi) + e^{-it\jap{\xi}} \bigl( \widehat{\alpha}_1(\xi) v(t,0)^2 + \widehat{\alpha}_2(\xi) |v(t,0)|^2 + \widehat{\alpha}_3(\xi) \bar{v}(t,0)^2 \bigr) \Bigr) \\
 &= 2 e^{-it\jap{\xi}} \widehat{\alpha}_1(\xi) e^{2it} \partial_t \bigl( e^{-it} v(t,0) \bigr) \bigl( e^{-it} v(t,0) \bigr) \\
 &\quad + 2 e^{-it\jap{\xi}} \widehat{\alpha}_2(\xi) \, \Re \Bigl( \partial_t \bigl( e^{-it} v(t,0) \bigr) \bigl( e^{+it} \bar{v}(t,0) \bigr) \Bigr) \\
 &\quad + 2 e^{-it\jap{\xi}} \widehat{\alpha}_3(\xi) e^{-2it} \partial_t \bigl( e^{+it} \bar{v}(t,0) \bigr) \bigl( e^{+it} \bar{v}(t,0) \bigr) \\ 
 &\quad + \frac{1}{2i} e^{-it\jxi} \jxi^{-1} \calF\Bigl[ \alpha(\cdot) \bigl( u(t, \cdot)^2 - u(t,0)^2 \bigr) \Bigr](\xi) \\
 &\quad + \frac{\beta_0}{2i} e^{-it\jxi} \jxi^{-1} \calF\bigl[ u(t)^3 \bigr](\xi) + \frac{1}{2i} e^{-it\jxi} \jxi^{-1} \calF\bigl[ \beta(\cdot) u(t)^3 \bigr](\xi).
 \end{aligned}
\end{equation}
Hence, upon introducing the variable coefficient quadratic normal form 
\begin{equation*}
 \calQ := \alpha_1(x) v(t,0)^2 + \alpha_2(x) |v(t,0)|^2 + \alpha_3(x) \bar{v}(t,0)^2,
\end{equation*}
it follows that the equation~\eqref{equ:nlkg_nonresonant_sec} for $v(t)$ takes on the form
\begin{equation} \label{equ:nlkg_nonresonant_normal_form_subt}
 \begin{aligned}
  (\pt - i \jn)(v + \calQ) &= 2 \alpha_1(x) e^{2it} \partial_t \bigl( e^{-it} v(t,0) \bigr) \bigl( e^{-it} v(t,0) \bigr) + 2 \alpha_2(x) \, \Re \Bigl(\pt \bigl( e^{-it} v(t,0) \bigr) \bigl( e^{+it} \bar{v}(t,0) \bigr) \Bigr) \\
 &\quad  + 2 \alpha_3(x) e^{-2it} \pt \bigl( e^{+it} \bar{v}(t,0) \bigr) \bigl( e^{+it} \bar{v}(t,0) \bigr) + \frac{1}{2i} \jn^{-1} \Bigl( \alpha(\cdot) \bigl( u(t, \cdot)^2 - u(t,0)^2 \bigr) \Bigr) \\
 &\quad + \frac{1}{2i} \jn^{-1} \bigl( \beta(\cdot) u^3 \bigr) + \frac{\beta_0}{2i} \jap{\nabla}^{-1} \bigl( u^3 \bigr).
 \end{aligned}
\end{equation}
Integrating~\eqref{equ:nlkg_nonresonant_normal_form_subt} in time, we also obtain that Duhamel's formula for $v(t)$ can be rewritten as
\begin{equation} \label{equ:duh_v_rearr}
\begin{aligned}
 v(t) &= e^{it\jap{\nabla}} \Bigl( v_0 + \alpha_1 v(0,0)^2 + \alpha_2 |v(0,0)|^2 + \alpha_3 \bar{v}(0,0)^2 \Bigr) \\
 &\qquad - \alpha_1(x) v(t,0)^2 - \alpha_2(x) |v(t,0)|^2 - \alpha_3(x) \bar{v}(t,0)^2 \\
 &\qquad + 2 \int_0^t \bigl( e^{i (t-s)\jn} \alpha_1 \bigr) e^{2is} \partial_s \bigl( e^{-is} v(s,0) \bigr) \bigl( e^{-is} v(s,0) \bigr) \, \ud s \\
 &\qquad  + 2 \int_0^t \bigl( e^{i(t-s)\jap{\nabla}} \alpha_2 \bigr) \, \Re \Bigl( \partial_s \bigl( e^{-is} v(s,0) \bigr) \bigl( e^{+is} \bar{v}(s,0) \bigr) \Bigr) \, \ud s \\
 &\qquad  + 2 \int_0^t \bigl( e^{i(t-s)\jap{\nabla}} \alpha_3 \bigr) e^{-2is} \partial_s \bigl( e^{+is} \bar{v}(s,0) \bigr) \bigl( e^{+is} \bar{v}(s,0) \bigr) \, \ud s \\
 &\qquad + \frac{1}{2i} \int_0^t e^{+i(t-s)\jn} \jn^{-1} \Bigl( \alpha(\cdot) \bigl( u(s, \cdot)^2 - u(s,0)^2 \bigr) \Bigr) \, \ud s \\
 &\qquad + \frac{1}{2i} \int_0^t e^{i(t-s)\jap{\nabla}} \jap{\nabla}^{-1} \bigl( \beta(\cdot) u(s, \cdot)^3 \bigr) \, \ud s \\  
 &\qquad + \frac{\beta_0}{2i} \int_0^t e^{i(t-s)\jap{\nabla}} \jap{\nabla}^{-1} \bigl( u(s, \cdot)^3 \bigr) \, \ud s. 
\end{aligned}
\end{equation}
The equation~\eqref{equ:duh_v_rearr} will be convenient for deriving energy estimates in the next subsection.
For ease of notation in the remainder of this section, we denote the constant coefficient cubic term in Duhamel's formula~\eqref{equ:duh_v_rearr} for $v(t)$ by
\begin{align*}
 \calC(t) &:= \frac{\beta_0}{2i} \int_0^t e^{i(t-s)\jap{\nabla}} \jap{\nabla}^{-1} \bigl( u(s, \cdot)^3 \bigr) \, \ud s.
\end{align*}
Moreover, we introduce the following short-hand notations for the variable coefficient nonlinear terms in~\eqref{equ:duh_v_rearr}, which should all be thought of as ``variable coefficient cubic terms'',
\begin{align*}
 \calV_1(t) &:= 2 \int_0^t \bigl( e^{i (t-s)\jn} \alpha_1 \bigr) e^{2is} \partial_s \bigl( e^{-is} v(s,0) \bigr) \bigl( e^{-is} v(s,0) \bigr) \, \ud s, \\
 \calV_2(t) &:= 2 \int_0^t \bigl( e^{i(t-s)\jap{\nabla}} \alpha_2 \bigr) \, \Re \Bigl( \partial_s \bigl( e^{-is} v(s,0) \bigr) \bigl( e^{+is} \bar{v}(s,0) \bigr) \Bigr) \, \ud s, \\
 \calV_3(t) &:= 2 \int_0^t \bigl( e^{i(t-s)\jap{\nabla}} \alpha_3 \bigr) e^{-2is} \partial_s \bigl( e^{+is} \bar{v}(s,0) \bigr) \bigl( e^{+is} \bar{v}(s,0) \bigr) \, \ud s, \\
 \calV_4(t) &:= \frac{1}{2i} \int_0^t e^{+i(t-s)\jn} \jn^{-1} \Bigl( \alpha(\cdot) \bigl( u(s, \cdot)^2 - u(s,0)^2 \bigr) \Bigr) \, \ud s, \\ 
 \calV_5(t) &:= \frac{1}{2i} \int_0^t e^{i(t-s)\jap{\nabla}} \jap{\nabla}^{-1} \bigl( \beta(\cdot) u(s, \cdot)^3 \bigr) \, \ud s. 
\end{align*}

\subsection{Energy estimates} \label{subsec:nonresonant_energy_estimates}

In this subsection we establish the main energy estimates for the proof of Theorem~\ref{thm:nonresonant}. Without explicitly mentioning this, we will frequently use the commutator identities~\eqref{equ:commutator_identities}.

\subsubsection{Preparatory lemmas}

We begin with a crucial improved decay estimate for $\pt( e^{-it} v(t,0) )$ thanks to which the nonlinear terms $\calV_1(t), \calV_2(t), \calV_3(t)$ produced by the variable coefficient quadratic normal form~$\calQ$ can be considered as ``variable coefficient cubic terms''. 
\begin{lemma}[Key improved decay] \label{lem:key_improved_decay}
 Let $v(t)$ be the solution to~\eqref{equ:first_order_kg}. Then we have uniformly for all $0 \leq t \leq T$ that
 \begin{equation}
  \bigl| \pt \bigl( e^{-it} v(t,0) \bigr) \bigr| \lesssim \frac{N(T)}{\jt^{1-\delta}}.
 \end{equation}
\end{lemma}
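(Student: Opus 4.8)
The plan is to differentiate the ``phase-filtered'' variable $e^{-it}v(t,x)$ using the equation~\eqref{equ:first_order_kg}, evaluate at $x=0$, and show that each resulting term decays faster than $\jt^{-1+\delta}$. Starting from $(\pt - i\jn)v = \frac{1}{2i}\jn^{-1}(\alpha(\cdot)u^2 + \beta_0 u^3 + \beta(\cdot)u^3)$, one computes
\begin{equation*}
 \pt\bigl(e^{-it}v(t)\bigr) = e^{-it}\Bigl( i(\jn-1)v(t) + \tfrac{1}{2i}\jn^{-1}\bigl(\alpha u^2 + \beta_0 u^3 + \beta u^3\bigr) \Bigr).
\end{equation*}
So the task reduces to bounding, at $x=0$, the three contributions: (a) the ``low-frequency gain'' term $(\jn-1)v(t)$, (b) the variable-coefficient quadratic term $\jn^{-1}(\alpha u^2)$, and (c) the cubic terms $\jn^{-1}(\beta_0 u^3 + \beta u^3)$. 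For (c), the pointwise bound $\|u(t)\|_{L^\infty_x}\lesssim N(T)\jt^{-1/2}$ from the definition of $N(T)$ immediately gives $\|u(t)^3\|_{L^\infty_x}\lesssim N(T)^3\jt^{-3/2}$, and since $\jn^{-1}$ and multiplication by the Schwartz function $\beta$ are bounded on the relevant spaces (e.g.\ mapping into $L^\infty$ via an $H^1_x$ or $L^1_x$ bound using also $\|\jn^2 v(t)\|_{L^2_x}\lesssim N(T)\jt^\delta$ for the high Sobolev control), this term is $O(N(T)^3\jt^{-3/2+\delta})$, which is more than enough.

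The two genuinely delicate pieces are (a) and (b). For (a), the point is that the symbol $\jxi - 1 = O(\xi^2)$ vanishes to second order at the origin; I would use this together with the Duhamel representation of $v(t)$ — better yet, I would not differentiate the free part at all but instead use the asymptotics of the Klein--Gordon propagator (Lemma~\ref{lem:KG_propagator_asymptotics}) or, more efficiently, the local decay estimate~\eqref{equ:local_decay_van2}, which is precisely tailored to $\frac{\jn-1}{\jn}e^{\pm it\jn}$ and yields $\jt^{-2}$ local decay. Applying~\eqref{equ:local_decay_van2} to the linear evolution of $v_0$ gives $\jt^{-2}$ decay, and applying it inside the Duhamel integral against the spatially localized nonlinearity $\alpha(\cdot)(\text{stuff})$ — after the familiar splitting $u(s,\cdot)^2 = u(s,0)^2 + (u(s,\cdot)^2 - u(s,0)^2)$ — produces a convergent time integral of the form $\int_0^t \jap{t-s}^{-2}\,(\text{integrable or slowly growing})\,ds$; this is how the analogous estimate in Proposition~\ref{prop:resonant_bootstrap_bounds} for $\pt(e^{-it}v)$ was handled, and the same template applies here. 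Since the $\beta_0 u^3$ and $\beta u^3$ pieces are cubic and decay like $\jt^{-3/2}$, they pose no difficulty in this integral.

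For (b), the variable coefficient quadratic term $\bigl(\jn^{-1}(\alpha u^2)\bigr)(t,0)$: the crucial input is that $\widehat\alpha(\pm\sqrt 3)=0$. I would insert $u(t,0)^2 = e^{2it}(e^{-it}v(t,0))^2 + 2(e^{-it}v(t,0))(e^{it}\bar v(t,0)) + e^{-2it}(e^{it}\bar v(t,0))^2$ into Duhamel's formula, and for the most dangerous piece — the one carrying the $e^{2is}$ oscillation, for which the phase $2-\jxi$ of the combined propagator/phase-filter can be stationary — integrate by parts in $s$ exactly as in the normal form derivation~\eqref{equ:nonresonant_pull_out_pt}; the non-resonance hypothesis guarantees that $(2-\jxi)^{-1}\widehat\alpha(\xi)$ is a bona fide Schwartz multiplier, so no singularity is introduced. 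After the integration by parts, the terms that remain either are boundary terms of size $\lesssim \|v(t,0)\|^2 \lesssim N(T)^2\jt^{-1}$, or carry a factor $\pt(e^{-is}v(s,0))$; for the latter, one needs the pointwise-in-time local decay bounds $\|\jap{x}^{-2}v(s)\|_{L^\infty_x}\lesssim N(T)\jt^{-1/2}$ together with a bound on $\pt(e^{-is}v(s,0))$ — but the gain over the naive estimate is already enough because of the extra smoothing $\jn^{-1}$ and the decaying coefficient; alternatively one can use~\eqref{equ:local_decay_van1} for the $\px v$-type remainder coming from $u(s,\cdot)^2 - u(s,0)^2$. The non-oscillatory and $e^{-2is}$ pieces are easier since their phases $-\jxi$ and $-(2+\jxi)$ never vanish. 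Collecting all contributions yields $|\pt(e^{-it}v(t,0))| \lesssim N(T)\jt^{-1+\delta}$ (the loss of $\jt^\delta$ coming from the high Sobolev / weighted norms in $N(T)$).

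\textbf{Main obstacle.} The step I expect to require the most care is piece (b): correctly performing the integration by parts in $s$ for the resonant-phase term so as to exploit $\widehat\alpha(\pm\sqrt3)=0$, and then bounding the resulting ``$\pt(e^{-is}v(s,0))$'' terms without circularity — this is why the statement is phrased with $N(T)$ on the right-hand side and proved inside the bootstrap, so that one may freely use the already-assumed bounds $\|v(t)\|_{L^\infty_x}\lesssim N(T)\jt^{-1/2}$, $\|\jn^2 v(t)\|_{L^2_x}\lesssim N(T)\jt^\delta$, etc., as inputs. The bookkeeping of which weighted norm of $\alpha$ (and $\beta$) is needed — consistent with the hypothesis $\|\jx^4\alpha\|_{H^2_x}<\infty$, $\|\jx^3\beta\|_{H^1_x}<\infty$ — should be tracked but is routine.
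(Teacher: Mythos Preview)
Your decomposition into (a), (b), (c) is correct, and your handling of (c) matches the paper. But your plans for (a) and (b) both miss the mark.

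\textbf{Piece (b) is trivial.} The term $\bigl(\jn^{-1}(\alpha u^2)\bigr)(t,0)$ is an \emph{instantaneous} evaluation of the nonlinearity, not a Duhamel integral --- there is no time variable $s$ to integrate by parts in, and no role for the non-resonance assumption. The paper simply bounds
\[
\bigl|((\pt-i\jn)v)(t,0)\bigr| \;\lesssim\; \|\jn^{-1}(\alpha u^2)\|_{L^\infty_x} + \|\jn^{-1}(\beta_0 u^3 + \beta u^3)\|_{L^\infty_x} \;\lesssim\; \|u(t)\|_{L^\infty_x}^2 + \|u(t)\|_{L^\infty_x}^3 \;\lesssim\; \frac{N(T)^2}{\jt}.
\]
The hypothesis $\widehat\alpha(\pm\sqrt3)=0$ is \emph{not used at all} in this lemma; it enters only later, in the normal form and in the energy estimate for $\jn L v$.

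\textbf{Piece (a): the paper works directly on the Fourier side, not through Duhamel.} Writing
\[
((\jn-1)v)(t,0) = \frac{1}{\sqrt{2\pi}}\int_\bbR (\jxi-1)\,e^{it\jxi}\,\bigl(e^{-it\jxi}\hat v(t,\xi)\bigr)\,d\xi,
\]
the paper uses $\jxi-1 = \calO(\xi^2)$ to insert $e^{it\jxi} = \frac{\jxi}{it\xi}\partial_\xi(e^{it\jxi})$ without creating a singularity, and integrates by parts once in $\xi$. After Cauchy--Schwarz the result is bounded by
\[
\frac{1}{t}\Bigl(\|\jn v(t)\|_{L^2_x} + \bigl\|\jxi^2\partial_\xi\bigl(e^{-it\jxi}\hat v(t,\xi)\bigr)\bigr\|_{L^2_\xi}\Bigr) = \frac{1}{t}\Bigl(\|\jn v(t)\|_{L^2_x} + \|\jn L v(t)\|_{L^2_x}\Bigr) \lesssim \frac{N(T)}{\jt^{1-\delta}}.
\]
That is the entire argument: the improved decay at $x=0$ follows \emph{instantaneously} from the weighted profile control $\|\jn Lv(t)\|_{L^2_x}$ already in $N(T)$.

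Your proposed Duhamel + local decay route for (a) has a genuine gap. The estimate~\eqref{equ:local_decay_van2} requires $\jx^{-2}$ weights on both sides, so to apply it to the $\beta_0 u(s)^3$ contribution inside the Duhamel integral you would need control of $\|\jx^2 u(s)^3\|_{L^2_x}$ --- but $u(s)^3$ is not spatially localized, and $N(T)$ does not control $\|x^2 v(s)\|_{L^2_x}$ or $\|x v(s)\|_{L^\infty_x}$. (In the resonant case of Proposition~\ref{prop:resonant_bootstrap_bounds} there was no constant-coefficient cubic term, so this obstruction did not arise.) Falling back on the ordinary dispersive estimate~\eqref{equ:dispersive_decay} for that piece gives only $\jap{t-s}^{-1/2}$, and the resulting time integral does not produce $\jt^{-1+\delta}$. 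The Fourier-side integration by parts sidesteps all of this by working at the fixed time $t$.
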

\begin{proof}%[Proof of Lemma~\ref{lem:key_improved_decay}]
First, we compute that
\begin{align*}
 \pt \bigl( e^{-it} v(t,0) \bigr) &= -i e^{-it} v(t,0) + e^{-it} (\pt v)(t,0) \\
 &= i e^{-it} \bigl( (-1 + \jn) v \bigr)(t,0) + e^{-it} \bigl( (\pt - i \jn) v \bigr)(t,0).
\end{align*}
Correspondingly, we have 
\begin{equation} \label{equ:key_improved_decay_starting_bound}
 \bigl| \pt \bigl( e^{-it} v(t,0) \bigr) \bigr| \leq \bigl| \bigl( (-1 + \jn) v \bigr)(t,0) \bigr| + \bigl| \bigl( (\pt - i \jn) v \bigr)(t,0) \bigr|.
\end{equation}
The first term on the right-hand side of~\eqref{equ:key_improved_decay_starting_bound} can be written on the Fourier side as
\begin{align*}
 \bigl( (-1 + \jn) v \bigr)(t,0) &= \frac{1}{\sqrt{2\pi}} \int_{\bbR} (-1+\jap{\xi}) \hat{v}(t, \xi) \, \ud \xi = \frac{1}{\sqrt{2\pi}} \int_{\bbR} (-1+\jap{\xi}) e^{it\jap{\xi}} e^{-it\jap{\xi}} \hat{v}(t, \xi) \, \ud \xi.
\end{align*}
Using that $(-1+\jap{\xi}) = \calO(\xi^2)$ for $|\xi| \ll 1$, we can integrate by parts in $\xi$ to find for any $t > 0$ that
\begin{align*}
 &\sqrt{2\pi} \bigl( (-1+\jn) v \bigr)(t,0) \\
 &\quad = \int_{\bbR} (-1+\jap{\xi}) \frac{1}{it} \frac{\jap{\xi}}{\xi} \partial_\xi \bigl( e^{it\jap{\xi}} \bigr) e^{-it\jap{\xi}} \hat{v}(t, \xi) \, \ud \xi \\
 &\quad = - \frac{1}{it} \int_{\bbR} e^{it\jap{\xi}}  \biggl( \underbrace{\partial_\xi \biggl( \frac{(-1+\jap{\xi}) \jxi}{\xi} \biggr)}_{\calO(1)} e^{-it\jap{\xi}} \hat{v}(t, \xi) + \underbrace{\biggl( \frac{(-1+\jxi)}{\xi \jxi} \biggr)}_{\calO(\jxi^{-1})} \jxi^2 \partial_\xi \bigl( e^{-it\jap{\xi}} \hat{v}(t, \xi) \bigr) \biggr) \, \ud \xi.
\end{align*}
Thus, upon recalling that $e^{-it\jn} (Lv)(t) = \calF^{-1} [ \jxi i \partial_\xi ( e^{-it\jxi} \hat{v}(t,\xi) ) ]$, we obtain by Cauchy-Schwarz for any time $0 < t \leq T$ that
\begin{align*}
 \bigl|\bigl( (-1 + \jn) v \bigr)(t,0)\bigr| &\lesssim \frac{1}{t} \Bigl( \bigl\|\jxi^{-1}\bigr\|_{L^2_\xi} \bigl\| \jxi \hat{v}(t, \xi) \bigr\|_{L^2_\xi} + \bigl\|\jxi^{-1}\bigr\|_{L^2_\xi} \bigl\| \jxi^2 \partial_\xi \bigl( e^{-it\jap{\xi}} \hat{v}(t, \xi) \bigr) \bigr\|_{L^2_\xi} \Bigr) \\
 &\lesssim \frac{1}{t} \Bigl( \bigl\| \jn v(t) \bigr\|_{L^2_x} + \bigl\| \jn L v(t) \bigr\|_{L^2_x} \Bigr) \\
 &\lesssim \frac{1}{t} N(T) \jt^{+\delta}.
\end{align*}
For short times $0 \leq t \leq 1$ we have $\bigl|\bigl( (-1 + \jn) v \bigr)(t,0)\bigr| \lesssim \|v(t)\|_{H^2_x}$ just by Sobolev embedding so that overall we get uniformly for all times $0 \leq t \leq T$ that 
\begin{align*}
 \bigl|\bigl( (-1 + \jn) v \bigr)(t,0)\bigr| \lesssim \frac{N(T)}{\jt^{1-\delta}}.
\end{align*}
For the second term on the right-hand side of~\eqref{equ:key_improved_decay_starting_bound} we easily obtain from the equation~\eqref{equ:first_order_kg} for $v(t)$ that
\begin{align*}
 \bigl| \bigl( (\pt - i \jn) v \bigr)(t,0) \bigr| &\lesssim \bigl\|  \jap{\nabla}^{-1} \bigl( \alpha(\cdot) u(t)^2 \bigr) \bigr\|_{L^\infty_x} + \bigl\| \jap{\nabla}^{-1} \bigl( u^3 \bigr) \bigr\|_{L^\infty_x} + \bigl\| \jap{\nabla}^{-1} \bigl( \beta(\cdot) u(t)^3 \bigr) \bigr\|_{L^\infty_x} \\
 &\lesssim \|u(t)\|_{L^\infty_x}^2 + \|u(t)\|_{L^\infty_x}^3 \\
 &\lesssim \frac{N(T)^2}{\jt}.
\end{align*}
This finishes the proof of Lemma~\ref{lem:key_improved_decay}.
\end{proof}

In the next lemma we establish some auxiliary energy estimates that are needed for the proof of one of the main energy estimates in Proposition~\ref{prop:slow_growth_H1L} further below.

\begin{lemma}[Auxiliary bounds] \label{lem:auxiliary_bounds}
Let $v(t)$ be the solution to~\eqref{equ:first_order_kg}. Then we have uniformly for all $0 \leq t \leq T$ that
\begin{align}
 \bigl\| \pt v(t) \bigr\|_{L^2_x} &\lesssim N(T) \jt^{+\delta}, \label{equ:auxiliary_ptv} \\
 \bigl\| Zv(t) \bigr\|_{L^2_x} &\lesssim N(T) \jt^{+\delta}. \label{equ:auxiliary_Zv}  
\end{align}
\end{lemma}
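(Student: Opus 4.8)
The plan is to derive both bounds directly from the equation~\eqref{equ:first_order_kg} together with the definition of $N(T)$ and the commutator identities~\eqref{equ:commutator_identities}, with no Duhamel iteration required. These are crude energy bounds: they only lose a factor $\jt^{+\delta}$, so there is no subtlety beyond bookkeeping. First, for~\eqref{equ:auxiliary_ptv}, I would write $\pt v = i\jn v + (\pt - i\jn)v$ and note that $\|i \jn v(t)\|_{L^2_x} \leq \|\jn^2 v(t)\|_{L^2_x} \lesssim N(T)\jt^{+\delta}$ directly from the definition of $N(T)$, while the forcing term is controlled in $L^2_x$ by
\begin{equation*}
 \bigl\| (\pt - i\jn) v(t) \bigr\|_{L^2_x} \lesssim \bigl\| \jn^{-1}\bigl(\alpha(\cdot) u(t)^2\bigr) \bigr\|_{L^2_x} + |\beta_0| \bigl\| \jn^{-1}\bigl(u(t)^3\bigr)\bigr\|_{L^2_x} + \bigl\|\jn^{-1}\bigl(\beta(\cdot) u(t)^3\bigr)\bigr\|_{L^2_x}.
\end{equation*}
Using $u = v + \bar v$, placing one factor of $u$ in $L^\infty_x$ (bounded by $\jt^{-\hf} N(T)$) and the other(s) in $L^2_x$ (bounded by $\|\jn^2 v(t)\|_{L^2_x} \lesssim N(T)\jt^{+\delta}$, together with $\|u\|_{L^\infty_x} \lesssim N(T)$ for the cubic terms), and absorbing the smooth decaying coefficients $\alpha, \beta \in L^\infty_x$, this is $\lesssim N(T)^2 \jt^{-\hf+\delta} + N(T)^3 \jt^{-1+\delta}$. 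Since $N(T) \leq 1$, the whole thing is $\lesssim N(T)\jt^{+\delta}$, which gives~\eqref{equ:auxiliary_ptv}.

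For~\eqref{equ:auxiliary_Zv}, I would use the relation $Z = iL + i\jn^{-1}\px + x(\pt - i\jn)$ recorded in the preliminaries. The term $iL v$ satisfies $\|Lv(t)\|_{L^2_x} \leq \|\jn L v(t)\|_{L^2_x} \lesssim N(T)\jt^{+\delta}$ directly, and $\|\jn^{-1}\px v(t)\|_{L^2_x} \leq \|v(t)\|_{L^2_x} \lesssim N(T)$. The remaining term is $x(\pt - i\jn)v(t)$, for which I would use the explicit form of the right-hand side of~\eqref{equ:first_order_kg}: $x(\pt - i\jn)v = \frac{1}{2i} x\jn^{-1}(\alpha(\cdot) u^2 + \beta_0 u^3 + \beta(\cdot) u^3)$. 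Commuting $x$ through $\jn^{-1}$ using $[x, \jn^{-1}] = -\jn^{-3}\px$, this becomes $\frac{1}{2i}\jn^{-1}\bigl( x(\alpha(\cdot) u^2 + \cdots)\bigr)$ plus a harmless $\jn^{-3}\px(\cdots)$ term; the weight $x$ is absorbed into the decaying coefficients $x\alpha(x), x\beta(x) \in L^\infty_x$, and then the same $L^\infty_x \cdot L^2_x$ (respectively $L^\infty_x \cdot L^\infty_x \cdot L^2_x$) splitting as above gives $\lesssim N(T)^2 \jt^{-\hf+\delta} + N(T)^3\jt^{-1+\delta} \lesssim N(T)\jt^{+\delta}$.

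The only mild point of care is that $x$ multiplied against the coefficients must still be a bounded (indeed decaying) function; this is guaranteed by the hypothesis $\|\jx^4 \alpha\|_{H^2_x} < \infty$ and $\|\jx^3 \beta\|_{H^1_x} < \infty$ in Theorem~\ref{thm:nonresonant}, which give far more decay than needed here. I do not anticipate a genuine obstacle in this lemma; the main (entirely routine) issue is simply to verify that, with $N(T) \leq 1$, every nonlinear contribution decays at least like $\jt^{-\hf+\delta}$ so that it is dominated by the claimed $\jt^{+\delta}$ growth. The bounds are deliberately lossy because their only purpose is to feed into the more delicate estimate of Proposition~\ref{prop:slow_growth_H1L}, where the local decay of the Klein-Gordon propagator will be used to recover the needed time decay.
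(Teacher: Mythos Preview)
Your approach is essentially the same as the paper's, and the estimate for $\pt v$ is fine. However, there is a genuine gap in your treatment of $x(\pt - i\jn)v$ for the $Zv$ bound: you write that ``the weight $x$ is absorbed into the decaying coefficients $x\alpha(x), x\beta(x) \in L^\infty_x$,'' but this mechanism does not apply to the constant coefficient cubic term $\beta_0 u^3$, since $\beta_0$ has no spatial decay to absorb the factor of $x$.

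For this term you must instead use the weighted norm $\|xv(t)\|_{L^2_x}$, which is part of $N(T)$ with growth $\jt^{1+\delta}$:
\[
 \bigl\| x\, \beta_0\, u(t)^3 \bigr\|_{L^2_x} \lesssim \|xv(t)\|_{L^2_x}\, \|v(t)\|_{L^\infty_x}^2 \lesssim N(T)\jt^{1+\delta} \cdot \frac{N(T)^2}{\jt} = N(T)^3 \jt^{+\delta}.
\]
This is exactly how the paper handles it. The final conclusion $\lesssim N(T)\jt^{+\delta}$ still holds since $N(T)\leq 1$, but your intermediate claim that the cubic contribution is $\lesssim N(T)^3\jt^{-1+\delta}$ is incorrect for this term, and the argument as written does not cover it. This is precisely why the quantity $\jt^{-1-\delta}\|xv(t)\|_{L^2_x}$ is included in the bootstrap norm $N(T)$.
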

\begin{proof}%[Proof of Lemma~\ref{lem:auxiliary_bounds}]
 The estimate~\eqref{equ:auxiliary_ptv} follows readily from the original first-order equation~\eqref{equ:first_order_kg} satisfied by $v(t)$
 \begin{align*}
  \|(\pt v)(t)\|_{L^2_x} &\lesssim \|(\jn v)(t)\|_{L^2_x} + \bigl\|\alpha(x) u(t)^2\bigr\|_{L^2_x} + \bigl\| \beta_0 u(t)^3 \bigr\|_{L^2_x} + \bigl\| \beta(x) u(t)^3 \bigr\|_{L^2_x} \\
  &\lesssim \|(\jn v)(t)\|_{L^2_x} + \|\alpha(x)\|_{L^2_x} \|v(t)\|_{L^\infty_x}^2 + \|v(t)\|_{L^2_x} \|v(t)\|_{L^\infty_x}^2 + \|\beta(x)\|_{L^2_x} \|v(t)\|_{L^\infty_x}^3 \\
  &\lesssim N(T) \jt^{+\delta}.
 \end{align*}
 To establish the bound~\eqref{equ:auxiliary_Zv}, we first use the identity $Z = iL + i \jn^{-1} \px + x (\pt - i\jn)$ to find that
 \begin{align*}
  \|(Zv)(t)\|_{L^2_x} &\lesssim \|(L v)(t)\|_{L^2_x} + \|v(t)\|_{L^2_x} + \bigl\| x (\pt - i \jn) v(t) \bigr\|_{L^2_x} \lesssim N(T) \jt^{+\delta} + \bigl\| x (\pt - i \jn) v(t) \bigr\|_{L^2_x}.
 \end{align*}
 From the equation~\eqref{equ:first_order_kg} for $v(t)$ we then obtain that
 \begin{align*}
  \bigl\| x (\pt - i \jn) v(t) \bigr\|_{L^2_x} &\lesssim \bigl\| x \jn^{-1} \bigl( \alpha(\cdot) u(t)^2 + \beta_0 u(t)^3 + \beta(\cdot) u(t)^3 \bigr) \bigr\|_{L^2_x} \\
  &\lesssim \bigl\| \jx \bigl( \alpha(x) u(t)^2 + \beta_0 u(t)^3 + \beta(x) u(t)^3 \bigr) \bigr\|_{L^2_x} \\
  &\lesssim \| \jx \alpha(x) \|_{L^2_x} \|v(t)\|_{L^\infty_x}^2 + \bigl( \|v(t)\|_{L^2_x} + \|x v(t)\|_{L^2_x} \bigr) \|v(t)\|_{L^\infty_x}^2 + \| \jx \beta(x)\|_{L^2_x} \|v(t)\|_{L^\infty_x}^3 \\
  &\lesssim \frac{N(T)^2}{\jt} + N(T) \jt^{1+\delta} \frac{N(T)^2}{\jt} + \frac{N(T)^3}{\jt^{\thf}} \\
  &\lesssim N(T)^2 \jt^{+\delta}.
 \end{align*}
 Hence, 
 \begin{equation*}
  \|(Zv)(t)\|_{L^2_x} \lesssim N(T)^2 \jt^{+\delta}.
 \end{equation*}
\end{proof}

We repeatedly exploit that spatial derivatives of the solution to~\eqref{equ:first_order_kg} have stronger local decay that is quantified in the following lemma.
\begin{lemma}[Improved local decay of spatial derivatives] \label{lem:improved_decay_localized_derivatives}
Let $\kappa(x)$ be a smooth and sufficiently decaying variable coefficient and let $v(t)$ be the solution to~\eqref{equ:first_order_kg}. Then we have uniformly for all $0 \leq t \leq T$ that
 \begin{align}
  \bigl\| \kappa(x) (\px u)(t) \bigr\|_{L^2_x} &\lesssim \bigl\| \jx \kappa(x) \bigr\|_{L^\infty_x} \frac{N(T)}{\jt^{1-\delta}}, \label{equ:decay_localized_px} \\
  \Bigl\| \kappa(x) \bigl( u(t, x)^2 - u(t,0)^2 \bigr) \Bigr\|_{L^2_x} &\lesssim \bigl\| \jx^2 \kappa(x) \bigr\|_{L^\infty_x} \frac{N(T)^2}{\jt^{\frac{3}{2}-\delta}}. \label{equ:decay_localized_difference}
 \end{align}
\end{lemma}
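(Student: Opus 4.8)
The plan is to establish \eqref{equ:decay_localized_px} first by exploiting the operator $L$, and then to bootstrap \eqref{equ:decay_localized_difference} from it via the fundamental theorem of calculus together with a rescaling.

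For \eqref{equ:decay_localized_px}, I would write $u(t) = v(t) + \bar v(t)$ and use the identity $L = \jn x - i t \px$ to trade a spatial derivative for a factor of $t^{-1}$: for $t \geq 1$ one has $\px v(t) = \frac{1}{it}\bigl( \jn x v(t) - L v(t) \bigr)$. Commuting with the help of $[x,\jn] = \jn^{-1}\px$ from \eqref{equ:commutator_identities} gives $\jn x = x \jn - \jn^{-1}\px$, hence
\begin{equation*}
 \kappa(x)\px v(t) = \frac{1}{it}\Bigl( \kappa(x)\, x\, \jn v(t) - \kappa(x)\jn^{-1}\px v(t) - \kappa(x) L v(t) \Bigr).
\end{equation*}
Since $\jn^{-1}\px$ is bounded on $L^2_x$ by Plancherel and $\|x\kappa(x)\|_{L^\infty_x} + \|\kappa(x)\|_{L^\infty_x} \lesssim \|\jx\kappa(x)\|_{L^\infty_x}$, I would bound the right-hand side in $L^2_x$ by
\begin{equation*}
 \frac{1}{t}\|\jx\kappa(x)\|_{L^\infty_x}\Bigl( \|\jn v(t)\|_{L^2_x} + \|v(t)\|_{L^2_x} + \|\jn L v(t)\|_{L^2_x} \Bigr) \lesssim \|\jx\kappa(x)\|_{L^\infty_x}\frac{N(T)}{\jt^{1-\delta}},
\end{equation*}
using the definition of $N(T)$ and $\jt \lesssim t$ for $t \geq 1$. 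The $\bar v$ contribution is estimated identically since $\px\bar v = \overline{\px v}$, and the short-time regime $0 \leq t \leq 1$ is handled directly from $\|\kappa(x)\px u(t)\|_{L^2_x} \lesssim \|\kappa(x)\|_{L^\infty_x}\|\jn^2 v(t)\|_{L^2_x}$.

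For \eqref{equ:decay_localized_difference}, I would factor
\begin{equation*}
 u(t,x)^2 - u(t,0)^2 = x\Bigl( \int_0^1 (\px u)(t,\eta x)\, \ud \eta \Bigr)\bigl( u(t,x) + u(t,0) \bigr),
\end{equation*}
pull out $\|u(t) + u(t,0)\|_{L^\infty_x} \lesssim \|u(t)\|_{L^\infty_x} \lesssim N(T)\jt^{-\frac12}$, and use Minkowski's inequality in $\eta$ to reduce to a uniform (in $\eta \in (0,1]$) bound for $\|x\kappa(x)(\px u)(t,\eta x)\|_{L^2_x}$ with an integrable factor of $\eta$ to spare. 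The change of variables $y = \eta x$ gives
\begin{equation*}
 \bigl\| x\kappa(x)(\px u)(t,\eta x) \bigr\|_{L^2_x} = \eta^{-\frac12}\bigl\| \tilde\kappa_\eta(\px u)(t) \bigr\|_{L^2_x}, \qquad \tilde\kappa_\eta(y) := \frac{y}{\eta}\,\kappa\Bigl( \frac{y}{\eta} \Bigr),
\end{equation*}
and the elementary inequality $\jap{y} \leq \jap{y/\eta}$ for $\eta \in (0,1]$ yields $\|\jx\tilde\kappa_\eta\|_{L^\infty_x} \lesssim \|\jx^2\kappa(x)\|_{L^\infty_x}$ uniformly in $\eta$. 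Applying \eqref{equ:decay_localized_px} with coefficient $\tilde\kappa_\eta$ and integrating $\int_0^1 \eta^{-\frac12}\, \ud \eta < \infty$ then produces \eqref{equ:decay_localized_difference}; again $0 \leq t \leq 1$ is handled by the direct bound.

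I expect the only genuinely delicate point to be this rescaling step: one must verify that the dilated coefficients $\tilde\kappa_\eta$ carry a weighted $L^\infty$ norm that is bounded uniformly for $\eta \in (0,1]$ and that the leftover power $\eta^{-\frac12}$ is integrable near $\eta = 0$ — both are consequences of the monotonicity $\jap{y} \leq \jap{y/\eta}$. The conceptual content, however, is entirely in \eqref{equ:decay_localized_px}: the gain of $\jt^{-1+\delta}$ comes from writing $t\px = i(\jn x - L)$, absorbing the localized $x$-weight into $\jx\kappa$, and paying only the slowly growing weighted energy $\|\jn L v(t)\|_{L^2_x} \lesssim N(T)\jt^{\delta}$.
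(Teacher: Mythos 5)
Your proof is correct and follows essentially the same route as the paper's: the identity $t\px = -i(\jn x - L) = -i(x\jn - \jn^{-1}\px - L)$ converts the localized derivative into $\frac{1}{t}$ times quantities controlled by $N(T)\jt^{\delta}$, and the difference estimate follows from the fundamental theorem of calculus plus the change of variables $y = \eta x$. The only (cosmetic) difference is in \eqref{equ:decay_localized_difference}: you invoke \eqref{equ:decay_localized_px} as a black box for the dilated coefficient $\tilde\kappa_\eta$ — with the correct uniform bound $\|\jx\tilde\kappa_\eta\|_{L^\infty_x} \lesssim \|\jx^2\kappa\|_{L^\infty_x}$ via $\jap{\eta z} \leq \jap{z}$ — whereas the paper re-runs the same $L$-decomposition pointwise at $\eta x$ inside the $\eta$-integral; both versions reduce to the same integrable factor $\int_0^1 \eta^{-\frac12}\,\ud\eta < \infty$ and yield the same final bound.
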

\begin{proof}%[Proof of Lemma~\ref{lem:improved_decay_localized_derivatives}]
 We start with the proof of~\eqref{equ:decay_localized_px}.
For times $0 \leq t \leq 1$ we just bound 
\begin{align*}
 \bigl\| \kappa(x) (\px u)(t) \bigr\|_{L^2_x} \lesssim \|\kappa(x)\|_{L^\infty_x} \|\px v(t)\|_{L^2_x} \lesssim \|\kappa(x)\|_{L^\infty_x} N(T) \jt^{+\delta}.
\end{align*}
For $t \geq 1$ we first observe that
\begin{equation} \label{equ:tpx_decay_pickup}
\begin{aligned}
 \kappa(x) (\px v)(t) &= \frac{1}{(-it)} \kappa(x) \, (-it) (\px v)(t) \\
 &= \frac{1}{(-it)} \kappa(x) \, \bigl( -\jn x + L \bigr) v(t) \\
 &= \frac{1}{(-it)} \kappa(x) \, \bigl( - x \jn + \jn^{-1} \px + L \bigr) v(t) \\
 &= - \frac{1}{(-it)} x \kappa(x) (\jn v)(t) + \frac{1}{(-it)} \kappa(x) (\jn^{-1} \px v)(t) + \frac{1}{(-it)} \kappa(x) (L v)(t) 
\end{aligned}
\end{equation}
and thus
\begin{align*}
 \bigl\| \kappa(x) (\px v)(t) \bigr\|_{L^2_x} &\lesssim \frac{1}{t} \| x \kappa(x) \|_{L^\infty_x} \|(\jn v)(t)\|_{L^2_x} + \frac{1}{t} \|\kappa(x)\|_{L^\infty_x} \|v(t)\|_{L^2_x} + \frac{1}{t} \|\kappa(x)\|_{L^\infty_x} \|(L v)(t)\|_{L^2_x} \\
 &\lesssim \frac{1}{t} \bigl\| \jx \kappa(x) \bigr\|_{L^\infty_x} \bigl( \| (\jn v)(t) \|_{L^2_x} + \| (L v)(t) \|_{L^2_x} \bigr). 
\end{align*}
Hence, for $t \geq 1$ we have
\begin{align*}
 \bigl\| \kappa(x) (\px u)(t) \bigr\|_{L^2_x} &\lesssim \bigl\| \kappa(x) (\px v)(t) \bigr\|_{L^2_x} \lesssim \frac{1}{t} \bigl\| \jx \kappa(x) \bigr\|_{L^\infty_x}  N(T) \jt^{+\delta}.
\end{align*}
Putting things together, we obtain uniformly for all $t \geq 0$ that
\begin{align*}
 \bigl\| \kappa(x) (\px u)(t) \bigr\|_{L^2_x} \lesssim \|\jx \kappa(x)\|_{L^\infty_x} \frac{N(T)}{\jt^{1-\delta}}.
\end{align*}

Next, we give the proof of~\eqref{equ:decay_localized_difference}.
For times $0 \leq t \leq 1$ we just bound  
\begin{equation}
 \bigl\| \kappa(x) \bigl( u(t, x)^2 - u(t,0)^2 \bigr) \bigr\|_{L^2_x} \lesssim \|\kappa(x)\|_{L^2_x} \|v(t)\|_{L^\infty_x}^2 \lesssim \frac{N(T)^2}{\jt}.
\end{equation}
For $t \geq 1$ we first observe that by the fundamental theorem of calculus we have 
\begin{align*}
 \kappa(x) \bigl( u(t, x)^2 - u(t,0)^2 \bigr) &= \kappa(x) \bigl( u(t,x) - u(t,0) \bigr) \bigl( u(t,x) + u(t,0) \bigr) \\
 &= x \kappa(x) \Bigl( \int_0^1 (\px u)(t, \eta x) \, \ud \eta \Bigr) \bigl( u(t,x) + u(t,0) \bigr).
\end{align*}
Then in view of~\eqref{equ:tpx_decay_pickup} we may write
\begin{align*}
 x \kappa(x) \biggl( \int_0^1 (\px v)(t, \eta x) \, \ud \eta \biggr) &= - \frac{1}{(-it)} x^2 \kappa(x) \biggl( \int_0^1 \eta (\jn v)(t, \eta x) \, \ud \eta \biggr) + \frac{1}{(-it)} x \kappa(x) \biggl( \int_0^1 (\jn^{-1} \px v)(t, \eta x) \, \ud \eta \biggr) \\
 &\qquad + \frac{1}{(-it)} x \kappa(x) \biggl( \int_0^1 (L v)(t, \eta x) \, \ud \eta \biggr)
\end{align*}
and therefore bound 
\begin{align*}
 &\biggl\| x \kappa(x) \biggl( \int_0^1 (\px v)(t, \eta x) \, \ud \eta \biggr) \biggr\|_{L^2_x} \\
 &\lesssim \frac{1}{t} \bigl\| \jx^2 \kappa(x) \bigr\|_{L^\infty_x} \biggl( \int_0^1 \eta \|(\jn v)(t,\eta \cdot)\|_{L^2_x} \, \ud \eta + \int_0^1 \| v(t, \eta \cdot) \|_{L^2_x} \, \ud \eta + \int_0^1 \|(Lv)(t,\eta \cdot)\|_{L^2_x} \, \ud \eta \biggr) \\
 &\lesssim \frac{1}{t} \bigl\| \jx^2 \kappa(x) \bigr\|_{L^\infty_x} \biggl( \int_0^1 \eta^{\hf} \|(\jn v)(t)\|_{L^2_x} \, \ud \eta + \int_0^1 \eta^{-\hf} \|v(t)\|_{L^2_x} \, \ud \eta + \int_0^1 \eta^{-\hf} \|Lv(t)\|_{L^2_x} \, \ud \eta \biggr) \\
 &\lesssim \frac{1}{t} \bigl\| \jx^2 \kappa(x) \bigr\|_{L^\infty_x} \Bigl( \|\jn v(t)\|_{L^2_x} + \|v(t)\|_{L^2_x} + \|Lv(t)\|_{L^2_x} \Bigr).
\end{align*}
Hence, for $t \geq 1$ we find
\begin{align*}
 \Bigl\| \kappa(x) \bigl( u(t, x)^2 - u(t,0)^2 \bigr) \Bigr\|_{L^2_x} &\lesssim \biggl\| x \kappa(x) \Bigl( \int_0^1 (\px u)(t, \eta x) \, \ud \eta \Bigr) \bigl( u(t,x) + u(t,0) \bigr) \biggr\|_{L^2_x}   \\
 &\lesssim \biggl\| x \kappa(x) \biggl( \int_0^1 (\px v)(t, \eta \, \cdot) \, \ud \eta \biggr) \biggr\|_{L^2_x} \|v(t)\|_{L^\infty_x} \\
 &\lesssim \frac{1}{t} \bigl\| \jx^2 \kappa(x) \bigr\|_{L^\infty_x} \Bigl( \|\jn v(t)\|_{L^2_x} + \|v(t)\|_{L^2_x} + \|Lv(t)\|_{L^2_x} \Bigr) \|v(t)\|_{L^\infty_x} \\
 &\lesssim \frac{1}{t} \bigl\| \jx^2 \kappa(x) \bigr\|_{L^\infty_x} N(T) \jt^{+\delta} \frac{N(T)}{\jt^{\hf}}.
\end{align*}
Putting things together we arrive at the desired estimate 
\begin{equation}
 \Bigl\| \kappa(x) \bigl( u(t, x)^2 - u(t,0)^2 \bigr) \Bigr\|_{L^2_x} \leq \bigl\| \jx^2 \kappa(x) \bigr\|_{L^\infty_x} \frac{N(T)^2}{\jt^{\thf-\delta}}.
\end{equation} 
\end{proof}

Finally, the derivation of a slow growth estimate for the energy of $\jn L v(t)$ in Proposition~\ref{prop:slow_growth_H1L} below relies on the following local decay bounds for $\jn L \calV_\ell(t)$, $\ell = 1, \ldots, 5$. 
\begin{lemma}[Local decay bounds] \label{lem:weighted_energy_bounds}
Let $v(t)$ be the solution to~\eqref{equ:first_order_kg}. Then we have uniformly for all $0 \leq t \leq T$ that
\begin{align*}
 \bigl\| \jx^{-2} \jn L \calV_\ell(t) \bigr\|_{L^2_x} &\lesssim \frac{N(T)^2}{\jt^{\hf-\delta}} \quad \text{ for } \ell = 1, \ldots, 4, \\
 \bigl\| \jx^{-2} \jn L \calV_5(t) \bigr\|_{L^2_x} &\lesssim \frac{N(T)^3}{\jt^{\hf}}, \\
\end{align*}
\end{lemma}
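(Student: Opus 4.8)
The backbone of the argument will be the commutator identity $[(\pt - i\jn), L] = 0$ from~\eqref{equ:commutator_identities}, which allows us to transport $L$ inside each Duhamel integral, together with the local decay estimates of Lemma~\ref{lem:local_decay}. The one subtlety is that when $L$ falls on a variable coefficient (or, after commuting $\jn^{-1}$, on the nonlinearity) it produces a factor growing like $s$; but that factor is always tied to a spatial derivative $\px$, so one may invoke the stronger $\jap{t}^{-\thf}$ local decay of~\eqref{equ:local_decay_van1} instead of the bare $\jap{t}^{-\hf}$ bound of~\eqref{equ:local_decay}, and this gain is precisely what absorbs the $s$-growth after one time integration. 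First I would write, for each $\ell$, $\calV_\ell(t) = \int_0^t e^{i(t-s)\jn} F_\ell(s)\,\ud s$, where $F_1(s) = 2\alpha_1\, g_1(s)$ with the \emph{scalar} $g_1(s) := e^{2is}\,\partial_s\bigl(e^{-is}v(s,0)\bigr)\bigl(e^{-is}v(s,0)\bigr)$, similarly $F_2, F_3$ with scalars $g_2(s), g_3(s)$, and $F_4(s) = \frac{1}{2i}\jn^{-1}N_4(s)$, $F_5(s) = \frac{1}{2i}\jn^{-1}N_5(s)$ with $N_4 := \alpha(\cdot)\bigl(u(s,\cdot)^2 - u(s,0)^2\bigr)$ and $N_5 := \beta(\cdot)\, u(s,\cdot)^3$. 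Since $\calV_\ell$ solves $(\pt - i\jn)\calV_\ell = F_\ell$ with $\calV_\ell(0) = 0$, the commutator identity gives $L\calV_\ell(t) = \int_0^t e^{i(t-s)\jn}(L_s F_\ell)(s)\,\ud s$ with $L_s := \jn x - is\px$, and hence
\[
\jn L\calV_\ell(t) = \int_0^t e^{i(t-s)\jn}\,\jn (L_s F_\ell)(s)\,\ud s .
\]

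Next I would expand $\jn L_s F_\ell$. For $\ell = 1, 2, 3$ the coefficient is the only $x$-dependent factor, so $\jn (L_s F_\ell)(s) = 2 g_\ell(s)\bigl(\jn^2 x\,\alpha_\ell - is\,\px(\jn\alpha_\ell)\bigr)$. For $\ell = 4, 5$ I would also use $[L_s, \jn^{-1}] = -\jn^{-2}\px$ (valid for every $s$, cf.~\eqref{equ:commutator_identities}) to write $\jn L_s F_\ell = \frac{1}{2i}\bigl(\jn\, (x N_\ell) - is\px N_\ell\bigr) - \frac{1}{2i}\,\frac{\px}{\jn}N_\ell$. The key point is that in every case the potentially dangerous $s$-growth multiplies a pure spatial derivative: $\px(\jn\alpha_\ell)$ or $\px N_\ell$.

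The estimate then splits in two. The contributions without the $s$-factor — $\jn^2 x\alpha_\ell$, $\jn(x N_\ell)$, and $\frac{\px}{\jn}N_\ell$ — I would control by~\eqref{equ:local_decay}, in the form $\|\jx^{-2}e^{i(t-s)\jn}h\|_{L^2_x}\lesssim \jap{t-s}^{-\hf}\|\jx h\|_{L^2_x}$ (for $\frac{\px}{\jn}N_\ell$ one even gains the full $\jap{t-s}^{-\thf}$ from~\eqref{equ:local_decay_van1}), after moving the remaining $\jn$'s onto the localized profile and invoking the weighted bounds of Lemma~\ref{lem:improved_decay_localized_derivatives}. For the $s$-growth terms, which are of the form $s\,\px W$ with $W \in \{\jn\alpha_\ell,\, N_\ell\}$, I would write $\px W = \jn\cdot\frac{\px}{\jn}W$, so that $e^{i(t-s)\jn}\bigl(s\,\px W\bigr) = s\,\frac{\px}{\jn}e^{i(t-s)\jn}\jn W$, and apply~\eqref{equ:local_decay_van1}:
\[
\Bigl\| \jx^{-2}\, s\,\frac{\px}{\jn}\,e^{i(t-s)\jn}\jn W\Bigr\|_{L^2_x} \lesssim s\,\jap{t-s}^{-\thf}\,\bigl\|\jx^2 \jn W\bigr\|_{L^2_x} ,
\]
which is legitimate because $\px W$ has Fourier transform vanishing at $\xi = 0$. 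Using $|g_\ell(s)|\lesssim N(T)^2\js^{-\thf+\delta}$ for $\ell = 1,2,3$ — here Lemma~\ref{lem:key_improved_decay} applied to $\partial_s(e^{-is}v(s,0))$ and $\|v(s)\|_{L^\infty_x}\lesssim N(T)\js^{-\hf}$ enter — and $\|\jx^2\jn N_4\|_{L^2_x}\lesssim N(T)^2\js^{-\thf+\delta}$ for $\ell = 4$ (from~\eqref{equ:decay_localized_difference} for the genuine difference part and~\eqref{equ:decay_localized_px} for the piece $\alpha\, u\,\px u$), the time integrals close:
\[
\int_0^t s\,\jap{t-s}^{-\thf}\,\js^{-\thf+\delta}\,\ud s \lesssim \int_0^t \jap{t-s}^{-\thf}\,\js^{-\hf+\delta}\,\ud s \lesssim \jt^{-\hf+\delta}, \qquad \int_0^t \jap{t-s}^{-\hf}\,\js^{-\thf+\delta}\,\ud s \lesssim \jt^{-\hf} ,
\]
which gives the stated bound for $\ell = 1,\dots,4$. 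For $\ell = 5$ the estimate is cleaner since $\|\jx^2\jn N_5\|_{L^2_x}\lesssim N(T)^3\js^{-\thf}$ (using $\|u(s)\|_{L^\infty_x}^3\lesssim N(T)^3\js^{-\thf}$ together with~\eqref{equ:decay_localized_px} for $\beta\, u^2\px u$), and the same integrals yield $N(T)^3\jt^{-\hf}$.

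The main obstacle — and the reason the proof is not entirely routine — is exactly the $s\,\px$ term produced by $L_s$: using only the $\jap{t-s}^{-\hf}$ local decay there would leave $\int_0^t s\,\jap{t-s}^{-\hf}\,\js^{-\thf+\delta}\,\ud s \sim \jt^{\delta}$, which is far too weak. The remedy — recognizing that this term always carries a spatial derivative, so that the enhanced $\jap{t-s}^{-\thf}$ estimate~\eqref{equ:local_decay_van1} is available — is exactly the mechanism that turns the normal-form terms $\calV_1,\calV_2,\calV_3$ and the difference term $\calV_4$ into effective ``variable coefficient cubic'' nonlinearities. Finally, I would verify at the outset that all the weighted Sobolev norms of $\alpha,\beta,\alpha_1,\alpha_2,\alpha_3$ appearing above are finite; this follows from the standing decay and regularity hypotheses on $\alpha$ and $\beta$ together with the non-resonance condition~\eqref{equ:nonresonance_assumption_sec}, under which the symbols defining $\alpha_1,\alpha_2,\alpha_3$ in~\eqref{equ:nonresonant_defs_alpha_coeff} are smooth, and it is the routine bookkeeping part of the argument.
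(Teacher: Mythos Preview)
Your proposal is correct and follows essentially the same approach as the paper. The only organizational difference is that the paper does not commute $L$ through the propagator; instead it writes $\jn L = \jn^2 x - it\jn\px$ at the outer time $t$, giving
\[
\bigl\| \jx^{-2} \jn L \calV_\ell(t) \bigr\|_{L^2_x} \lesssim \bigl\| \jx^{-1} \jn^2 \calV_\ell(t) \bigr\|_{L^2_x} + t \bigl\| \jx^{-2} \jn \px \calV_\ell(t) \bigr\|_{L^2_x},
\]
and then estimates each piece from the Duhamel formula using~\eqref{equ:local_decay} and~\eqref{equ:local_decay_van1} respectively. This produces the factor $t$ outside the time integral rather than your $s$ inside, but since $s\le t$ the resulting convolution bounds are the same; the core mechanism you correctly identify --- that the growing factor is always tied to $\px$, so the enhanced $\jap{t-s}^{-\thf}$ local decay absorbs it --- is exactly the paper's point as well.
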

\begin{proof}%[Proof of Lemma~\ref{lem:weighted_energy_bounds}]
We start off by observing that for $\ell = 1, \ldots, 5$ we have 
\begin{equation*}
 \bigl\| \jx^{-2} \jn L \calV_\ell(t) \bigr\|_{L^2_x} \lesssim \bigl\| \jx^{-1} \jn^2 \calV_\ell(t) \bigr\|_{L^2_x} + t \bigl\| \jx^{-2} \jn \px \calV_\ell(t) \bigr\|_{L^2_x}.
\end{equation*}
Then we obtain for the terms $\calV_\ell(t)$, $\ell = 1, 2, 3$, by the local decay estimates from Lemma~\ref{lem:local_decay} and the improved decay estimates from Lemma~\ref{lem:key_improved_decay} that
\begin{align*}
 &\sum_{1 \leq \ell \leq 3} \bigl\| \jx^{-2} \jn L \calV_\ell(t) \bigr\|_{L^2_x} \\
 &\quad \lesssim \sum_{1 \leq \ell \leq 3} \int_0^t \bigl\| \jx^{-1} e^{i(t-s)\jn} \jx^{-1} \bigr\|_{L^2_x \to L^2_x} \bigl\| \jx \alpha_\ell\bigr\|_{H^2_x} \bigl|\partial_s \bigl( e^{-is} v(s,0) \bigr)\bigr| |v(s,0)| \, \ud s \\
 &\quad \quad + \sum_{1 \leq \ell \leq 3} t \int_0^t \bigl\| \jx^{-2} \jn^{-1} \px e^{i(t-s)\jn} \jx^{-2} \bigr\|_{L^2_x \to L^2_x} \bigl\| \jx^2 \alpha_\ell \bigr\|_{H^2_x} \bigl|\partial_s \bigl( e^{-is} v(s,0) \bigr)\bigr| |v(s,0)| \, \ud s \\
 &\quad \lesssim \int_0^t \frac{1}{\jap{t-s}^{\hf}} \frac{N(T)^2}{\js^{\thf-\delta}} \, \ud s + t \int_0^t \frac{1}{\jap{t-s}^{\thf}} \frac{N(T)^2}{\js^{\thf-\delta}} \, \ud s \\
 &\quad \lesssim \frac{N(T)^2}{\jt^{\hf-\delta}}.
\end{align*}
Similarly, for the term $\calV_4(t)$ the local decay estimates from Lemma~\ref{lem:local_decay} in combination with the improved decay estimates from Lemma~\ref{lem:improved_decay_localized_derivatives} yield that
\begin{align*}
 \bigl\| \jx^{-2} \jn L \calV_4(t) \bigr\|_{L^2_x} &\lesssim \int_0^t \bigl\| \jx^{-1} e^{i(t-s)\jn} \jx^{-1} \bigr\|_{L^2_x \to L^2_x} \bigl\| \jx \alpha(x) \bigl( u(s,x)^2 - u(s,0)^2 \bigr) \bigr\|_{H^1_x} \, \ud s \\
 &\quad + t \int_0^t \bigl\| \jx^{-2} \jn^{-1} \px e^{i(t-s)\jn} \jx^{-2} \bigr\|_{L^2_x \to L^2_x} \bigl\| \jx^2 \alpha(x) \bigl( u(s,x)^2 - u(s,0)^2 \bigr) \bigr\|_{H^1_x} \, \ud s \\
 &\lesssim \int_0^t \frac{1}{\jap{t-s}^{\hf}} \frac{N(T)^2}{\js^{\thf-\delta}} \, \ud s + t \int_0^t \frac{1}{\jap{t-s}^{\thf}} \frac{N(T)^2}{\js^{\thf-\delta}} \, \ud s \\
 &\lesssim \frac{N(T)^2}{\jt^{\hf-\delta}}.
\end{align*}
Finally, for the term $\calV_5(t)$ we have that
\begin{align*}
 \bigl\| \jx^{-2} \jn L \calV_5(t) \bigr\|_{L^2_x} &\lesssim \int_0^t \bigl\| \jx^{-1} e^{i(t-s)\jn} \jx^{-1} \bigr\|_{L^2_x \to L^2_x} \bigl\| \jx \beta(x) u(s)^3 \bigr\|_{H^1_x} \, \ud s \\
 &\quad + t \int_0^t \bigl\| \jx^{-2} \jn^{-1} \px e^{i(t-s)\jn} \jx^{-2} \bigr\|_{L^2_x \to L^2_x} \bigl\| \jx^2 \beta(x) u(s)^3 \bigr\|_{H^1_x} \, \ud s.
\end{align*}
By Lemma~\ref{lem:improved_decay_localized_derivatives} we obtain the bound
\begin{align*}
 \bigl\| \jx^2 \beta(x) u(s)^3 \bigr\|_{H^1_x} &\lesssim \bigl\| \jx^2 \beta(x) \bigr\|_{H^1_x} \|v(s)\|_{L^\infty_x}^3 + \bigl\| \jx^2 \beta(x) (\px u)(s) \bigr\|_{L^2_x} \|v(s)\|_{L^\infty_x}^2 \lesssim \frac{N(T)^3}{\js^{\thf}},
\end{align*}
and hence, using the local decay estimates from Lemma~\ref{lem:local_decay}, we arrive at the desired estimate
\begin{align*}
 \bigl\| \jx^{-2} \jn L \calV_5(t) \bigr\|_{L^2_x} &\lesssim \int_0^t \frac{1}{\jap{t-s}^{\hf}} \frac{N(T)^3}{\js^{\thf}} \, \ud s + t \int_0^t \frac{1}{\jap{t-s}^{\thf}} \frac{N(T)^3}{\js^{\thf}} \, \ud s \lesssim \frac{N(T)^3}{\jt^{\hf}}. 
\end{align*}
\end{proof}

\subsubsection{Main energy growth estimates}

We are now in the position to derive growth bounds for all energy norms that are part of the bootstrap quantity $N(T)$.
\begin{proposition} \label{prop:slow_growth_H2v}
Let $v(t)$ be the solution to~\eqref{equ:first_order_kg}. Then uniformly for all $0 \leq t \leq T$ it holds that
 \begin{equation}
  \bigl\| \jn^2 v(t) \bigr\|_{L^2_x} \lesssim \|v_0\|_{H^2_x} + \|v_0\|_{H^1_x}^2 + N(T)^2 \jt^{+\delta}.  
 \end{equation}
\end{proposition}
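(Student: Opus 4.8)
The plan is to use Duhamel's formula~\eqref{equ:duh_v_rearr} for $v(t)$, apply $\jn^2$, and estimate each term in $L^2_x$. Since $\jn^2$ commutes with the linear propagator $e^{it\jn}$, the first (linear) term is bounded by the conserved-in-norm quantity $\|\jn^2(v_0 + \alpha_1 v(0,0)^2 + \alpha_2|v(0,0)|^2 + \alpha_3 \bar v(0,0)^2)\|_{L^2_x} \lesssim \|v_0\|_{H^2_x} + \|v_0\|_{H^1_x}^2$, using that the $\alpha_j$ are smooth and decaying (from~\eqref{equ:nonresonant_defs_alpha_coeff}) and that $|v(0,0)| \lesssim \|v_0\|_{H^1_x}$ by Sobolev embedding. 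The quadratic boundary term $\alpha_1 v(t,0)^2 + \alpha_2|v(t,0)|^2 + \alpha_3\bar v(t,0)^2$ is bounded by $\lesssim \|v(t)\|_{L^\infty_x}^2 \lesssim N(T)^2 \jt^{-1}$, which is even better than claimed.

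For the variable-coefficient cubic-type terms $\calV_1(t), \dots, \calV_5(t)$, I would use the energy estimate~\eqref{equ:energy_est_std}: since $(\pt - i\jn)\calV_\ell = F_\ell$ with $\calV_\ell(0)=0$, we get $\|\jn^2 \calV_\ell(t)\|_{L^2_x} \lesssim \|\jn^2 F_\ell\|_{L^1_t L^2_x}$. For $\calV_1, \calV_2, \calV_3$ the forcing involves $\alpha_j$ times $\pt(e^{-is}v(s,0)) \cdot v(s,0)$; applying $\jn^2$ only hits the smooth decaying coefficient $\alpha_j$, and by Lemma~\ref{lem:key_improved_decay} and Sobolev embedding this is $\lesssim \|\jn^2\alpha_j\|_{L^2_x} \cdot \frac{N(T)}{\js^{1-\delta}} \cdot \frac{N(T)}{\js^{1/2}} \lesssim N(T)^2 \js^{-3/2+\delta}$, which is integrable in $s$, giving a uniform $\calO(N(T)^2)$ bound. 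For $\calV_4$, $\jn^2$ acting on $\jn^{-1}(\alpha(\cdot)(u(s,\cdot)^2 - u(s,0)^2))$ is controlled via $\|\jn(\alpha(\cdot)(u^2-u(s,0)^2))\|_{L^2_x}$; using Lemma~\ref{lem:improved_decay_localized_derivatives} (specifically~\eqref{equ:decay_localized_difference} together with~\eqref{equ:decay_localized_px} to handle the derivative) this is $\lesssim N(T)^2 \js^{-3/2+\delta}$, again integrable. For $\calV_5$, the term $\beta(\cdot)u^3$ with $\jn^2 \jn^{-1} = \jn$ yields $\|\jn(\beta(\cdot)u^3)\|_{L^2_x} \lesssim \|\jx^2\beta\|_{H^1_x}\|v(s)\|_{L^\infty_x}^3 + \|\jx^2\beta(x)(\px u)(s)\|_{L^2_x}\|v(s)\|_{L^\infty_x}^2 \lesssim N(T)^3 \js^{-3/2}$ by~\eqref{equ:decay_localized_px}, hence integrable.

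The only remaining term is the constant-coefficient cubic term $\calC(t) = \frac{\beta_0}{2i}\int_0^t e^{i(t-s)\jn}\jn^{-1}(u(s)^3)\,\ud s$, for which the naive $L^1_t L^2_x$ estimate of the forcing gives $\|\jn(u(s)^3)\|_{L^2_x} \lesssim \|v(s)\|_{L^\infty_x}^2\|v(s)\|_{H^1_x} \lesssim N(T)^3 \js^{-1}\cdot\js^{\delta}$, whose time integral is $\lesssim N(T)^3 \jt^{\delta}$. This is exactly the $\jt^{+\delta}$ growth asserted in the proposition, and no cancellation or normal form is needed at the $\|\jn^2 v\|_{L^2}$ level (unlike for the $L^\infty$ or $L$-weighted norms). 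The main (minor) obstacle is bookkeeping the derivative distribution: when $\jn^2$ or $\jn$ lands on products, one must check it never forces a derivative onto $v(s,0)$ in a way that destroys the improved local decay — but since the boundary values $v(s,0)$, $\pt(e^{-is}v(s,0))$ are scalars and $\jn$ only differentiates the $x$-dependent coefficients $\alpha_j(x), \beta(x)$, this is automatic. Collecting all the above bounds yields
\begin{equation*}
 \bigl\| \jn^2 v(t) \bigr\|_{L^2_x} \lesssim \|v_0\|_{H^2_x} + \|v_0\|_{H^1_x}^2 + N(T)^2 + N(T)^3 \jt^{+\delta} \lesssim \|v_0\|_{H^2_x} + \|v_0\|_{H^1_x}^2 + N(T)^2 \jt^{+\delta},
\end{equation*}
where in the last step we used $N(T) \leq 1$.
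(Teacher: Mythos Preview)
Your proposal is correct and follows essentially the same approach as the paper: apply $\jn^2$ to the Duhamel formula~\eqref{equ:duh_v_rearr}, bound the linear and boundary terms directly, use Lemma~\ref{lem:key_improved_decay} and Lemma~\ref{lem:improved_decay_localized_derivatives} to get integrable-in-time $H^1_x$ bounds on the forcings of $\calV_1,\dots,\calV_5$, and let the constant-coefficient cubic term $\calC(t)$ produce the $\jt^{+\delta}$ growth via $\|u(s)^3\|_{H^1_x} \lesssim N(T)^3 \js^{-1+\delta}$. The paper's proof is organized and estimated in exactly this way.
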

\begin{proof}%[Proof of Proposition~\ref{prop:slow_growth_H2v}]
 From the Duhamel formula~\eqref{equ:duh_v_rearr} for $v(t)$ we obtain that
 \begin{align*}
  \bigl\|\jn^2 v(t)\bigr\|_{L^2_x} &\lesssim \|v_0\|_{H^2_x} + \sum_{1 \leq j \leq 3} \bigl\| \jn^2 \alpha_j \bigr\|_{L^2_x} \|v_0\|_{L^\infty_x}^2 + \sum_{1 \leq j \leq 3} \bigl\| \jn^2 \alpha_j \bigr\|_{L^2_x} \|v(t)\|_{L^\infty_x}^2 \\
  &\quad + \int_0^t \bigl\| \beta_0 u(s)^3 \bigr\|_{H^1_x} \, \ud s + \int_0^t \bigl\| \beta(x) u(s)^3 \bigr\|_{H^1_x} \, \ud s + \int_0^t \bigl\| \alpha(x) \bigl( u(s,x)^2 - u(s,0)^2 \bigr) \bigr\|_{H^1_x} \, \ud s \\
%  &\quad + \int_0^t \bigl\| \alpha(x) \bigl( u(s,x)^2 - u(s,0)^2 \bigr) \bigr\|_{H^1_x} \, \ud s \\
  &\quad + \sum_{1 \leq j \leq 3} \int_0^t \bigl\| \jn^2 \alpha_j \bigr\|_{L^2_x} \bigl| \partial_s \bigl( e^{-is} v(s,0) \bigr) \bigr| |v(s,0)| \, \ud s.
 \end{align*}
 Then we have 
 \begin{align*}
  \bigl\| u(s)^3 \bigr\|_{H^1_x} \lesssim \|v(s)\|_{H^1_x} \|v(s)\|_{L^\infty_x}^2 \lesssim \frac{N(T)^3}{\js^{1-\delta}}
 \end{align*}
 and by Lemma~\ref{lem:improved_decay_localized_derivatives} we also have 
 \begin{align*}
  \bigl\| \beta(x) u(s)^3 \bigr\|_{H^1_x} &\lesssim \bigl\| \beta(x) u(s)^3 \bigr\|_{L^2_x} + \bigl\| \beta'(x) u(s)^3 \bigr\|_{L^2_x} + \bigl\| \beta(x) (\px u)(s) u(s)^2 \bigr\|_{L^2_x} \\
  &\lesssim \|\beta(x)\|_{H^1_x} \|v(s)\|_{L^\infty_x}^3 + \bigl\| \beta(x) (\px u)(s) \bigr\|_{L^2_x} \|v(s)\|^2_{L^\infty_x} \\
  &\lesssim \frac{N(T)^3}{\js^{\thf}} + \frac{N(T)^3}{\js^{\thf-\delta}}, \\
  \bigl\| \alpha(x) \bigl( u(s,x)^2 - u(s,0)^2 \bigr) \bigr\|_{H^1_x} &\lesssim \bigl\| \bigl( |\alpha(x)| + |\alpha'(x)| \bigr) \bigl( u(s,x)^2 - u(s,0)^2 \bigr) \bigr\|_{L^2_x} + \bigl\| \alpha(x) (\px u)(s) \bigr\|_{L^2_x} \|v(s)\|_{L^\infty_x} \\
  &\lesssim \frac{N(T)^2}{\jt^{\thf-\delta}}.
 \end{align*}
 Using the previous bounds and Lemma~\ref{lem:key_improved_decay}, we conclude that
 \begin{align*}
  \bigl\|\jn^2 v(t)\bigr\|_{L^2_x} &\lesssim \|v_0\|_{H^2_x} + \|v_0\|_{H^1_x}^2 + \frac{N(T)^2}{\jt} + \int_0^t \frac{N(T)^3}{\js^{1-\delta}} \, \ud s + \int_0^t \frac{N(T)^3}{\js^{\thf}} \, \ud s + \int_0^t \frac{N(T)^2}{\js^{\thf-\delta}} \, \ud s \\
  &\lesssim \|v_0\|_{H^2_x} + \|v_0\|_{H^1_x}^2 + N(T)^2 \jt^{+\delta}.
 \end{align*}
\end{proof}

\begin{proposition} \label{prop:slow_growth_xv}
Let $v(t)$ be the solution to~\eqref{equ:first_order_kg}. Then uniformly for all $0 \leq t \leq T$ it holds that
 \begin{equation}
  \| x v(t) \|_{L^2_x} \lesssim \bigl( \|x v_0\|_{L^2_x} + \|v_0\|_{H^1_x}^2 \bigr) \jt + N(T)^2 \jt^{1+\delta}.
 \end{equation}
\end{proposition}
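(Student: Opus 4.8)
The plan is to start from the rearranged Duhamel formula~\eqref{equ:duh_v_rearr} for $v(t)$, apply the weight $x$, and estimate each term in $L^2_x$. The only mechanism producing powers of $t$ is the commutation of $x$ past the Klein--Gordon propagator: from the identity $e^{-i\tau\jn} x e^{i\tau\jn} = x + i\tau \jn^{-1}\px$ together with the commutator identities~\eqref{equ:commutator_identities} (in particular $[x,\jn^{-1}] = -\jn^{-3}\px$), one obtains for every $\tau \in \bbR$ and all suitable $g$, $h$ that
\begin{equation*}
 \bigl\| x\, e^{i\tau\jn} g \bigr\|_{L^2_x} \lesssim \|x g\|_{L^2_x} + \jap{\tau}\, \|g\|_{L^2_x}, \qquad \bigl\| x\, e^{i\tau\jn} \jn^{-1} h \bigr\|_{L^2_x} \lesssim \|x h\|_{L^2_x} + \jap{\tau}\, \|h\|_{L^2_x}.
\end{equation*}
Since $\tau \le t$ in every Duhamel integral, applying $x$ costs at most an extra factor $\jt$ compared with the unweighted estimates.

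Granting this, the terms of~\eqref{equ:duh_v_rearr} are treated as follows. For the linear evolution of $v_0 + \alpha_1 v(0,0)^2 + \alpha_2 |v(0,0)|^2 + \alpha_3 \bar v(0,0)^2$, the first commutation bound with $\tau = t$, the Sobolev estimate $|v(0,0)| \lesssim \|v_0\|_{H^1_x}$, and the smoothness and decay of the coefficients $\alpha_j$ from~\eqref{equ:nonresonant_defs_alpha_coeff} give the contribution $\bigl(\|x v_0\|_{L^2_x} + \|v_0\|_{H^1_x}^2\bigr)\jt$. The normal form piece $-\alpha_1(x) v(t,0)^2 - \alpha_2(x)|v(t,0)|^2 - \alpha_3(x)\bar v(t,0)^2$ is bounded directly by $\sum_j \|x\alpha_j\|_{L^2_x}\|v(t)\|_{L^\infty_x}^2 \lesssim N(T)^2\jt^{-1}$. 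For $\calV_1(t),\calV_2(t),\calV_3(t)$ I would apply the first commutation bound to $e^{i(t-s)\jn}\alpha_j$ and then insert $\bigl|\pt(e^{-is}v(s,0))\bigr| \lesssim N(T)\js^{-1+\delta}$ from Lemma~\ref{lem:key_improved_decay} and $|v(s,0)| \lesssim \|v(s)\|_{L^\infty_x} \lesssim N(T)\js^{-\hf}$, bounding each by $\jt\int_0^t N(T)^2\js^{-\thf+\delta}\,\ud s \lesssim N(T)^2\jt$. For $\calV_4(t)$ I would use the second commutation bound with $h = \alpha(\cdot)(u(s,\cdot)^2 - u(s,0)^2)$ and Lemma~\ref{lem:improved_decay_localized_derivatives} (with $\kappa = \alpha$ and with $\kappa = x\alpha$) to get $\|xh\|_{L^2_x} + \|h\|_{L^2_x} \lesssim N(T)^2\js^{-\thf+\delta}$, again giving $\lesssim N(T)^2\jt$; and $\calV_5(t)$ is handled likewise using $\|\jx\beta(x)u(s)^3\|_{L^2_x} \lesssim N(T)^3\js^{-\thf}$, for a contribution $\lesssim N(T)^3\jt$.

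The main obstacle, and the source of the $\jt^{1+\delta}$ rather than $\jt$ growth, is the constant coefficient cubic term $\calC(t)$. Applying the second commutation bound with $h = u(s,\cdot)^3$ forces control of $\|x\,u(s)^3\|_{L^2_x}$, where $x$ landing on the nonlinearity reproduces the very quantity being estimated; writing $u = v + \bar v$ and using $\|v(s)\|_{L^\infty_x} \lesssim N(T)\js^{-\hf}$, $\|v(s)\|_{L^2_x} \lesssim \|\jn^2 v(s)\|_{L^2_x} \lesssim N(T)\js^{\delta}$, and the a priori bound $\|x v(s)\|_{L^2_x} \lesssim N(T)\js^{1+\delta}$ built into $N(T)$, one gets $\|u(s)^3\|_{L^2_x} \lesssim N(T)^3\js^{-1+\delta}$ and $\|x\,u(s)^3\|_{L^2_x} \lesssim \|x v(s)\|_{L^2_x}\|v(s)\|_{L^\infty_x}^2 \lesssim N(T)^3\js^{\delta}$, hence
\begin{equation*}
 \bigl\| x \calC(t) \bigr\|_{L^2_x} \lesssim \int_0^t \Bigl( N(T)^3 \js^{\delta} + \jt\, N(T)^3 \js^{-1+\delta} \Bigr)\, \ud s \lesssim N(T)^3 \jt^{1+\delta}.
\end{equation*}
Summing all contributions and using $N(T) \le 1$ yields the asserted bound. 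Apart from verifying the commutation estimates above, the only real care needed is this last term, where one must check that the weighted $L^2$ bound closes with a factor $N(T)^2$ (indeed $N(T)^3$) rather than $N(T)$; the remaining terms are routine consequences of Lemmas~\ref{lem:key_improved_decay} and~\ref{lem:improved_decay_localized_derivatives} and the definition of $N(T)$.
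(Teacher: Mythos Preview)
Your proposal is correct and follows essentially the same route as the paper: both start from the rearranged Duhamel formula~\eqref{equ:duh_v_rearr}, commute $x$ past $e^{i(t-s)\jn}$ via $x e^{i\tau\jn} = e^{i\tau\jn}(x + i\tau\jn^{-1}\px)$, and identify the constant coefficient cubic term $\calC(t)$ as the sole source of the $\jt^{1+\delta}$ growth, closing it exactly as you do by feeding the bootstrap bound $\|xv(s)\|_{L^2_x} \lesssim N(T)\js^{1+\delta}$ back into $\|x\,u(s)^3\|_{L^2_x}$. The paper treats the terms $\calV_\ell$ with a one-line ``in a similar manner'' where you spell out the appeals to Lemmas~\ref{lem:key_improved_decay} and~\ref{lem:improved_decay_localized_derivatives}; there is no substantive difference.
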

\begin{proof}%[Proof of Proposition~\ref{prop:slow_growth_xv}]
 Starting from the Duhamel formula~\eqref{equ:duh_v_rearr} for $v(t)$ and using that 
 \begin{equation*}
  x e^{it\jn} = e^{it\jn} (x + i t \jn^{-1} \px),
 \end{equation*}
 we find 
 \begin{align*}
  \|x v(t)\|_{L^2_x} &\lesssim \|x v_0\|_{L^2_x} + \sum_{1 \leq j \leq 3} \|x \alpha_j(x)\|_{L^2_x} \|v_0\|_{H^1_x}^2 + t \Bigl( \|v_0\|_{L^2_x} + \sum_{1 \leq j \leq 3} \|\alpha_j(x)\|_{L^2_x} \|v_0\|_{H^1_x}^2 \Bigr) \\
  &\quad + \sum_{1 \leq j \leq 3} \|x \alpha_j(x)\|_{L^2_x} \|v(t)\|_{L^\infty_x}^2 + \bigl\| x \calC(t) \bigr\|_{L^2_x} + \sum_{1 \leq \ell \leq 5} \bigl\| x \calV_\ell(t) \bigr\|_{L^2_x}. 
 \end{align*}
 Then we crudely bound 
 \begin{align*}
  \bigl\| x \calC(t) \bigr\|_{L^2_x} &\lesssim \int_0^t \bigl\| \bigl( x + i (t-s) \jn^{-1} \px \bigr) \jn^{-1} \bigl( u(s)^3 \bigr) \bigr\|_{L^2_x} \, \ud s \\
  &\lesssim \int_0^t \bigl( \|xv(s)\|_{L^2_x} + \|v(s)\|_{L^2_x} \bigr) \|v(s)\|_{L^\infty_x}^2 \, \ud s + t \int_0^t \|v(s)\|_{L^2_x} \|v(s)\|_{L^\infty_x}^2 \, \ud s \\
  &\lesssim \int_0^t N(T) \js^{1+\delta} \frac{N(T)^2}{\js} \, \ud s + t \int_0^t N(T) \js^{+\delta} \frac{N(T)^2}{\js} \, \ud s \\
  &\lesssim N(T)^3 \jt^{1+\delta}
 \end{align*}
 and in a similar manner, we obtain for $1 \leq \ell \leq 5$ that
 \begin{align*}
  \bigl\| x \calV_\ell(t) \bigr\|_{L^2_x} \lesssim N(T)^2 \jt.
 \end{align*}
 Putting all of the previous estimates together, we arrive at the desired bound
 \begin{align*}
  \|x v(t)\|_{L^2_x} &\lesssim \bigl( \|x v_0\|_{L^2_x} + \|v_0\|_{H^1_x}^2 \bigr) \jt + N(T)^2 \jt^{1+\delta}.
 \end{align*}
\end{proof}

\begin{proposition} \label{prop:slow_growth_H1L}
Let $v(t)$ be the solution to~\eqref{equ:first_order_kg}. Then uniformly for all $0 \leq t \leq T$ it holds that
 \begin{equation}
  \bigl\| \jn L v(t) \bigr\|_{L^2_x} \lesssim \| x v_0 \|_{H^2_x} + \|v_0\|_{H^1_x}^2 + N(T)^2 \jt^{+\delta}.
 \end{equation}
\end{proposition}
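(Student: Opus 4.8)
The plan is to differentiate the conserved-energy-type quantity $\|\jn L v(t)\|_{L^2_x}^2$ using the energy estimate~\eqref{equ:energy_est_sq} applied to $w = \jn L v$, and to control the resulting source term. Since $(\pt - i\jn)$ commutes with $L$ (first identity in~\eqref{equ:commutator_identities}), applying $\jn L$ to the rearranged Duhamel equation~\eqref{equ:duh_v_rearr}, or equivalently to the differentiated form~\eqref{equ:nlkg_nonresonant_normal_form_subt}, is the natural starting point. First I would observe that
\begin{equation*}
 \jn L (v + \calQ)(t) = e^{it\jn} \jn x \bigl( v_0 + \calQ(0) \bigr) + \sum_{1 \leq \ell \leq 5} \jn L \calV_\ell(t),
\end{equation*}
so that
\begin{equation*}
 \|\jn L v(t)\|_{L^2_x} \lesssim \|\jn x v_0\|_{L^2_x} + \|\jn x \calQ(0)\|_{L^2_x} + \sum_{1 \leq \ell \leq 5} \|\jn L \calV_\ell(t)\|_{L^2_x} \lesssim \|x v_0\|_{H^2_x} + \|v_0\|_{H^1_x}^2 + \sum_{1 \leq \ell \leq 5} \|\jn L \calV_\ell(t)\|_{L^2_x},
\end{equation*}
using that the $\alpha_j$ are fixed smooth decaying functions and that $v(0,0)$ is controlled by $\|v_0\|_{H^1_x}$ via Sobolev embedding. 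It therefore suffices to bound $\|\jn L \calV_\ell(t)\|_{L^2_x}$ for $\ell = 1, \ldots, 5$, and this is where the local decay estimates come in.

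For each $\ell$, I would estimate $\|\jn L \calV_\ell(t)\|_{L^2_x}$ via the energy identity~\eqref{equ:energy_est_sq}: writing $W_\ell := \jn L \calV_\ell$, we have $(\pt - i\jn) W_\ell = \jn L G_\ell$ where $G_\ell$ is the corresponding source term in~\eqref{equ:nlkg_nonresonant_normal_form_subt} (e.g. $G_1 = 2\alpha_1 e^{2is}\pt(e^{-is}v(s,0))(e^{-is}v(s,0))$, $G_4 = \frac{1}{2i}\jn^{-1}(\alpha(\cdot)(u(s,\cdot)^2 - u(s,0)^2))$, etc.), so that
\begin{equation*}
 \|\jn L \calV_\ell(t)\|_{L^2_x}^2 \lesssim \int_0^t \bigl\| \jn L G_\ell(s) \bigr\|_{L^2_x} \bigl\| \jn L \calV_\ell(s) \bigr\|_{L^2_x} \, \ud s.
\end{equation*}
The factor $\|\jn L G_\ell(s)\|_{L^2_x}$ produces a divergent power of $s$ whenever $L$ falls on the variable coefficient $\alpha_j$ (or $\alpha$, $\beta$), but $\|\jn L \calV_\ell(s)\|_{L^2_x}$ grows at most like $\js^{1/2+\delta}$ if we feed in the bootstrap information, and the \emph{product} of the bad power with the local decay bound from Lemma~\ref{lem:weighted_energy_bounds} — namely $\|\jx^{-2}\jn L \calV_\ell(s)\|_{L^2_x} \lesssim N(T)^2 \js^{-1/2+\delta}$ for $\ell \leq 4$ and $\lesssim N(T)^3 \js^{-1/2}$ for $\ell = 5$ — buys exactly the extra decay needed. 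Concretely, after expanding $L$ by Leibniz, the terms where $L$ hits the coefficient are of the schematic form (weighted coefficient)$\times$(profile data), and one pairs them against the weighted $\jn L \calV_\ell$ using Lemma~\ref{lem:weighted_energy_bounds}; the terms where $L$ hits the fast-decaying factor ($\pt(e^{-is}v(s,0))$ via Lemma~\ref{lem:key_improved_decay}, or $u(s,\cdot)^2 - u(s,0)^2$ via Lemma~\ref{lem:improved_decay_localized_derivatives}) are handled by the improved-decay lemmas together with the auxiliary bounds~\eqref{equ:auxiliary_ptv}--\eqref{equ:auxiliary_Zv} for $\|\pt v\|_{L^2_x}$ and $\|Zv\|_{L^2_x}$ (the latter needed because $L$ applied to $v(s,0)$ in a nonlinear term requires spatial-localization control of the profile). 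In every case the time integrand is $O(N(T)^2 \js^{-1+\delta} \cdot \|\jn L\calV_\ell(s)\|_{L^2_x})$, and a Gronwall / continuity argument in the quantity $\sup_{0\leq s \leq t}\js^{-\delta}\|\jn L\calV_\ell(s)\|_{L^2_x}$ closes it, yielding $\|\jn L\calV_\ell(t)\|_{L^2_x} \lesssim N(T)^2 \jt^{+\delta}$.

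The main obstacle is the term $\calV_4$ coming from $\alpha(x)(u^2 - u(s,0)^2)$, since there the variable coefficient is $\alpha$ itself (only assumed to satisfy $\|\jx^4\alpha\|_{H^2_x}<\infty$, not better), and $L$ landing on it produces a factor of $s$ that must be fully absorbed; this is precisely why one needs the second local decay estimate~\eqref{equ:local_decay_van1} (the $\js^{-3/2}$ decay of $\jn^{-1}\px e^{is\jn}$ between weighted spaces) rather than just~\eqref{equ:local_decay}, and why Lemma~\ref{lem:improved_decay_localized_derivatives} is stated with the gain $\js^{-3/2+\delta}$ — the combination $t \cdot \js^{-3/2} \cdot \js^{-3/2+\delta}$ integrated in $s$ is what produces the final $\jt^{+\delta}$ and no worse. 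A secondary subtlety is bookkeeping which norms of $v(s,0)$, $\pt(e^{-is}v(s,0))$, and their $L$-derivatives actually appear: one must check that $L$ acting inside the nonlinear terms $\calV_1, \calV_2, \calV_3$ only ever sees $v(s,0)$ (a scalar at the origin) so that $L$ effectively acts on the coefficients $\alpha_j$, which are Schwartz-like and cause no trouble beyond the $s$-weight already accounted for. Once all five contributions are bounded by $N(T)^2\jt^{+\delta}$, summing and combining with the initial-data terms gives the claimed estimate.
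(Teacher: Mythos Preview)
Your decomposition omits the constant coefficient cubic contribution $\calC(t) = \frac{\beta_0}{2i}\int_0^t e^{i(t-s)\jn}\jn^{-1}(u(s)^3)\,\ud s$: the right-hand side of~\eqref{equ:nlkg_nonresonant_normal_form_subt} contains the term $\frac{\beta_0}{2i}\jn^{-1}(u^3)$, so the correct identity is
\[
 \jn L (v+\calQ)(t) = e^{it\jn}\jn^2 x\bigl(v_0+\calQ(0)\bigr) + \jn L\,\calC(t) + \sum_{1\le\ell\le 5}\jn L\,\calV_\ell(t).
\]
Your local-decay scheme cannot treat $\jn L\,\calC(t)$: the cubic source $\beta_0 u^3$ carries no spatially localized coefficient, so there is no $\jx^{-2}$ weight available to pair against a weighted bound of the type in Lemma~\ref{lem:weighted_energy_bounds}, and moreover $L$ does not satisfy a Leibniz rule, so $\jn L\jn^{-1}(u^3)$ cannot be expressed in terms of $Lv$ in any useful way.

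The paper handles $\calC(t)$ by passing from $L$ to the Lorentz boost $Z$ via the identity $i\jn L = \jn Z - i\px - x\jn(\pt-i\jn)+\jn^{-1}\px(\pt-i\jn)$. Since $Z$ \emph{is} a derivation, one can compute $(\pt-i\jn)(\jn Z\calC)$ explicitly and reduce to energy integrals involving $\|Zv(s)\|_{L^2_x}\|v(s)\|_{L^\infty_x}^2$ and $\|\pt v(s)\|_{L^2_x}\|v(s)\|_{L^\infty_x}^2$; this is exactly where the auxiliary bounds~\eqref{equ:auxiliary_ptv}--\eqref{equ:auxiliary_Zv} are used (not, as you suggest, for the $\calV_\ell$ terms, where $L$ only sees the scalar $v(s,0)$ and hence acts solely on the localized coefficients $\alpha_j$). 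After bounding $\|\jn Z\calC(t)\|_{L^2_x}$, one converts back to $\|\jn L\calC(t)\|_{L^2_x}$ using the additional terms in the $L$--$Z$ identity, each of which is controlled directly from the equation and the bootstrap quantities. Your treatment of the variable coefficient terms $\calV_1,\ldots,\calV_5$ is otherwise the same as the paper's.
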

\begin{proof}%[Proof of Proposition~\ref{prop:slow_growth_H1L}]
From the Duhamel formula~\eqref{equ:duh_v_rearr} and the identity $\jn L e^{it\jn} = e^{it\jn} \jn^2 x$ we obtain that
\begin{align*}
 \bigl\| \jn L v(t) \bigr\|_{L^2_x} &\lesssim \|x v_0\|_{H^2_x} + \sum_{1 \leq j \leq 3} \|x \alpha_j(x)\|_{H^2_x} \|v_0\|_{L^\infty_x}^2 + \sum_{1 \leq j \leq 3} \bigl( \| x \alpha_j(x)\|_{H^2_x} + t \|\alpha_j(x)\|_{H^2_x} \bigr) \|v(t)\|_{L^\infty_x}^2 \\
 &\quad + \bigl\| \jn L \calC(t) \bigr\|_{L^2_x} + \sum_{1 \leq \ell \leq 5} \bigl\| \jn L \calV_\ell(t) \bigr\|_{L^2_x} \\
 &\lesssim \|x v_0\|_{H^2_x} + \|v_0\|_{H^1_x}^2 + \jt \frac{N(T)^2}{\jt} + \bigl\| \jn L \calC(t) \bigr\|_{L^2_x} + \sum_{1 \leq \ell \leq 5} \bigl\| \jn L \calV_\ell(t) \bigr\|_{L^2_x}.
\end{align*}
We now have to control the growth of the energies of $\jn L$ acting on the constant coefficient cubic term~$\calC(t)$ and on the variable coefficient terms $\calV_\ell(t)$, $\ell = 1, \ldots, 5$.
Since the action of the operator~$L$ on the constant coefficient cubic nonlinearity is difficult to compute, we first derive a bound on the growth of the energy of a Lorentz boost $\jn Z$ acting on $\calC(t)$ and then use that
\begin{equation} \label{equ:relate_jnL_jnZ}
 i \jn L = \jn Z - i \px - x \jn (\pt - i \jn) + \jn^{-1} \px (\pt - i \jn).
\end{equation}
To this end we compute that
\begin{align*}
 (\pt - i \jn) (\jn Z \calC) &= \frac{\beta_0}{2i} Z \bigl( u^3 \bigr) + \frac{\beta_0}{2i} \bigl[ \jn, Z \bigr] \jn^{-1} \bigl( u^3 \bigr) + \bigl[ (\pt - i \jn), \jn Z \bigr] \calC \\
 &= \frac{\beta_0}{2i} Z \bigl( u^3 \bigr) - \frac{\beta_0}{2i} \jn^{-2} \px \pt \bigl( u^3 \bigr) + i \px (\pt - i \jn) \calC \\
 &= \frac{3 \beta_0}{2i} (Zu) u^2 - \frac{3 \beta_0}{2i} \jn^{-2} \px \bigl( (\pt u) u^2 \bigr) + \frac{\beta_0}{2i} \jn^{-1} \px \bigl( u^3 \bigr).
\end{align*}
Using the energy estimate~\eqref{equ:energy_est_std} and the auxiliary bounds from Lemma~\ref{lem:auxiliary_bounds}, we then obtain that
\begin{align*}
 \bigl\| (\jn Z \calC)(t) \bigr\|_{L^2_x} &\lesssim \int_0^t \|Zv(s)\|_{L^2_x} \|v(s)\|_{L^\infty_x}^2 \, \ud s + \int_0^t \| \pt v(s) \|_{L^2_x}  \|v(s)\|_{L^\infty_x}^2 \, \ud s + \int_0^t \|v(s)\|_{L^2_x} \|v(s)\|_{L^\infty_x}^2 \, \ud s \\
 &\lesssim \int_0^t N(T) \js^{+\delta} \frac{N(T)^2}{\js} \, \ud s \\
 &\lesssim N(T)^3 \jt^{+\delta},
\end{align*}
and thus by~\eqref{equ:relate_jnL_jnZ} that
\begin{align*}
 \bigl\| (\jn L \calC)(t) \bigr\|_{L^2_x} &\lesssim \bigl\| (\jn Z \calC)(t) \bigr\|_{L^2_x} + \bigl\| \px \calC(t) \bigr\|_{L^2_x} + \bigl\| x \jn (\pt - i \jn) \calC(t) \bigr\|_{L^2_x} + \bigl\| \jn^{-1} \px (\pt - i \jn) \calC(t) \bigr\|_{L^2_x} \\
 &\lesssim N(T)^3 \jt^{+\delta} + \int_0^t \| v(s) \|_{L^2_x} \|v(s)\|_{L^\infty_x}^2 \, \ud s + \|x v(t)\|_{L^2_x} \|v(t)\|_{L^\infty_x}^2 + \|v(t)\|_{L^2_x} \|v(t)\|_{L^\infty_x}^2 \\
 &\lesssim N(T)^3 \jt^{+\delta} + \int_0^t N(T) \js^{+\delta} \frac{N(T)^2}{\js} \, \ud s + N(T) \jt^{1+\delta} \frac{N(T)^2}{\jt} + N(T) \jt^{+\delta} \frac{N(T)^2}{\jt} \\
 &\lesssim N(T)^3 \jt^{+\delta}.
\end{align*}

Next, we estimate the growth of the energies of $\jn L$ acting on the variable coefficient terms $\calV_\ell(t)$, $\ell = 1, \ldots, 5$. Using the energy estimate~\eqref{equ:energy_est_sq}, the local decay bounds from Lemma~\ref{lem:weighted_energy_bounds}, and the improved decay bound from Lemma~\ref{lem:key_improved_decay}, we obtain for $\ell = 1, 2, 3$ that
\begin{align*}
 \bigl\| \jn L \calV_\ell(t) \bigr\|_{L^2_x}^2 &\lesssim \int_0^t \bigl\| (\pt - i\jn) \bigl(\jn L \calV_\ell\bigr)(s) \overline{\bigl( \jn L \calV_\ell \bigr)(s)} \bigr\|_{L^1_x} \, \ud s \\
 &\lesssim \int_0^t \bigl\| \jx^2 \jn L (\pt - i\jn) \calV_\ell(s) \bigr\|_{L^2_x} \bigl\| \jx^{-2} \bigl( \jn L \calV_\ell \bigr)(s) \bigr\|_{L^2_x} \, \ud s \\
 &\lesssim \int_0^t \js \bigl\| \jx^3 \alpha_\ell(x) \bigr\|_{H^2_x} \bigl| \partial_s \bigl( e^{-is} v(s,0) \bigr) \bigr| |v(s,0)| \bigl\| \jx^{-2} \bigl( \jn L \calV_\ell \bigr)(s) \bigr\|_{L^2_x} \, \ud s \\
 &\lesssim \int_0^t \js \frac{N(T)^2}{\js^{\thf-\delta}} \frac{N(T)^2}{\js^{\hf-\delta}} \, \ud s \\
 &\lesssim N(T)^2 \jt^{+2\delta}.
\end{align*}
Similarly, for the term $\calV_4(t)$ we use the energy estimate~\eqref{equ:energy_est_sq}, the weighted energy bounds from Lemma~\ref{lem:weighted_energy_bounds} and the improved decay bounds from Lemma~\ref{lem:improved_decay_localized_derivatives}, to infer that
\begin{align*}
 \bigl\| \jn L \calV_4(t) \bigr\|_{L^2_x}^2 &\lesssim \int_0^t \bigl\| (\pt - i\jn) \bigl(\jn L \calV_4\bigr)(s) \overline{\bigl( \jn L \calV_4 \bigr)(s)} \bigr\|_{L^1_x} \, \ud s \\
 &\lesssim \int_0^t \bigl\| \jx^2 \jn L (\pt - i\jn) \calV_4(s) \bigr\|_{L^2_x} \bigl\| \jx^{-2} \bigl( \jn L \calV_4 \bigr)(s) \bigr\|_{L^2_x} \, \ud s \\
 &\lesssim \int_0^t \js \bigl\| \jx^3 \alpha(x) \bigl( u(s,x)^2 - u(s,0)^2 \bigr) \bigr\|_{H^1_x} \bigl\| \jx^{-2} \bigl( \jn L \calV_4 \bigr)(s) \bigr\|_{L^2_x} \, \ud s \\
 &\lesssim \int_0^t \js \bigl\| \jx^3 \alpha(x) \bigr\|_{H^1_x} \frac{N(T)^2}{\js^{\thf-\delta}} \frac{N(T)^2}{\js^{\hf-\delta}} \, \ud s \\
 &\lesssim N(T)^4 \jt^{+2\delta}.
\end{align*}
Analogously, we also find that $\| \jn L \calV_5(t) \|_{L^2_x}^2 \lesssim N(T)^6 \jt^{+2\delta}$. Combining all of the above estimates, we arrive at the desired bound
\begin{align*}
 \bigl\| \jn L v(t) \bigr\|_{L^2_x} &\lesssim \|x v_0\|_{H^2_x} + \|v_0\|_{H^1_x}^2 + N(T)^2 \jt^{+\delta}.
\end{align*}
\end{proof}

\subsection{$L^\infty_\xi$ control of the profile} 

In this subsection we obtain an a priori bound on a weighted $L^\infty_\xi$ norm of the profile of the solution to~\eqref{equ:first_order_kg}.

\begin{proposition} \label{prop:Linfty_bound_profile}
Let $f(t) = e^{-it\jn} v(t)$ be the profile of the solution $v(t)$ to~\eqref{equ:first_order_kg}. Then we have uniformly for all $1 \leq t \leq T$ that
 \begin{equation} \label{equ:Linfty_bound_profile}
  \bigl\| \jxi^{\thf} \hat{f}(t,\xi) \bigr\|_{L^\infty_\xi} \lesssim \bigl\| \jxi^{\thf} \hat{f}(1,\xi) \bigr\|_{L^\infty_\xi} + N(T)^2. 
 \end{equation}
\end{proposition}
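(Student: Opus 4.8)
The plan is to start from the equation~\eqref{equ:pt_ft_profile_recast} for the normalized profile and analyze the contribution of each term by a combination of ODE/stationary phase analysis and crude $L^\infty_\xi$ bounds. Writing $g(t,\xi) := \jxi^{\thf} \bigl( \hat f(t,\xi) + e^{-it\jxi}(\widehat{\alpha}_1(\xi) v(t,0)^2 + \widehat{\alpha}_2(\xi)|v(t,0)|^2 + \widehat{\alpha}_3(\xi)\bar v(t,0)^2) \bigr)$, equation~\eqref{equ:pt_ft_profile_recast} expresses $\pt g(t,\xi)$ as a sum of six types of terms. The quadratic-normal-form correction terms (the $\widehat{\alpha}_1,\widehat{\alpha}_2,\widehat{\alpha}_3$ pieces on the right-hand side) are handled by Lemma~\ref{lem:key_improved_decay}: each contributes at most $\jt^{-(1-\delta)} \cdot N(T)^2$ in $L^\infty_\xi$ after multiplying by $\jxi^{\thf}\jxi^{-2}$, which is integrable in time and yields an $\calO(N(T)^2)$ bound. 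The variable-coefficient-difference term $\jxi^{-1}\calF[\alpha(u^2-u(t,0)^2)](\xi)$ is controlled via Lemma~\ref{lem:improved_decay_localized_derivatives}: since $\|\jxi^{\hf}\jxi^{-1}\calF[\cdots]\|_{L^\infty_\xi} \lesssim \|\jx\alpha(x)(u(t,x)^2-u(t,0)^2)\|_{H^1_x} \lesssim N(T)^2 \jt^{-(\thf-\delta)}$, which is again time-integrable. The variable coefficient cubic term $\jxi^{-1}\calF[\beta u^3](\xi)$ similarly decays like $N(T)^3\jt^{-(\thf-\delta)}$ and is harmless. Finally the boundary terms from passing between $\hat f$ and $g$ (i.e. $e^{-it\jxi}\widehat{\alpha}_j(\xi)\cdot(\text{products of }v(t,0))$) are $\calO(N(T)^2 \jt^{-1})$ pointwise and hence $\calO(N(T)^2)$ uniformly; note $v(t,0) = \calO(N(T)\jt^{-\hf})$ follows from the $L^\infty_x$-component of $N(T)$.

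The genuinely delicate term is the constant-coefficient cubic contribution $\frac{\beta_0}{2i}\jxi^{-1}\calF[u(t)^3](\xi)$, which is responsible for the logarithmic phase correction in~\eqref{equ:thm_asymptotics} and therefore cannot be bounded by naive time-integrability. Here I would follow the standard space-time resonance approach (as in~\cite{GMS12_Ann,GNT09,GPR16}). Writing $u = v + \bar v$ and expanding $u^3$, each of the eight cubic interactions leads, after inserting the profile $f$ and the asymptotics of the Klein-Gordon propagator, to an oscillatory integral whose phase is $\Phi(\xi,\eta,\sigma) = -\jxi \pm \jap{\xi-\eta} \pm \jap{\eta-\sigma} \pm \jap{\sigma}$. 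For the seven ``non-resonant'' sign choices the phase has no space-time resonance compatible with the stationary point of the spatial oscillation, so Lemma~\ref{lem:stationary_phase}(ii) (or repeated integration by parts in the frequency variables, using the weighted energy bounds $\|\jn L v(t)\|_{L^2_x}\lesssim N(T)\jt^{\delta}$ from Proposition~\ref{prop:slow_growth_H1L} to control the resulting weights) gives a contribution decaying like $\jt^{-(1+c)}$ for some $c>0$, hence an $\calO(N(T)^3)$ total. The single resonant interaction $u\bar u u$ (the term with a genuine space-time resonance at $\eta=\sigma=0$, $\xi$ arbitrary) is the one that produces a term of the form $\frac{c}{t}|\jxi^{\thf}\hat f(t,\xi)|^2 \cdot \jxi^{\thf}\hat f(t,\xi)$ plus a remainder of size $N(T)^3\jt^{-(1+c)}$; this is precisely the cubic Schrödinger-type ODE that yields a bounded (indeed unit-modulus up to $\log$-phase) solution. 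In particular this term contributes nothing to the \emph{growth} of $|\jxi^{\thf}\hat f|$ — the nonlinearity is purely imaginary-coefficient times $|\cdot|^2$ — so the $L^\infty_\xi$ norm stays bounded.

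Putting these together: integrating $\pt g(t,\xi)$ from $1$ to $t$ gives $\|g(t,\xi)\|_{L^\infty_\xi} \lesssim \|g(1,\xi)\|_{L^\infty_\xi} + N(T)^2$, since every term other than the resonant cubic is time-integrable with $L^1_t$ norm $\lesssim N(T)^2$, and the resonant cubic — after isolating its ODE structure and noting the coefficient is purely imaginary — can be bounded via Grönwall (or directly, since $\frac{d}{dt}|g|^2 = 2\Re(\bar g \pt g)$ and the resonant piece contributes $0$ to $\Re(\bar g \pt g)$ after the phase cancellation). Finally I would convert $\|g(t,\xi)\|_{L^\infty_\xi}$ back to $\|\jxi^{\thf}\hat f(t,\xi)\|_{L^\infty_\xi}$ using that the correction terms $\jxi^{\thf}e^{-it\jxi}\widehat{\alpha}_j(\xi)(\cdots)$ are $\calO(N(T)^2\jt^{-1})$, and note $\|g(1,\xi)\|_{L^\infty_\xi} \lesssim \|\jxi^{\thf}\hat f(1,\xi)\|_{L^\infty_\xi} + N(T)^2$.

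\textbf{Main obstacle.} The hard part is the careful bookkeeping for the constant-coefficient cubic term: one must correctly identify the unique resonant frequency interaction among the eight terms in $u^3 = (v+\bar v)^3$, verify that at the relevant stationary point of the spatial phase the time-phase $\Phi$ vanishes only for that interaction (so that Lemma~\ref{lem:stationary_phase}(i)/(ii) applies with a genuine gain for the other seven), and check that the weights generated by integration by parts in $(\eta,\sigma)$ are absorbable by $\|\jn L v\|_{L^2_x} \lesssim N(T)\jt^\delta$ and $\|x v\|_{L^2_x}\lesssim N(T)\jt^{1+\delta}$ — the latter is why the bootstrap quantity $N(T)$ includes these weighted norms. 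One also needs the output of Lemma~\ref{lem:KG_propagator_asymptotics} to replace $v(s)$ by its leading asymptotic profile with a controlled remainder, and one must track that the remainder term in~\eqref{equ:KG_propagator_asymptotics} (of size $t^{-5/8}\|\jx v\|_{H^2_x}$) cubed is still integrable in time — this forces the use of $\delta$ small and produces the $\jt^{-(1+c)}$ rate with $c$ small but positive.
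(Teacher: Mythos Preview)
Your overall architecture is correct: the normal-form correction $\calR(t,\xi)$, the three $\widehat{\alpha}_j$ terms, the difference $\alpha(\cdot)(u^2-u(\cdot,0)^2)$, and the variable-coefficient cubic $\beta(\cdot)u^3$ are all handled exactly as you describe, via Lemma~\ref{lem:key_improved_decay} and Lemma~\ref{lem:improved_decay_localized_derivatives}, and the paper does the same. The issue is entirely in your treatment of the constant-coefficient cubic $\beta_0 u^3$.

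First, expanding $u^3=(v+\bar v)^3$ gives \emph{four} distinct interaction types (with multiplicities $1,3,3,1$), not eight; the phases are the $\phi_1,\ldots,\phi_4$ listed after~\eqref{equ:ft_const_cubic_decom_osc_integrals}. More importantly, each of the four phases has a genuine stationary point in $(\eta,\sigma)$: for the resonant term (II, phase $\phi_2$) the stationary point is $(\eta_2,\sigma_2)=(-\xi,\xi)$, not $(0,0)$, and for the three non-resonant terms I, III, IV the stationary points are $(\xi/3,\xi/3)$, $(\xi,\xi)$, $(\xi/3,\xi/3)$ respectively. What distinguishes II from I, III, IV is \emph{not} the absence of a spatial stationary point but rather that $\phi_2$ vanishes at its stationary point while $\phi_1,\phi_3,\phi_4$ do not. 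Consequently Lemma~\ref{lem:stationary_phase}(ii) does not apply to I, III, IV, and integration by parts in $(\eta,\sigma)$ alone cannot produce $t^{-(1+c)}$ decay for them --- each still contributes a leading $1/t$ term.

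The paper's remedy (Lemma~\ref{lem:ode_weighted_profile}) is to apply Lemma~\ref{lem:stationary_phase}(i) to \emph{all four} interactions, yielding for each a $1/t$ leading contribution plus an $\calO(N(T)^3 t^{-1-\nu})$ remainder. For I, III, IV the leading $1/t$ terms carry explicit oscillatory factors $e^{it\omega_j(\xi)}$ with $\omega_j(\xi)\neq 0$ (namely $-\jxi+3\jap{\xi/3}$, $-2\jxi$, $-\jxi-3\jap{\xi/3}$). These are then disposed of in the proof of Proposition~\ref{prop:Linfty_bound_profile} by multiplying by the integrating factor $e^{iB(t)}$ and integrating by parts in \emph{time}, using the crude bound $\|\jxi^{3/2}\pt\hat f(t,\xi)\|_{L^\infty_\xi}\lesssim N(T)^2\jt^{-1/2+2\delta}$ that follows directly from~\eqref{equ:pt_ft_profile_rewritten}. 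Your $\frac{d}{dt}|g|^2$ idea for the resonant term is fine in spirit (and equivalent to the integrating factor), but it does not by itself handle the $1/t$ leading pieces of I, III, IV, which still contribute a non-integrable $\calO(N(T)^4/t)$ to $\frac{d}{dt}|g|^2$ unless you first exploit their time oscillations. Finally, the paper never substitutes the pointwise asymptotics from Lemma~\ref{lem:KG_propagator_asymptotics} into the cubic term; the entire analysis is done directly on the frequency-side convolution integrals.
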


The proof of Proposition~\ref{prop:Linfty_bound_profile} consists of an ODE argument. The main work in fact goes into deriving the following differential equation for the profile.
\begin{lemma} \label{lem:ode_weighted_profile}
 Let $f(t) = e^{-it\jn} v(t)$ be the profile of the solution $v(t)$ to~\eqref{equ:first_order_kg}. Then there exists a small absolute constant $0 < \nu \ll 1$ such that for all $1 \leq t \leq T$ it holds that
 \begin{equation} \label{equ:ode_weighted_profile}
  \begin{aligned}
   &\pt \Bigl( \jxi^{\thf} \hat{f}(t, \xi) + \calR(t,\xi) \Bigr) \\
   &\quad = \frac{1}{t} \frac{3 \beta_0}{2i} \jxi^{-1} \bigl| \jxi^\thf \hat{f}(t, \xi) \bigr|^2 \jxi^\thf \hat{f}(t,\xi) + \frac{1}{t} \frac{\beta_0}{2 \sqrt{3}} \jxi^{\hf} \jap{{\textstyle \frac{\xi}{3}}}^3 e^{it(-\jxi + 3 \jap{\frac{\xi}{3}})} \hat{f}(t, {\textstyle \frac{\xi}{3}})^3  \\
   &\quad \quad + \frac{1}{t} \frac{3 \beta_0}{2i} \jxi^{\frac{7}{2}} e^{-2 i t \jxi} \bigl| \hat{f}(t, -\xi) \bigr|^2 \bar{\hat{f}}(t,-\xi) - \frac{1}{t} \frac{\beta_0}{2 \sqrt{3}} \jap{\xi}^{\hf} \jap{{\textstyle \frac{\xi}{3}}}^3 e^{-it(\jxi + 3 \jap{\frac{\xi}{3}})} \hat{\bar{f}}(t, {\textstyle \frac{\xi}{3}})^3 + \calO \biggl( \frac{N(T)^2}{t^{1+\nu}} \biggr),
  \end{aligned}
 \end{equation}
 where 
 \begin{equation*}
  \| \calR(t,\xi) \|_{L^\infty_\xi} \lesssim \frac{N(T)^2}{t}, \quad 1 \leq t \leq T.
 \end{equation*}
\end{lemma}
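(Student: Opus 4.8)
The strategy is to start from Duhamel's formula~\eqref{equ:duh_v_rearr} for $v(t)$, multiply by $e^{-it\jn}$ to pass to the profile, apply $\jxi^{\thf}$, and then extract the leading order stationary-phase contributions from the constant coefficient cubic term $\calC(t)$ while showing that all remaining terms are either time derivatives of quantities bounded by $N(T)^2/t$ in $L^\infty_\xi$ (these go into $\calR(t,\xi)$) or are directly of size $\calO(N(T)^2 t^{-1-\nu})$. Equivalently, and more efficiently, I would differentiate in $t$: from~\eqref{equ:pt_ft_profile_recast} we already have $\pt(\hat f(t,\xi) + e^{-it\jxi}\widehat{\calQ}(t,\xi))$ expressed as a sum of the three variable-coefficient-normal-form terms (involving $\pt(e^{\mp it}v(t,0))$, hence $\calO(N(T)^2\jt^{-1})$ pointwise but with an extra $\jt^{-\delta}$ gain and spatial localization, so after the $\jxi^{\thf}$ weight they contribute to $\calO(N(T)^2 t^{-1-\nu})$ via the improved decay of Lemma~\ref{lem:key_improved_decay}), the difference term $\alpha(\cdot)(u^2-u(\cdot,0)^2)$ (handled by Lemma~\ref{lem:improved_decay_localized_derivatives}, giving $\jt^{-\thf+\delta}$, more than enough), the variable-coefficient cubic $\beta(\cdot)u^3$ (also $\jt^{-\thf}$ up to logs), and finally the constant coefficient cubic $\frac{\beta_0}{2i}e^{-it\jxi}\jxi^{-1}\calF[u(t)^3](\xi)$, which is the only genuinely critical term. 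So the crux is the stationary phase analysis of this last term.

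\textbf{Stationary phase for the cubic term.} Writing $u = v + \bar v$ and $v(t) = e^{it\jn}f(t)$, the term $\frac{\beta_0}{2i}e^{-it\jxi}\jxi^{-1}\calF[u^3](\xi)$ becomes a sum of oscillatory integrals
\[
 \iint e^{it\phi_\pm(\xi,\eta,\sigma)} \, m(\xi,\eta,\sigma) \, \widehat{f^{\pm}}(t,\xi-\eta)\widehat{f^{\pm}}(t,\eta-\sigma)\widehat{f^{\pm}}(t,\sigma) \, \ud\eta\,\ud\sigma,
\]
with phases of the form $\phi = -\jxi \pm\langle\xi-\eta\rangle \pm\langle\eta-\sigma\rangle \pm\langle\sigma\rangle$ and symbol $m = \jxi^{-1}$. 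One applies Lemma~\ref{lem:stationary_phase} in the variables $(\eta,\sigma)$ after inserting a cutoff that localizes near the stationary points; the Hessian in $(\eta,\sigma)$ is nondegenerate away from frequency infinity (with $\mu \simeq \jxi^{-c}$, which is why one carries the $\jxi$-weight carefully), and the complementary non-stationary region is handled by part (ii) of that lemma together with the bound on pseudo-product operators from Lemma~\ref{lem:pseudoproduct_op_bound}. The resonant set: for the all-$+$ phase ($u^3$ with three copies of $v$ in the $e^{+it\jn}$ sense) the interior stationary point is $\eta = \frac{2\xi}{3}$, $\sigma = \frac{\xi}{3}$, and $\xi-\eta = \eta-\sigma = \sigma = \frac{\xi}{3}$, giving the space-time resonant output at $\phi = -\jxi + 3\jap{\xi/3}$, hence the term $\frac{1}{t}\frac{\beta_0}{2\sqrt3}\jxi^{\hf}\jap{\xi/3}^3 e^{it(-\jxi+3\jap{\xi/3})}\hat f(t,\xi/3)^3$ in~\eqref{equ:ode_weighted_profile} (the $\sqrt3$ and the Jacobian factors come from $\Delta = |\det\Hess\phi|$ evaluated there); similarly the genuinely resonant "diagonal" contribution $\eta = \xi$, $\sigma = 0$ from the $v\bar v v$-type terms produces the main logarithmic term $\frac{1}{t}\frac{3\beta_0}{2i}\jxi^{-1}|\jxi^{\thf}\hat f(t,\xi)|^2\jxi^{\thf}\hat f(t,\xi)$, and the reflected point $\eta = 0$, $\sigma = \xi$ etc. produces the $e^{-2it\jxi}|\hat f(t,-\xi)|^2\bar{\hat f}(t,-\xi)$ term; the all-$-$ phase gives the conjugated $\jap{\xi/3}^3$ term. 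All error terms from Lemma~\ref{lem:stationary_phase} carry a power $t^{-1-\alpha}$ with $\alpha > 0$ small but at the cost of $\|\langle(x,y)\rangle^{2\alpha}\widehat F\|_{L^1}$, which one bounds by the weighted energies $\|\jn L v\|_{L^2}$, $\|xv\|_{L^2}$, $\|\jn^2 v\|_{L^2}$ controlled by $N(T)\jt^{\delta}$ in~\eqref{equ:nonresonant_NT_def}; choosing $\nu < \alpha - \delta$ (times a fixed power) absorbs these into $\calO(N(T)^2 t^{-1-\nu})$.

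\textbf{Assembling $\calR$.} The terms that are not yet of the form "leading ODE term plus $\calO(t^{-1-\nu})$" — namely the purely oscillatory (non-space-time-resonant but stationary-in-frequency) cubic contributions whose phase $\phi$ is bounded away from zero, as well as the normal-form quantity $e^{-it\jxi}\widehat{\calQ}(t,\xi)$ itself, and any boundary-type pieces produced by integrating by parts in $s$ in the non-resonant cubic interactions — are all pointwise $\calO(N(T)^2/t)$ in $L^\infty_\xi$: $\widehat{\calQ}$ because $|v(t,0)|^2 \lesssim N(T)^2\jt^{-1}$ and the $\jxi^{\thf}\widehat{\alpha}_j(\xi)$ are bounded (using $\|\jx^4\alpha\|_{H^2_x} < \infty$), and the non-resonant cubic pieces because after one integration by parts in time the symbol $\phi^{-1}$ is a bounded multiplier on the relevant frequency ranges and the resulting expression is a pseudo-product of $\hat f$'s controlled via Lemma~\ref{lem:pseudoproduct_op_bound} and the $L^\infty_\xi$ and $L^2_x$ bounds in $N(T)$, picking up the $1/t$ from the $e^{it\phi}$ oscillation. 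Collecting precisely these pieces into $\calR(t,\xi)$ gives $\|\calR(t,\xi)\|_{L^\infty_\xi}\lesssim N(T)^2/t$, completing~\eqref{equ:ode_weighted_profile}.

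\textbf{Main obstacle.} The hard part is the stationary phase analysis of the constant coefficient cubic term with the correct $\jxi$-weighted bookkeeping: one must simultaneously (i) identify \emph{all} stationary points of $\phi$ in $(\eta,\sigma)$ — interior ones, and also boundary/degenerate configurations where two of the three frequency factors collide — and verify that only the ones listed produce $t^{-1}$ contributions while the rest give integrable-in-$t$ error, (ii) track the degeneration of $\det\Hess\phi$ as $|\xi|\to\infty$ so that the $\mu^{-(\hf+2\alpha)}$ losses in Lemma~\ref{lem:stationary_phase} are compensated by the decay gained, and (iii) ensure the weighted $L^1$ norms $\|\langle(x,y)\rangle^{2\alpha}\widehat F\|_{L^1}$ of the relevant bilinear/trilinear inputs really are controlled by the $N(T)$-quantities with the small power $\jt^{\delta}$, which forces the choice of $\nu$ and $\delta$. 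This is where the technical input from~\cite{GPR16} (both Lemma~\ref{lem:stationary_phase} and Lemma~\ref{lem:pseudoproduct_op_bound}) is used most heavily.
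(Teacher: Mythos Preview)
Your overall strategy matches the paper's: start from~\eqref{equ:pt_ft_profile_recast}, show the normal-form-residual terms and the variable-coefficient pieces contribute $\calO(N(T)^2 t^{-3/2+\delta})$ via Lemmas~\ref{lem:key_improved_decay} and~\ref{lem:improved_decay_localized_derivatives}, and reduce to a stationary-phase analysis of $\frac{\beta_0}{2i}e^{-it\jxi}\jxi^{\hf}\calF[u^3](\xi)$. You also correctly identify the four leading $t^{-1}$ contributions from the four types of cubic interactions.

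Two points where your proposal diverges from the paper. First, in the paper $\calR(t,\xi)$ is \emph{only} the normal-form piece
\[
\calR(t,\xi) = e^{-it\jxi}\jxi^{\thf}\bigl(\widehat{\alpha}_1(\xi)v(t,0)^2 + \widehat{\alpha}_2(\xi)|v(t,0)|^2 + \widehat{\alpha}_3(\xi)\bar v(t,0)^2\bigr).
\]
The three oscillatory stationary-phase outputs (with phases $-\jxi+3\jap{\xi/3}$, $-2\jxi$, $-\jxi-3\jap{\xi/3}$) are kept explicitly on the right-hand side of~\eqref{equ:ode_weighted_profile}, exactly as the statement reads; they are integrated by parts in time only later, in the proof of Proposition~\ref{prop:Linfty_bound_profile}. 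Your description of $\calR$ as also absorbing ``non-resonant cubic pieces'' via time integration by parts does not match the lemma being proved.

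Second, for the regions away from the stationary points the paper does \emph{not} invoke part~(ii) of Lemma~\ref{lem:stationary_phase}. Instead it performs a dyadic Littlewood--Paley decomposition in $(\eta,\sigma)$ (and when needed also in $\xi-\eta-\sigma$), identifies on each block which one-variable derivative $\partial_\eta\phi$ or $\partial_\sigma\phi$ has a good lower bound, integrates by parts once in that single variable, and then bounds the resulting expression by Lemma~\ref{lem:pseudoproduct_op_bound} with $L^2\times L^2\times L^\infty$ or $L^2\times L^\infty\times L^2$ placement. Part~(i) of Lemma~\ref{lem:stationary_phase} is applied only on a small rescaled neighborhood of the unique critical point (your worry about ``boundary/degenerate configurations where two frequency factors collide'' does not arise: each of the four phases has exactly one nondegenerate critical point in $(\eta,\sigma)$). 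This dyadic one-dimensional integration-by-parts treatment is precisely what makes the $\jxi$-weighted bookkeeping close at high frequencies; applying the two-dimensional non-stationary bound globally would incur $\mu^{-(\hf+\alpha)}$ losses that are not compensated.
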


We first give the short proof of Proposition~\ref{prop:Linfty_bound_profile} before we turn to the lengthier derivation of the differential equation for the profile.
\begin{proof}[Proof of Proposition~\ref{prop:Linfty_bound_profile}]
 We define 
 \begin{equation} \label{equ:integrating_factor_phase_def}
  B(t) := \frac{3 \beta_0}{2} \jxi^{-1} \int_1^t \frac{1}{s} \bigl| \jxi^\thf \hat{f}(s, \xi) \bigr|^2 \, \ud s, \quad 1 \leq t \leq T.
 \end{equation}
 Then we multiply the differential equation \eqref{equ:ode_weighted_profile} by the integrating factor $e^{iB(t)}$ to obtain that 
 \begin{equation} \label{equ:ode_weighted_profile_multiplied_intfactor}
  \pt \Bigl( \jxi^{\thf} \hat{f}(t, \xi) e^{iB(t)} + \calR(t,\xi) e^{iB(t)} \Bigr) = \calI^{(1)}(t) + \calI^{(2)}(t) + \calI^{(3)}(t) + \calO \biggl( \frac{N(T)^2}{t^{1+\nu}} \biggr),
 \end{equation}
 where 
 \begin{align*}
  \calI^{(1)}(t) &:= \frac{1}{t} \frac{\beta_0}{2 \sqrt{3}} \jxi^{\hf} \jap{{\textstyle \frac{\xi}{3}}}^3 e^{it(-\jxi + 3 \jap{\frac{\xi}{3}})} \hat{f}(t, {\textstyle \frac{\xi}{3}})^3 e^{iB(t)}, \\
  \calI^{(2)}(t) &:= \frac{1}{t} \frac{3 \beta_0}{2i} \jxi^{\frac{7}{2}} e^{-2 i t \jxi} \bigl| \hat{f}(t, -\xi) \bigr|^2 \bar{\hat{f}}(t,-\xi) e^{iB(t)}, \\
  \calI^{(3)}(t) &:= - \frac{1}{t} \frac{\beta_0}{2 \sqrt{3}} \jap{\xi}^{\hf} \jap{{\textstyle \frac{\xi}{3}}}^3 e^{-it(\jxi + 3 \jap{\frac{\xi}{3}})} \hat{\bar{f}}(t, {\textstyle \frac{\xi}{3}})^3 e^{iB(t)}.
 \end{align*}
 Upon showing that
 \begin{equation} \label{equ:integrated_in_time_bound_calIell}
  \sup_{1 \leq t \leq T} \, \biggl\| \int_1^t \calI^{(\ell)}(s) \, \ud s \biggr\|_{L^\infty_\xi} \lesssim N(T)^3 \quad \text{ for } 1 \leq \ell \leq 3,
 \end{equation}
 the asserted estimate~\eqref{equ:Linfty_bound_profile} follows from integrating~\eqref{equ:ode_weighted_profile_multiplied_intfactor} in time and taking the $L^\infty_\xi$ norm. 
 The latter bound~\eqref{equ:integrated_in_time_bound_calIell} is a consequence of the oscillations of the phases in the terms $\calI^{(\ell)}(t)$, $\ell = 1, 2, 3$. Indeed, starting with the term $\calI^{(1)}(t)$, we rewrite it as
 \begin{align*}
  \calI^{(1)}(t) &= \pt \biggl( \frac{1}{t} \frac{\beta_0}{2 \sqrt{3}} \jxi^{\hf} \jap{{\textstyle \frac{\xi}{3}}}^3 (-i) \bigl( (-\jxi + 3 \jap{{\textstyle \frac{\xi}{3}}} \bigr)^{-1}  e^{it(-\jxi + 3 \jap{\frac{\xi}{3}})} \hat{f}(t, {\textstyle \frac{\xi}{3}})^3 e^{iB(t)} \biggr) \\
  &\quad + \frac{1}{t^2} \frac{\beta_0}{2 \sqrt{3}} \jxi^{\hf} \jap{{\textstyle \frac{\xi}{3}}}^3 (-i) \bigl( (-\jxi + 3 \jap{{\textstyle \frac{\xi}{3}}} \bigr)^{-1}  e^{it(-\jxi + 3 \jap{\frac{\xi}{3}})} \hat{f}(t, {\textstyle \frac{\xi}{3}})^3 e^{iB(t)} \\
  &\quad - \frac{1}{t} \frac{3 \beta_0}{2 \sqrt{3}} \jxi^{\hf} \jap{{\textstyle \frac{\xi}{3}}}^3 (-i) \bigl( (-\jxi + 3 \jap{{\textstyle \frac{\xi}{3}}} \bigr)^{-1}  e^{it(-\jxi + 3 \jap{\frac{\xi}{3}})}  (\pt \hat{f})(t, {\textstyle \frac{\xi}{3}}) \hat{f}(t, {\textstyle \frac{\xi}{3}})^2 e^{iB(t)} \\
  &\quad - \frac{1}{t^2} \frac{3 \beta_0^2}{4 \sqrt{3}} \jxi^{\hf} \jap{{\textstyle \frac{\xi}{3}}}^3 \bigl( (-\jxi + 3 \jap{{\textstyle \frac{\xi}{3}}} \bigr)^{-1}  e^{it(-\jxi + 3 \jap{\frac{\xi}{3}})} \hat{f}(t, {\textstyle \frac{\xi}{3}})^3 \jxi^{-1} \bigl| \jxi^\thf \hat{f}(t, \xi) \bigr|^2 e^{iB(t)}.
 \end{align*}
 Then noting that $(-\jxi + 3 \jap{{\textstyle \frac{\xi}{3}}} \bigr)^{-1} = \calO \bigl( \jxi \bigr)$ and that~\eqref{equ:pt_ft_profile_rewritten} yields by direct computation the crude estimate 
 \begin{equation*}
  \bigl\| \jxi^\thf \pt \hat{f}(t,\xi) \bigr\|_{L^\infty_\xi} \lesssim N(T)^2 \jt^{-\frac{1}{2}+2\delta}, \quad 0 \leq t \leq T,
 \end{equation*}
 the bound~\eqref{equ:integrated_in_time_bound_calIell} for $\calI^{(1)}(t)$ follows readily
 \begin{align*}
  \sup_{1 \leq t \leq T} \, \biggl\| \int_1^t \calI^{(1)}(s) \, \ud s \biggr\|_{L^\infty_\xi} &\lesssim \sup_{1 \leq t \leq T} \, \bigl\| \jxi^\thf \hat{f}(t,\xi) \bigr\|_{L^\infty_\xi}^3 + \int_1^T \frac{1}{s^2} \bigl\| \jxi^\thf \hat{f}(s,\xi) \bigr\|_{L^\infty_\xi}^3 \, \ud s \\
  &\quad + \int_1^T \frac{1}{s} \bigl\| \jxi^\thf \pt \hat{f}(s,\xi) \bigr\|_{L^\infty_\xi} \bigl\| \jxi^\thf \hat{f}(s,\xi) \bigr\|_{L^\infty_\xi}^2 \, \ud s + \int_1^T \frac{1}{s^2} \bigl\| \jxi^\thf \hat{f}(s,\xi) \bigr\|_{L^\infty_\xi}^5 \, \ud s \\
  &\lesssim N(T)^3 + \int_1^T \frac{1}{s^2} N(T)^3 \, \ud s + \int_1^T \frac{1}{s^{\thf-2\delta}} N(T)^4 \, \ud s + \int_1^T \frac{1}{s^2} N(T)^5 \, \ud s \\
  &\lesssim N(T)^3.
 \end{align*}
 For the terms $\calI^{(2)}(t)$ and $\calI^{(3)}(t)$ the bound~\eqref{equ:integrated_in_time_bound_calIell} can be derived analogously.
\end{proof}

Now we turn to the derivation of the ODE~\eqref{equ:ode_weighted_profile} for the profile asserted in Lemma~\ref{lem:ode_weighted_profile}.
\begin{proof}[Proof of Lemma~\ref{lem:ode_weighted_profile}]
 Multiplying the differential equation~\eqref{equ:pt_ft_profile_recast} for $\hat{f}(t,\xi)$ by $\jxi^{\thf}$ gives
 \begin{equation} \label{equ:ode_weighted_profile_derivation1}
  \begin{aligned}
   &\pt \Bigl( \jxi^{\thf} \hat{f}(t, \xi) + e^{-it\jap{\xi}} \jxi^{\thf} \bigl( \widehat{\alpha}_1(\xi) v(t,0)^2 + \widehat{\alpha}_2(\xi) |v(t,0)|^2 + \widehat{\alpha}_3(\xi) \bar{v}(t,0)^2 \bigr) \Bigr) \\
   &= \frac{\beta_0}{2i} e^{-it\jap{\xi}} \jap{\xi}^{\hf} \calF\bigl[ u(t, \cdot)^3 \bigr](\xi) \\ 
   &\qquad + 2 e^{-it\jap{\xi}} \jap{\xi}^{\thf} \widehat{\alpha}_1(\xi) e^{2it} \partial_t \bigl( e^{-it} v(t,0) \bigr) \bigl( e^{-it} v(t,0) \bigr) \\
   &\qquad + 2 e^{-it\jap{\xi}} \jap{\xi}^{\thf} \widehat{\alpha}_2(\xi) \, \Re \Bigl( \partial_t \bigl( e^{-it} v(t,0) \bigr) \bigl( e^{+it} \bar{v}(t,0) \bigr) \Bigr) \\
   &\qquad + 2 e^{-it\jap{\xi}} \jap{\xi}^{\thf} \widehat{\alpha}_3(\xi) e^{-2it} \partial_t \bigl( e^{+it} \bar{v}(t,0) \bigr) \bigl( e^{+it} \bar{v}(t,0) \bigr) \\
   &\qquad + \frac{1}{2i} e^{-it\jap{\xi}} \jap{\xi}^{\hf} \calF\Bigl[ \alpha(\cdot) \bigl( u(t, \cdot)^2 - u(t,0)^2 \bigr) \Bigr](\xi) \\
   &\qquad + \frac{1}{2i} e^{-it\jap{\xi}} \jap{\xi}^{\hf} \calF\bigl[ \beta(\cdot) u(t, \cdot)^3 \bigr](\xi).
  \end{aligned}
 \end{equation}
 We already note that the term
 \begin{equation*}
  \calR(t,\xi) := e^{-it\jap{\xi}} \jxi^{\thf} \bigl( \widehat{\alpha}_1(\xi) v(t,0)^2 + \widehat{\alpha}_2(\xi) |v(t,0)|^2 + \widehat{\alpha}_3(\xi) \bar{v}(t,0)^2 \bigr) 
 \end{equation*}
 on the left-hand side of~\eqref{equ:ode_weighted_profile_derivation1} satisfies the claimed bound
 \begin{equation*}
  \| \calR(t,\xi) \|_{L^\infty_\xi} \lesssim \sum_{1\leq\ell\leq3} \bigl\| \jxi^\thf \widehat{\alpha}_\ell(\xi) \bigr\|_{L^\infty_\xi} \|v(t)\|_{L^\infty_x}^2 \lesssim \frac{N(T)^2}{\jt}, \quad 0 \leq t \leq T.
 \end{equation*}
 Moreover, we observe that all terms apart from the contribution of the constant coefficient cubic term, i.e. the first term on the right-hand side of~\eqref{equ:ode_weighted_profile_derivation1}, have integrable time decay. Indeed, for $1 \leq \ell \leq 3$ we can use Lemma~\ref{lem:key_improved_decay} to crudely bound
 \begin{align*}
  \Bigl\| \jap{\xi}^{\thf} \widehat{\alpha}_\ell(\xi) \bigl|\partial_t \bigl( e^{-it} v(t,0) \bigr)\bigr| \bigl| e^{-it} v(t,0) \bigr| \Bigr\|_{L^\infty_\xi} &\lesssim \bigl\| \jap{\xi}^{\thf} \widehat{\alpha}_\ell(\xi) \bigr\|_{L^\infty_\xi} \bigl| \partial_t \bigl( e^{-it} v(t,0) \bigr) \bigr| |v(t,0)| \lesssim \frac{N(T)^2}{\jt^{\thf-\delta}}, \quad 0 \leq t \leq T.
 \end{align*}
 Further, by Lemma~\ref{lem:improved_decay_localized_derivatives} it holds that
\begin{comment}
  \begin{equation*}
  \Bigl\| \jap{\xi}^{\hf} \calF\Bigl[ \alpha(\cdot) \bigl( u(t, \cdot)^2 - u(t,0)^2 \bigr) \Bigr](\xi) \Bigr\|_{L^\infty_\xi} \lesssim \bigl\| \jx \alpha(x) \bigl( u(t,x)^2 - u(t,0)^2 \bigr) \bigr\|_{H^1_x} \lesssim \frac{N(T)^2}{\jt^{\thf-\delta}}, \quad 0 \leq t \leq T,
 \end{equation*}
\end{comment}
 \begin{align*}
  &\Bigl\| \jap{\xi}^{\hf} \calF\Bigl[ \alpha(\cdot) \bigl( u(t, \cdot)^2 - u(t,0)^2 \bigr) \Bigr](\xi) \Bigr\|_{L^\infty_\xi} \\
  &\quad \lesssim \bigl\| \jx \alpha(x) \bigl( u(t,x)^2 - u(t,0)^2 \bigr) \bigr\|_{H^1_x} \\
  &\quad \lesssim \sum_{j=0}^1 \, \bigl\| \px^j \bigl( \jx \alpha(x) \bigr) \bigl( u(t,x)^2 - u(t,0)^2 \bigr) \bigr\|_{L^2_x} + \bigl\| \jx \alpha(x) (\px u)(t) u(t) \bigr\|_{L^2_x} \\
  &\quad \lesssim \frac{N(T)^2}{\jt^{\thf-\delta}}, \quad 0 \leq t \leq T,
 \end{align*}
 as well as
 \begin{align*}
  \bigl\| \jxi^{\hf} \calF\bigl[ \beta(\cdot) u(t, \cdot)^3 \bigr](\xi) \bigr\|_{L^\infty_\xi} &\lesssim \bigl\| \jxi^\hf \calF\bigl[ \beta(\cdot) u(t, \cdot)^3 \bigr](\xi) \bigr\|_{H^1_\xi} \lesssim \bigl\| \jx \beta(x) u(t)^3 \bigr\|_{H^1_x} \lesssim \frac{N(T)^3}{\jt^\thf}, \quad 0 \leq t \leq T.
 \end{align*}
 Thus, we have uniformly for all times $0 \leq t \leq T$ that
 \begin{align*}
  \pt \Bigl( \jxi^{\thf} \hat{f}(t, \xi) + \calR(t,\xi) \Bigr) = \frac{\beta_0}{2i} e^{-it\jap{\xi}} \jap{\xi}^{\hf} \calF\bigl[ u(t, \cdot)^3 \bigr](\xi) + \calO \biggl( \frac{N(T)^2}{\jt^{\thf-\delta}} \biggr),
 \end{align*}
 and it remains to analyze the contribution of the constant coefficient cubic term 
 \begin{equation*}
  \frac{\beta_0}{2i} e^{-it\jap{\xi}} \jap{\xi}^{\hf} \calF\bigl[ u(t, \cdot)^3 \bigr](\xi).
 \end{equation*}
Inserting $u(t) = v(t) + \bar{v}(t) = e^{+it\jn} f(t) + e^{-it\jn} \overline{f(t)}$, we find that
\begin{equation} \label{equ:ft_const_cubic_decom_osc_integrals}
 \begin{aligned}
  \jxi^{\hf} e^{-it\jap{\xi}} \calF\bigl[ u(t, \cdot)^3 \bigr](\xi) &= \frac{\jxi^{\hf}}{2 \pi} \iint e^{i t \phi_1(\xi, \eta, \sigma)} \hat{f}(t, \xi-\eta-\sigma) \hat{f}(t, \eta) \hat{f}(t, \sigma) \, \ud \eta \, \ud \sigma \\
  &\quad + \frac{3 \jxi^{\hf}}{2\pi} \iint e^{i t \phi_2(\xi, \eta, \sigma)} \hat{f}(t, \xi-\eta-\sigma) \hat{\bar{f}}(t, \eta) \hat{f}(t, \sigma) \, \ud \eta \, \ud \sigma \\
  &\quad + \frac{3 \jxi^{\hf}}{2\pi} \iint e^{i t \phi_3(\xi, \eta, \sigma)} \hat{f}(t, \xi-\eta-\sigma) \hat{\bar{f}}(t, \eta) \hat{\bar{f}}(t, \sigma) \, \ud \eta \, \ud \sigma \\
  &\quad + \frac{\jxi^{\hf}}{2\pi} \iint e^{i t \phi_4(\xi, \eta, \sigma)} \hat{\bar{f}}(t, \xi-\eta-\sigma) \hat{\bar{f}}(t, \eta) \hat{\bar{f}}(t, \sigma) \, \ud \eta \, \ud \sigma \\
  &\equiv I + II + III + IV,
 \end{aligned}
\end{equation}
where we introduced the phase functions
\begin{align*}
 \phi_1(\xi, \eta, \sigma) &:= -\jxi + \jap{\xi-\eta-\sigma} + \jap{\eta} + \jap{\sigma}, \\
 \phi_2(\xi, \eta, \sigma) &:= -\jxi + \jap{\xi-\eta-\sigma} - \jap{\eta} + \jap{\sigma}, \\
 \phi_3(\xi, \eta, \sigma) &:= -\jxi + \jap{\xi-\eta-\sigma} - \jap{\eta} - \jap{\sigma}, \\
 \phi_4(\xi, \eta, \sigma) &:= -\jxi - \jap{\xi-\eta-\sigma} - \jap{\eta} - \jap{\sigma}.
\end{align*}
The long-time behavior of the oscillatory integrals on the right-hand side of~\eqref{equ:ft_const_cubic_decom_osc_integrals} is governed by the stationary points of the phases (in $\eta$ and $\sigma$, and in $t$ after time integration). A short computation reveals that the stationary points (in $\eta$ and $\sigma$) of the phase functions are given by
\begin{align*}
 \partial_\eta \phi_i = \partial_\sigma \phi_i = 0 \quad \Leftrightarrow \quad (\eta, \sigma) = (\eta_i, \sigma_i), \quad 1 \leq i \leq 4,
\end{align*}
where 
\begin{align*}
 (\eta_1, \sigma_1) &= \Bigl( \frac{\xi}{3}, \frac{\xi}{3} \Bigr), \\
 (\eta_2, \sigma_2) &= \bigl( -\xi, \xi \bigr), \\
 (\eta_3, \sigma_3) &= \bigl( \xi, \xi \bigr), \\
 (\eta_4, \sigma_4) &= \Bigl( \frac{\xi}{3}, \frac{\xi}{3} \Bigr).
\end{align*}
Moreover, we calculate that
\begin{align*}
 \phi_1(\xi, \eta_1, \sigma_1) &= - \jxi + 3 \jap{ \textstyle{\frac{\xi}{3}} } = \calO \bigl( \jxi^{-1} \bigr), \\
 \phi_2(\xi, \eta_2, \sigma_2) &= 0, \\
 \phi_3(\xi, \eta_3, \sigma_3) &= - 2 \jxi, \\
 \phi_4(\xi, \eta_4, \sigma_4) &= - \jxi - 3 \jap{ \textstyle{\frac{\xi}{3}} },
\end{align*}
and that
\begin{align*}
 \det \Hess_{\eta, \sigma} \, \phi_1(\xi, \eta_1, \sigma_1) &= 3 \jap{\textstyle{\frac{\xi}{3}}}^{-6}, &\sign \Hess_{\eta, \sigma} \, \phi_1(\xi, \eta_1, \sigma_1) &= 2, \\
 \det \Hess_{\eta, \sigma} \, \phi_2(\xi, \eta_2, \sigma_2) &= - \jxi^{-6}, &\sign \Hess_{\eta, \sigma} \, \phi_2(\xi, \eta_2, \sigma_2) &= 0, \\
 \det \Hess_{\eta, \sigma} \, \phi_3(\xi, \eta_3, \sigma_3) &= - \jxi^{-6}, &\sign \Hess_{\eta, \sigma} \, \phi_3(\xi, \eta_3, \sigma_3) &= 0, \\
 \det \Hess_{\eta, \sigma} \, \phi_4(\xi, \eta_4, \sigma_4) &= 3 \jap{\textstyle{\frac{\xi}{3}}}^{-6}, &\sign \Hess_{\eta, \sigma} \, \phi_4(\xi, \eta_4, \sigma_4) &= -2. 
\end{align*}
The stationary phase analysis of the oscillatory integrals $I$--$IV$ on the right-hand side of~\eqref{equ:ft_const_cubic_decom_osc_integrals} proceeds in exactly the same manner for each term.
We therefore only provide below the details for the crucial term $II$, which governs the long-time behavior of the solution $v(t)$ since it does not exhibit additional time oscillations. The treatment of the other terms is left to the reader. The final outcome is that there exists a small constant $0 < \nu \ll 1$ such that uniformly for all $1 \leq t \leq T$ we have 
\begin{equation*}
 \begin{aligned}
  I &= \frac{1}{t} \frac{i}{2 \sqrt{3}} \jxi^{\hf} \jap{{\textstyle \frac{\xi}{3}}}^3 e^{it(-\jxi + 3 \jap{\frac{\xi}{3}})} \hat{f}(t, {\textstyle \frac{\xi}{3}})^3 + \calO \biggl( \frac{N(T)^3}{t^{1+\nu}} \biggr), \\
  II &= \frac{3}{t} \jxi^{\frac{7}{2}} \bigl| \hat{f}(t, \xi) \bigr|^2 \hat{f}(t,\xi) + \calO \biggl( \frac{N(T)^3}{t^{1+\nu}} \biggr), \\
  III &= \frac{3}{t} \jxi^{\frac{7}{2}} e^{-2 i t \jxi} \bigl| \hat{f}(t, -\xi) \bigr|^2 \bar{\hat{f}}(t,-\xi) + \calO \biggl( \frac{N(T)^3}{t^{1+\nu}} \biggr), \\
  IV &= - \frac{1}{t} \frac{i}{2 \sqrt{3}} \jap{\xi}^{\hf} \jap{{\textstyle \frac{\xi}{3}}}^3 e^{-it(\jxi + 3 \jap{\frac{\xi}{3}})} \hat{\bar{f}}(t, {\textstyle \frac{\xi}{3}})^3 + \calO \biggl( \frac{N(T)^3}{t^{1+\nu}} \biggr).
 \end{aligned}
\end{equation*}

\medskip 

\noindent \underline{\it Stationary phase analysis of the oscillatory integral II:}
We consider the case where $|\xi| \sim 2^j$ for some $j \gg 1$, noting that the analysis for $|\xi| \lesssim 1$ is analogous, but does not require a refined treatment of the smaller frequencies. 
Let $\{ \psi_\ell \}_{\ell \geq 0}$ be a smooth partition of unity so that $\sum_{\ell \geq 0} \psi_\ell(\eta) = 1$ for all $\eta \in \bbR$
and with the property that $\psi_\ell$ is supported on $\{ |\eta| \sim 2^\ell \}$ for $\ell \geq 1$ and on $\{ |\eta| \lesssim 1 \}$ for $\ell = 0$.
Then we decompose
\begin{align*}
 &\jxi^{\hf} \iint e^{i t \phi_2(\xi, \eta, \sigma)} \hat{f}(t, \xi-\eta-\sigma) \hat{\bar{f}}(t, \eta) \hat{f}(t, \sigma) \, \ud \eta \, \ud \sigma \\
 &\quad \quad = \sum_{k, \ell \geq 0} \underbrace{ \jxi^{\hf} \iint e^{i t \phi_2(\xi, \eta, \sigma)} \hat{f}(t, \xi-\eta-\sigma) \hat{\bar{f}}(t, \eta) \hat{f}(t, \sigma) \psi_k(\eta) \psi_\ell(\sigma) \, \ud \eta \, \ud \sigma }_{J_{k\ell}} \\
 &\quad \quad = \underbrace{ \sum_{k \geq j+10} \sum_{0 \leq \ell \leq k-5} J_{k\ell} }_{\calJ^{(1)}} + \underbrace{ \sum_{\ell \geq j+10} \sum_{0 \leq k \leq \ell-5} J_{k\ell} }_{\calJ^{(2)}} + \underbrace{ \sum_{k \geq j+10} \sum_{|\ell-k| < 5} J_{k\ell} }_{\calJ^{(3)}} + \underbrace{ \sum_{0 \leq k, \ell \lesssim j} J_{k \ell} }_{\calJ^{(4)}}.  
\end{align*}
The only stationary point $(\eta, \sigma) = (-\xi, \xi)$ of the phase $\phi_2(\xi, \eta, \sigma)$ is contained in the region $|\eta|, |\sigma| \lesssim 2^j$. Correspondingly, the terms $\calJ^{(1)}$, $\calJ^{(2)}$, and $\calJ^{(3)}$ can just be estimated by integrating by parts either in $\eta$ or in $\sigma$, while the term $\calJ^{(4)}$ requires a stationary phase analysis.
Below we use the short-hand notation $\widehat{f_{\sim k}}(t,\eta)$ to indicate localization of $\hat{f}(t,\eta)$ to frequencies $|\eta| \sim 2^k$ for $k \geq 1$ and to $|\eta| \lesssim 1$ for $k = 0$. 

\medskip 

\noindent {\it Contribution of the term $\calJ^{(1)}$:} 
On the supports of the integrands of the terms $J_{k\ell}$ in the sum $\calJ^{(1)}$, we have $|\eta| \sim 2^k \gg 2^j \sim |\xi|$ and $|\sigma| \sim 2^\ell \ll 2^k \sim |\eta|$. Thus, there holds $|\xi - \eta - \sigma| \sim 2^k$ and we may write 
\begin{align*}
 \calJ^{(1)} &= \sum_{k \geq j+10} \sum_{0 \leq \ell \leq k-5} \jxi^{\hf} \iint e^{i t \phi_2(\xi, \eta, \sigma)} \widehat{f_{\sim k}}(t, \xi-\eta-\sigma) \widehat{\bar{f}_{\sim k}}(t, \eta) \widehat{f_{\sim \ell}}(t, \sigma) \psi_k(\eta) \psi_\ell(\sigma) \, \ud \eta \, \ud \sigma.
\end{align*}
Moreover, observe that in view of the identity
\begin{equation*}
 \frac{1}{\partial_\sigma \phi_2} = \frac{\jap{\sigma}\jap{\xi-\eta-\sigma} ( \sigma \jap{\xi-\eta-\sigma} + (\xi-\eta-\sigma) \jap{\sigma})}{\sigma^2 - (\xi-\eta-\sigma)^2}
\end{equation*}
we have on the supports of the integrands of $J_{k\ell}$ that
\begin{equation} \label{equ:IIJ2_bound_phase_inverse}
 \Bigl| \partial_\eta^{m_1} \partial_\sigma^{m_2} \frac{1}{\partial_\sigma \phi_2(\xi, \eta, \sigma)} \Bigr| \lesssim 2^{+2\ell} 2^{-(m_1+m_2) \ell} \quad \text{ for } 0 \leq \ell \leq k-5, \, k \geq j+10,
\end{equation}
for all integers $0 \leq m_1, m_2 \leq 10$. We therefore integrate by parts in $\sigma$ to obtain that
\begin{align*}
 \calJ^{(1)} &= \calJ^{(1)}_{(a)} + \calJ^{(1)}_{(b)} + \calJ^{(1)}_{(c)},
\end{align*}
where
\begin{align*}
 \calJ^{(1)}_{(a)} &:= - \sum_{k \geq j+10} \sum_{0 \leq \ell \leq k-5} \jxi^{\hf} \iint e^{i t \phi_2(\xi, \eta, \sigma)} \frac{1}{it \partial_\sigma \phi_2} \partial_\sigma \widehat{f_{\sim k}}(t, \xi-\eta-\sigma) \widehat{\bar{f}_{\sim k}}(t, \eta) \widehat{f_{\sim \ell}}(t, \sigma) \psi_k(\eta) \psi_\ell(\sigma) \, \ud \eta \, \ud \sigma, \\
 \calJ^{(1)}_{(b)} &:= - \sum_{k \geq j+10} \sum_{0 \leq \ell \leq k-5} \jxi^{\hf} \iint e^{i t \phi_2(\xi, \eta, \sigma)} \frac{1}{it \partial_\sigma \phi_2} \widehat{f_{\sim k}}(t, \xi-\eta-\sigma) \widehat{\bar{f}_{\sim k}}(t, \eta) \partial_\sigma \widehat{f_{\sim \ell}}(t, \sigma) \psi_k(\eta) \psi_\ell(\sigma) \, \ud \eta \, \ud \sigma, \\
 \calJ^{(1)}_{(c)} &:= - \sum_{k \geq j+10} \sum_{0 \leq \ell \leq k-5} \jxi^{\hf} \iint e^{i t \phi_2(\xi, \eta, \sigma)} \partial_\sigma \Bigl[  \frac{1}{it \partial_\sigma \phi_2} \psi_k(\eta) \psi_\ell(\sigma) \Bigr] \widehat{f_{\sim k}}(t, \xi-\eta-\sigma) \widehat{\bar{f}_{\sim k}}(t, \eta) \widehat{f_{\sim \ell}}(t, \sigma) \, \ud \eta \, \ud \sigma.
\end{align*}
In view of~\eqref{equ:IIJ2_bound_phase_inverse} we have 
\begin{align*}
 \biggl\| \iint e^{ix\eta} e^{iy\sigma} \frac{1}{\partial_\sigma \phi_2} \psi_k(\eta) \psi_\ell(\sigma) \, \ud \eta \, \ud \sigma \biggr\|_{L^1_{x,y}(\bbR\times\bbR)} \lesssim 2^k 2^{\ell} \quad \text{ for } 0 \leq \ell \leq k-5, \, k \geq j+10.
\end{align*}
Hence, by Lemma~\ref{lem:pseudoproduct_op_bound} we may bound 
\begin{align*}
 \bigl|\calJ^{(1)}_{(a)}\bigr| + \bigl|\calJ^{(1)}_{(b)}\bigr| &\lesssim \frac{2^{\frac{1}{2}j}}{t} \sum_{k \geq j+10} \sum_{0 \leq \ell \leq k-5} 2^k 2^{\ell} \Bigl( 2^{-2k} \bigl\| \jn L v(t) \bigr\|_{L^2_x} 2^{-2k} \bigl\| \jn^2 v(t) \bigr\|_{L^2_x} \|v_{\sim \ell}(t)\|_{L^\infty_x} \\
 &\qquad \qquad \qquad \qquad \qquad \qquad \qquad + 2^{-2k} \bigl\|\jn^2 v(t)\bigr\|_{L^2_x} \|v_{\sim k}(t)\|_{L^\infty_x} 2^{-2\ell} \bigl\| \jn L v(t) \bigr\|_{L^2_x} \Bigr) \\
 &\lesssim \frac{2^{\frac{1}{2}j}}{t} \sum_{k \geq j+10} \sum_{0 \leq \ell \leq k-5} 2^{-k} 2^{-\ell} \bigl\| \jn L v(t) \bigr\|_{L^2_x} \bigl\| \jn^2 v(t) \bigr\|_{L^2_x} \|v(t)\|_{L^\infty_x} \\
 &\lesssim \frac{N(T)^3}{t^{\frac{3}{2}-2\delta}} 2^{-\frac{1}{2} j} \lesssim \frac{N(T)^3}{t^{\frac{3}{2}-2\delta}}.
\end{align*}
Analogously, we obtain that
\begin{align*}
 \bigl|\calJ^{(1)}_{(c)}\bigr| \lesssim \frac{N(T)^3}{t^{\frac{3}{2}-2\delta}}.
\end{align*}

\medskip 

\noindent {\it Contribution of the term $\calJ^{(2)}$:} 
On the supports of the integrands of $J_{k\ell}$ in $\calJ^{(2)}$, we have $|\sigma| \sim 2^\ell \gg 2^j$ and $|\eta| \sim 2^k \ll 2^\ell \sim |\sigma|$. Hence, there holds $|\xi-\eta-\sigma| \sim |\sigma| \sim 2^\ell$ and on the supports of the integrands we have 
\begin{equation*} 
 \Bigl| \partial_\eta^{m_1} \partial_\sigma^{m_2} \frac{1}{\partial_\eta \phi_2(\xi, \eta, \sigma)} \Bigr| \lesssim 2^{+2k} 2^{-(m_1+m_2) k} \quad \text{for } 0 \leq k \leq \ell - 5, \, \ell \geq j+10,
\end{equation*}
for all integers $0 \leq m_1, m_2 \leq 10$.
We integrate by parts in $\eta$ and then proceed in the same manner as for the sum $\calJ^{(1)}$ to get $|\calJ^{(2)}| \lesssim N(T)^3 t^{-(\frac{3}{2}-2\delta)}$.

\medskip 

\noindent {\it Contribution of the term $\calJ^{(3)}$:} 
On the supports of the integrands of $J_{k\ell}$ in $\calJ^{(3)}$, we have $|\eta| \sim |\sigma| \sim 2^k \gg 2^j$, which means that $|\xi-\eta-\sigma|$ can possibly become small. For this reason we additionally decompose 
\begin{equation*}
 \hat{f}(t, \xi-\eta-\sigma) = \sum_{0 \leq n \lesssim k} \widehat{f_{\sim n}}(t, \xi - \eta - \sigma).
\end{equation*}
In this case $\partial_\eta \phi_2$ cannot vanish and satisfies suitable bounds. Indeed, if $\eta$ and $\xi-\eta-\sigma$ have the same sign, we find that
\begin{align*}
 \bigl| \partial_\eta \phi_2(\xi, \eta, \sigma) \bigr| = \biggl| -\frac{\xi-\eta-\sigma}{\jap{\xi-\eta-\sigma}} - \frac{\eta}{\jap{\eta}} \biggr| \geq \frac{|\eta|}{\jap{\eta}} \gtrsim 1.
\end{align*}
Instead, if $\eta$ and $\xi-\eta-\sigma$ have opposite signs, we have 
\begin{align*}
 \bigl| \eta - (\xi-\eta-\sigma) \bigr| \geq |\eta| \sim 2^k.
\end{align*}
Since $|\xi -\sigma| \sim 2^k$, it follows from 
\begin{align*}
 \frac{1}{\partial_\eta \phi_2} &=  - \frac{\jap{\eta}\jap{\xi-\eta-\sigma} (\eta \jap{\xi-\eta-\sigma} - (\xi-\eta-\sigma) \jap{\eta})}{ (\xi-\sigma) (\eta-(\xi-\eta-\sigma)) } 
\end{align*}
that overall we have 
\begin{equation} \label{equ:IIJ3_bound_phase_inverse}
 \Bigl| \partial_\eta^{m_1} \partial_\sigma^{m_2} \frac{1}{\partial_\eta \phi_2(\xi, \eta, \sigma)} \Bigr| \lesssim 2^{+2n} 2^{-(m_1+m_2) n} \quad \text{ for } 0 \leq n \leq k+5, \, k \geq j+10, \, |\ell-k| < 5,
\end{equation}
for all integers $0 \leq m_1, m_2 \leq 10$.
We may therefore integrate by parts in $\eta$ to find that 
\begin{align*}
 \calJ^{(3)} &= \calJ^{(3)}_{(a)} + \calJ^{(3)}_{(b)} + \calJ^{(3)}_{(c)},
\end{align*}
where
\begin{align*}
 \calJ^{(3)}_{(a)} &:= - \sum_{\substack{k \geq j+10 \\ |\ell-k| < 5}} \sum_{0 \leq n \lesssim k} \jxi^{\hf} \iint e^{i t \phi_2(\xi, \eta, \sigma)} \frac{1}{it \partial_\eta \phi_2} \partial_\eta \widehat{f_{\sim n}}(t, \xi-\eta-\sigma) \widehat{\bar{f}_{\sim k}}(t, \eta) \widehat{f_{\sim \ell}}(t, \sigma) \psi_k(\eta) \psi_\ell(\sigma) \, \ud \eta \, \ud \sigma, \\
 \calJ^{(3)}_{(b)} &:= - \sum_{\substack{k \geq j+10 \\ |\ell-k| < 5}} \sum_{0 \leq n \lesssim k} \jxi^{\hf} \iint e^{i t \phi_2(\xi, \eta, \sigma)} \frac{1}{it \partial_\eta \phi_2} \widehat{f_{\sim n}}(t, \xi-\eta-\sigma) \partial_\eta \widehat{\bar{f}_{\sim k}}(t, \eta)  \widehat{f_{\sim \ell}}(t, \sigma) \psi_k(\eta) \psi_\ell(\sigma) \, \ud \eta \, \ud \sigma, \\
 \calJ^{(3)}_{(c)} &:= - \sum_{\substack{k \geq j+10 \\ |\ell-k| < 5}} \sum_{0 \leq n \lesssim k} \jxi^{\hf} \iint e^{i t \phi_2(\xi, \eta, \sigma)} \partial_\eta \Bigl[  \frac{1}{it \partial_\eta \phi_2} \psi_k(\eta) \psi_\ell(\sigma) \Bigr] \widehat{f_{\sim n}}(t, \xi-\eta-\sigma) \widehat{\bar{f}_{\sim k}}(t, \eta) \widehat{f_{\sim \ell}}(t, \sigma) \, \ud \eta \, \ud \sigma.
\end{align*}
In view of~\eqref{equ:IIJ3_bound_phase_inverse} we have 
\begin{align*}
 \biggl\| \iint e^{ix\eta} e^{iy\sigma} \frac{1}{\partial_\eta \phi_2} \psi_k(\eta) \psi_\ell(\sigma) \, \ud \eta \, \ud \sigma \biggr\|_{L^1_{x,y}(\bbR\times\bbR)} \lesssim 2^k 2^{\ell} \quad \text{ for } 0 \leq n \leq k+5, \, k \geq j+10, \, |\ell-k| < 5.
\end{align*}
Hence, by Lemma~\ref{lem:pseudoproduct_op_bound} we may bound 
\begin{align*}
 \bigl|\calJ^{(3)}_{(a)}\bigr| + \bigl|\calJ^{(3)}_{(b)}\bigr| &\lesssim \frac{2^{\frac{1}{2}j}}{t} \sum_{\substack{k \geq j+10 \\ |\ell-k| < 5}} \sum_{0 \leq n \lesssim k} 2^k 2^{\ell} \Bigl( 2^{-2n} \bigl\| \jn L v(t) \bigr\|_{L^2_x} 2^{-2k} \bigl\| \jn^2 v(t) \bigr\|_{L^2_x} \|v_{\sim \ell}(t)\|_{L^\infty_x} \\
 &\qquad \qquad \qquad \qquad \qquad \qquad \qquad + 2^{-2n} \bigl\|\jn^2 v(t)\bigr\|_{L^2_x} 2^{-2k} \bigl\| \jn L v(t) \bigr\|_{L^2_x} \|v_{\sim \ell}(t)\|_{L^\infty_x} \Bigr).
\end{align*}
Finally, using that $\|v_{\sim \ell}(t)\|_{L^\infty_x} \lesssim 2^{-\frac{3}{4} \ell} \bigl\| \jn^2 v(t) \bigr\|_{L^2_x}^\hf \|v(t)\|_{L^\infty_x}^\hf$ for any $\ell \geq 0$,
we arrive at the estimate 
\begin{align*}
 \bigl|\calJ^{(1)}_{(a)}\bigr| + \bigl|\calJ^{(1)}_{(b)}\bigr| &\lesssim \frac{2^{\frac{1}{2}j}}{t} \sum_{\substack{k \geq j+10 \\ |\ell-k| < 5}} \sum_{0 \leq n \lesssim k} 2^k 2^\ell 2^{-2n} 2^{-2k} 2^{-\frac{3}{4} \ell} \bigl\| \jn L v(t) \bigr\|_{L^2_x} \bigl\| \jn^2 v(t) \bigr\|_{L^2_x}^{\thf} \|v(t)\|_{L^\infty_x}^\hf \\
 &\lesssim \frac{N(T)^3}{t^{\frac{5}{4}-\frac{5}{2}\delta}} 2^{-\frac{1}{4} j} \lesssim \frac{N(T)^3}{t^{\frac{5}{4}-\frac{5}{2}\delta}}.
\end{align*}
In the same manner, we derive that $|\calJ^{(1)}_{(c)}| \lesssim N(T)^3 t^{-(\frac{5}{4} - \frac{5}{2}\delta)}$.

\medskip

\noindent {\it Contribution of the term $\calJ^{(4)}$:} 
Here the strategy is to cut out a sufficiently small neighborhood around the stationary point $(\eta, \sigma) = (-\xi, \xi)$, where we have a suitable lower bound on the determinant of the Hessian of the phase $\phi_2$ to apply the stationary phase Lemma~\ref{lem:stationary_phase}. Outside that neighborhood we can again just integrate by parts in $\eta$ or $\sigma$. 
To this end we introduce smooth bump functions $\chi_c, \chi_{8c} \in C^\infty$ for some sufficiently small absolute constant $0 < c \ll 1$ such that $\chi_c(\zeta) = 1$ for $|\zeta-1| \leq c$ and $\chi_c(\zeta) = 0$ for $|\zeta-1| \geq 2c$, respectively such that $\chi_{8c}(\zeta) = 1$ for $|\zeta-1| \leq 8c$ and $\chi_{8c}(\zeta) = 0$ for $|\zeta-1| \geq 16 c$. Then we decompose
\begin{align*}
 \calJ^{(4)} &= \jxi^{\hf} \iint e^{i t \phi_2(\xi, \eta, \sigma)} \widehat{f_{\sim j}}(t, \xi-\eta-\sigma) \widehat{\bar{f}_{\sim j}}(t, \eta) \widehat{f_{\sim j}}(t, \sigma) \chi_c\Bigl(\frac{\sigma}{\xi}\Bigr) \chi_{8c}\Bigl(\frac{\eta}{-\xi}\Bigr) \, \ud \eta \, \ud \sigma \\
 &\quad + \sum_{0 \leq k, \ell \lesssim j} \jxi^{\hf} \iint e^{i t \phi_2(\xi, \eta, \sigma)} \hat{f}(t, \xi-\eta-\sigma) \hat{\bar{f}}(t, \eta) \hat{f}(t, \sigma) \Bigl( 1 - \chi_c\Bigl(\frac{\sigma}{\xi}\Bigr) \Bigr) \psi_k(\eta) \psi_\ell(\sigma) \, \ud \eta \, \ud \sigma \\
 &\quad + \sum_{0 \leq k, \ell \lesssim j} \jxi^{\hf} \iint e^{i t \phi_2(\xi, \eta, \sigma)} \hat{f}(t, \xi-\eta-\sigma) \hat{\bar{f}}(t, \eta) \hat{f}(t, \sigma) \chi_c\Bigl(\frac{\sigma}{\xi}\Bigr) \Bigl( 1 - \chi_{8c}\Bigl(\frac{\eta}{-\xi}\Bigr) \Bigr)  \psi_k(\eta) \psi_\ell(\sigma) \, \ud \eta \, \ud \sigma \\
 &\equiv \calJ^{(4)}_{(a)} + \calJ^{(4)}_{(b)} + \calJ^{(4)}_{(c)}.
\end{align*}

\noindent {\it Contribution of the term $\calJ^{(4)}_{(a)}$}:
On the support of $\chi_c (\frac{\sigma}{\xi} ) \chi_{8c} (\frac{\eta}{-\xi} )$ we have a suitable lower bound on the determinant of the Hessian of the phase function given by
\begin{equation*}
 \bigl| \det \Hess \, \phi_2(\xi, \eta, \sigma) \bigr| \gtrsim \jap{\xi}^{-6} \gtrsim 2^{-6j}, \quad \text{ for } |\sigma - \xi| \leq c |\xi|, \, |\eta + \xi| \leq 8 c |\xi|, \, |\xi| \sim 2^j.
\end{equation*}
Changing variables to $(\xi', \eta', \sigma') := 2^{-j} (\xi, \eta, \sigma)$, we may write
\begin{equation*}
 \calJ^{(4)}_{(a)} = 2^{2j} \jxi^\hf \iint e^{i 2^{3j} t \psi(\xi', \eta', \sigma')} F(t, \eta', \sigma') \chi_c\Bigl(\frac{\sigma'}{\xi'}\Bigr) \chi_{8c}\Bigl(\frac{\eta'}{-\xi'}\Bigr) \, \ud \eta' \, \ud \sigma' 
\end{equation*}
with 
\begin{align*}
 \psi(\xi', \eta', \sigma') &:= 2^{-3j} \phi_2(2^j \xi', 2^j \eta', 2^j \sigma'), \\
 F(t, \eta', \sigma') &:= \widehat{f_{\sim j}}(t, 2^j(\xi' - \eta' - \sigma')) \widehat{\bar{f}_{\sim j}}(t, 2^j \eta') \widehat{f_{\sim j}}(t, 2^j \sigma').
\end{align*}
Correspondingly, on the support of $\chi_c (\frac{\sigma'}{\xi'} ) \chi_{8c} (\frac{\eta'}{-\xi'} )$ we have the lower bound
\begin{equation*}
 \bigl| \det \Hess \, \psi(\xi', \eta', \sigma') \bigr| = 2^{-2j} \bigl| \det \Hess \, \phi_2(\xi, \eta, \sigma) \bigr| \gtrsim 2^{-8j}.
\end{equation*}
Applying Lemma~\ref{lem:stationary_phase} with $\Delta = 2^{-2j} \jxi^{-6}$, $\lambda = 2^{3j} t$, and $\mu = 2^{-8j}$, 
we obtain for any $0 < \alpha \leq 1$ that
\begin{align*}
 \calJ^{(4)}_{(a)} = \frac{2\pi}{t} \jxi^{\hf} \jxi^3 \bigl| \hat{f}(t,\xi) \bigr|^2 \hat{f}(t,\xi) + 2^{2j} \jxi^{\hf} \calO \left( \frac{ \bigl\| \langle (x,y) \rangle^{2 \alpha} \widehat{F}(t) \bigr\|_{L^1_{x,y}}}{ (2^{-8j})^{\hf + 2\alpha} (2^{3j} t)^{1+\alpha}} \right).
\end{align*}
Now observe that
\begin{align*}
 \widehat{F}(t,x,y) = 2^{-3j} \int e^{-iz\xi'} f_{\sim j}(t, 2^{-j} z) \bar{f}_{\sim j}(t, 2^{-j}(z-x)) f_{\sim j}(t, 2^{-j}(z-y)) \, \ud z,
\end{align*}
which implies
\begin{equation} \label{equ:nonresonant_ODE_deriv_profileb1}
 \bigl\| \widehat{F}(t) \bigr\|_{L^1_{x,y}} \lesssim \|f_{\sim j}(t)\|_{L^1_x}^3 \quad \text{ and } \quad \bigl\| |(x,y)|^{2\alpha} \widehat{F}(t) \bigr\|_{L^1_{x,y}} \lesssim 2^{2\alpha j} \|f_{\sim j}(t)\|_{L^1_x}^2 \bigl\| |x|^{2\alpha} f_{\sim j}(t) \bigr\|_{L^1_x}.
\end{equation}
From Proposition~\ref{prop:slow_growth_H2v} and Proposition~\ref{prop:slow_growth_H1L} we obtain the following bounds on the profiles 
\begin{equation} \label{equ:nonresonant_ODE_deriv_profileb2}
\begin{aligned}
 \|f_{\sim j}(t)\|_{L^1_x} \lesssim 2^{-2j} \bigl\| \jn^2 f(t) \bigr\|_{L^1_x} &\lesssim 2^{-2j} \bigl\| \jn^2 f(t) \bigr\|_{L^2_x}^\hf \bigl\| x \jn^2 f(t) \bigr\|_{L^2_x}^{\hf} \\
 &\lesssim 2^{-2j} \bigl( \bigl\| \jn^2 v(t) \bigr\|_{L^2_x} + \bigl\| \jn L v(t) \bigr\|_{L^2_x} \bigr) \\
 &\lesssim 2^{-2j} N(T) \jt^{+\delta}
\end{aligned}
\end{equation}
and for $0 \leq \alpha < \frac{1}{4}$, 
\begin{equation} \label{equ:nonresonant_ODE_deriv_profileb3}
 \begin{aligned}
  \bigl\| |x|^{2\alpha} f_{\sim j}(t) \bigr\|_{L^1_x} \lesssim \bigl\| \jx f_{\sim j}(t) \bigr\|_{L^2_x} &\lesssim \|f_{\sim j}(t)\|_{L^2_x} + \|x f_{\sim j}(t)\|_{L^2_x} \\
  &\lesssim \|f(t)\|_{L^2_x} + \|x f(t)\|_{L^2_x} + 2^{-j} \|f(t)\|_{L^2_x} \\
  &\lesssim \bigl\| \jn^2 v(t) \bigr\|_{L^2_x} + \bigl\| \jn L v(t) \bigr\|_{L^2_x} \\
  &\lesssim N(T) \jt^{+\delta}.  
%  \bigl\| |x|^{2\alpha} f_{\sim j}(t) \bigr\|_{L^1_x} &\lesssim \|f_{\sim j}(t)\|_{L^2_x}^{\hf-2\alpha} \|x f_{\sim j}(t)\|_{L^2_x}^{\hf + 2\alpha} \\
%  &\lesssim \|f(t)\|_{L^2_x}^{\hf-2\alpha} \bigl( \|x f(t)\|_{L^2_x} + 2^{-j} \|f(t)\|_{L^2_x} \bigr)^{\hf + 2\alpha} \\
%  &\lesssim \bigl\| \jn^2 v(t) \bigr\|_{L^2_x} + \bigl\| \jn L v(t) \bigr\|_{L^2_x} \\
%  &\lesssim N(T) \jt^{+\delta}.
 \end{aligned}
\end{equation}
Combining \eqref{equ:nonresonant_ODE_deriv_profileb1}--\eqref{equ:nonresonant_ODE_deriv_profileb3} yields for $0 \leq \alpha < \frac{1}{4}$, 
\begin{align*}
 \bigl\| \langle (x,y) \rangle^{2 \alpha} \widehat{F}(t) \bigr\|_{L^1_{x,y}} \lesssim 2^{-(4-2\alpha)j} N(T)^3 \jt^{+3\delta}.
\end{align*}
For sufficiently small $0 < \alpha \ll 1$, it follows that
\begin{align*}
 \Bigl\| \calJ^{(4)}_{(a)} - \frac{2\pi}{t} \jxi^{\hf} \jxi^3 \bigl| \hat{f}(t,\xi) \bigr|^2 \hat{f}(t,\xi) \Bigr\|_{L^\infty_\xi} &\lesssim 2^{(\frac{7}{2} + 13 \alpha) j} 2^{-(4-2\alpha)j} \frac{N(T)^3}{t^{1+\alpha-3\delta}} \lesssim \frac{N(T)^3}{t^{1+\alpha-3\delta}}.
\end{align*}

\medskip 

\noindent {\it Contribution of the term $\calJ^{(4)}_{(b)}$}:
In this case, $\partial_\eta \phi_2$ cannot vanish and satisfies suitable bounds. We additionally decompose 
\begin{equation*}
 \hat{f}(t, \xi-\eta-\sigma) = \sum_{0 \leq n \lesssim j} \widehat{f_{\sim n}}(t, \xi - \eta - \sigma).
\end{equation*}
If $\eta$ and $\xi-\eta-\sigma$ have the same sign, we find that
\begin{align*}
 \bigl| \partial_\eta \phi_2(\xi, \eta, \sigma) \bigr| = \biggl| - \frac{\xi-\eta-\sigma}{\jap{\xi-\eta-\sigma}} - \frac{\eta}{\jap{\eta}} \biggr| \geq \max \biggl\{ \frac{|\xi-\eta-\sigma|}{\jap{\xi-\eta-\sigma}}, \frac{\eta}{\jap{\eta}} \biggr\} \gtrsim 1.
\end{align*}
Instead, if $\eta$ and $\xi-\eta-\sigma$ have opposite signs, we have 
\begin{align*}
 \bigl| \eta - (\xi-\eta-\sigma) \bigr| \geq \max \bigl\{ |\eta|, |\xi-\eta-\sigma| \bigr\} \gtrsim \max \, \{ 2^k, 2^n \}.
\end{align*}
Since $|\xi -\sigma| \geq c |\xi| \gtrsim 2^j$ in view of the cut-off $(1-\chi_c(\frac{\sigma}{\xi}))$, it follows from 
\begin{align*}
 \frac{1}{\partial_\eta \phi_2} &=  - \frac{\jap{\eta}\jap{\xi-\eta-\sigma} (\eta \jap{\xi-\eta-\sigma} - (\xi-\eta-\sigma) \jap{\eta})}{ (\xi-\sigma) (\eta-(\xi-\eta-\sigma)) } 
\end{align*}
that overall we have in this case 
\begin{equation} \label{equ:J4b_bound_phase_inverse}
 \Bigl| \partial_\eta^{m_1} \partial_\sigma^{m_2} \frac{1}{\partial_\eta \phi_2(\xi, \eta, \sigma)} \Bigr| \lesssim 2^{2k} 2^{2n} 2^{-j} 2^{-\max\{k, n\}} 2^{-(m_1+m_2) \min\{k, n\}}
\end{equation}
for all integers $0 \leq m_1, m_2 \leq 10$.
%We may therefore integrate by parts in $\eta$ and then estimate in a manner similar to the treatment of the terms $\calJ^{(1)}$, $\calJ^{(2)}$, and $\calJ^{(3)}$ to get that $|\calJ^{(4)}_{(b)}| \lesssim N(T)^3 t^{-(\frac{5}{4}-\frac{5}{2} \delta)}$.
We may therefore integrate by parts in $\eta$ to obtain that 
\begin{align*}
 \calJ^{(4)}_{(b)} = \calJ^{(4)}_{(b1)} + \calJ^{(4)}_{(b2)} + \calJ^{(4)}_{(b3)} 
\end{align*}
where 
\begin{align*}
 \calJ^{(4)}_{(b1)} &= - \sum_{0 \leq k, \ell, n \lesssim j} \jxi^{\hf} \iint e^{i t \phi_2(\xi, \eta, \sigma)} \frac{1}{i t \partial_\eta \phi_2} \partial_\eta \widehat{f_{\sim n}}(t, \xi-\eta-\sigma) \widehat{\bar{f}_{\sim k}}(t, \eta) \widehat{f_{\sim \ell}}(t, \sigma) \Bigl( 1 - \chi_c\Bigl(\frac{\sigma}{\xi}\Bigr) \Bigr) \psi_k(\eta) \psi_\ell(\sigma) \, \ud \eta \, \ud \sigma, \\
 \calJ^{(4)}_{(b2)} &= - \sum_{0 \leq k, \ell, n \lesssim j} \jxi^{\hf} \iint e^{i t \phi_2(\xi, \eta, \sigma)} \frac{1}{i t \partial_\eta \phi_2} \widehat{f_{\sim n}}(t, \xi-\eta-\sigma) \partial_\eta \widehat{\bar{f}_{\sim k}}(t, \eta) \widehat{f_{\sim \ell}}(t, \sigma) \Bigl( 1 - \chi_c\Bigl(\frac{\sigma}{\xi}\Bigr) \Bigr) \psi_k(\eta) \psi_\ell(\sigma) \, \ud \eta \, \ud \sigma, \\
 \calJ^{(4)}_{(b3)} &= - \sum_{0 \leq k, \ell, n \lesssim j} \jxi^{\hf} \iint e^{i t \phi_2(\xi, \eta, \sigma)} \partial_\eta \Bigl[ \frac{1}{i t \partial_\eta \phi_2} \Bigl( 1 - \chi_c\Bigl(\frac{\sigma}{\xi}\Bigr) \Bigr) \psi_k(\eta) \psi_\ell(\sigma) \Bigr] \widehat{f_{\sim n}}(t, \xi-\eta-\sigma) \widehat{\bar{f}_{\sim k}}(t, \eta) \widehat{f_{\sim \ell}}(t, \sigma) \, \ud \eta \, \ud \sigma.
\end{align*}
In view of~\eqref{equ:J4b_bound_phase_inverse} we have for $0 \leq k, \ell, n \lesssim j$ that
\begin{align*}
 \biggl\| \iint e^{ix\eta} e^{iy\sigma} \frac{1}{\partial_\eta \phi_2} \Bigl( 1 - \chi_c\Bigl(\frac{\sigma}{\xi}\Bigr) \Bigr) \psi_k(\eta) \psi_\ell(\sigma) \, \ud \eta \, \ud \sigma \biggr\|_{L^1_{x,y}(\bbR\times\bbR)} \lesssim 2^{3k} 2^\ell 2^{2n} 2^{-j} 2^{-\max\{k, n\}} 2^{-2 \min\{k,n\}}.
\end{align*}
Hence, by Lemma~\ref{lem:pseudoproduct_op_bound} we may bound 
\begin{align*}
 \bigl|\calJ^{(4)}_{(b1)}\bigr| + \bigl|\calJ^{(4)}_{(b2)}\bigr| &\lesssim \frac{2^{\frac{1}{2}j}}{t} \sum_{0 \leq k, \ell, n \lesssim j} 2^{3k} 2^\ell 2^{2n} 2^{-j} 2^{-\max\{k, n\}} 2^{-2 \min\{k,n\}} \times \\
 &\qquad \qquad \qquad \qquad \qquad \qquad \times 2^{-2n} 2^{-2k} \bigl\| \jn L v(t) \bigr\|_{L^2_x} \bigl\| \jn^2 v(t) \bigr\|_{L^2_x} \|v_{\sim \ell}(t)\|_{L^\infty_x}.
\end{align*}
Finally, using that $\|v_{\sim \ell}(t)\|_{L^\infty_x} \lesssim 2^{-\frac{3}{4} \ell} \bigl\| \jn^2 v(t) \bigr\|_{L^2_x}^\hf \|v(t)\|_{L^\infty_x}^\hf$ for any $\ell \geq 0$,
we arrive at the estimate 
\begin{align*}
 \bigl|\calJ^{(4)}_{(b1)}\bigr| + \bigl|\calJ^{(4)}_{(b2)}\bigr| &\lesssim \frac{2^{-\frac{1}{2}j}}{t} \sum_{0 \leq k, \ell, n \lesssim j} 2^{k} 2^{\frac{1}{4} \ell} 2^{-\max\{k, n\}} 2^{-2 \min\{k,n\}} \bigl\| \jn L v(t) \bigr\|_{L^2_x} \bigl\| \jn^2 v(t) \bigr\|_{L^2_x}^{\thf} \|v(t)\|_{L^\infty_x}^\hf \\
 &\lesssim \frac{N(T)^3}{t^{\frac{5}{4}-\frac{5}{2}\delta}} 2^{-\frac12 j} 2^{\frac{1}{4} j} 2^{(0+)j} \lesssim \frac{N(T)^3}{t^{\frac{5}{4}-\frac{5}{2}\delta}}.
\end{align*}
Analogously, we obtain that
\begin{align*}
 \bigl|\calJ^{(4)}_{(b3)}\bigr| \lesssim \frac{N(T)^3}{t^{\frac{5}{4}-\frac{5}{2}\delta}}.
\end{align*}

\medskip 

\noindent {\it Contribution of the term $\calJ^{(4)}_{(c)}$}:
In this regime $\partial_\sigma \phi_2$ cannot vanish and satisfies suitable bounds.
As before we additionally decompose 
\begin{equation*}
 \hat{f}(t, \xi-\eta-\sigma) = \sum_{0 \leq n \lesssim j} \widehat{f_{\sim n}}(t, \xi - \eta - \sigma).
\end{equation*}
If $\sigma$ and $\xi-\eta-\sigma$ have opposite signs, we easily obtain that
\begin{align*}
 \bigl| \partial_\sigma \phi_2(\xi, \eta, \sigma) \bigr| = \biggl| - \frac{\xi-\eta-\sigma}{\jap{\xi-\eta-\sigma}} + \frac{\sigma}{\jap{\sigma}} \biggr| \geq \max \biggl\{ \frac{|\xi-\eta-\sigma|}{\jap{\xi-\eta-\sigma}}, \frac{\sigma}{\jap{\sigma}} \biggr\} \gtrsim 1.
\end{align*}
Consider now the case where $\sigma$ and $\xi-\eta-\sigma$ have the same sign. Without loss of generality assume that $\xi > 0$, and thus $\sigma > 0$ in view of the cut-off $\chi_c(\frac{\sigma}{\xi})$. Then we must have $\xi - \eta - \sigma > 0$, and since $|\sigma| \sim 2^j$ due to the cut-off $\chi_c(\frac{\sigma}{\xi})$, it follows that
\begin{equation*}
 \xi - \eta > \sigma \gtrsim 2^j.
\end{equation*}
Moreover, the cut-offs $\chi_c(\frac{\sigma}{\xi})$ and $(1-\chi_{8c}(\frac{\eta}{-\xi}))$ enforce that $|\sigma - \xi| \leq 2c|\xi|$ and $|\eta+\xi| \geq 8c|\xi|$, which gives 
\begin{align*}
 |\sigma - (\xi-\eta-\sigma)| = |\eta + \xi + 2(\sigma-\xi)| \geq |\eta+\xi| - 2|\sigma-\xi| \geq 4c|\xi| \gtrsim 2^j.
\end{align*}
Hence, in the case where $\sigma$ and $\xi-\eta-\sigma$ have the same sign, we can conclude from the identity
\begin{align*}
 \frac{1}{\partial_\sigma \phi_2} = \frac{\jap{\sigma}\jap{\xi-\eta-\sigma} ( \sigma \jap{\xi-\eta-\sigma} + (\xi-\eta-\sigma) \jap{\sigma})}{(\xi-\eta) (\sigma - (\xi-\eta-\sigma))}
\end{align*}
that overall in this regime we have 
\begin{equation} \label{equ:J4c_bound_phase_inverse}
 \Bigl| \partial_\eta^{m_1} \partial_\sigma^{m_2} \frac{1}{\partial_\sigma \phi_2(\xi, \eta, \sigma)} \Bigr| \lesssim 2^{2n} 2^{-(m_1+m_2) n}
\end{equation}
for all integers $0 \leq m_1, m_2 \leq 10$.
Then we integrate by parts in $\sigma$ to obtain that
\begin{align*}
 \calJ^{(4)}_{(c)} &= \calJ^{(4)}_{(c1)} + \calJ^{(4)}_{(c2)} + \calJ^{(4)}_{(c3)},
\end{align*}
where
\begin{align*}
 \calJ^{(4)}_{(c1)} &:= - \sum_{0 \leq k, n \lesssim j} \sum_{\ell \sim j} \jxi^{\hf} \iint e^{i t \phi_2(\xi, \eta, \sigma)} \frac{1}{it \partial_\sigma \phi_2} \partial_\sigma \widehat{f_{\sim n}}(t, \xi-\eta-\sigma) \widehat{\bar{f}_{\sim k}}(t, \eta) \widehat{f_{\sim \ell}}(t, \sigma) \times \\ 
 &\qquad \qquad \qquad \qquad \qquad \qquad \qquad \qquad \qquad \qquad \qquad \qquad \times \chi_c\Bigl(\frac{\sigma}{\xi}\Bigr) \Bigl( 1 - \chi_{8c} \Bigl(\frac{\eta}{-\xi}\Bigr) \Bigr) \psi_k(\eta) \psi_\ell(\sigma) \, \ud \eta \, \ud \sigma, \\
 \calJ^{(4)}_{(c2)} &:= - \sum_{0 \leq k, n \lesssim j} \sum_{\ell \sim j} \jxi^{\hf} \iint e^{i t \phi_2(\xi, \eta, \sigma)} \frac{1}{it \partial_\sigma \phi_2} \widehat{f_{\sim n}}(t, \xi-\eta-\sigma) \widehat{\bar{f}_{\sim k}}(t, \eta) \partial_\sigma \widehat{f_{\sim \ell}}(t, \sigma) \times \\ 
 &\qquad \qquad \qquad \qquad \qquad \qquad \qquad \qquad \qquad \qquad \qquad \qquad \times \chi_c\Bigl(\frac{\sigma}{\xi}\Bigr) \Bigl( 1 - \chi_{8c} \Bigl(\frac{\eta}{-\xi}\Bigr) \Bigr) \psi_k(\eta) \psi_\ell(\sigma) \, \ud \eta \, \ud \sigma, \\
 %\chi_c\Bigl(\frac{\sigma}{\xi}\Bigr) \Bigl( 1 - \chi_{8c} \Bigl(\frac{\eta}{-\xi}\Bigr) \Bigr) \psi_k(\eta) \psi_\ell(\sigma) \, \ud \eta \, \ud \sigma, \\
 \calJ^{(4)}_{(c3)} &:= - \sum_{0 \leq k, n \lesssim j} \sum_{\ell \sim j} \jxi^{\hf} \iint e^{i t \phi_2(\xi, \eta, \sigma)} \partial_\sigma \Bigl[  \frac{1}{it \partial_\sigma \phi_2} \chi_c\Bigl(\frac{\sigma}{\xi}\Bigr) \Bigl( 1 - \chi_{8c} \Bigl(\frac{\eta}{-\xi}\Bigr) \Bigr) \psi_k(\eta) \psi_\ell(\sigma) \Bigr] \times \\
 &\qquad \qquad \qquad \qquad \qquad \qquad \qquad \qquad \qquad \qquad \qquad \qquad \times \widehat{f_{\sim n}}(t, \xi-\eta-\sigma) \widehat{\bar{f}_{\sim k}}(t, \eta) \widehat{f_{\sim \ell}}(t, \sigma) \, \ud \eta \, \ud \sigma.
\end{align*}
In view of~\eqref{equ:J4c_bound_phase_inverse} we have for $0 \leq k, n \lesssim j$ and $\ell \sim j$ that
\begin{align*}
 \biggl\| \iint e^{ix\eta} e^{iy\sigma} \frac{1}{\partial_\sigma \phi_2} \chi_c\Bigl(\frac{\sigma}{\xi}\Bigr) \Bigl( 1 - \chi_{8c} \Bigl(\frac{\eta}{-\xi}\Bigr) \Bigr) \psi_k(\eta) \psi_\ell(\sigma) \, \ud \eta \, \ud \sigma \biggr\|_{L^1_{x,y}(\bbR\times\bbR)} \lesssim 2^k 2^j.
\end{align*}
Hence, by Lemma~\ref{lem:pseudoproduct_op_bound} we may bound 
\begin{align*}
 \bigl|\calJ^{(4)}_{(c1)}\bigr| + \bigl|\calJ^{(4)}_{(c2)}\bigr| &\lesssim \frac{2^{\frac{1}{2}j}}{t} \sum_{0 \leq k, n \lesssim j} 2^k 2^j 2^{-2n} 2^{-2j} \bigl\| \jn L v(t) \bigr\|_{L^2_x} \bigl\| \jn^2 v(t) \bigr\|_{L^2_x} \|v_{\sim k}(t)\|_{L^\infty_x}.
\end{align*}
Using again that $\|v_{\sim k}(t)\|_{L^\infty_x} \lesssim 2^{-\frac{3}{4} k} \bigl\| \jn^2 v(t) \bigr\|_{L^2_x}^\hf \|v(t)\|_{L^\infty_x}^\hf$ for any $k \geq 0$,
we arrive at the desired estimate 
\begin{align*}
 \bigl|\calJ^{(4)}_{(c1)}\bigr| + \bigl|\calJ^{(4)}_{(c2)}\bigr| &\lesssim \frac{2^{-\frac{1}{2}j}}{t} \sum_{0 \leq k, n \lesssim j} 2^{\frac{1}{4} k} 2^{-2n} \bigl\| \jn L v(t) \bigr\|_{L^2_x} \bigl\| \jn^2 v(t) \bigr\|_{L^2_x}^{\thf} \|v(t)\|_{L^\infty_x}^\hf \\
 &\lesssim \frac{N(T)^3}{t^{\frac{5}{4}-\frac{5}{2}\delta}} 2^{-\frac14 j} \lesssim \frac{N(T)^3}{t^{\frac{5}{4}-\frac{5}{2}\delta}}.
\end{align*}
Analogously, we derive that $|\calJ^{(4)}_{(c3)}| \lesssim N(T)^3 t^{-(\frac{5}{4}-\frac{5}{2}\delta)}$.

\medskip 

Putting all of the above estimates together, we find that 
\begin{align*}
 II = \frac{3}{t} \jxi^{\frac{7}{2}} \bigl| \hat{f}(t,\xi) \bigr|^2 \hat{f}(t,\xi) + \calO \biggl( \frac{N(T)^3}{t^{1+\nu}} \biggr)
\end{align*}
for $\nu = \min \, \{ \frac{1}{2} - 2 \delta, \frac{1}{4} - \frac{5}{2} \delta,  \alpha - 3 \delta \}$ with $0 < \delta \ll \alpha \ll 1$ sufficiently small. This finishes the stationary phase analysis of the oscillatory integral $II$ and thus concludes the proof of Lemma~\ref{lem:ode_weighted_profile}.
\end{proof}

\subsection{Proof of Theorem~\ref{thm:nonresonant}}

After the preparations in the previous subsections it is now an easy task to infer the asymptotic behavior of the solution $v(t)$ to~\eqref{equ:first_order_kg} and complete the proof of Theorem~\ref{thm:nonresonant}.

\begin{proof}[Proof of Theorem~\ref{thm:nonresonant}]

 By time reversal symmetry it suffices to consider positive times.
 For sufficiently small initial data we can propagate the bounds on the norms of the solution $v(t)$ in the bootstrap quantity $N(T)$ for short times. We may therefore assume that $N(1) \lesssim \varepsilon$, and turn to proving global-in-time a priori bounds for the solution $v(t)$ to~\eqref{equ:first_order_kg}.

 Let $T \geq 1$. Then we conclude from the asymptotics of the Klein-Gordon propagator stated in Lemma~\ref{lem:KG_propagator_asymptotics} as well as from the a priori bounds established in Proposition~\ref{prop:slow_growth_H2v}, Proposition~\ref{prop:slow_growth_H1L}, and Proposition~\ref{prop:Linfty_bound_profile} that
 \begin{equation} \label{equ:nonresonant_apriori_Linfty}
 \begin{aligned}
  \sup_{1 \leq t \leq T} \, t^{\frac{1}{2}} \|v(t)\|_{L^\infty_x} &\lesssim \sup_{1 \leq t \leq T} \, \bigl\| \jap{\xi}^{\frac{3}{2}} \hat{f}(t,\xi) \bigr\|_{L^\infty_\xi} + \sup_{1 \leq t \leq T} \, \frac{1}{t^{\frac{1}{8}}} \Bigl( \bigl\| (\jn L v)(t) \bigr\|_{L^2_x} + \bigl\| (\jn^2 v)(t) \bigr\|_{L^2_x} \Bigr) \\
  %&\lesssim \bigl\| \jxi^{\thf} \hat{f}(1,\xi) \bigr\|_{L^\infty_\xi} + N(T)^2 + \|v_0\|_{H^2_x} + \|v_0\|_{H^1_x}^2 + N(T)^2 \sup_{1 \leq t \leq T} t^{-\frac{1}{8} + \delta} + \| x v_0 \|_{H^2_x} + \|v_0\|_{H^1_x}^2 + N(T)^2 \jt^{+\delta} \\
  &\lesssim \bigl\| \jxi^{\thf} \hat{f}(1,\xi) \bigr\|_{L^\infty_\xi} + \|v_0\|_{H^2_x} + \| x v_0 \|_{H^2_x} + \|v_0\|_{H^1_x}^2 + N(T)^2 \\
  &\lesssim \varepsilon + N(T)^2.
 \end{aligned}
 \end{equation}
 Combining the estimate~\eqref{equ:nonresonant_apriori_Linfty} with the a priori bounds from Proposition~\ref{prop:slow_growth_H2v}, Proposition~\ref{prop:slow_growth_xv}, Proposition~\ref{prop:slow_growth_H1L}, and Proposition~\ref{prop:Linfty_bound_profile}, we conclude that
 \begin{equation*}
  N(T) \lesssim \varepsilon + N(T)^2. 
 \end{equation*}
 A standard continuity argument now yields that there exists an absolute constant $\varepsilon_0 > 0$ such that if $0 < \varepsilon \leq \varepsilon_0$, then we obtain the global a priori bound
 \begin{align*}
  &\sup_{t \geq 0} \, \biggl\{ \jt^{\frac{1}{2}} \|v(t)\|_{L^\infty_x} + \jt^{-\delta} \bigl\| \jn^2 v(t) \bigr\|_{L^2_x} + \jt^{-\delta} \bigl\| \jn L v(t) \bigr\|_{L^2_x} 
  + \jt^{-1-\delta} \|x v(t)\|_{L^2_x} + \bigl\| \jap{\xi}^{\frac{3}{2}} \hat{f}(t,\xi) \bigr\|_{L^\infty_\xi} \biggr\} \lesssim \varepsilon.
 \end{align*}
 The latter includes the sharp decay estimate~\eqref{equ:thm_sharp_decay} asserted in Theorem~\ref{thm:nonresonant}.
 Moreover, we observe that a standard by-product of the proof of the a priori bound~\eqref{equ:Linfty_bound_profile} for the profile in Proposition~\ref{prop:Linfty_bound_profile} is the existence of a limit profile $\widehat{W} \in L^\infty_\xi$ such that 
 \begin{equation*}
  \Bigl\| \jxi^\thf \hat{f}(t,\xi) - \widehat{W}(\xi) e^{-i \frac{3 \beta_0}{2} \jxi^{-1} |\widehat{W}(\xi)|^2 \log(t)} \Bigr\|_{L^\infty_\xi} \lesssim \frac{\varepsilon^2}{t^{\nu}}, \quad t \geq 1,
 \end{equation*}
 for the small constant $0 < \nu \ll 1$ from the statement of Lemma~\ref{lem:ode_weighted_profile}.
 Then the asserted asymptotics~\eqref{equ:thm_asymptotics} of the solution $v(t)$ to~\eqref{equ:first_order_kg} follow from Lemma~\ref{lem:KG_propagator_asymptotics}.
\end{proof}

\bibliographystyle{amsplain}
\bibliography{references_v2}

\end{document}